\DeclarePairedDelimiter\abs{\lvert}{\rvert}
\newcommand{\kk}{{\mathsf{k}}}
\newcommand{\PP}{{\mathbb{P}}}
\newcommand{\RR}{{\mathbb{R}}}
\newcommand{\ZZ}{{\mathbb{Z}}}
\newcommand{\CA}{{\mathcal{A}}}
\newcommand{\CB}{{\mathcal{B}}}
\newcommand{\CE}{{\mathcal{E}}}
\newcommand{\CF}{{\mathcal{F}}}
\newcommand{\CG}{{\mathcal{G}}}
\newcommand{\CH}{{\mathcal{H}}}
\newcommand{\CK}{{\mathcal{K}}}
\newcommand{\CL}{{\mathcal{L}}}
\newcommand{\CO}{{\mathcal{O}}}
\newcommand{\CP}{{\mathcal{P}}}
\newcommand{\CQ}{{\mathcal{Q}}}
\newcommand{\CR}{{\mathcal{R}}}
\newcommand{\CS}{{\mathcal{S}}}
\newcommand{\CT}{{\mathcal{T}}}
\newcommand{\CU}{{\mathcal{U}}}
\newcommand{\CV}{{\mathcal{V}}}
\newcommand{\CW}{{\mathcal{W}}}
\newcommand{\Hom}{\mathop{\mathsf{Hom}}\nolimits}
\newcommand{\Ext}{\mathop{\mathsf{Ext}}\nolimits}
\newcommand{\Spec}{\mathop{\mathsf{Spec}}\nolimits}
\newcommand{\Ker}{\mathop{\mathsf{Ker}}\nolimits}
\newcommand{\rk}{\mathop{\mathsf{rk}}}
\newcommand{\Gr}{{\mathsf{Gr}}}
\newcommand{\OGr}{{\mathsf{OGr}}}
\newcommand{\LGr}{{\mathsf{LGr}}}
\newcommand{\IGr}{{\mathsf{IGr}}}
\newcommand{\IFl}{{\mathsf{IFl}}}
\newcommand{\Fl}{{\mathsf{Fl}}}
\newcommand{\GL}{{\mathsf{GL}}}
\newcommand{\SP}{{\mathsf{Sp}}}
\newcommand{\Spin}{{\mathsf{Spin}}}
\newcommand{\Coh}{{\mathop{\mathsf{Coh}}}}
\newcommand{\Rep}{{\mathsf{Rep}}}
\newcommand{\bfG}{\mathbf{G}}
\newcommand{\bfP}{\mathbf{P}}
\newcommand{\bfU}{\mathbf{U}}
\newcommand{\bfL}{\mathbf{L}}
\newcommand{\YD}{\mathrm{Y}}
\newcommand{\YDm}{\YD^{\mathrm{m}}}
\newcommand{\YDu}{\YD^{\mathrm{u}}}
\newcommand{\YDmu}{\YD^{\mathrm{mu}}}
\newcommand{\rmP}{\mathrm{P}}
\newcommand{\lsup}[2]{\vphantom{#1}^{#2}\!#1}
\newcommand{\lperp}[1]{\lsup{#1}{\perp}}
\newcommand{\ydw}{\mathsf{w}}
\newcommand{\ydh}{\mathsf{h}}
\theoremstyle{plain}
\newtheorem{theorem}{Theorem}[section]
\newtheorem{conjecture}[theorem]{Conjecture}
\newtheorem*{conjecture*}{Conjecture}
\newtheorem{lemma}[theorem]{Lemma}
\newtheorem{proposition}[theorem]{Proposition}
\newtheorem{corollary}[theorem]{Corollary}
\theoremstyle{definition}
\newtheorem{definition}[theorem]{Definition}
\theoremstyle{remark}
\newtheorem{remark}[theorem]{Remark}
\newtheorem{example}[theorem]{Example}
\title{Derived Categories of Grassmannians: a Survey}
\author{Anton Fonarev}
\dedicatory{}
\address{\sloppy
\parbox{0.95\textwidth}{
   Algebraic Geometry Section, Steklov Mathematical Institute of Russian Academy of Sciences,
8 Gubkin str., Moscow 119991, Russia
\hfill
}\bigskip}
\email{avfonarev@mi-ras.ru}
\date{}
\thanks{This work was performed at the Steklov International Mathematical
Center and supported by the Ministry of Science and Higher Education
of the Russian Federation (agreement no. 075-15-2022-265).
The work was supported by the Theoretical Physics and Mathematics Advancement Foundation «BASIS»}
\begin{document}

\begin{abstract}
   We discuss what is known about the structure of the bounded
   derived categories of coherent sheaves
   on Grassmannians of simple algebraic groups.
\end{abstract}

\maketitle

\section{Derived Categories and Semiorthogonal Decompositions}
\subsection{Derived categories}
In the middle of the 20th century, Alexander Grothendieck realized that
it is not sufficient to work with classical derived functors if one wants
to formulate what is now called \emph{coherent} or \emph{Serre--Grothendieck--Verdier}
duality, a relative version of Serre duality. Together with his student
Jean-Louis Verdier, he developed the notion of a derived category.
As it often happens with Grothendieck's ideas, the underlying insight
is beautiful in its simplicity. By that time, it had been known for a while
that homological constructions often operate with various kinds of resolutions
constructed for modules, sheaves of modules, coherent sheaves, etc.
In modern terms, one could say that an abelian category $\CA$
with enough projectives (or injectives) is equivalent to the homotopy category
of projective (injective) resolutions. A resolution is, by definition,
a complex with reasonable terms concentrated only in negative (or positive)
degrees which has only one possibly nontrivial cohomology group in degree 0.
What happens if we remove this restriction on cohomology?
Meanwhile, when we deal with projective and/or injective resolutions,
we treat them up to homotopy equivalence. We do the latter in order
to identify resolutions of the same objects, i.e. resolutions with isomorphic
cohomology. Which equivalence do we put on arbitrary complexes?

The answer that Grothendieck and Verdier give is simple and beautiful. Begin
with an abelian category $\CA$. Consider the category of
complexes $C(\CA)$ (one can consider various versions of this category:
bounded complexes, unbounded complexes, complexes bounded from above or below,
complexes with bounded cohomology, etc.). Now, formally invert all quasi-isomorphisms,
i.e. morphisms of complexes that induce isomorphisms on cohomology. That is
precisely the derived category $D(\CA)$ of $\CA$. While the idea is beautiful
and simple, there is plenty of technical details to fill in. First,
localization (the process of inverting a class of morphisms) is a rather
delicate procedure. Fortunately, it can be done rather painlessly by first
passing to the homotopy category $H(\CA)$: the category of complexes
where morphisms are considered up to homotopy, just like we did for resolutions.
Since homotopic morphisms induce the same morphisms on homology, this first
step does no harm. Now, the second step is to invert quasi-isomorphisms
in the homotopy category. It turns out that the latter step can be done
rather neatly
since in the homotopy category quasi-isomorphisms satisfy the so-called Ore
conditions that one can find in non-commutative algebra.

Once the derived category is constructed, one should naturally ask questions
about its structure. Say, we started with a $\kk$-linear abelian category
over a field $\kk$. While the derived category is naturally $\kk$-linear
and additive, it is no longer abelian. Grothendieck and Verdier came up with
a beautiful notion of a triangulated category, which allows to do much
of the homological algebra just by considering exact triangles
instead of short exact sequences.

Given a smooth projective variety $X$ over a field $\kk$, one defines
its bounded derived category $D(X)$ as the derived category $D(\Coh(X))$
of the category of coherent sheaves $\Coh(X)$ on $X$.
While its construction is simple, for many years derived categories remained
mysterious black boxes with unfathomable internals. It all changed with
the appearance of the paper~\cite{Beilinson1979} by Alexander Beilinson, who in 1978 gave a very
explicit description of the bounded derived category of a projective space.
That work was followed by papers by Mikhail Kapranov, who gave
a similar description of the derived categories of classical Grassmannians
and quadrics.
In both cases it was shown that the bounded derived categories of coherent
sheaves on the corresponding varieties admit full exceptional collections.
Since then, it has been conjectured that the same holds for the derived
categories of all rational homogeneous varieties. In the present survey
we discuss what we know about this conjecture.

\subsection{Semiorthogonal decompositions and exceptional collections}
Let $\CT$ be a triangulated category.
We want to somehow split $\CT$ into smaller pieces. One way of doing it
is using the notion of a semiorthogonal decomposition.

\begin{definition}
   A \emph{semiorthogonal decomposition} of a triangulated category $\CT$
   is a collection of full triangulated subcategories $\CA_1,\CA_2,\ldots,\CA_n\subset \CT$
   such that
   \begin{enumerate}
      \item for all $1\leq i < j \leq n$ and all $X_i\in\CA_i$, $X_j\in \CA_j$
      one has $\Hom_{\CT}(X_j, X_i)=0$,
      \item $\CT$ is the smallest strictly full triangulated subcategory of $\CT$
      containing $\CA_1,\CA_2,\ldots,\CA_n$.
   \end{enumerate}
   A semiorthogonal decomposition is denoted by $\CT=\langle \CA_1,\CA_2,\ldots,\CA_n\rangle$.
\end{definition}

A natural question is whether one can construct a semiorthogonal decomposition
starting with a full triangulated subcategory $\CA\subset \CT$.

\begin{definition}
   A full triangulated subcategory $\CA\subset \CT$ is called \emph{admissible}
   if the inclusion functor $\iota:\CA\to \CT$ admits both
   right and left adjoint functors $\iota^*,\iota^!:\CT\to \CA$.
\end{definition}

If $\CA\subset \CT$ is an admissible full triangulated subcategory,
one can immediately construct
two semiorthogonal decompositions of $\CT$. First, define
the right and left orthogonals to $\CA$ as
\begin{equation*}
   \begin{split}
   \CA^\perp &= \langle X \in \CT \mid \Hom(Y, X)=0\text{ for all }Y\in \CA \rangle,\\
   \lperp{\CA} &= \langle X \in \CT \mid \Hom(X, Y)=0\text{ for all }Y\in \CA \rangle.
   \end{split}
\end{equation*}

\begin{lemma}[\cite{Bondal1990a}]
   If $\CA\subset \CT$ is an admissible full triangulated subcategory,
   then one has semiorthogonal decompositions
   \begin{equation*}
      \CT = \langle \CA^\perp, \CA \rangle
      \quad\text{and}\quad
      \CT = \langle \CA, \lperp{\CA} \rangle.
   \end{equation*}
\end{lemma}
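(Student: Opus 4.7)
My plan is to prove both decompositions in parallel, since the two are dual under swapping the left and right adjoints. I will describe the argument for $\CT = \langle \CA^\perp, \CA \rangle$ in detail and indicate the straightforward modification that yields $\CT = \langle \CA, \lperp{\CA} \rangle$. In each case the semiorthogonality is immediate from the very definition of $\CA^\perp$ (resp.\ $\lperp{\CA}$), so the core task is to verify the generation condition: every object of $\CT$ must be obtained from objects of $\CA$ and $\CA^\perp$ (resp.\ $\lperp{\CA}$) via a distinguished triangle.

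For the first decomposition, the idea is to start with an arbitrary $T \in \CT$ and take the counit $\epsilon_T \colon \iota\iota^* T \to T$ of the adjunction $\iota \dashv \iota^*$. Embedding it in a distinguished triangle
\begin{equation*}
\iota\iota^* T \xrightarrow{\epsilon_T} T \to C \to \iota\iota^* T[1],
\end{equation*}
I will argue that the cone $C$ lies in $\CA^\perp$. By adjunction, $\Hom_\CT(\iota A, C) \cong \Hom_\CA(A, \iota^* C)$ for every $A \in \CA$, so it suffices to show $\iota^* C = 0$.

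The key step is to apply $\iota^*$ to the triangle above, obtaining
\begin{equation*}
\iota^*\iota\iota^* T \xrightarrow{\iota^*\epsilon_T} \iota^* T \to \iota^* C \to \iota^*\iota\iota^* T[1].
\end{equation*}
By the triangle identity for the adjunction, the composition $\iota^* T \xrightarrow{\eta_{\iota^* T}} \iota^*\iota\iota^* T \xrightarrow{\iota^*\epsilon_T} \iota^* T$ equals the identity; full faithfulness of $\iota$ forces the unit $\eta$ to be a natural isomorphism, whence $\iota^*\epsilon_T$ is also an isomorphism and $\iota^* C = 0$, as needed.

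For the second decomposition, I will dualize: take the unit $\eta_T \colon T \to \iota\iota^! T$ of the adjunction $\iota^! \dashv \iota$, embed it in a triangle $K \to T \to \iota\iota^! T \to K[1]$, and use the analogous triangle identity together with $\iota^!\iota \cong \id$ (again from full faithfulness of $\iota$) to conclude $\iota^! K = 0$. Adjunction then gives $\Hom_\CT(K, \iota A) \cong \Hom_\CA(\iota^! K, A) = 0$ for every $A \in \CA$, placing $K$ in $\lperp{\CA}$. The only potential obstacle is bookkeeping the directions of the adjunctions correctly; the substance of the proof — full faithfulness of $\iota$ yielding $\iota^*\iota \cong \id \cong \iota^!\iota$ via the triangle identities — is short.
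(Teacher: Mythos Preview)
Your argument is correct and is the standard proof of this lemma. Note, however, that the paper does not actually supply a proof of this statement: the lemma is simply quoted with a citation to \cite{Bondal1990a}, so there is no ``paper's own proof'' to compare against. Your write-up is essentially the argument one finds in Bondal--Kapranov (or any standard reference): use the counit of $\iota \dashv \iota^*$ to build the triangle $\iota\iota^*T \to T \to C$, then invoke full faithfulness of $\iota$ (equivalently, the unit $\eta\colon \id_\CA \to \iota^*\iota$ being an isomorphism) together with the triangle identity to conclude $\iota^*C = 0$, hence $C \in \CA^\perp$; the dual argument handles the other decomposition.

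One small stylistic remark: strictly speaking you should also observe that $\CA^\perp$ and $\lperp{\CA}$ are closed under shifts and cones (so that they are honest triangulated subcategories), but this is immediate from the long exact $\Hom$ sequence and is usually left implicit.
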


Due to fundamental results of A.~Bondal and M.~Kapranov, see~\cite{Bondal1990a},
all the subcategories that we consider in the present survey
are admissible.

From now on we assume that all our categories are $\kk$-linear,
where $\kk$ is a field. The simplest example of an admissible subcategory
is one generated by an exceptional object.

\begin{definition}
   An object $E\in\CT$ is called \emph{exceptional} if $\Hom(E, E) = \kk$
   and $\Hom(E, E[t]) = 0$ for all $t\neq 0$.
\end{definition}

If $E\in \CT$ is an exceptional object, the smallest
full triangulated subcategory $\langle E \rangle$ generated by $E$
is simply equivalent to the derived category of finite-dimensional vector
spaces over $\kk$, and the latter is equivalent to the category of finite-dimensional
$\ZZ$-graded vector spaces. Alternatively, $\langle E \rangle$ is equivalent
to the bounded derived category $D(\Spec \kk)$ of a point. If a triangulated
category admits a semiorthogonal decomposition such that every component is generated
by an exceptional object, one says that the category admits a full exceptional
collection. Here is a more common definition, where from now on for arbitrary
objects in a triangulated category $\CT$ by $\Ext^i(X,Y)$
we mean $\Hom_{\CT}(X,Y[i])$.

\begin{definition}
   An \emph{exceptional collection} in a triangulated category $\CT$ is a sequence
   of exceptional objects $E_1,E_2,\ldots,E_n$ such that for all $1\leq i < j\leq n$
   one has $\Ext^\bullet(E_j, E_i)=0$.
   An exceptional collection is \emph{full} if there is a semiorthogonal
   decomposition $\CT=\left\langle \langle E_1\rangle, \langle E_2\rangle,\ldots,\langle E_n\rangle\right\rangle$.
\end{definition}

One usually drops the angle brackets for the subcategory generated by an exceptional
object. For instance, one usually denotes by $\langle E_1, E_2,\ldots, E_n\rangle$
the strictly full triangulated subcategory generated by an exceptional collection
$E_1, E_2,\ldots, E_n$.

We are ready to give the first formulation of the pioneering result which
launched a vast area of research in algebraic geometry. We warn the reader
that the statement in the note might seem different, but this is only due
to the fact that the terminology had not been developed at the time of publication.

\begin{theorem}[\cite{Beilinson1979}]\label{thm:bei-1}
   The bounded derived category $D(\mathbb{P}^n)$ of the projective space
   $\mathbb{P}^n$ over a field $\kk$ admits a full exceptional collection
   \begin{equation}\label{eq:pn-ec}
      D(\mathbb{P}^n) = \langle \CO, \CO(1), \ldots, \CO(n) \rangle.
   \end{equation}
\end{theorem}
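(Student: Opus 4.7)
The plan is to verify separately the two defining properties of a full exceptional collection: pairwise semiorthogonality together with exceptionality of each term, and generation of $D(\mathbb{P}^n)$.

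For the first property, I would compute, for $0 \leq i \leq j \leq n$ and $t \in \ZZ$,
\[
   \Ext^t(\CO(j), \CO(i)) = H^t(\mathbb{P}^n, \CO(i-j)),
\]
and invoke the classical cohomology of line bundles on projective space: $H^\bullet(\mathbb{P}^n, \CO(m))$ vanishes entirely for $-n \leq m \leq -1$, while for $m = 0$ only $H^0 = \kk$ survives. This simultaneously establishes semiorthogonality (case $i < j$) and exceptionality of each $\CO(i)$ (case $i = j$).

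The substantive step is generation, for which I would follow Beilinson's argument via the resolution of the diagonal on $\mathbb{P}^n \times \mathbb{P}^n$. With $V = H^0(\mathbb{P}^n, \CO(1))^\vee$ and $E = \CO(1) \boxtimes T_{\mathbb{P}^n}(-1)$, composing pullbacks of the Euler sequences along $p_1$ and $p_2$ produces a global section
\[
   p_1^* \CO(-1) \hookrightarrow V \otimes \CO_{\mathbb{P}^n \times \mathbb{P}^n} \twoheadrightarrow p_2^* T_{\mathbb{P}^n}(-1)
\]
of $E$ whose zero locus is precisely the diagonal $\Delta$: at $(x,y)$ the composite vanishes iff the tautological line at $x$ lies in that at $y$, i.e.\ iff $x = y$. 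Since $\codim \Delta = n = \rk E$, the section is regular, and the associated Koszul complex, using the identity $\Lambda^k E^\vee = \CO(-k) \boxtimes \Omega^k(k)$, furnishes a resolution
\[
   0 \to \CO(-n) \boxtimes \Omega^n(n) \to \cdots \to \CO(-1) \boxtimes \Omega^1(1) \to \CO \boxtimes \CO \to \CO_\Delta \to 0.
\]

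With this resolution in hand, for any $F \in D(\mathbb{P}^n)$ I would use the Fourier--Mukai identity $F \cong Rp_{1*}(p_2^* F \otimes \CO_\Delta)$, replace $\CO_\Delta$ by its Koszul resolution, and apply flat base change and the projection formula to write $F$ as an iterated cone over objects of the form $\CO(-k) \otimes \RGamma(\mathbb{P}^n, F \otimes \Omega^k(k))$ for $0 \leq k \leq n$. Each such object is a direct sum of shifts of $\CO(-k)$, so $F \in \langle \CO(-n), \ldots, \CO \rangle$; twisting by $\CO(n)$, an autoequivalence of $D(\mathbb{P}^n)$, yields the collection in~\eqref{eq:pn-ec}. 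The main obstacle is the geometric input: one must verify that the Euler composite is a regular section of $E$ whose scheme-theoretic zero locus is exactly $\Delta$ (not a thickening), which amounts to checking that the induced map $E^\vee|_\Delta \to \CO/\mathcal{I}_\Delta^2$ identifies $E^\vee|_\Delta$ with the conormal bundle $\Omega^1_{\mathbb{P}^n}$. Once this geometric input is established, the remainder is a formal derived-functor manipulation.
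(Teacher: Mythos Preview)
Your proposal is correct and follows essentially the same route as the paper: cohomology of line bundles for exceptionality and semiorthogonality, then Beilinson's Koszul resolution of the diagonal together with the Fourier--Mukai/projection-formula argument for generation. The only cosmetic difference is the final step---you pass from $\langle\CO(-n),\ldots,\CO\rangle$ to $\langle\CO,\ldots,\CO(n)\rangle$ by twisting with $\CO(n)$, whereas the paper applies the derived duality anti-autoequivalence; both are harmless autoequivalences.
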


A similar result was soon proved by Kapranov for classical Grassmannians and quadrics.
Before we present explicit formulations of these results, we can state the main
folklore conjecture that came out of the work of Beilinson and Kapranov.

\begin{conjecture}\label{conj:main}
   Let $\bfG$ be a semisimple algebraic group over an algebraically closed
   field $\kk$ of characteristic 0, and let $\bfP\subset \bfG$ be a parabolic
   subgroup. There is a full exceptional collection in the bounded
   derived category of coherent sheaves $D(\bfG/\bfP)$.
\end{conjecture}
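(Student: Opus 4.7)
The plan is to produce, for each pair $(\bfG, \bfP)$, an explicit finite collection of $\bfG$-equivariant vector bundles on $\bfG/\bfP$ and then verify that this collection is both exceptional and generates the bounded derived category. A natural first reduction is to the case when $\bfP$ is maximal: if $\bfP = \bfP_1 \cap \bfP_2$ for larger parabolics $\bfP_1, \bfP_2$, then the projection $\bfG/\bfP \to \bfG/\bfP_1$ is a fibration whose fibers are homogeneous varieties of the Levi of $\bfP_1$, so assuming by induction full exceptional collections on fiber and base, a relative Beilinson-type argument produces one on the total space. It therefore suffices to treat each maximal parabolic individually, for which the natural building blocks are tensor products of Schur functors applied to the tautological sub- or quotient bundles, or more generally pushforwards of equivariant line bundles along Bott--Samelson-type resolutions.

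\textbf{Construction of the candidate collection.} For classical Grassmannians $\Gr(k, n)$, Kapranov's prescription---Schur functors $\Sigma^\lambda \CU^\vee$ of the tautological bundle, with $\lambda$ ranging over Young diagrams fitting in a $k \times (n-k)$ box---is the model. For isotropic Grassmannians of types $B$, $C$, $D$ and smooth quadrics one enlarges this by spinor or symplectic analogues. For $\bfG/\bfP$ with $\bfG$ exceptional, the natural framework is that of Lefschetz collections: a ``starting block'' $\CB_0 \subset D(\bfG/\bfP)$ generated by a few equivariant bundles, together with its twists $\CB_0(1), \CB_0(2), \ldots$ by the ample generator of $\Pic(\bfG/\bfP)$, arranged in a rectangular or staircase pattern whose length matches the Fano index.

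\textbf{Verification.} Semiorthogonality of the candidate collection is a computation via Borel--Weil--Bott: each $\Ext^\bullet(E_j, E_i)$ reduces to the cohomology of an equivariant bundle on $\bfG/\bfP$, whose associated weight one shows to be either singular or to lie in an appropriately negative chamber. Fullness is harder. The standard approach is to prove that the classes $[E_i]$ span the Grothendieck group $K_0(\bfG/\bfP)$, whose rank equals $|W / W_{\bfP}|$; the rank count, combined with semiorthogonality and the Bondal--Van den Bergh saturatedness of $D(\bfG/\bfP)$, then forces the subcategory generated by the $E_i$ to coincide with $D(\bfG/\bfP)$.

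\textbf{Main obstacle.} I expect the crux of the problem to lie in two places. First, for $\bfG$ of exceptional type ($E_6, E_7, E_8, F_4, G_2$) and for certain non-cominuscule classical Grassmannians such as $\IGr(k, 2n)$ with intermediate $k$, constructing a collection of the correct length $|W / W_{\bfP}|$ is delicate: the Levi reduction is not clean, and naive candidates tend either to have the wrong length or to fail exceptionality. Second, even when a candidate of the correct length is found, establishing fullness uniformly---without a case-by-case reduction to explicit resolutions or K-theoretic computer verification---remains the principal open point, and this is why the conjecture is presently known only in scattered cases rather than in full generality.
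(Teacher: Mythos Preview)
The statement you are attempting to prove is Conjecture~\ref{conj:main}, and the paper does not contain a proof of it: the entire survey is devoted to cataloguing the partial cases in which it has been established (type $A$ completely, certain symplectic and orthogonal Grassmannians, a handful of exceptional cases) and to explaining why the remaining cases are open. So there is no ``paper's own proof'' to compare against; your outline is a sketch of the standard programme, and you yourself acknowledge in the final paragraph that it does not go through in general.

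That said, there is one genuine error in your verification step that is worth flagging, because it is a common misconception. You write that once the classes $[E_i]$ span $K_0(\bfG/\bfP)$, ``the rank count, combined with semiorthogonality and the Bondal--Van den Bergh saturatedness of $D(\bfG/\bfP)$, then forces the subcategory generated by the $E_i$ to coincide with $D(\bfG/\bfP)$.'' This is false: an exceptional collection of length equal to $\rk K_0$ need not be full. The orthogonal complement of such a collection is an admissible subcategory with vanishing $K_0$ (and in fact vanishing Hochschild homology), but such \emph{phantom} categories are now known to exist on smooth projective varieties. Saturatedness guarantees admissibility of the orthogonal, not its triviality. This is precisely why the papers surveyed here (Kapranov, Kuznetsov, the author, Kuznetsov--Polishchuk, \ldots) go to considerable lengths---resolutions of the diagonal, staircase complexes, restriction to subvarieties, Proposition~\ref{prop:full}---to prove fullness directly, rather than inferring it from a $K$-theory count. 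Your ``standard approach'' to fullness is therefore not a valid approach at all, and the difficulty you locate in constructing collections of the correct length is only half the problem; proving fullness once you have them is an independent and, as the survey demonstrates, typically harder task.
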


Let us make a few remarks on Conjecture~\ref{conj:main}. First, from the structure
theory of semisimple algebraic groups, parabolic reduction and some generalities
on derived categories, it is rather easy to reduce the statement to the case when
$\bfG$ is simple and $\bfP$ is maximal parabolic. Varieties of the form $\bfG/\bfP$
are often called \emph{generalized Grassmannians}, and we restrict our
attention to them. Next, one can impose
additional conditions on the exceptional objects. Since $\bfG/\bfP$ comes
with a natural action of $\bfG$, it is natural to ask for the collection
to consist of $\bfG$-equivariant objects. However, that is a harmless request
since it was shown
by A.~Polishchuk in~\cite[Lemma~2.2]{Polishchuk2011} that any exceptional object
in $D(\bfG/\bfP)$ admits a~$\bfG$-equivariant structure.
Another restriction one can impose is to ask that all the objects are pure
sheaves and not complexes (thus equivariant vector bundles). In the strongest
form one could ask for extra relations between the exceptional objects.

\begin{definition}
   An exceptional collection $E_1,E_2,\ldots,E_n$ is called \emph{strong}
   if for all $1\leq i < j\leq n$ one has $\Ext^t(E_i, E_j)=0$ for all
   $t\neq 0$.
\end{definition}

The strongest version of Conjecture~\ref{conj:main} for generalized Grassmannians
could be the following.

\begin{conjecture}\label{conj:gr}
   Let $\bfG$ be a simple algebraic group over an algebraically closed
   field $\kk$ of characteristic 0, and let $\bfP\subset \bfG$ be a maximal
   parabolic subgroup. There is a full strong exceptional collection in the bounded
   derived category of coherent sheaves $D(\bfG/\bfP)$ consisting of vector bundles.
\end{conjecture}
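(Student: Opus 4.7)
The plan is to proceed by the Dynkin classification, treating each pair $(\bfG, \bfP)$ with $\bfG$ simple and $\bfP\subset\bfG$ a maximal parabolic separately. For each such $\bfG/\bfP$, I would first identify a pool of candidate $\bfG$-equivariant vector bundles built from the tautological bundles on the corresponding generalized Grassmannian, typically of the form $\Sigma^\lambda \CU \otimes \Sigma^\mu \CQ \otimes \CO(d)$ where $\Sigma^\lambda$ is a Schur functor, and $\CU$, $\CQ$ are the tautological sub- and quotient bundles (or their spinor analogues for orthogonal Grassmannians). In minuscule and cominuscule cases, the Kapranov-style prescription using Young diagrams inscribed in a fixed rectangle should be the starting point; in the remaining (isotropic or exceptional) cases, I would seek a Lefschetz decomposition $\langle \CB_0, \CB_1(1), \ldots, \CB_{m-1}(m-1)\rangle$ with respect to the generator of $\Pic(\bfG/\bfP)$, following the Kuznetsov--Polishchuk philosophy.

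Having chosen the collection $E_1,\dots,E_n$, the second step is to verify the semiorthogonality and strongness conditions $\Ext^\bullet(E_i,E_j)=0$ and $\Ext^{>0}(E_i,E_j)=0$ for $i<j$. The main tool here is Borel--Weil--Bott: writing $\CHom(E_i,E_j)$ as a direct sum of irreducible equivariant bundles via Littlewood--Richardson (in type $A$) or the appropriate branching rules in other types, and then checking that each summand $\Sigma^\nu$ has weight $\nu+\rho$ either singular (cohomology zero) or dominant with length zero (cohomology concentrated in degree $0$). This is a finite, combinatorial check once the collection is fixed, though in exceptional types the bookkeeping becomes delicate.

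The third step is fullness, which I expect to be the dominant obstacle. A necessary condition is that $n$ equals $\abs{W/W_\bfP}$, the number of Schubert cells, since this computes the rank of $K_0(\bfG/\bfP)$. To upgrade numerics to actual generation I would look for a resolution of the structure sheaf of the diagonal $\CO_\Delta\subset \bfG/\bfP\times \bfG/\bfP$ whose terms are external tensor products of objects from the collection; alternatively, when $\bfG/\bfP$ admits a convenient birational model (a projective bundle, a blow-up, or a total space of a homogeneous vector bundle over a smaller flag variety), one can bootstrap fullness from a known decomposition via Orlov's projective bundle/blow-up formulas.

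The hard part will be producing such a diagonal resolution, or a birational reduction, in the cases where no direct geometric model is known, namely adjoint and coadjoint Grassmannians of the exceptional types and several non-extremal isotropic Grassmannians in types $B, C, D$. It is precisely here that Conjecture~\ref{conj:gr} remains open, and any complete proof would either have to exhibit a new general mechanism (e.g.\ equivariant categorification of some $\bfG$-action on the Grothendieck group) or continue the case-by-case program type by type, node by node.
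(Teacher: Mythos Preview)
The statement you are attempting to prove is \emph{Conjecture}~\ref{conj:gr}, and the paper does not contain a proof of it: the entire paper is a survey of the partial progress that has been made toward this conjecture, and it explicitly leaves the conjecture open. There is therefore no ``paper's own proof'' to compare your proposal against.

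Your proposal is not a proof either, and to your credit you acknowledge this in the final paragraph. What you have written is a faithful summary of the existing program: classify by Dynkin type and node, build candidate equivariant bundles from Schur functors of tautological (and spinor) bundles, verify exceptionality and semiorthogonality via Borel--Weil--Bott and branching rules, and attack fullness either by a resolution of the diagonal or by reduction through projective/Grassmannian bundle structures. This is exactly the strategy the paper surveys, and the paper documents precisely where it has succeeded (type~$A$ via Kapranov, quadrics, $\IGr(2,V)$, $\LGr(V)$, a handful of sporadic isotropic and exceptional cases) and where it has not (most isotropic Grassmannians in types $B$, $C$, $D$ away from the extremal nodes, and most Grassmannians for $E_6$, $E_7$, $E_8$, $F_4$). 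Your identification of the bottleneck---fullness in the absence of a diagonal resolution or a convenient birational model---is accurate; this is why the paper devotes so much attention to staircase complexes and the Kuznetsov--Polishchuk exceptional block machinery as substitutes.

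One further caution: even in the cases where full exceptional collections of vector bundles are known, \emph{strongness} is not always verified in the literature, and the paper itself rarely emphasizes it. Your plan to check strongness alongside semiorthogonality via Borel--Weil--Bott is the right idea, but it is an additional layer of combinatorics that the existing references do not uniformly carry out, so a complete proof of Conjecture~\ref{conj:gr} would require revisiting even the ``known'' cases on this point.
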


In the rest of the survey we discuss what is known about Conjecture~\ref{conj:gr}.
Before we begin, we need to discuss some other general notions from the theory
of derived categories.

\subsection{(Graded) dual exceptional collections}
The notion of a dual exceptional collection could have appeared in the very
same paper by Beilinson~\cite{Beilinson1979} and is analogous to the notion
of a dual basis. Formally, it was studied by A.~Bondal in~\cite{Bondal1990}.
However, since then several conventions related to shifts of objects have been
used in literature. We will take a slightly alternative route and work
with the definitions introduced in~\cite{Fonarev2023}.

First, it will be convenient to work with exceptional collections indexed
by a partially ordered set (poset). Many of our collections will be naturally
indexed by some posets of Young diagrams.
\begin{definition}
   An exceptional collection indexed by a poset $(\CP, \preceq)$ is
   a collection of exceptional
   objects $\{ E_x \}_{x\in\CP}$ such that
   $\Ext^\bullet(E_x, E_y)=0$ unless $x\preceq y$.
\end{definition}

If $\CP=\{1,2,\ldots,n\}$ with its total ordering, then we get the usual notion
of an exceptional collection. However, if $\CP$ has no comparable elements, then
an exceptional collection indexed by $\CP$ consists of pairwise orthogonal objects.

Let us now assume that $\CP$ is finite and \emph{graded}: there exists
a function $|-|:\CP\to \ZZ_{\geq 0}$ such that on each connected component
of $\CP$ it attains value 0, and for any pair $y,x\in\CP$ such that
$y$ covers $x$ one has $|y| = |x|+1$.
\begin{lemma}{{\cite[Lemma~2.5]{Fonarev2023}}}
   Let $\langle E_x\mid x\in\CP \rangle$ be an exceptional collection
    indexed by a finite graded poset $\CP$. For any $y\in \CP$ there exists
    a unique (up to isomorphism) object $E^\circ_y\in \langle E_x\mid x\in\CP \rangle$ such that
    \begin{enumerate}
        \item $\Ext^\bullet(E_x, E^\circ_y)=0$ for all $x\neq y$,
        \item $\Ext^\bullet(E_y, E^\circ_y)=\kk[-|y|]$.
    \end{enumerate}
    The objects $E^\circ_y$ form an exceptional collection
    with respect to the opposite poset $\CP^\circ$.
    This collection is~called the \emph{graded left dual,} and
    $\langle E^\circ_y\mid y\in\CP^\circ \rangle=\langle E_x\mid x\in\CP \rangle$. 
\end{lemma}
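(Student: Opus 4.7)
The plan is to reduce to the classical (totally ordered) theory of dual exceptional collections by picking a linear extension of $\CP$, and then to refine the vanishings obtained thereby to the opposite-poset indexing by exploiting the freedom in that choice.

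First I would handle uniqueness via a detection lemma: any $Z\in\langle E_x\mid x\in\CP\rangle$ with $\RHom(E_x,Z)=0$ for every $x$ must vanish. Indeed, the full subcategory of objects $Y$ satisfying $\RHom(Y,Z)=0$ is triangulated and contains every $E_x$, hence contains $Z$ itself, forcing $\RHom(Z,Z)=0$. A standard bootstrap argument then shows that any object characterized by its $\RHom(E_x,-)$-values is unique up to isomorphism, which gives uniqueness of $E^\circ_y$.

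For existence, fix any linear extension $x_1,\ldots,x_n$ of $(\CP,\preceq)$; then $E_{x_1},\ldots,E_{x_n}$ is an exceptional collection in the classical, totally ordered sense. I would apply Bondal's construction of the left dual via iterated left mutations,
\[
F_{x_j} := \mathbb{L}_{E_{x_1}}\mathbb{L}_{E_{x_2}}\cdots\mathbb{L}_{E_{x_{j-1}}}E_{x_j},
\]
which satisfies $\RHom(E_{x_i},F_{x_j})=\kk\cdot\delta_{ij}$ concentrated in degree zero. Setting $E^\circ_y := F_y[-|y|]$ gives conditions (1) and (2) on the nose. Since the $F_y$ form a full exceptional collection in $\langle E_x\mid x\in\CP\rangle$ by the classical theorem, the two generated subcategories coincide.

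The main step is promoting the vanishings to the opposite-poset indexing. The crucial observation is that the characterization $\RHom(E_x,F_y)=\kk\cdot\delta_{xy}$ is intrinsic, so the $F_y$, and hence the $E^\circ_y$, do not depend on the chosen linear extension. The classical theorem also guarantees that the reversed sequence $F_{x_n},\ldots,F_{x_1}$ is exceptional, so $\Ext^\bullet(F_{x_i},F_{x_j})=0$ whenever $x_i$ precedes $x_j$ in the chosen extension. I would then invoke the elementary order-theoretic fact that for distinct $p,q\in\CP$ with $q\not\preceq p$ there exists a linear extension of $\CP$ placing $p$ before $q$: if $p\prec q$ any extension works, while if $p$ and $q$ are incomparable, this is the standard lemma on linear extensions of a finite poset. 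Applying the classical vanishing to such an extension yields $\Ext^\bullet(E^\circ_p,E^\circ_q)=0$ for every such pair, which is precisely the exceptional-collection condition indexed by $\CP^\circ$. The main conceptual obstacle is recognizing this linearization-independence of $F_y$, since it is what allows the extra incomparabilities of $\CP$ to refine the coarse vanishings coming from any single linear extension; once this point is accepted, the rest is standard mutation calculus and elementary order theory.
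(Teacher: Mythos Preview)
The survey paper does not actually prove this lemma; it merely states it with a citation to \cite{Fonarev2023}, so there is no in-paper argument to compare against. That said, your proposal is a correct and self-contained proof.

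Your strategy---choose a linear extension, invoke Bondal's left dual via iterated left mutations, then upgrade the resulting total-order vanishings to the opposite-poset vanishings by varying the linear extension---is sound. The key point, that the object $F_y$ characterized by $\RHom(E_x,F_y)=\kk\cdot\delta_{xy}$ is independent of the chosen linear extension, follows from your detection/uniqueness argument, and the order-theoretic fact you quote (for distinct $p,q$ with $q\not\preceq p$ there is a linear extension placing $p$ before $q$) is exactly what is needed. The case split is complete: if $q\not\preceq p$ and $p\neq q$ then either $p\prec q$ or $p,q$ are incomparable, and in both cases such an extension exists.

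Two small points you might tighten. First, your uniqueness sketch (``a standard bootstrap argument'') is a bit terse; the cleanest way is to observe that once the dual collection $F_{x_1},\ldots,F_{x_n}$ exists, every object $Z$ of the generated category admits a canonical filtration with factors $\RHom(E_{x_i},Z)\otimes F_{x_i}$, so the values $\RHom(E_{x_i},Z)$ determine $Z$ up to isomorphism. Second, you should verify that the shift convention matches: with the cone convention $\mathbb{L}_A B=\Cone(\RHom(A,B)\otimes A\to B)$ one indeed gets $\RHom(E_{x_j},F_{x_j})=\kk$ in degree~$0$, so $E^\circ_y=F_y[-|y|]$ lands in the correct degree; other sources build a shift into the mutation and the bookkeeping changes accordingly.
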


When $\CP$ is linearly ordered, the definitions of the graded left dual exceptional
collection and the left dual exceptional collection from~\cite{Bondal1990} agree.
\begin{remark}
   The usual definition of a graded poset simply requires existence of a grading
   function. That is, a function $\nu:\CP\to\ZZ$ such that $\nu(y)=\nu(x)$
   whenever $y$ covers $x$. For instance, the opposite poset $\CP^\circ$
   is graded if and only if $\CP^\circ$ is. If one wants to define the
   \emph{graded right dual} to a graded exceptional collection, then
   one has to take a grading function different from $|-|$.
   Since graded right duals do not appear in the present text, we leave
   the details to the reader.
\end{remark}

\subsection{Lefschetz decompositions}
In many cases the collections that have been constructed in the derived
categories of generalized Grassmannians are Lefschetz.
We begin with the notion of a Lefschetz semiorthogonal decomposition,
originally introduced by A.~Kuznetsov in the context of Homological Projective
Duality. For simplicity, fix a smooth projective variety $X$ with a very
ample line bundle $\CO(1)$.

\begin{definition}[\cite{Kuznetsov2007}]
   A \emph{Lefschetz decomposition} of $D(X)$ is a collection of full
   triangulated subcategories $D(X)\supset \CA_0\supset \CA_1\supset\cdots\supset\CA_{m-1}$
   such that there is a semiorthogonal decomposition
   \begin{equation*}
      D(X) = \langle \CA_0, \CA_1(1), \ldots, \CA_{m-1}(m-1) \rangle,
   \end{equation*}
   where $\CB(i)$ denotes the image of a subcategory $\CB\subset D(X)$
   under the autoequivalence given by the tensor product $-\otimes \CO(i)$.
   If, in addition, $\CA_0=\CA_1=\cdots=\CA_{m-1}$, then the decomposition
   is called \emph{rectangular}.
\end{definition}

Lefschetz decompositions are particularly nice. Imagine, one has found
a rectangular Lefschetz decomposition whose initial block $\CA_0$
is generated by a full exceptional collection. Then it extends to
a~full exceptional collection in the whole derived category:
one should simply add the appropriate twists of the~exceptional objects.
There are particularly interesting cases when one has a natural Lefschetz
decomposition that is not rectangular, thus the following definition.

\begin{definition}[\cite{Fonarev2013}]
   A \emph{Lefschetz basis} is an exceptional collection $(E_1,E_2,\ldots, E_n)$
   together with a~function $o:\{1,2,\ldots,n\}\to \ZZ_{> 0}$ called
   the \emph{support function} such that
   the collection of subcategories $\CA_i = \langle E_j \mid i < o(j) \rangle$
   forms a Lefschetz decomposition of $D(X)$.
   A \emph{Lefschetz exceptional collection} is a~Lefschetz basis
   whose support function is weakly decreasing.
\end{definition}

Remark that the definition of a Lefschetz basis immediately extends to
exceptional collections graded by posets.

Assume further that the variety $X$
is such that its canonical bundle $\omega_X\simeq \CO(-m)$ for some positive $m>0$.
Then it immediately follows from Serre duality that $m$ is the maximal
number of blocks in any Lefschetz decomposition of $D(X)$.

\begin{definition}[\cite{KuznetsovSmirnovGr2020}]
   Assume that $\omega_X\simeq \CO(-m)$. The \emph{residual category}
   of a Lefschetz decomposition $D(X)=\langle \CA_0, \CA_1(1), \ldots, \CA_{m-1}(m-1)\rangle$
   is defined as $\langle \CA_{m-1}, \CA_{m-1}(1), \ldots, \CA_{m-1}(m-1)\rangle^\perp$,
   which is the left orthogonal to its \emph{rectangular part} $\langle \CA_{m-1}, \CA_{m-1}(1), \ldots, \CA_{m-1}(m-1)\rangle$.
\end{definition}

\section{Projective Spaces and Classical Grassmannians}
\subsection{Projective spaces}
The problem of constructing a full exceptional collection consists essentially
of two parts. First, one should find an exceptional collection of appropriate
length. Indeed, if a triangulated category admits a full exceptional collection,
then its Grothendieck group $K_0$ is finitely generated and free. Since
the rank of $K_0$ can often be computed via geometric means, we know what
the length of a~full exceptional collection should be if there is one at all.
Once the objects are found (which seems to be an~art on its own), one needs
to check that they are exceptional and verify semiorthogonality. The latter
two part are essentially cohomological computations, and there are plenty
of tools developed specifically for these purposes.
Next, one needs to somehow show that the collection is full, and in each case
this seems to be a much harder problem.

Let us see how Beilinson dealt with the two tasks in~\cite{Beilinson1979}.
To align our notation with the more general case of classical Grassmannians,
let us fix a vector space $V$ over $\kk$ of dimension $n$. We follow
the anti-Grothendieck convention under which $\PP(V)$ parametrizes
one-dimensional subspaces in $V$. Then $\PP(V)$ is of dimension $(n-1)$,
and Theorem~\ref{thm:bei-1} states that the derived category $D(\PP(V))$
admits a full exceptional collection
\begin{equation}\label{eq:bei-ec}
\langle \CO, \CO(1), \ldots, \CO(n-1)\rangle.
\end{equation}
It is easy to check that the collection~\eqref{eq:bei-ec} is exceptional.
Indeed, a classical computation of cohomology of line bundles on
a projective space implies that
\begin{equation}
   \Ext^\bullet(\CO(j), \CO(i)) = H^\bullet(\PP(V), \CO(i-j)) =
   \begin{cases}
      S^{i-j}V^* & \text{if } 1\leq j\leq i\leq n-1, \\
      0 & \text{if } 1\leq i < j\leq n-1.
   \end{cases}
\end{equation}
We thus simultaneously check both exceptionality of the objects from~\eqref{eq:bei-ec}
and semiorthogonality.

Fullness was checked by Beilinson via a trick that is now called the \emph{resolution
of diagonal} argument.
For an~arbitrary smooth projective variety one may consider the diagram
\begin{equation}
\begin{tikzcd}
   & X \arrow[d, "\Delta"] \arrow[ddr, bend left, "id"] \arrow[ddl, bend right, "id"'] & \\
   & X\times X \arrow[dr, "q"] \arrow[dl, "p"'] & \\
   X & & X,
\end{tikzcd}
\end{equation}
where $p$ and $q$ are the projections on the first and second multiple,
respectively, and $\Delta$ denotes the diagonal embedding.
With the use of the projection formula, the identity functor from $D(X)$ to itself may be cleverly rewritten
as the composition
\begin{equation}\label{eq:pn-id}
   Id_{D(X)}\simeq q_*\!\left(p^*(-)\otimes \Delta_*\CO_X\right),
\end{equation}
where, unless mentioned otherwise, all the functors are derived.
Now, if one finds a good enough resolution for $\Delta_*\CO_X$ in
$D(X\times X)$, then one can deduce statements about generation in $D(X)$.

Let us return to the case $X=\PP(V)$. The structure sheaf of the diagonal
$\Delta_*\CO$ has a Koszul resolution of the form
\begin{equation}\label{eq:bei-koszul}
   0\to p^*\!\left(\Omega^{n-1}(n-1)\right)\otimes q^*\!\left(\CO(-n+1)\right)
   \to\cdots \to p^*\!\left(\Omega^{1}(1)\right)\otimes q^*\!\left(\CO(-1)\right)
   \to \CO\to \Delta_*\CO \to 0,
\end{equation}
where $\Omega^i = \Lambda^i\Omega^1$ is the locally free sheaf of differential
$i$-forms. From the projection formula and~\eqref{eq:pn-id}  we conclude that
every object $F\in D(\PP(V))$ belongs to the subcategory generated by
\begin{equation*}
   \begin{split}
   q_*\!\left(p^*(F)\otimes p^*\!\left(\Omega^{i}(i)\right)\otimes q^*\!\left(\CO(-i)\right)\right) & \simeq
   q_*\!\left(p^*\!\left(F\otimes \Omega^{i}(i)\right)\otimes q^*\!\left(\CO(-i)\right)\right) \\
   & \simeq q_*p^*\!\left(F\otimes \Omega^{i}(i)\right)\otimes \CO(-i) \\
   & \simeq R\Gamma(\PP(V), F\otimes \Omega^{i}(i))\otimes_{\kk}\CO(-i).
   \end{split}
\end{equation*}
We thus see that the objects $(\CO(-n+1), \ldots, \CO(-1), \CO)$ generate
$D(\PP(V))$. Using the (derived) duality anti-autoequivalence, we deduce that
the objects from~\eqref{eq:bei-ec} generate $D(\PP(V))$ as well.

Let us make a few remarks. First, on the projective space there is a very
special Euler short exact sequence
\begin{equation}\label{eq:pn-euler}
   0\to \Omega^1(1) \to V^* \to \CO(1)\to 0,
\end{equation}
where by $V^*$ we denote the trivial bundle with fibre $V^*$ (similarly, we will denote by $V$
the trivial bundle with fibre $V$). If we denote
by $\CL=\CO(-1)$ the tautological line bundle on $\PP(V)$, then we can
rewrite~\eqref{eq:pn-euler}~as
\begin{equation}\label{eq:pn-upeprp}
   0\to \CL^\perp \to V^* \to \CL^*\to 0,
\end{equation}
where $\CL^\perp \simeq (V/\CL)^*$.
The latter presentation will be of great importance for Grassmannians.
Second, the objects
\begin{equation}\label{eq:bei-dual}
   \left\langle\Omega^{n-1}(n-1), \ldots, \Omega^{2}(2), \Omega^{1}(1), \CO\right\rangle
   =
   \langle\Lambda^{n-1}\CL^\perp, \ldots, \Lambda^2\CL^\perp, \CL^\perp, \CO\rangle
\end{equation}
also form a full exceptional collection in $D(\PP(V))$ which
turns out to be left dual to the collection~\eqref{eq:bei-ec}
(we will see this later in a more general setting of classical Grassmannians).
It is not a coincidence that the terms of the resolution~\eqref{eq:bei-koszul}
are of the form $E^\circ_x\boxtimes (E_x)^*$, where $(-)^*$ denotes
the usual (derived) dual.

Unfortunately, the resolution of diagonal argument is rather hard to apply
to cases other than projective spaces and classical Grassmannians
(we will see how it works for the latter in a moment).
Another way of showing fullness of Beilinson's collection relies on a simple
lemma first proved by Dmitri~Orlov. Recall that a \emph{classical generator}
of a triangulated category $\CT$ is an object $E\in\CT$ such that
$\CT$ coincides with the smallest strictly full triangulated subcategory
of $\CT$ which contains $E$ and is closed under direct summands.
For instance, an exceptional collection $\langle E_1, E_2, \ldots, E_r\rangle$
is full if and only if $E=\oplus E_i$ is a classical generator of $\CT$.
For convenience, we formulate the lemma for smooth projective varieties.

\begin{theorem}[{\cite[Theorem~4]{Orlov2009}}]\label{lm:orlov}
   Let $X$ be a smooth projective variety of dimension $d$, and let $\CL$
   be a~very ample line bundle on $X$. For any integer $t$ the object
   $\CE=\bigoplus_{i=t}^{t+d}\CL^i$ is a classical generator of $D(X)$.
\end{theorem}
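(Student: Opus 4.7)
Since $-\otimes\CL^t$ is an autoequivalence of $D(X)$, it is enough to take $t = 0$ and show that the thick triangulated envelope $\langle \CO, \CL, \ldots, \CL^d\rangle$ is all of $D(X)$. The plan proceeds in two stages: first, a single Koszul resolution will show that this envelope already contains every integer twist $\CL^k$, $k\in\ZZ$; second, a standard generation result for ample line bundles completes the argument.

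For the first stage, very ampleness of $\CL$ embeds $X$ into $\PP^N$ with $N = h^0(X, \CL) - 1 \geq d$. A generic codimension-$(d{+}1)$ linear subspace of $\PP^N$ misses the $d$-dimensional $X$, so there exist $d+1$ sections $s_0, \ldots, s_d \in H^0(X, \CL)$ with empty common zero locus; they define a regular section of the rank-$(d{+}1)$ bundle $\CL^{\oplus(d+1)}$, whose Koszul complex
\begin{equation*}
   0 \to \CL^{-(d+1)} \to \CO^{d+1}\otimes\CL^{-d} \to \cdots \to \CO^{d+1}\otimes\CL^{-1} \to \CO_X \to 0
\end{equation*}
is exact. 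Viewed as a chain of distinguished triangles, any single term lies in the triangulated envelope of the other $d{+}1$; tensoring the complex with $\CL^{d+1}$ and with $\CL^{d}$ respectively then shows that both $\CL^{d+1}$ and $\CL^{-1}$ belong to $\langle \CO, \CL, \ldots, \CL^d\rangle$, and a simple induction extends this to every $\CL^k$, $k \in \ZZ$.

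For the second stage, I invoke the classical fact that on a smooth projective variety with ample $\CL$, the family $\{\CL^{-n}\}_{n \geq 0}$ classically generates $D(X)$. This rests on Serre's theorem, which implies that the right orthogonal to this family inside the unbounded quasi-coherent derived category $D_{\mathrm{qc}}(X)$ vanishes, together with the compact-generation formalism (Thomason--Trobaugh, Bondal--Van den Bergh): the thick envelope of a set of compact generators of $D_{\mathrm{qc}}(X)$ recovers its compact objects, which on a smooth projective $X$ coincide with $D^{\perf}(X) = D^b(\Coh X) = D(X)$. Combined with the first stage this yields $D(X) = \langle \CO, \CL, \ldots, \CL^d\rangle$. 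The principal technical obstacle is this second-stage generation result; by comparison the Koszul argument of the first stage, once the good sections are chosen, is elementary.
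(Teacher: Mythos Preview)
The survey does not prove this theorem; it is quoted with a reference to \cite{Orlov2009} and then used as a black box (for instance in the proof of Proposition~\ref{prop:full}). There is therefore no in-paper argument to compare your proposal against.

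Your argument is correct. One cosmetic slip: the multiplicities in the Koszul complex of a nowhere-vanishing section of $\CL^{\oplus(d+1)}$ are the binomial coefficients $\binom{d+1}{k}$, not all equal to $d+1$; this does not affect anything, since each term is still a direct sum of copies of a single line bundle $\CL^{-k}$. The two-stage structure --- first a Koszul relation among $d+2$ consecutive twists showing that the thick envelope of $\CO,\CL,\ldots,\CL^d$ already contains every $\CL^k$, and then the standard compact-generation fact that the powers of an ample line bundle classically generate $D(X)$ on a smooth projective variety --- is one of the well-known routes to this result and is in the same spirit as Orlov's original argument.
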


Let us try to use Theorem~\ref{lm:orlov} for Beilinson's exceptional collection~\eqref{eq:bei-ec}.
Putting $t=0$, we see that $D(\PP(V))$ is classically generated by
$\CO\oplus\CO(1)\oplus\cdots\oplus\CO(n)$, which is one twist more than
the sum of the~objects in~\eqref{eq:bei-ec}: $\CO\oplus\CO(1)\oplus\cdots\oplus\CO(n-1)$.
Thus, to show that Beilinson's collection is full it is enough to show that $\CO(n)$ belongs to the subcategory
generated by $\CO, \CO(1), \ldots, \CO(n-1)$. Consider the regular
nowhere vanishing section $s$ of $V(1)$ which corresponds to the identity
map in
\[
\Gamma(\PP(V), V(1))\simeq V\otimes V^* \simeq \Hom(V, V).
\]
Since it is nowhere vanishing, the Koszul resolution of its zero locus
is simply an exact complex of the form
\begin{equation}\label{eq:pn-koszul}   
   0\to \Lambda^{n}V^*(-n)\to \cdots \to \Lambda^2V^*(-2)\to V^*(-1)\to \CO\to 0.
\end{equation}
Twisting the complex~\eqref{eq:pn-koszul} by $\CO(n)$, we get a complex of the form
\begin{equation}\label{eq:pn-koszul-tw}   
   0\to \Lambda^{n}V^*\to \cdots \to \Lambda^2V^*(n-2)\to V^*(n-1)\to \CO(n)\to 0.
\end{equation}
Treating~\eqref{eq:pn-koszul-tw} as a resolution for it rightmost term $\CO(n)$,
we see that $\CO(n)\in \langle \CO, \CO(1), \ldots, \CO(n-1) \rangle$.
Finally, $\CO\oplus\CO(1)\oplus\cdots\oplus\CO(n)\in \langle \CO, \CO(1), \ldots, \CO(n-1) \rangle$,
so by Theorem~\ref{lm:orlov} Beilinson's collection is full.

The argument presented above can be reformulated as the following proposition.

\begin{proposition}\label{prop:full}
   Let $X$ be a smooth projective variety, let $\CL$ be a very (anti-)ample line bundle on $X$,
   and let $\langle E_1, E_2,\ldots, E_r \rangle\subset D(X)$
   be an exceptional collection such that $\CO\in \langle E_1, E_2,\ldots, E_r \rangle$.
   If for all $i=1,\ldots, r$ the object $E_i\otimes \CL$ belongs to
   $\langle E_1, E_2,\ldots, E_r \rangle$, then the collection is full.
\end{proposition}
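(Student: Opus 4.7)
The plan is to mimic the fullness argument just given for Beilinson's collection on $\PP(V)$, by combining iterated twisting with Orlov's classical generator theorem (Theorem~\ref{lm:orlov}). Write $\CA = \langle E_1, E_2, \ldots, E_r\rangle$ for the strictly full triangulated subcategory generated by the collection; the goal is to show $\CA = D(X)$.

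First I would establish that $\CA$ is stable under the autoequivalence $-\otimes \CL$. For this, consider
\begin{equation*}
   \CA' = \{ F \in D(X) \mid F\otimes \CL \in \CA \}.
\end{equation*}
Since $-\otimes \CL$ is an exact autoequivalence and $\CA$ is strictly full and triangulated, $\CA'$ is again strictly full and triangulated. The hypothesis $E_i \otimes \CL \in \CA$ says $E_i \in \CA'$ for every $i$, hence $\CA \subseteq \CA'$. Iterating, $F \otimes \CL^k \in \CA$ whenever $F \in \CA$ and $k \geq 0$. Applying this to $F = \CO$, which lies in $\CA$ by hypothesis, gives $\CL^k \in \CA$ for all $0 \leq k \leq d$, where $d = \dim X$.

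Next I would apply Theorem~\ref{lm:orlov}. If $\CL$ is very ample, taking $t = 0$ shows that $\bigoplus_{k=0}^{d} \CL^k$ is a classical generator of $D(X)$; if instead $\CL$ is very anti-ample, one applies the theorem to the very ample bundle $\CL^{-1}$ with $t = -d$, and the same direct sum $\bigoplus_{k=0}^{d} \CL^k$ reappears as a classical generator. In either case $\CA$ contains a classical generator of $D(X)$.

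To conclude, I would invoke the Bondal--Kapranov results cited earlier in the survey: the subcategory $\CA$ generated by a finite exceptional collection in $D(X)$ is admissible, and in particular closed under direct summands. Combined with the previous step, this forces $\CA = D(X)$, so the collection is full. The only slightly subtle ingredient is this last step, where admissibility is needed to upgrade classical generation into an equality of subcategories; everything else is a formal manipulation of triangulated subcategories that cleanly separates the geometric input (Orlov's theorem) from the purely categorical bookkeeping (stability of $\CA$ under $-\otimes \CL$).
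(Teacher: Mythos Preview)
Your proof is correct and follows essentially the same approach as the paper's: show by induction that $\CA$ contains all nonnegative powers of $\CL$, then invoke Orlov's classical generator theorem. You are simply more explicit about two points the paper leaves implicit --- the formal argument that $\CA$ is stable under $-\otimes\CL$, and the need for $\CA$ to be closed under direct summands (via admissibility) in order to pass from containing a classical generator to equaling $D(X)$.
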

\begin{proof}
   By induction, the category $\CT=\langle E_1, E_2,\ldots, E_r\rangle$ contains $\CL^i$ for all $i\geq 0$.
   By Theorem~\ref{lm:orlov}, $\CT$ contains the classical generator $\oplus_{i=0}^{\dim X}\CL$.
   Thus, $\CT=D(X)$.
\end{proof}

To conclude our discussion of projective spaces, we turn to the relative case established by
D.~Orlov.

\begin{theorem}[\cite{Orlov1992}]\label{thm:orlov-proj}
   Let $X$ be a smooth projective variety, and let $\CV$ be a vector bundle on $X$ of rank~$n$.
   Consider the projectivization $\pi:\PP_X(\CV)\to X$, and denote by $\CO_{\CV}(1)$ the relative dual tautological line bundle.
   There is a semiorthogonal decomposition
   \[
      D(\PP_X(\CV)) = \langle D(X), D(X)(1), \ldots, D(X)(n-1) \rangle,
   \]
   where $D(X)(i)$ denotes the subcategory $\pi^*D(X)\otimes \CO_{\CV}(i)$. In particular,
   if $D(X)$ admits a full exceptional collection, then so does $D(\PP_X(\CV))$.
\end{theorem}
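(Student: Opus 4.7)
The plan is to verify the three ingredients of the claimed semiorthogonal decomposition: that each functor $\pi^*(-) \otimes \CO_{\CV}(i) \colon D(X) \to D(\PP_X(\CV))$ is fully faithful, that its image is pairwise semiorthogonal with the others in the prescribed order, and that together they generate $D(\PP_X(\CV))$. The cohomological input for the first two ingredients is the standard computation on projective bundles: $R\pi_*\CO_{\CV} = \CO_X$ and $R\pi_*\CO_{\CV}(k) = 0$ for all $-n+1 \leq k \leq -1$.

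Full-faithfulness follows from the projection formula and adjunction: $\RHom(\pi^*A, \pi^*B) \simeq \RHom(A, B \otimes R\pi_*\CO_{\CV}) \simeq \RHom(A, B)$. Twisting by $\CO_{\CV}(i)$ is an autoequivalence and does not affect this. For semiorthogonality, if $0 \leq i < j \leq n-1$ and $A, B \in D(X)$, the same projection formula gives
\[
\RHom\!\left(\pi^*A \otimes \CO_{\CV}(j),\, \pi^*B \otimes \CO_{\CV}(i)\right) \simeq \RHom\!\left(A,\, B \otimes R\pi_*\CO_{\CV}(i-j)\right),
\]
and since $i-j \in [-n+1,-1]$, the right-hand side vanishes.

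The main obstacle is generation, which I would handle by a relative version of the resolution of the diagonal argument recalled in the excerpt. Let $\CL \hookrightarrow \pi^*\CV$ denote the relative tautological subbundle on $\PP_X(\CV)$. On the fibre product $\PP_X(\CV) \times_X \PP_X(\CV)$ with projections $p_1, p_2$, composing $p_1^*\CL$ with the quotient $\pi^*\CV \twoheadrightarrow \pi^*\CV/\CL$ pulled back along $p_2$ gives a global section of the rank $(n-1)$ bundle $p_1^*\CO_{\CV}(1) \otimes p_2^*(\pi^*\CV/\CL)$. Restricting to a single fibre $\PP^{n-1} \times \PP^{n-1}$ recovers Beilinson's section, so this section is regular of codimension $n-1$ and cuts out precisely the relative diagonal. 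The associated Koszul complex resolves $\Delta_*\CO$ by sheaves of the form $p_1^*\CO_{\CV}(-j) \otimes p_2^*\Lambda^j(\pi^*\CV/\CL)^*$ for $j = 0, \ldots, n-1$. Treating $\Delta_*\CO$ as the Fourier--Mukai kernel of the identity on $D(\PP_X(\CV))$ and convolving with this resolution, the projection formula presents any $F \in D(\PP_X(\CV))$ as an iterated extension of objects of the form $\pi^*G_j \otimes \CO_{\CV}(-j)$ with $G_j \in D(X)$; this shows generation by $\langle D(X)(-n+1), \ldots, D(X)(-1), D(X)\rangle$, which after twisting by $\CO_{\CV}(n-1)$ is the stated decomposition.

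The deduction of a full exceptional collection is then automatic: if $D(X) = \langle E_1, \ldots, E_r\rangle$, the concatenation $\langle \pi^*E_1, \ldots, \pi^*E_r, \pi^*E_1(1), \ldots, \pi^*E_r(n-1)\rangle$ is a full exceptional collection in $D(\PP_X(\CV))$, where fullness is inherited from the semiorthogonal decomposition and exceptionality/semiorthogonality within each block follow from full-faithfulness of $\pi^*(-)\otimes\CO_{\CV}(i)$. The only genuinely delicate point is verifying regularity of the relative diagonal section, and this reduces cleanly to the absolute case fibrewise.
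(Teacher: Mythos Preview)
The paper does not give its own proof of this theorem; it is stated as a cited result from Orlov's 1992 paper and used as a black box. Your argument is correct and is precisely the standard proof: the relative version of the Beilinson resolution-of-diagonal that the paper spells out in detail for the absolute case $\PP^n$. One small remark: when you say ``treating $\Delta_*\CO$ as the Fourier--Mukai kernel of the identity'' while working on $\PP_X(\CV)\times_X\PP_X(\CV)$ rather than on the absolute product, it is worth noting explicitly that the relative diagonal, viewed as a kernel on the fibre product with the relative pull--push, still represents the identity functor (or equivalently that pushing your Koszul resolution forward along the closed immersion into $\PP_X(\CV)\times\PP_X(\CV)$ gives a resolution of the absolute diagonal); this is straightforward but should be said.
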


Since partial flag varieties of the form $\Fl(1, \ldots, i; V)$ and $\Fl(j, \ldots n-1; V)$
are iterated projectivizations of vector bundles, we deduce the following.

\begin{corollary}\label{cor:fl-1}
   Let $V$ be a vector space for dimension $n$. For all $1\leq i,j\leq n-1$ the derived categories
   of the partial flag varieties $\Fl(1, \ldots, i; V)$ and $\Fl(j, \ldots n-1; V)$ admit
   full exceptional collections consisting of line bundles.
\end{corollary}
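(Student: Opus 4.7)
The plan is to realise each partial flag variety as an iterated projective bundle of vector bundles and to apply Orlov's Theorem~\ref{thm:orlov-proj} inductively along the tower, with Beilinson's Theorem~\ref{thm:bei-1} serving as the base case.

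For $\Fl(1,\ldots,i;V)$, I would use the forgetful morphism
\[
   \pi_k:\Fl(1,\ldots,k;V)\to\Fl(1,\ldots,k-1;V),\qquad (L_1\subset\cdots\subset L_k)\mapsto (L_1\subset\cdots\subset L_{k-1}).
\]
The fibre over $(L_1\subset\cdots\subset L_{k-1})$ is the space of lines in $V/L_{k-1}$, so $\pi_k$ identifies $\Fl(1,\ldots,k;V)$ with the projectivisation of the tautological quotient bundle $V/\CL_{k-1}$ on $\Fl(1,\ldots,k-1;V)$, which is a vector bundle of rank $n-k+1$. Starting from $\Fl(1;V)=\PP(V)$, where Theorem~\ref{thm:bei-1} furnishes a full exceptional collection of line bundles, I would iterate: if $D(\Fl(1,\ldots,k-1;V))$ admits such a collection $\{\CM_\alpha\}$, then Theorem~\ref{thm:orlov-proj} produces a full exceptional collection on $\Fl(1,\ldots,k;V)$ of the form $\{\pi_k^*\CM_\alpha\otimes\CO(s)\}_{0\leq s\leq n-k}$, and these are again line bundles since the relative hyperplane class is a line bundle.

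For $\Fl(j,\ldots,n-1;V)$ I would run the dual tower. The forgetful morphism $\Fl(j,\ldots,n-1;V)\to\Fl(j+1,\ldots,n-1;V)$ has fibre over $(L_{j+1}\subset\cdots\subset L_{n-1})$ equal to the space of hyperplanes in $L_{j+1}$, which identifies the source with the projectivisation of the dual $\CL_{j+1}^*$ of the tautological subbundle on the base, a vector bundle of rank $j+1$. The base of this tower is $\Fl(n-1;V)\simeq\PP(V^*)$, again a projective space to which Theorem~\ref{thm:bei-1} applies, so the same inductive procedure yields a full exceptional collection of line bundles.

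There is no substantive obstacle here: Theorems~\ref{thm:bei-1} and~\ref{thm:orlov-proj} do all of the heavy lifting, and the only point requiring verification is that each forgetful morphism in these two families is globally a projectivisation of a vector bundle rather than a more general fibration. This follows from the existence of the tautological sub- and quotient bundles on the relevant partial flag variety, together with the elementary observation that a tensor product of the pullback of a line bundle with a power of the relative $\CO(1)$ is again a line bundle.
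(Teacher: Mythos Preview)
Your argument is correct and is precisely the approach the paper takes: it simply remarks that $\Fl(1,\ldots,i;V)$ and $\Fl(j,\ldots,n-1;V)$ are iterated projectivizations of vector bundles and then deduces the corollary from Theorem~\ref{thm:orlov-proj}, without spelling out the towers. Your version fills in those details accurately.
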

Since more general flag varieties are iterated Grassmannian bundles over Grassmannians,
we will need a little more to show that the derived categories of all of them admit
full exceptional collections. To prove the latter will need
the following result concerning fibrewise full exceptional collections.

\begin{theorem}[\cite{Samokhin2007}]\label{thm:sam}
   Let $\pi: X\to S$ be a flat proper morphism of smooth schemes $X$ and $S$ over $\kk$.
   Assume that the objects $E_1, E_2, \ldots, E_r\in D(X)$ are such that for any
   closed point $s\in S$ restrictions to the fibre $X_s$ of $E_i$ form a full exceptional
   collection in $D(X_s)$. Then there is a semiorthogonal decomposition
   \[
      D(X) = \langle \pi^*D(S)\otimes E_1, \pi^*D(S) \otimes E_2, \ldots, \pi^*D(S)\otimes E_r \rangle.
   \]
   In particular, if $D(S)$ admits a full exceptional collection, then so does $D(X)$.
\end{theorem}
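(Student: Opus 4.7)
The plan is the standard three-step argument: set up derived base change, verify semiorthogonality together with full faithfulness of each inclusion $D(S)\hookrightarrow D(X)$, and then deduce generation fibrewise. Since $\pi$ is flat and proper with smooth source and target, derived base change applies freely: for any $F\in D(X)$ and any closed point $s\in S$ with inclusion $\iota_s:X_s\hookrightarrow X$, one has a canonical isomorphism $L\iota_s^*(\pi_*F)\simeq \RGamma(X_s, F|_{X_s})$. This is the workhorse of the proof, applied to complexes of the form $\RCHom(E_i,E_j)$.

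For semiorthogonality, take $i>j$ and arbitrary $A,B\in D(S)$. Adjunction together with the projection formula gives
\[
   \Hom_X(\pi^*A\otimes E_i,\, \pi^*B\otimes E_j[k])\simeq \Hom_S\!\left(A,\, B\otimes \pi_*\RCHom(E_i, E_j)[k]\right).
\]
By base change the fibre of $\pi_*\RCHom(E_i,E_j)$ at $s$ computes $\RHom_{X_s}(E_i|_{X_s},E_j|_{X_s})$, which vanishes by fibrewise semiorthogonality. Since $\pi_*\RCHom(E_i,E_j)$ is perfect, a Nakayama-type argument forces it to vanish, and the displayed $\Hom$ is zero. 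Taking $i=j$ in the same computation shows that $\pi_*\RCHom(E_i,E_i)$ is perfect with one-dimensional fibre concentrated in degree zero; the canonical map $\CO_S\to \pi_*\RCHom(E_i,E_i)$ induced by $\id_{E_i}$ is a fibrewise isomorphism, hence an isomorphism, which proves that $\pi^*(-)\otimes E_i:D(S)\to D(X)$ is fully faithful.

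It remains to prove generation, which I regard as the main obstacle. Let $F\in D(X)$ lie in the right orthogonal to all subcategories $\pi^*D(S)\otimes E_i$. The adjunction used above forces $\pi_*\RCHom(E_i, F)=0$ for every $i$, and applying base change once more yields $\RHom_{X_s}(E_i|_{X_s}, F|_{X_s})=0$ for each $i$ and every closed point $s\in S$. Since $E_1|_{X_s},\ldots,E_r|_{X_s}$ is a \emph{full} exceptional collection on $X_s$, this forces $F|_{X_s}=0$ for all closed $s\in S$. The subtle point is then the passage from fibrewise vanishing to $F=0$: one invokes that $F$ is cohomologically bounded, $X$ is Noetherian, and the support of each cohomology sheaf of $F$ is a closed subset of $X$ projecting to $S$; vanishing on all fibres over closed points together with flatness of $\pi$ forces every such support to be empty. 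Thus the left orthogonal to $\langle \pi^*D(S)\otimes E_1,\ldots,\pi^*D(S)\otimes E_r\rangle$ is trivial, and the semiorthogonal decomposition follows.
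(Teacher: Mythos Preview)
The paper does not prove Theorem~\ref{thm:sam}; it is quoted without proof from \cite{Samokhin2007}, so there is no in-paper argument to compare against. Your proof is the standard one (essentially the argument in Samokhin's paper, and in Kuznetsov's base-change formalism) and is correct in substance.

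Two small slips are worth fixing. First, in your base-change statement you write $L\iota_s^*(\pi_*F)$ with $\iota_s:X_s\hookrightarrow X$, but $\pi_*F$ lives on $S$; you mean $Lj_s^*R\pi_*F\simeq \RGamma(X_s,L\iota_s^*F)$ with $j_s:\{s\}\hookrightarrow S$ the inclusion of the point and $\iota_s$ the fibre inclusion. Second, in the last sentence you say ``the \emph{left} orthogonal \ldots\ is trivial'' after having computed the \emph{right} orthogonal; what you actually showed is $\Hom(\pi^*A\otimes E_i,F[\bullet])=0$ for all $A$ and $i$, i.e.\ $F\in\langle\cdots\rangle^{\perp}$. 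Since each $\pi^*(-)\otimes E_i$ has the right adjoint $\pi_*\RCHom(E_i,-)$, the images are right admissible, and triviality of the right orthogonal together with semiorthogonality yields the decomposition. Also, in the fibrewise-vanishing step the relevant input is Nakayama (top cohomology sheaf restricted to the fibre through a support point is nonzero), not flatness of $\pi$; flatness was already used for base change.
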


Theorem~\ref{thm:sam} has some immediate consequences. The most obvious one is that
given smooth projective varieties $X$ and $Y$ whose derived categories
admit full exceptional collections $D(X)=\langle E_1, E_2, \ldots, E_r\rangle$ and
$D(Y) = \langle F_1, F_2, \ldots, F_s\rangle$, the bounded derived category of their
product admits a full exceptional collection
$D(X\times Y) = \langle E_i\boxtimes E_j \mid i=1,\ldots,r;\ j=1,\ldots, s\rangle$.
Remark that the latter collection is most naturally indexed by product of the partially ordered sets
$\{1,\ldots, r\}\times \{1,\ldots, s\}$.

\subsection{Classical Grassmannians} Beilinson's theorem was generalized to classical Grassmannians
by Mikhail~Kapranov in his 1984 paper~\cite{Kapranov1984}. We begin with a general setup.
As before, let $V$ be a vector space of dimension $n$ over an algebraically closed field $\kk$
of characteristic zero
(for results concerning arbitrary characteristic
we refer the reader to the work~\cite{Buchweitz2015, Efimov2017}).
Fix an integer $1\leq k \leq n-1$ and consider the Grassmannian $\Gr(k, V)$ of $k$-dimensional
subspaces in $V$. It comes with a rank $k$ tautological subbundle $\CU$. One has a short exact sequence
of vector bundles
\begin{equation}\label{eq:gr-ses}
   0\to \CU^\perp\! \to V^*\!\to \CU^*\to 0,
\end{equation}
which in the case $k=1$ specializes to~\eqref{eq:pn-upeprp}.

Denote by $\YD_{h, w}$ the set of Young diagrams of height at most $h$ and width
at most $w$, which can be identified with the set of sequences
\[
   \YD_{h, w} = \{ \lambda \in \ZZ^h \mid w \geq \lambda_1\geq \lambda_2\geq \cdots \geq \lambda_h\geq 0 \}.
\]
The set $\YD_{h,w}$ comes with a natural partial order given by containment: for $\lambda,\mu \in \YD_{h,w}$
put
\[
\lambda \succeq \mu \quad \text{if}\quad \lambda_i \geq \mu_i \text{ for all } i=1,\ldots, h.
\]
Simultaneously, there is a complete lexicographical order on $\YD_{h,w}$ induced by
the corresponding order on~$\ZZ^h$, which we denote by $\geq$. For a Young diagram $\lambda$
we denote by $|\lambda|=\sum\lambda_i$ its size, and by $\lambda^T$ its transpose.
The \emph{width} of $\lambda$ is defined as $\ydw(\lambda) = \lambda_1$, and the \emph{height}
of $\lambda$ is equal to $\ydh(\lambda) = \ydw(\lambda^T)$.

Given a vector bundle $\CE$ and a Young diagram $\lambda$, we denote by $\Sigma^\lambda\CE$
the result of the application of the corresponding Schur functor. We follow the convention
under which $\Sigma^{(t)}=S^t$ is the symmetric power functor. We are ready to formulate
Kapranov' result. In the following theorem we identify the posets $\YD_{k,n-k}$ and
$\YD_{n-k,k}$ via transposition of diagrams.

\begin{theorem}[\cite{Kapranov1984}]\label{thm:kapranov}
   The bounded derived category of $\Gr(k, V)$ admits a full exceptional collection
   indexed by the poset $\YD_{k, n-k}$:
   \begin{equation}\label{eq:gr-ec}
      D(\Gr(k, V)) = \left\langle \Sigma^\lambda\CU^* \mid \lambda\in\YD_{k, n-k} \right\rangle.
   \end{equation}
   Its graded left dual is given by
   \begin{equation}\label{eq:gr-ec-dual}
      D(\Gr(k, V)) = \left\langle \Sigma^{\mu}\CU^\perp \mid \mu\in\YD_{n-k, k} \right\rangle.
   \end{equation}
   That is, for $\lambda\in\YD_{k,n-k}$ and $\mu\in\YD_{n-k,k}$ one has
   \begin{equation*}
      \Ext^\bullet(\Sigma^\lambda\CU^*, \Sigma^\mu\CU^\perp) = \begin{cases}
         \kk[-|\lambda|] & \text{if }\lambda = \mu^T, \\
         0 & \text{otherwise}.
      \end{cases}
   \end{equation*}
\end{theorem}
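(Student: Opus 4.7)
The plan is to establish the theorem in three parts: (i) exceptionality and semiorthogonality of $\{\Sigma^\lambda\CU^*\}$ indexed by the containment poset; (ii) fullness, via a Beilinson-style resolution of the diagonal; and (iii) verification of the duality pairing, which together with the uniqueness clause of Lemma~2.5 of~\cite{Fonarev2023} identifies $\{\Sigma^\mu\CU^\perp\}$ as the graded left dual. All three steps reduce ultimately to Bott's theorem (Borel-Weil-Bott) on $\GL_n/P$ combined with the standard combinatorics of Cauchy, Littlewood-Richardson, and Pieri.

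For (i), I would write
\[
\Ext^\bullet(\Sigma^\mu\CU^*, \Sigma^\lambda\CU^*) = H^\bullet\!\bigl(\Gr(k,V),\, \Sigma^\mu\CU\otimes \Sigma^\lambda\CU^*\bigr),
\]
decompose the homogeneous bundle on the right into irreducible $\GL_k$-equivariant summands of the form $\Sigma^\nu\CU^*\otimes(\det\CU)^s$ using Littlewood-Richardson, and apply Bott termwise. A direct weight check shows that every such summand either has vanishing cohomology or contributes only when $\mu\subseteq\lambda$, with exactly a one-dimensional $H^0$ when $\mu=\lambda$. This simultaneously yields the exceptionality of each $\Sigma^\lambda\CU^*$ and the poset-indexed semiorthogonality.

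For (ii), mimic Beilinson's resolution of the diagonal. On $\Gr(k,V)\times\Gr(k,V)$, the identity in $\End V$ produces a canonical global section of $\CU_1^*\boxtimes(V/\CU_2)$ whose zero locus is the diagonal $\Delta$ (since $\CU_1\subset\CU_2$ forces equality by dimension), of the expected codimension $k(n-k)$. The associated Koszul complex is therefore a locally free resolution of $\Delta_*\CO$, and by the Cauchy formula together with $(V/\CU)^*\simeq \CU^\perp$ its terms decompose as
\[
\Lambda^t\bigl(\CU_1\boxtimes(V/\CU_2)^*\bigr) = \bigoplus_{\lambda\in\YD_{k,n-k},\,|\lambda|=t} \Sigma^\lambda\CU_1\boxtimes \Sigma^{\lambda^T}\CU_2^\perp.
\]
Convolving an arbitrary $F\in D(\Gr(k,V))$ against this resolution via $F\simeq q_*(p^*F\otimes \Delta_*\CO)$ presents $F$ as an iterated extension of the bundles $\Sigma^{\lambda^T}\CU^\perp$ with coefficients the complexes $\RGamma(\Gr(k,V),\, F\otimes \Sigma^\lambda\CU)$, proving fullness of $\{\Sigma^\mu\CU^\perp\}_{\mu\in\YD_{n-k,k}}$. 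Since $|\YD_{k,n-k}|=\binom{n}{k}=\rk K_0(\Gr(k,V))$, combining this with (i) forces both collections to be full.

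For (iii), the bundle $\Sigma^\lambda\CU\otimes \Sigma^\mu\CU^\perp$ is already an irreducible $P$-equivariant bundle, so its cohomology is computed by Borel-Weil-Bott in a single step. A direct analysis of the $\rho$-shift of its Levi highest weight shows that this shift is regular precisely when $\lambda=\mu^T$, in which case the unique Weyl element carrying the shifted weight to the dominant chamber has length $|\lambda|$ and Bott yields $\kk[-|\lambda|]$; in every other case the shift is singular, so the cohomology vanishes. The principal technical obstacle throughout is this Borel-Weil-Bott bookkeeping in (i) and (iii): one must track the combinatorics of $\rho$-shifts and Weyl orbits carefully to identify which summands contribute, though the difficulty is concrete combinatorics rather than any conceptual gap.
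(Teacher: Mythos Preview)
Your approach is essentially the paper's: Borel--Bott--Weil plus Littlewood--Richardson for (i), Kapranov's Koszul resolution of the diagonal with the Cauchy decomposition for (ii), and a direct BWB computation on the irreducible bundle $\Sigma^\lambda\CU\otimes\Sigma^\mu\CU^\perp$ for (iii). The paper phrases the section as one of $(V/\CU)\boxtimes\CU^*$ rather than your $\CU^*\boxtimes(V/\CU)$, but that is just the two factors swapped.

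One point to tighten: the sentence ``Since $|\YD_{k,n-k}|=\binom{n}{k}=\rk K_0$, combining this with (i) forces both collections to be full'' is not a valid deduction on its own---an exceptional collection of length $\rk K_0$ need not be full (phantom categories). You have two immediate fixes already in hand. Either run the convolution the other way (push forward along the other projection, or equivalently apply derived duality as the paper does for $\PP^n$) to see directly that the $\Sigma^\lambda\CU$, hence the $\Sigma^\lambda\CU^*$, generate; or observe that your computation (iii), which is independent of fullness, together with Lemma~2.5 of \cite{Fonarev2023} shows $\langle\Sigma^\mu\CU^\perp\rangle=\langle\Sigma^\lambda\CU^*\rangle$, so fullness of one gives fullness of the other. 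Either route closes the gap with no new ingredients.
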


When $k=1$, Theorem~\ref{thm:kapranov} produces Beilinson's exceptional
collection on $\PP(V)=\Gr(1, V)$. Indeed, one has $\CO(1) = \CU^*$, $\Omega^1(1)\simeq \CU^\perp$,
and $\YD_{1,n-1} = \{(i) \mid i=0,\ldots, n-1\}$.
For each $0\leq i\leq n-1$ there are isomorphisms $\Sigma^{(i)}\CO(1) = S^i\CO(1)\simeq \CO(i)$
and $\Sigma^{(i)^T}\CU^\perp = \Lambda^i\left(\Omega^1(1)\right)\simeq \Omega^i(i)$.
Thus, Theorem~\ref{thm:kapranov} is a~generalization of Theorem~\ref{thm:bei-1}.

The original proof of Theorem~\ref{thm:kapranov} is analogous to Beilinson's
proof of Theorem~\ref{thm:bei-1}. First, one needs to check that the objects from~\eqref{eq:gr-ec}
are exceptional and semiorthogonal. Since all of them are irreducible equivariant vector bundles,
one can apply the celebrated Borel--Bott--Weil theorem together with the Littlewood--Richardson rule.
In order to show fullness, Kapranov applies a resolution of diagonal argument. Consider the~diagram
\begin{equation*}
   \begin{tikzcd}
      & \Gr(k, V)\times \Gr(k, V) \arrow[dr, "q"] \arrow[dl, "p"'] & \\
      \Gr(k, V) & & \Gr(k, V).
   \end{tikzcd}
\end{equation*}
The composition of natural morphisms
\[
   p^*\CU^\perp \to V^* \to q^*\CU^*
\]
vanishes precisely along the diagonal, which can be described as the vanishing locus
of the corresponding section $s\in\Gamma(\Gr(k, V)\times \Gr(k, V),\, (V/\CU)\boxtimes \CU^*)$,
where $\CE\boxtimes\CF = p^*\CE\otimes q^*\CF$ and $(V/\CU)\simeq (\CU^\perp)^*$.
The Koszul resolution of $\Delta_*\CO$ has terms of the form
\[
   \Lambda^i\!\left(\CU^\perp\!\boxtimes\CU\right) \simeq \bigoplus_{|\lambda|=i} \Sigma^{\lambda^T}\!\CU^\perp\boxtimes\Sigma^\lambda\CU,
\]
where the isomorphism follows from~\cite[Corollary~2.3.3]{Weyman2003}.
Remark that the product
$\Sigma^{\lambda^T}\!\CU^\perp\boxtimes\Sigma^\lambda\CU$ is nonzero
if and only if $\lambda\in\YD_{k, n-k}$ since $\rk(\CU) = k$ and $\rk(\,\CU^\perp) = n-k$.
The rest of the argument goes just as the one we presented for projective spaces.

The exceptional collection~\eqref{eq:gr-ec} is defined in an obvious way in the relative setting.
Applying Theorem~\ref{thm:sam}, we deduce the following.

\begin{theorem}\label{thm:gr-rel}
   Let $\CV$ be a vector bundle of rank $n$ on a smooth projective variety $X$, and let $1\leq k\leq n-1$
   be an integer. Consider the relative Grassmannian bundle
   $\pi:\Gr_X(k, \CV)\to X$, and let $\CU\subset \pi^*\CV$ denote the relative tautological bundle.
   Then there is a (graded) semiorthogonal decomposition of the form
   \[
      D(\Gr_X(k, \CV)) = \left\langle \pi^* D(X)\otimes \Sigma^\lambda\CU^* \mid \lambda\in\YD_{k, n-k} \right\rangle.
   \]
   In particular, if $D(X)$ has a full exceptional collection (consisting of vector bundles),
   then $D(\Gr_X(k, \CV))$ also has a full exceptional collection (consisting of vector bundles).
\end{theorem}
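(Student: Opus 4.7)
The plan is to deduce the theorem directly from Samokhin's criterion (Theorem~\ref{thm:sam}) applied to the relative Grassmannian bundle, using Kapranov's absolute description (Theorem~\ref{thm:kapranov}) to supply the fibrewise input.

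First I would observe that $\pi:\Gr_X(k, \CV)\to X$ is smooth and proper, hence flat, being a Zariski-locally trivial fibration with fibre $\Gr(k, n)$. The relative tautological subbundle $\CU\subset \pi^*\CV$ is globally defined, so for each $\lambda\in\YD_{k, n-k}$ the Schur functor produces a well-defined vector bundle $\Sigma^\lambda\CU^*$ on $\Gr_X(k, \CV)$. To fit the hypothesis of Theorem~\ref{thm:sam}, which phrases things for linearly ordered collections, I would pick any linear refinement of the containment order on $\YD_{k,n-k}$, for instance by sorting by $|\lambda|$ and breaking ties lexicographically.

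For any closed point $x\in X$ with fibre $V = \CV_x$, the restriction of $\CU$ to $\Gr_X(k, \CV)_x = \Gr(k, V)$ is by construction the absolute tautological subbundle of $V$ on $\Gr(k, V)$. Since Schur functors commute with pullback, $(\Sigma^\lambda\CU^*)|_{\Gr(k, V)}$ coincides with Kapranov's absolute bundle, and Theorem~\ref{thm:kapranov} asserts that these form a full exceptional collection on the fibre. Invoking Theorem~\ref{thm:sam} then delivers the semiorthogonal decomposition
\[
D(\Gr_X(k, \CV)) = \left\langle \pi^*D(X)\otimes \Sigma^\lambda\CU^* \mid \lambda\in\YD_{k, n-k}\right\rangle.
\]

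For the final sentence, I would substitute a full exceptional collection $D(X) = \langle F_1, \ldots, F_s\rangle$ of vector bundles into each block: the functor $F \mapsto \pi^*F\otimes\Sigma^\lambda\CU^*$ is fully faithful (being the composition of the fully faithful $\pi^*$ with an autoequivalence), so each block inherits the exceptional collection $\pi^*F_1\otimes\Sigma^\lambda\CU^*, \ldots, \pi^*F_s\otimes\Sigma^\lambda\CU^*$, whose terms are visibly vector bundles. Concatenating over the ordered index set $\YD_{k,n-k}$ gives the desired full exceptional collection of vector bundles on $\Gr_X(k, \CV)$. There is no real obstacle here: both input theorems are cited as black boxes, and the only substantive verification is the compatibility of $\CU$ with fibrewise restriction, which holds by the construction of the relative tautological bundle.
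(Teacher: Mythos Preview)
Your proof is correct and follows exactly the route the paper takes: the paper simply remarks that Kapranov's collection is defined in the obvious way in the relative setting and then invokes Theorem~\ref{thm:sam}, which is precisely what you spell out in detail. The only addition you make is the explicit verification of flatness and fibrewise restriction, plus the bookkeeping for the final sentence, none of which differs in substance from the paper's one-line deduction.
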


\begin{corollary}
   Let $X$ be a smooth projective variety, which can be obtained as an iterated Grassmannian
   bundle over a smooth projective variety $S$ such that $D(S)$ admits a full exceptional collection.
   Then so does $D(X)$.
\end{corollary}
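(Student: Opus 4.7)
The plan is to proceed by induction on the number of Grassmannian bundle steps in the tower expressing $X$ as an iterated Grassmannian bundle over $S$. By hypothesis, there is a sequence of smooth projective varieties
$$X = X_m \xrightarrow{\pi_m} X_{m-1} \xrightarrow{\pi_{m-1}} \cdots \xrightarrow{\pi_1} X_0 = S,$$
in which each morphism $\pi_i$ identifies $X_i$ with a relative Grassmannian $\Gr_{X_{i-1}}(k_i, \CV_i)$ for some vector bundle $\CV_i$ on $X_{i-1}$ of rank $n_i$ and some integer $1\leq k_i \leq n_i - 1$.

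The base case is immediate: $D(X_0) = D(S)$ admits a full exceptional collection by assumption. For the inductive step, assuming $D(X_{i-1})$ admits a full exceptional collection, Theorem~\ref{thm:gr-rel} applied to $\pi_i$ produces a full exceptional collection on $D(X_i)$, naturally indexed by the product of the indexing set of the collection on $X_{i-1}$ with the poset $\YD_{k_i, n_i - k_i}$. After $m$ applications we obtain a full exceptional collection on $D(X) = D(X_m)$.

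There is essentially no obstacle: the substance is entirely packaged into Theorem~\ref{thm:gr-rel}, and the argument is a one-line induction on $m$. The only remark worth making is that the second clause of Theorem~\ref{thm:gr-rel} also propagates the property of consisting of vector bundles through each step, so if the given collection on $D(S)$ consists of vector bundles, then so does the resulting collection on $D(X)$.
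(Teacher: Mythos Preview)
Your argument is correct and is precisely the intended one: the corollary is stated in the paper without proof because it follows immediately from Theorem~\ref{thm:gr-rel} by the obvious induction on the length of the tower, exactly as you wrote. Your additional remark about propagating the vector-bundle property matches the parenthetical clause in Theorem~\ref{thm:gr-rel}.
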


Since for all integer sequences $1\leq i_1 < i_2 < \cdots < i_s \leq n-1$ the partial flag variety
$\Fl(i_1,\ldots, i_s; V)$ is an iterated Grassmannian bundle, say, over the Grassmannian $\Gr(i_1, V)$,
we conclude the following.

\begin{corollary}\label{cor:type-a}
   All (partial) flag varieties over an algebraically closed field of characteristic zero
   admit full exceptional collections consisting of equivariant vector bundles.
\end{corollary}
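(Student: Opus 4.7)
The plan is to realise any partial flag variety as an iterated relative Grassmannian bundle whose ultimate base is a classical Grassmannian, and then feed this tower into Theorem~\ref{thm:gr-rel} with Kapranov's Theorem~\ref{thm:kapranov} at the bottom.

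Concretely, fix a sequence $1\leq i_1 < i_2 < \cdots < i_s\leq n-1$ and consider the forgetful morphism
\[
\pi_s\colon \Fl(i_1,\ldots,i_s; V) \longrightarrow \Fl(i_1,\ldots,i_{s-1}; V)
\]
which drops the last step of the flag. The first thing I would check is the elementary fact that $\pi_s$ is canonically isomorphic to the relative Grassmannian bundle $\Gr_{\Fl(i_1,\ldots,i_{s-1};V)}(i_s-i_{s-1},\, V/\CU_{i_{s-1}})$, where $\CU_{i_{s-1}}$ is the tautological rank $i_{s-1}$ subbundle on $\Fl(i_1,\ldots,i_{s-1};V)$: a point of the fibre over a flag $U_{i_1}\subset\cdots\subset U_{i_{s-1}}$ is exactly an $(i_s-i_{s-1})$-dimensional subspace of $V/U_{i_{s-1}}$. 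Iterating this observation produces a tower
\[
\Fl(i_1,\ldots,i_s; V) \xrightarrow{\pi_s} \Fl(i_1,\ldots,i_{s-1}; V) \xrightarrow{\pi_{s-1}} \cdots \xrightarrow{\pi_2} \Fl(i_1;V) = \Gr(i_1,V),
\]
each arrow of which is a relative Grassmannian bundle.

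Kapranov's Theorem~\ref{thm:kapranov} furnishes a full exceptional collection of vector bundles on the base $\Gr(i_1,V)$. Climbing the tower, I would apply Theorem~\ref{thm:gr-rel} once at each stage; since that theorem preserves the property of admitting a full exceptional collection of vector bundles, induction on $s$ yields such a collection on $\Fl(i_1,\ldots,i_s;V)$. For the equivariance claim, note that every morphism $\pi_j$ and every tautological bundle appearing in the construction is $\GL(V)$-equivariant; pullbacks along the $\pi_j$ and the relative Schur functors $\Sigma^\lambda$ appearing in Theorem~\ref{thm:gr-rel} therefore preserve equivariance, and Kapranov's starting collection is manifestly $\GL(V)$-equivariant, so the whole tower-built collection consists of equivariant bundles. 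No serious obstacle arises here: the only substantive input is the identification of each $\pi_j$ as a relative Grassmannian bundle, after which the proof is pure bookkeeping combining Theorems~\ref{thm:kapranov} and~\ref{thm:gr-rel}.
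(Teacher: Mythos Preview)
Your proposal is correct and follows exactly the paper's approach: realise $\Fl(i_1,\ldots,i_s;V)$ as an iterated Grassmannian bundle over $\Gr(i_1,V)$, apply Kapranov's Theorem~\ref{thm:kapranov} at the base, and climb the tower using Theorem~\ref{thm:gr-rel}. The paper states this in one sentence preceding the corollary; your version simply spells out the identification of each forgetful map with a relative Grassmannian and the equivariance bookkeeping.
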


Corollary~\ref{cor:type-a} completely establishes Conjecture~\ref{conj:main} in type A.

\subsection{Staircase complexes}\label{ssec:stair} A class of long exact sequences of equivariant
vector bundles was introduced in~\cite{Fonarev2013} in order to study Lefschetz
exceptional collections (bases) in $D(\Gr(k, V))$. In the present section we describe these
sequences, which are called \emph{staircase complexes} and give a construction which is
independent on the existence of Kapranov's exceptional collection (this construction
appeared, unpublished, in th e author's PhD thesis).

For any positive integers $h,w>0$ such that $h+w=n$ the set $\YD_{h,w}$ carries
a cyclic group action: fix a generator $g$ of the cyclic group $\ZZ/n\ZZ$ and put
\begin{equation}\label{eq:cyclic}
   \lambda \mapsto g\cdot\lambda = \begin{cases}
      (\lambda_1+1, \lambda_2+1, \ldots, \lambda_h+1) & \text{if } \ydw(\lambda) < w, \\
      (\lambda_2, \ldots, \lambda_h, 0) & \text{if } \ydw(\lambda) = w.
   \end{cases}
\end{equation}
For brevity, we denote $\lambda'=g\cdot \lambda$. The easiest way to see that~\eqref{eq:cyclic}
indeed defines the cyclic action is the following. Recall that there is a bijection
between $\YD_{h,w}$ and the set of binary sequences $a_1a_2\ldots a_n\in \{0,1\}$ such that $\sum a_i=h$:
each element of $\YD_{h,w}$ represents an integer path going from the lower left corner
of an $h$ by $w$ rectangle to the top right one, moving one step right or up at a time.
For each step to the right we write $0$, and for each step going up we write $1$.
Under this bijection the cyclic action on $\YD_{h,w}$ translates to the usual cyclic action on binary
sequences, $g\cdot (a_1a_2\ldots a_n) = a_na_1\ldots a_{n-1}$, which obviously preserves the sum of the elements.
For any sequence $\lambda\in\ZZ^h$ and any integer $t$ put
\begin{equation*}
   \lambda(t) = (\lambda_1 + t, \lambda_2 + t, \ldots, \lambda_h + t).
\end{equation*}
If $\lambda$ is weakly decreasing, then it corresponds to a dominant weight
of $\GL_h$, and for any rank $h$ bundle $\CE$ one has
$\Sigma^{\lambda(t)}\CE\simeq \Sigma^{\lambda}\CE\otimes (\det\CE)^{\otimes t}$.
Put
\begin{equation*}
   \bar\lambda = (\lambda_2, \ldots, \lambda_h, 0).
\end{equation*}
Then we can reformulate~\eqref{eq:cyclic} as $\lambda'=\lambda(1)$ as long as $\lambda(1)\in\YD_{h,w}$
and $\lambda'=\bar\lambda$ if $\lambda(1)\notin\YD_{h,w}$.

Let $\lambda\in\YD_{h,w}$ be a diagram of maximal width, i.e. $\ydw(\lambda)=w$, and let $\mu=\lambda^T$.
Remark that the condition on $\ydw(\lambda)$ implies that $\mu_w > 0$.
For $i=1,\ldots,w$, define $\lambda^{(i)}$ according to the following rule:
\begin{equation}\label{eq:circ}
   \begin{split}
      \lambda^{(1)} & = (\mu_1, \mu_2, \ldots, \mu_{w-1}, 0)^T, \\
      \lambda^{(i)} & = (\mu_1, \ldots, \mu_{w-i},\, \mu_{w-i+2}-1, \ldots, \mu_w-1, 0)^T \quad \text{for } 1 < i < w,\\
      \lambda^{(w)} & = (\mu_2-1, \ldots, \mu_w-1, 0)^T.
   \end{split}
\end{equation}
The formulas~\eqref{eq:circ} may look rather mysterious; however, they have a rather simple
combinatorial explanation. Informally speaking, in order to find $\lambda^{(i)}$,
one should take the inner band of unit width along
the border of $\lambda$ (this band is nothing but the skew Young diagram $\lambda/\lambda^{(w)}$)
and cut $i$ rightmost columns of the band out of $\lambda$.
In terms of binary sequences, if $a_1a_2\ldots a_n$ is the sequence corresponding to $\lambda$,
let $n>z_1>z_2>\cdots > z_w\geq 1$ be the indices for which $a_{z_i}=0$. Then $\lambda^{(i)}$
is the diagram corresponding to the binary sequence
\[
   a_1a_2\ldots a_{z_i-1}1a_{z_i+1}\ldots a_{n-1}0
\]
(recall that $\ydw(\lambda)=w$, so $a_n=1$).
We thus get a sequence of subdiagrams
\begin{equation}\label{eq:circ-filtr}
   \lambda \supset \lambda^{(1)}\supset \cdots \supset \lambda^{(w)}.
\end{equation}

\begin{example}
   Consider $\lambda = (4,4,2)\in \YD_{3, 4}$. Then the filtration~\eqref{eq:circ-filtr} becomes
   \[
      \ytableausetup{boxsize=1em}
      \ytableausetup{centertableaux}
      \lambda=
      \ydiagram{4, 4, 2} \supset
      \ydiagram[*(lightgray)]{3+1,3+1}*{4, 4, 2} \supset
      \ydiagram[*(lightgray)]{3+1,2+2}*{4, 4, 2} \supset
      \ydiagram[*(lightgray)]{3+1,1+3,1+1}*{4, 4, 2} \supset
      \ydiagram[*(lightgray)]{3+1,1+3, 2}*{4, 4, 2}\ ,
   \]
   where $\lambda^{(i)}$ is the white part of the corresponding diagram.
\end{example}

\begin{proposition}[{\cite[Proposition~5.3]{Fonarev2013}}]\label{prop:starcase}
   Let $\lambda\in\YD_{k, n-k}$ be a diagram such that $\ydw(\lambda)=n-k$.
   There is a long exact sequence of vector bundles on $\Gr(k, V)$ of the form
   \begin{equation}\label{eq:staricase}
      0\to \Sigma^{\bar\lambda}\CU^*(-1) \to \Lambda^{b_\lambda^{(w)}}V^*\otimes \Sigma^{\lambda^{(w)}}\CU^* \to
      \cdots
      \to \Lambda^{b_\lambda^{(1)}}V^*\otimes \Sigma^{\lambda^{(1)}}\CU^* \to
      \Sigma^\lambda\CU^* \to 0,
   \end{equation}
   where $b_\lambda^{(i)} = |\lambda/\lambda^{(i)}| = |\lambda|-|\lambda^{(i)}|$.
\end{proposition}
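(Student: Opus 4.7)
The plan is to build the staircase complex (\ref{eq:staricase}) by first defining its differentials via $\GL(V)$-equivariance and Pieri's rule, and then verifying exactness by induction on $\ydh(\lambda)$, with the single-row case handled as a twisted Koszul resolution.

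\textbf{Step 1: Construction of the differentials.} A key combinatorial observation is that each successive skew shape $\lambda^{(i-1)}/\lambda^{(i)}$ from (\ref{eq:circ}) is a vertical (column) strip of $b_\lambda^{(i)}-b_\lambda^{(i-1)}$ boxes. By Pieri's rule, $\Sigma^{\lambda^{(i-1)}}\CU^*$ occurs with multiplicity one inside $\Lambda^{b_\lambda^{(i)}-b_\lambda^{(i-1)}}\CU^*\otimes\Sigma^{\lambda^{(i)}}\CU^*$. Composing this Pieri projection with the surjection $V^* \twoheadrightarrow \CU^*$ and the comultiplication $\Lambda^{b_\lambda^{(i)}}V^* \to \Lambda^{b_\lambda^{(i-1)}}V^*\otimes \Lambda^{b_\lambda^{(i)}-b_\lambda^{(i-1)}}V^*$ produces the unique (up to scalar) equivariant differential $\Lambda^{b_\lambda^{(i)}}V^*\otimes\Sigma^{\lambda^{(i)}}\CU^* \to \Lambda^{b_\lambda^{(i-1)}}V^*\otimes\Sigma^{\lambda^{(i-1)}}\CU^*$. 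The leftmost differential $\Sigma^{\bar\lambda}\CU^*(-1) \to \Lambda^{b_\lambda^{(w)}}V^*\otimes\Sigma^{\lambda^{(w)}}\CU^*$ is constructed analogously, using $\CO(-1)\simeq\det\CU$. After normalizing scalars, a direct check with Pieri maps shows that consecutive differentials compose to zero.

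\textbf{Step 2: Base case and exactness.} For the single-row case $\lambda = (n-k)$, the complex becomes
\begin{equation*}
0 \to \CO(-1) \to \Lambda^{n-k}V^* \to \Lambda^{n-k-1}V^* \otimes \CU^* \to \cdots \to V^*\otimes S^{n-k-1}\CU^* \to S^{n-k}\CU^* \to 0,
\end{equation*}
whose exactness is verified locally: over an open set on which the Euler sequence $0\to\CU^\perp\to V^*\to\CU^*\to 0$ splits, the complex reduces to the standard Koszul resolution associated with the augmentation $S^\bullet V^* \twoheadrightarrow S^\bullet \CU^*$. For general $\lambda$ of maximal width, I argue exactness by induction on $\ydh(\lambda)$. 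Since the complex is $\GL(V)$-equivariant, its cohomology sheaves split into irreducible Schur-type summands; a direct Borel--Bott--Weil computation shows that the alternating sum of Grothendieck classes of the terms vanishes in $K_0(\Gr(k,V))$. To rule out cancelling cohomology, one restricts the complex along a suitable partial-flag projection, reducing the exactness check to a staircase complex associated to a diagram of smaller height on a smaller Grassmannian, and eventually to the base case.

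\textbf{Main obstacle.} The principal difficulty is the combinatorial matching: proving that \emph{precisely} the subdiagrams $\lambda^{(i)}$ of (\ref{eq:circ})---and not some other collection allowed by Pieri---fit into an exact complex. The cyclic-action/binary-sequence description in (\ref{eq:cyclic}) is crucial here: the $\lambda^{(i)}$ are obtained by flipping a single $0$ in the binary word of $\lambda$, which is what makes the assembled complex behave like a twisted Koszul complex of a regular sequence. Establishing this exactness---either through a careful induction using the filtration (\ref{eq:circ-filtr}) or through a geometric realization as a pushforward from a partial flag variety---together with the sign bookkeeping needed to make the differential square to zero, is the central non-routine ingredient.
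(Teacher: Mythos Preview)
Your approach differs substantially from the paper's, and your Step~2 carries a genuine gap. The paper does not build the differentials by hand and then verify exactness; it obtains the whole complex in one stroke as a derived pushforward. Using the correspondence $\Gr(k,V)\xleftarrow{\,p\,}\Fl(1,k;V)\xrightarrow{\,q\,}\PP(V)$ with universal flag $\CL\subset\CU$, the paper pushes forward $q^*\CF^\bullet\otimes\Sigma^{\bar\lambda}(\CU/\CL)\otimes(\det(\CU/\CL))^{-1}$, where $\CF^\bullet$ is the exact Koszul complex~\eqref{eq:stair-pn} on $\PP(V)$. Since $\CF^\bullet$ is exact, the pushforward vanishes in $D(\Gr(k,V))$; the hypercohomology spectral sequence has first page $E_1^{ab}=\Lambda^{n+a}V(1)\otimes R^bp_*(\CL^{-a}\otimes\Sigma^{\bar\lambda}(\CU/\CL))$, and a Borel--Bott--Weil computation on the $\PP^{k-1}$-fibres of $p$ shows that for each $a$ at most one $b$ contributes. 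The surviving terms are exactly the bundles in~\eqref{eq:staricase} (up to duality), so the spectral sequence degenerates into that exact complex. No induction on $\ydh(\lambda)$ is required, and the specific subdiagrams $\lambda^{(i)}$ emerge from Bott's algorithm rather than being guessed and justified a~posteriori.

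In your argument the $K_0$ identity is only a necessary condition, and the proposed fix---``restrict along a suitable partial-flag projection'' to reduce to a staircase complex of smaller height on a smaller Grassmannian---is never made concrete. It is not clear which projection you mean, why the restricted or pushed-forward complex would again be a staircase complex, or how the height actually drops: restricting $\Sigma^{\lambda}\CU^*$ to $\Gr(k,V')$ for a hyperplane $V'\subset V$ does not change $\lambda$, and pulling back to $\Fl(k-1,k;V)$ and pushing to $\Gr(k-1,V)$ scrambles the terms rather than producing a single smaller staircase. You correctly identify this as the main obstacle, and it is the missing idea. The ``geometric realization as a pushforward from a partial flag variety'' that you mention in passing at the end is precisely what the paper does---but with $\Fl(1,k;V)$, not a flag that lowers $k$---and it replaces the induction entirely.
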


\begin{remark}
   The proof of Proposition~\ref{prop:starcase} given in~\cite{Fonarev2013}
   relies on fullness of Kapranov's exceptional collection. Indeed,
   the long exact sequence~\eqref{eq:staricase} explicitly shows how
   the object $\Sigma^{\bar\lambda}\CU^*$ can be generated by the objects
   from Kapranov's collection. As promised in the beginning of the present
   section, below we give a fullness-independent construction of~\eqref{eq:staricase}.
\end{remark}

\begin{proof}[Proof of Proposition~\ref{prop:starcase}]
   Consider the diagram
   \begin{equation*}
      \begin{tikzcd}
         & \Fl(1, k; V) \arrow[dr, "q"] \arrow[dl, "p"'] & \\
         \Gr(k, V) & & \PP(V).
      \end{tikzcd}
   \end{equation*}
   Denote by $\CF^\bullet$ the dual of the exact complex~\eqref{eq:pn-koszul-tw} twisted by $\CO(-n+1)$, which
   is an exact complex on $\PP(V)$ of the form
   \begin{equation}\label{eq:stair-pn}
      0\to \CO(-n+1) \to V(-n+2)\to \Lambda^2V(-n+3)\to \cdots \to \Lambda^{n-1}V\to \CO(1)\to 0,
   \end{equation}
   where $\CF^0 = \CO(1)$.

   Denote by $\CL\subset \CU$ the universal flag on $\Fl(1, k; V)$. Consider the object
   \[
      X^\bullet = p_*\left( q^*\CF^\bullet \otimes \Sigma^{\bar\lambda}(\CU/\CL)\otimes (\det (\CU/\CL))^{-1} \right),
   \]
   where $\Sigma^{\bar\lambda}(\CU/\CL)$ is non-zero since $\ydh(\bar{\lambda}) \leq h-1 = \rk(\CU/\CL)$.
   Since $\CF^\bullet$ is exact, $X^\bullet$ is zero in $D(\Gr(k, V))$. Meanwhile, one has the
   standard spectral sequence
   \begin{equation*}
      E^{ab}_1 = R^bp_*\left( q^*\CF^a \otimes \Sigma^{\bar\lambda}(\CU/\CL)\otimes (\det (\CU/\CL))^{-1} \right)
      \Rightarrow H^{a+b}(X^\bullet).
   \end{equation*}
   By definition, for $a=-n, \ldots, 0$ we get
   \begin{equation*}
      \begin{split}
         E^{ab}_1 & = R^bp_*\left( q^*(\Lambda^{n+a}V(a+1)) \otimes \Sigma^{\bar\lambda}(\CU/\CL)\otimes (\det (\CU/\CL))^{-1} \right) \\
                  & \simeq R^bp_*\left( \Lambda^{n+a}V\otimes \CL^{-a-1} \otimes \Sigma^{\bar\lambda}(\CU/\CL)\otimes (\det (\CU/\CL))^{-1} \right) \\
                  & \simeq \Lambda^{n+a}V(1) \otimes R^bp_*\left(\CL^{-a} \otimes \Sigma^{\bar\lambda}(\CU/\CL) \right),
      \end{split}
   \end{equation*}
   where in the last isomorphism we used the identity $\det\CU = \det(\CU/\CL)\otimes \CL$.

   Since $\Fl(1, k; V)$ is nothing but the projectivization $\PP_{\Gr(k, V)}(\CU)$, we can use
   the Borel--Bott--Weil theorem to compute the higher push-forwards. It turns out, that
   the spectral sequence degenerates into an exact complex, which is dual to~\eqref{eq:staricase}.
\end{proof}

\begin{example}
   A well known example of a staircase complex comes from $\lambda = (n-k)$:
   \[
      0\to \CO(-1) \to \Lambda^{n-k}V^*\otimes \CO \to \cdots
      \to \Lambda^{n-k-i}V^*\otimes S^i\CU^* \to \cdots
      \to V^*\otimes S^{n-k-1}\CU^*\to S^{n-k}\CU^*\to 0.
   \]
   When $k=1$, the only staircase complex is a shifted and dualized Koszul complex,
   the complex dual to~\eqref{eq:stair-pn}.
\end{example}

An immediate application of staircase complexes is an alternative proof of fullness
of Kapranov's exceptional collection.
Indeed, let $\lambda\in\YD_{h, w}$. If $\ydh(\lambda) = h$, then $\lambda(-1)\in\YD_{h, w}$.
Since 
\[
   \Sigma^{\lambda(-1)}\CU^*\simeq\Sigma^{\lambda}\CU^* \otimes (\det\, \CU^*)^{-1}\simeq \Sigma^\lambda\CU^*(-1),
\]
we conclude that
for all such $\lambda$ the object $\Sigma^\lambda\CU^*(-1)$ trivially belongs to the subcategory
generated by the objects from Kapranov's collection.
If $\ydh(\lambda) < h$, then $\lambda = \bar\mu$, where $\mu = (n-k, \lambda_1, \ldots, \lambda_{h-1})$,
and the staircase complex~\eqref{eq:staricase} written for $\mu$ and treated as a right
resolution for $\Sigma^{\lambda}\CU^*(-1)$ shows that $\lambda=\bar\mu$
also belongs to the subcategory generated by Kapranov's collection. Applying Proposition~\ref{prop:full},
we see that Kapranov's collection is full. While this proof gives nothing new for classical Grassmannians,
this method will be generalized below to the case of Lagrangian Grassmannians, and we expect that
it can be used in even greater generality.

\subsection{Lefschetz decompositions for classical Grassmannians}
For a smooth projective Fano variety it is natural to look for a Lefschetz basis (decomposition)
which is as small as possible. There are various ways to interpret ``minimal'': one can ask for
the number of objects in the basis to be minimal or the rectangular part to be as large as possible.
Recall that $\Gr(k, V)$ is a Fano variety whose Picard group is isomorphic to $\ZZ$ and is generated
by $\CO(1)\simeq \det\CU^*$. Its (Fano) index equals $n$: $\omega_{\Gr(k, V)}\simeq \CO(-n)$.

The first example of a minimal (in any sense) Lefschetz basis in $D(\Gr(k, V))$ was constructed
for $k=2$ by A.~Kuznetsov in~\cite{Kuznetsov2008}.

\begin{theorem}[{\cite{Kuznetsov2008}}]\label{thm:ld-gr2}
   If the dimension $n=2t+1$ of $V$ is odd,
   then there is a rectangular Lefschetz decomposition in $D(\Gr(2, V))$
   with the basis
   \[
      (\CO, \CU^*, S^2\CU^*, \ldots, S^{t-1}\CU^*), \qquad o = (n, n, n,\ldots, n).
   \]
   If the dimension $n=2t$ of $V$ is even,
   then there is a Lefschetz decomposition in $D(\Gr(2, V))$
   with the basis
   \[
      (\CO, \CU^*, \ldots, S^{t-2}\CU^*, S^{t-1}\CU^*), \qquad o = (n, n, \ldots, n, n/2).
   \]
\end{theorem}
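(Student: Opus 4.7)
The plan is to verify three properties of the proposed Lefschetz collection: that it is exceptional and semiorthogonal, that its length matches $\rk K_0(\Gr(2,V))=\binom{n}{2}$, and that it is full. The first two are essentially routine, while fullness is the main obstacle.

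For semiorthogonality, I would compute $\Ext^\bullet(S^i\CU^*(k),S^j\CU^*(l))=H^\bullet(\Gr(2,V),S^i\CU\otimes S^j\CU^*\otimes\CO(l-k))$ for every pair $(j,l)$ preceding $(i,k)$ in the collection's order. Using $\CU\simeq\CU^*(-1)$ together with Pieri's rule one obtains $S^i\CU^*\otimes S^j\CU^*=\bigoplus_{r=0}^{\min(i,j)}\Sigma^{(i+j-r,r)}\CU^*$, and the Borel--Bott--Weil theorem on $\Gr(2,V)$ translates the required vanishing into an elementary numerical criterion on the weight $(a,b)$ of each Schur summand; a direct case analysis over the ranges dictated by Kuznetsov's ordering then yields the desired vanishings.

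For the length, one simply counts: in the odd case $n=2t+1$ the collection contains $t\cdot n=t(2t+1)=\binom{n}{2}$ objects, and in the even case $n=2t$ it contains $(t-1)\cdot n+n/2=t(2t-1)=\binom{n}{2}$ objects, matching the rank of $K_0(\Gr(2,V))$.

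For fullness I would apply Proposition~\ref{prop:full} with $\CL=\CO(1)$: since $\CO$ belongs to the collection, it suffices to check that tensoring each object with $\CO(1)$ keeps it inside the subcategory $\CT$ it generates. Most such twists lie in the collection by construction; the only ``boundary'' obstructions are $S^c\CU^*(n)$ for $c\in\{0,\ldots,t-1\}$ in the odd case, and $S^c\CU^*(n)$ for $c\in\{0,\ldots,t-2\}$ together with $S^{t-1}\CU^*(t)$ in the even case. For each such object I would apply the staircase complex of Proposition~\ref{prop:starcase} to a suitable $\lambda\in\YD_{2,n-2}$ of maximal width (and its twists); the resulting long exact sequence expresses the obstructing object through $\Sigma^{\lambda^{(i)}}\CU^*$-type bundles of strictly smaller shape, which can be iteratively reduced into $\CT$. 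The hard part will be the even case: the drop in the support function at $S^{t-1}\CU^*$ reflects the self-duality $S^{t-1}\CU\simeq S^{t-1}\CU^*(-t+1)$ combined with Serre duality $\omega_{\Gr(2,V)}\simeq\CO(-2t)$, so the missing middle-twist object $S^{t-1}\CU^*(t)$ requires a more delicate mutation argument that exploits this ``half-canonical'' symmetry rather than a straightforward staircase reduction.
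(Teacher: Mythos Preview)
Your outline for semiorthogonality (Borel--Bott--Weil plus Pieri) and the length count is fine. The interesting point is fullness, where your approach genuinely differs from the one the paper attributes to \cite{Kuznetsov2008}: there fullness is proved by covering $\Gr(2,V)$ by the family of sub-Grassmannians $\Gr(2,\Ker\phi)$ for $\phi\in V^*$ and checking that any object in the right orthogonal to the collection restricts to zero on every such hypersurface, hence vanishes. Your staircase route is instead the method underlying the later generalization in Theorem~\ref{thm:gr-lda}; note, however, that the argument recorded there does \emph{not} verify Proposition~\ref{prop:full} directly but rather shows that every Kapranov object $\Sigma^\lambda\CU^*$, $\lambda\in\YD_{2,n-2}$, lies in the Lefschetz subcategory $\CT$, and then invokes fullness of Kapranov's collection. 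That reformulation is both easier to induct on and avoids ever having to produce objects like $S^c\CU^*(n)$ that sit outside Kapranov's range.

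One concrete correction: your assessment of which boundary case is delicate is inverted. For even $n=2t$, the staircase complex for $\lambda=(n-2,t-1)$ twisted by $\CO(1)$ reads
\[
0\to S^{t-1}\CU^*\to \cdots\to \Lambda^{?}V^*\otimes S^{j}\CU^*(i)\to\cdots\to S^{t-1}\CU^*(t)\to 0,
\]
and every intermediate term has $j\le t-2$ (with $1\le i\le t$), hence already lies in $\CT$; so $S^{t-1}\CU^*(t)\in\CT$ falls out of a \emph{single} staircase, no ``half-canonical'' mutation needed. By contrast, the obstructions $S^c\CU^*(n)$ (in either parity) cannot be resolved by one staircase: any staircase containing such a term also contains terms of the form $S^{j}\CU^*(i)$ with $j\ge t$, which are not a~priori in $\CT$. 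These require the genuine induction alluded to above---most cleanly phrased as showing $\langle\Sigma^\lambda\CU^*\mid\lambda\in\YD_{2,n-2}\rangle\subset\CT$ by increasing $\lambda_1-\lambda_2$, rather than as a direct check of Proposition~\ref{prop:full}.
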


Recall that the number of objects in any full exceptional collection equals the rank of the Grothendieck group.
Since $\rk K_0(\Gr(k, V)) = \binom{n}{k}$, the minimal possible number of object one can get
in the first block of a Lefschetz decomposition is $\left\lceil\binom{n}{k}/n\right\rceil$.
(Recall that by Serre duality one can not have more blocks than the Fano index of the variety).
For $k=2$ we get the lower bound $\left\lceil\binom{n}{2}/2\right\rceil = \left\lceil \frac{n-1}{2}\right\rceil$,
which is achieved in Theorem~\ref{thm:ld-gr2}.
We also see that, for divisibility reasons, when $n$ is odd, $D(\Gr(2, V))$ can not have a~minimal rectangular
Lefschetz basis.

\begin{remark}\label{rmk:gr2}
   A more common way to write Lefschetz exceptional collections is in matrix form, where
   the objects are read bottom to top and left to right. For instance, when $n=2t+1$ is odd, one
   has a full exceptional collection in $D(\Gr(2, V))$ of the form
   \begin{equation}\label{eq:gr2odd}
      \begin{pmatrix}
         S^{t-1}\CU^* & S^{t-1}\CU^*(1) & \cdots & S^{t-1}\CU^*(2t) \\
         S^{t-2}\CU^* & S^{t-2}\CU^*(1) & \cdots & S^{t-2}\CU^*(2t) \\
         \vdots & \vdots &  & \vdots \\
         \CU^* & \CU^*(1) & \cdots & \CU^*(2t) \\
         \CO^* & \CO^*(1) & \cdots & \CO^*(2t)
      \end{pmatrix},
   \end{equation}
   and when $n=2t$ is even, one has a full exceptional collection in $D(\Gr(2, V))$ of the form
   \begin{equation*}
      \begin{pmatrix}
         S^{t-1}\CU^* & S^{t-1}\CU^*(1) & \cdots & S^{t-1}(t-1) &  \\
         S^{t-2}\CU^* & S^{t-2}\CU^*(1) & \cdots & S^{t-2}(t-1) & \cdots & S^{t-2}\CU^*(2t-1) \\
         \vdots & \vdots &  & \vdots & & \vdots \\
         \CU^* & \CU^*(1) & \cdots & \CU(t-1) & \cdots & \CU^*(2t-1) \\
         \CO^* & \CO^*(1) & \cdots & \CO(t-1) & \cdots & \CO^*(2t-1)
      \end{pmatrix}.
   \end{equation*}
\end{remark}

The argument used in~\cite{Kuznetsov2008} to show fullness of the two collections relies on the following
rather common technique. First, cover your variety by a family of simpler subvarieties.
In this case, one can consider zero loci of linear forms $\phi\in \Gamma(\Gr(k, V), \CU^*)\simeq V^*$,
which are isomorphic to $\Gr(k, \Ker\phi)$. Observe that if all the restrictions of an object $X\in D(\Gr(k, V))$
to the subvarieties vanish, then the object itself is zero. Finally, check that if an object is orthogonal
to all the objects in the collection, the fullness of which one wants to prove, then
its restriction is zero for all choices of $\phi$. Unfortunately, this method is not always easy to apply.

We are about to give a generalization of Theorem~\ref{thm:ld-gr2} to arbitrary $\Gr(k, V)$, where
fullness will be shown with the use of staircase complexes. Actually, we are going to describe
three Lefschetz bases in $D(\Gr(k, V))$. When $k$ and $n$ are coprime, the first two will coincide
and will give minimal rectangular decompositions for $D(\Gr(k, V))$.
When $k$ and $n$ are not coprime, the first one will be slightly larger. However, this is the one
that we know to be full in all cases (the second one is conjecturally full for $k$ and $n$ not coprime).
Finally, the third one is of the same ``shape'' as the second one. It is minimal in the sense
that its rectangular part is maximal possible, and we know that it is full, though the proof has not
appeared in the literature just yet and will be available in~\cite{FonarevUnpublished}.

Or strategy of constructing a Lefschetz basis will always be the same: pick a subset of 
$\YD_{k, n-k}$ and use the corresponding objects from Kapranov's collection as the first block
of the decomposition by assigning appropriate values of the support function.
\begin{definition}
   A diagram $\lambda\in\YD_{h, w}$ is called \emph{upper-triangular} if for all $1\leq i\leq h$
   one has
   \begin{equation}\label{eq:ydu}
      \lambda_i \leq \frac{w(h-i)}{h}.
   \end{equation}
   The subset of upper-triangular diagrams in $\YD_{h, w}$ is denoted by $\YDu_{h,w}\subset\YD_{h, w}$.
\end{definition}

\begin{example}
   We give some examples of sets of upper-triangular diagrams.
   \begin{enumerate}
      \item For $h=1$ the set $\YDu_{1, w}$ always consists of a single diagram: $\YD_{1, w} = \{(0)\}$.
      \item When $h=2$, one has $\YDu_{2, w} = \{(0, 0), (1, 0), \ldots, (\lfloor n/2\rfloor, 0)\}$.
      \item For $h=3$ and $w=3$ one has
      \[
         \YDu_{3,3} = \left\{\emptyset,\ \ydiagram{1},\ \ydiagram{2},\ \ydiagram{1, 1},\ \ydiagram{2, 1} \right\}.
      \]
   \end{enumerate}
\end{example}

For any upper-triangular diagram $\lambda\in\YDu_{w,h}$ let 
\[
   o^{\rm u}(\lambda)=i+w-\lambda_i,
\]
where $i$ is the smallest positive integer for which inequality~\eqref{eq:ydu} becomes an equality
(it is always the case for $i=h$). In other words, $o^{\rm u}(\lambda)$ is the length of the path going along
$\lambda$ from the top right corner of the $h$ by $w$ rectangle until one meets the diagonal again.
In terms of the cyclic group action, $o^{\rm u}(\lambda)$ is the smallest positive integer $i$ such that
$g^i\cdot\lambda\in \YDu_{h, w}$, where $g$ is the chosen generator of $\ZZ/n\ZZ$.

\begin{example}
   Here are some examples of $o^{\rm u}$ for various diagrams.
   \begin{enumerate}
   \item When $w$ and $h$ are coprime, then for any $\lambda\in\YDu_{w, h}$ one has $o^{\rm u}(\lambda) = n$.
   \item When $h=2$ and $w=2t$ one has $o^{\rm}((t, 0)) = t+1 = n/2$.
   \item For $h=3$ and $w=3$ one has the following values of $o^{\rm u}$:
   \[
      o^{\rm u}: \left(\emptyset,\ \ydiagram{1},\ \ydiagram{2},\ \ydiagram{1, 1},\ \ydiagram{2, 1} \right)
      \mapsto
      (6, 6, 2, 4, 2).
   \]
   \end{enumerate}
\end{example}

\begin{theorem}[{\cite[Theorem~4.1]{Fonarev2013}}]\label{thm:gr-lda}
   There is a Lefschetz decomposition of $D(\Gr(k, V))$ with a Lefschetz
   basis given by
   \begin{equation}\label{eq:gr-lda}
      \left\langle \Sigma^\lambda\CU^* \mid \lambda\in \YDu_{k, n-k}\right\rangle
   \end{equation}
   and the support function $o^{\rm u}$.
\end{theorem}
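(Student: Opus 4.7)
The plan is to establish the theorem in three stages: verifying the total object count, checking semiorthogonality, and deducing fullness via staircase complexes. Throughout, I use the isomorphism of equivariant vector bundles $\Sigma^\lambda \CU^*(j) \simeq \Sigma^{\lambda(j)} \CU^*$, where $\lambda(j) = (\lambda_1+j, \ldots, \lambda_k+j)$; this lets every object of the proposed decomposition be viewed as a Schur bundle with a (not necessarily Young) weakly decreasing index, which can in turn be reduced to $\YD_{k, n-k}$ up to an overall twist by a power of $\CO(1)$.

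The support function $o^{\mathrm{u}}(\lambda)$ is by construction the first return time of $\lambda \in \YDu_{k, n-k}$ to $\YDu_{k, n-k}$ under the cyclic $\ZZ/n\ZZ$-action~\eqref{eq:cyclic}. Each orbit of this action on $\YD_{k, n-k}$ is therefore partitioned by its $\YDu_{k, n-k}$-elements into arcs of lengths $o^{\mathrm{u}}(\lambda)$, and summing over orbits gives
\begin{equation*}
\sum_{\lambda \in \YDu_{k, n-k}} o^{\mathrm{u}}(\lambda) \;=\; \abs{\YD_{k, n-k}} \;=\; \binom{n}{k} \;=\; \rk K_0(\Gr(k, V)),
\end{equation*}
so the collection has the correct size to be full once exceptionality is established. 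For semiorthogonality, the required $\Ext$-vanishings between two objects $\Sigma^\lambda \CU^*(j)$ and $\Sigma^\mu \CU^*(i)$ reduce, after shifting both indices by a common constant to bring them into $\YD_{k, n-k}$, to direct applications of the Borel--Bott--Weil theorem together with the Littlewood--Richardson rule. The upper-triangular condition~\eqref{eq:ydu}, combined with the bounds $j < o^{\mathrm{u}}(\lambda)$ and $i < o^{\mathrm{u}}(\mu)$, is precisely calibrated to rule out the would-be nonzero contributions for $j > i$, or for $j = i$ and $\lambda \not\preceq \mu$; I order the blocks by $j$ and refine within each block by a linear extension of the containment order on $\YDu_{k, n-k}$.

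For fullness, I invoke Proposition~\ref{prop:full}. The subcategory $\CT$ generated by the collection contains $\CO = \Sigma^\emptyset \CU^*$, so it suffices to show that $\CT$ is stable under $- \otimes \CO(1)$. For $\lambda \in \YDu_{k, n-k}$ and $0 \leq j \leq o^{\mathrm{u}}(\lambda) - 2$ the twist $\Sigma^\lambda \CU^*(j+1)$ lies in $\CA_{j+1}(j+1) \subset \CT$ by construction. The only nontrivial case is $j+1 = o^{\mathrm{u}}(\lambda)$, where the twist would carry the object out of the prescribed range. Here I choose a diagram $\mu$ of maximal width $n-k$ whose $\bar\mu$, up to an overall shift by $\CO(c)$, realises $\lambda(o^{\mathrm{u}}(\lambda))$, and apply the staircase complex~\eqref{eq:staricase} of Proposition~\ref{prop:starcase} for $\mu$, twisted by the appropriate power of $\CO(1)$, as a right resolution of $\Sigma^\lambda \CU^*(o^{\mathrm{u}}(\lambda))$. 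This follows the same pattern as the fullness-independent proof of Kapranov's theorem given at the end of Section~\ref{ssec:stair}.

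The main obstacle will be verifying that every term $\Sigma^{\mu^{(i)}} \CU^*(\ast)$ of the chosen staircase complex already belongs to $\CT$, i.e.\ that its parameters lie within the Lefschetz basis after the overall twist. This amounts to tracking how the upper-triangular condition and the values of $o^{\mathrm{u}}$ behave under the column-removal operations $\mu \mapsto \mu^{(i)}$ of~\eqref{eq:circ}, and is the combinatorial heart of the proof. Once this bookkeeping is carried out, Proposition~\ref{prop:full} completes the argument.
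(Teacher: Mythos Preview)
Your object count and semiorthogonality sketch match the paper's account: the cyclic-orbit partition gives $\sum o^{\mathrm u}(\lambda)=\binom{n}{k}$, and semiorthogonality is indeed a Borel--Bott--Weil/Littlewood--Richardson computation controlled by the upper-triangular inequalities.

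The gap is in your fullness argument. You claim that for each boundary object $\Sigma^\lambda\CU^*(o^{\mathrm u}(\lambda))$ a \emph{single} staircase complex, suitably twisted, has all its intermediate terms $\Sigma^{\mu^{(i)}}\CU^*(c+1)$ already in $\CT$, and that checking this is ``bookkeeping''. This is false. Take $k=3$, $n=6$, $\lambda=(2,0,0)$, so $o^{\mathrm u}(\lambda)=2$. Your recipe gives $\mu=(3,2,0)$ and $c=2$; the twisted staircase
\[
0\to S^2\CU^*(2)\to \Lambda^4V^*\!\otimes\CU^*(3)\to \Lambda^3V^*\!\otimes\Lambda^2\CU^*(3)\to V^*\!\otimes\Sigma^{2,2}\CU^*(3)\to \Sigma^{3,2}\CU^*(3)\to 0
\]
has the two rightmost terms indexed by $(2,2,0)$ and $(3,2,0)$, neither of which is upper-triangular, and no renormalisation $\Sigma^{\alpha}\CU^*(j)$ with $\alpha\in\YDu_{3,3}$ and $0\le j<o^{\mathrm u}(\alpha)$ produces them (for $(3,2,0)+3$, the constraints $\alpha_3\ge 0$ and $\alpha_1\le 2$ force $j\ge 4$ and $j\le 3$). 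So the terms are not in $\CT$ to begin with, and iterating the same staircase on $\Sigma^{3,2}\CU^*(3)$ brings you right back to $S^2\CU^*(2)$.

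The paper's route avoids this trap: instead of trying to close $\CT$ under $\otimes\,\CO(1)$ directly, it proves by a genuinely inductive use of staircase complexes that \emph{every} Kapranov object $\Sigma^\nu\CU^*$, $\nu\in\YD_{k,n-k}$, lies in $\CT$; fullness then follows from Theorem~\ref{thm:kapranov}. The induction is set up so that the intermediate terms of each staircase are handled by earlier steps of the induction rather than assumed to lie in the Lefschetz basis outright. Your Proposition~\ref{prop:full} strategy could in principle be made to work, but only after you have essentially proved this stronger generation statement, at which point it is redundant.
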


\begin{example}
   Consider $\Gr(3, 6)$. Theorem~\ref{thm:gr-lda} produces a Lefschetz decomposition
   with the basis
   \[
      \left\langle \CO, \CU^*, \Lambda^2\CU^*, S^2\CU^*, \Sigma^{2, 1}\CU^* \right\rangle.
   \]
   The corresponding (full) Lefschetz exceptional collection in $D(\Gr(3, 6))$ is of the form
   \begin{equation}\label{eq:gr36-lda}
      \begin{pmatrix*}[r]
         \Sigma^{2, 1}\CU^* & \Sigma^{2, 1}\CU^*(1) & & & & \\
         S^2\CU^* & S^2\CU^*(1) & & & & \\
         \Lambda^2\CU^* & \Lambda^2\CU^*(1) & \Lambda^2\CU^*(2) & \Lambda^2\CU^*(3) &  & \\
         \CU^* & \CU^*(1) & \CU^*(2) & \CU^*(3) & \CU^*(4) & \CU^*(5) \\
         \CO & \CO(1) & \CO(2) & \CO(3) & \CO(4) & \CO(5)
      \end{pmatrix*}
   \end{equation}
\end{example}

The proof of Theorem~\ref{thm:gr-lda} relies on some intricate homology
computations using the Borel--Bott--Weil theorem and the Littlewood--Richardson
rule for semiorthogonality and staircase complexes for fullness.
Namely, using staircase complexes and induction, it is shown in~\cite{Fonarev2013} that
the objects of Kapranov's collection are generates by the objects given
by the Lefschetz basis~\eqref{eq:gr-lda}.

When $k=2$, Theorem~\ref{thm:gr-lda} recovers Theorem~\ref{thm:ld-gr2}.
When $k$ and $n$ are coprime, Theorem~\ref{thm:gr-lda} produces
a rectangular Lefschetz decomposition which is optimal due to numerical reasons:
the number of objects in the basis equals $\rk K_0(\Gr(k, V)) / n$.
However, when $k$ and $n$ are not coprime, one can find a smaller Lefschetz
decomposition in $D(\Gr(k, V))$. This decomposition agrees with
the one presented in Theorem~\ref{thm:gr-lda} when $(k, n)=1$ and is expected
to be full for arbitrary $k$ and $n$.

Consider again the cyclic action of $\ZZ/n\ZZ$ on $\YD_{w, h}$.
The following observation is a very pleasant and easy exercise.
\begin{lemma}[{\cite[Lemma~3.2]{Fonarev2013}}]
   Every orbit of the cyclic action on $\YD_{w, h}$ contains
   an upper-triangular diagram.
\end{lemma}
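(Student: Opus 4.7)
The plan is to translate the problem into the language of lattice paths and apply the classical cycle lemma for zero-sum integer sequences.

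First, I would use the bijection between $\YD_{w,h}$ and binary sequences $(a_1,\ldots,a_n) \in \{0,1\}^n$ with $\sum a_i = w$ described in the text. To each sequence associate the weighted partial sums $S_t = h \cdot u(t) - w \cdot r(t) = \sum_{i \leq t}(n a_i - w)$, where $u(t), r(t)$ count up-steps and right-steps among the first $t$ entries; note that $S_0 = S_n = 0$. The key identification to prove is that $\lambda \in \YD_{w,h}$ is upper-triangular if and only if $S_t \geq 0$ for all $t$, equivalently, the lattice path $\{(r(t), u(t))\}$ stays weakly above the diagonal from $(0,0)$ to $(h, w)$. For one direction, the south-east corner of the rightmost box of row $i$ appears on the path at $(r, u) = (\lambda_i, w-i)$, where the inequality $S = h(w-i) - w \lambda_i \geq 0$ is exactly $\lambda_i \leq h(w-i)/w$. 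For the converse, since $S_t$ decreases by $w$ after each R-step and increases by $h$ after each U-step, its minima along the path occur precisely at the endpoints of maximal R-runs, i.e. at the south-east corners just described.

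Having set up this correspondence, I would then invoke the standard cycle-lemma argument for zero-sum sequences. Let $t^*$ be any index where $S_{t^*}$ attains its minimum. Cyclic rotation of the sequence by $t^*$ positions transforms the partial sums into $S_{(t^* + k) \bmod n} - S_{t^*}$, which are non-negative by minimality of $S_{t^*}$ (using $S_n = 0$ to handle wrap-around). Since the text identifies the cyclic shift $g \cdot (a_1 \ldots a_n) = a_n a_1 \ldots a_{n-1}$ on binary sequences with the cyclic action on diagrams, an appropriate power of $g$ applied to $\lambda$ yields a diagram in its orbit whose binary sequence has all partial sums $\geq 0$, hence an upper-triangular representative.

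There is essentially no serious obstacle; the only bookkeeping that requires care is the identification of the combinatorial upper-triangularity condition with the non-negativity of partial sums along the path, which reduces to correctly locating the south-east corners of rightmost boxes of nonempty rows on the path and observing that $S_t$ is monotone on R- and U-steps.
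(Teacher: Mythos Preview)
Your argument is correct. The paper does not actually supply a proof of this lemma; it is stated with a citation to \cite{Fonarev2013} and described as ``a very pleasant and easy exercise.'' Your approach via the cycle lemma for zero-sum sequences is the standard one and is almost certainly what the author has in mind: the identification of upper-triangularity with nonnegativity of the partial sums $S_t = h\,u(t) - w\,r(t)$ is exactly the statement that the path stays weakly above the diagonal of the rectangle, and then rotating to the global minimum of $S_t$ is the classical Dvoretzky--Motzkin/cycle-lemma step. One tiny cosmetic point: the phrase ``south-east corner of the rightmost box of row $i$'' is vacuous when $\lambda_i=0$, but you already note this at the end by restricting to nonempty rows, and for empty rows the inequality $0\le h(w-i)/w$ is automatic.
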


For each $\lambda$ denote by $o(\lambda)$ the length of the cyclic orbit
containing $\lambda$.
Our strategy for constructing a Lefschetz basis will be the following:
we want the elements of the basis to be represented by elements of the orbits
of the action, while the support function will be the length of
the corresponding orbit.
One way to pick an element in each orbit is to go for upper-triangular
diagrams. However, by looking at the orbits for $h=3$ and $w=3$ we see
that such representatives are not unique as soon as $h$ and $w$ are not
coprime and $h, w > 2$. Indeed, the corresponding orbits are
\[
   \begin{array}[b]{ccccccccccc}
      \emptyset & \mapsto & \ydiagram{1,1,1} & \mapsto & \ydiagram{2,2,2} & \mapsto & \ydiagram{3,3,3} & \mapsto & \ydiagram{3, 3} & \mapsto & \ydiagram{3} \\[20pt]
      \ydiagram{1} & \mapsto & \ydiagram{2,1,1} & \mapsto & \ydiagram{3,2,2} & \mapsto & \ydiagram{2,2} & \mapsto & \ydiagram{3, 3, 1} & \mapsto & \ydiagram{3, 1} \\[20pt]
      \ydiagram{2} & \mapsto & \ydiagram{3,1,1} & \mapsto & \ydiagram{1,1} & \mapsto & \ydiagram{2,2,1} & \mapsto & \ydiagram{3, 3, 2} & \mapsto & \ydiagram{3, 2} \\[20pt]
      \ydiagram{2, 1} & \mapsto & \ydiagram{3,2,1} &  &  &  & 
   \end{array}
\]
and we see that both $(2, 0, 0)$ and $(1, 1, 0)$ are upper-triangular elements
lying in the same orbit.
A rather straightforward idea is to pick in each orbit the upper-triangular
element which is lexicographically minimal. Denote by $\YDmu_{h,w}$
the set of such diagrams.

\begin{theorem}[{\cite[Theorem~4.3]{Fonarev2013}}]\label{thm:gr-ldb}
   Let $\CA_i = \langle \Sigma^\lambda \CU^* \mid \lambda \in \YDmu_{k, n-k},\ i < o(\lambda) \rangle$. There is a semiorthogonal decomposition
   \begin{equation}\label{eq:gr-ldb}
      D(\Gr(k, V)) = \left\langle \CR^{\rm mu},\, \CA_0, \CA_1(1), \ldots, \CA_{n-1}(n-1) \right\rangle,
   \end{equation}
   where $\CR^{\rm mu}$ is expected to be zero.
\end{theorem}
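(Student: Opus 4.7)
Since $\YD_{k,n-k}$ decomposes into cyclic orbits with exactly one lex-minimal upper-triangular representative per orbit, one has $\sum_{\lambda \in \YDmu_{k,n-k}} o(\lambda) = \binom{n}{k} = \rk K_0(\Gr(k, V))$, so the proposed collection has the right cardinality. The plan is to prove the Lefschetz semiorthogonality; once this is done, $\CR^{\rm mu}$ is defined as the left orthogonal to the rectangular part $\langle \CA_{n-1}, \CA_{n-1}(1), \ldots, \CA_{n-1}(n-1) \rangle$, with admissibility furnished by~\cite{Bondal1990a}. The numerical $K_0$-count will force $K_0(\CR^{\rm mu}) = 0$ but does not force $\CR^{\rm mu} = 0$ as a triangulated category, which is why the fullness statement is left conjectural.

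The natural approach is to build on Theorem~\ref{thm:gr-lda} and mutate. Every orbit of the cyclic $\ZZ/n\ZZ$-action on $\YD_{k, n-k}$ contains at least one upper-triangular diagram, so Theorem~\ref{thm:gr-lda} already furnishes a separate tower of twists $\Sigma^\lambda\CU^*(0), \ldots, \Sigma^\lambda\CU^*(o^{\rm u}(\lambda)-1)$ for each upper-triangular representative, and the sum of tower lengths across the representatives of a single orbit equals $o(\lambda)$. When an orbit contains several upper-triangular diagrams (which occurs precisely when $\gcd(k, n) > 1$), I would use the staircase complexes of Proposition~\ref{prop:starcase} to rewrite the non-lex-minimal towers in terms of twists of the lex-minimal one, fusing them into a single tower of length $o(\mu)$ attached to $\mu \in \YDmu_{k,n-k}$. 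The staircase complex written for the diagram immediately preceding such a non-minimal representative in the orbit should provide the relevant long exact sequence, and the block structure of Theorem~\ref{thm:gr-lda} should ensure that its summands already lie in the correct parts of the decomposition.

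The resulting between-block semiorthogonality reduces to the cohomological vanishings
\begin{equation*}
   \Ext^\bullet\bigl(\Sigma^\mu\CU^*(j),\, \Sigma^\lambda\CU^*(i)\bigr) = H^\bullet\bigl(\Gr(k, V),\ \Sigma^\lambda\CU^* \otimes (\Sigma^\mu\CU^*)^* \otimes \CO(i-j)\bigr) = 0
\end{equation*}
for $\lambda, \mu \in \YDmu_{k, n-k}$ and $0 \leq i < j$ with $i < o(\lambda)$, $j < o(\mu)$; within-block semiorthogonality (the case $i = j$, $\lambda \not\preceq \mu$) is inherited directly from Kapranov's collection (Theorem~\ref{thm:kapranov}). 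Each cross-block Ext-group is computed by decomposing the tensor product via Littlewood--Richardson into a direct sum of irreducibles $\Sigma^\nu\CU^*$ and then applying Borel--Bott--Weil to each summand; the required vanishing amounts to showing that every resulting weight is singular under the dot-action of $S_n$.

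The main obstacle I anticipate is the combinatorial bookkeeping when $\gcd(k, n) > 1$. Whereas the support function $o^{\rm u}$ of Theorem~\ref{thm:gr-lda} depends only on a local property of a single diagram, $o$ is a global invariant of its orbit, so the lex-minimality condition defining $\YDmu_{k,n-k}$ must be used essentially to pin down a consistent representative per orbit. The critical case occurs at $j$ close to $o(\mu)$: one step further, at $j = o(\mu)$, the cyclic structure reproduces $\Sigma^\mu\CU^*$ through a staircase complex, which would break exceptionality, so the bound $j < o(\mu)$ is tight. Finally, the conjectural equality $\CR^{\rm mu} = 0$ is left open precisely because a phantom subcategory (with trivial $K_0$ but nonzero as a triangulated category) cannot be ruled out by the numerical count alone.
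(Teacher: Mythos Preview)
The survey does not give a proof of this theorem; it only cites \cite[Theorem~4.3]{Fonarev2013}. The one methodological hint it offers is the remark following Theorem~\ref{thm:gr-lda} that semiorthogonality there is established via the Borel--Bott--Weil theorem together with the Littlewood--Richardson rule. Since the collection in Theorem~\ref{thm:gr-ldb} consists of the same kind of objects---irreducible equivariant bundles twisted by powers of $\CO(1)$---the same machinery applies, and the direct Ext-computation you outline in your third paragraph is indeed the approach of the original source.

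Two points in your plan need correction. First, $\CR^{\rm mu}$ is not the orthogonal to the \emph{rectangular part} $\langle \CA_{n-1}, \CA_{n-1}(1), \ldots, \CA_{n-1}(n-1)\rangle$; that would be the residual category of a \emph{full} Lefschetz decomposition. Here fullness is precisely what is not known, and $\CR^{\rm mu}$ is simply $\langle \CA_0, \CA_1(1), \ldots, \CA_{n-1}(n-1)\rangle^\perp$, the orthogonal to the entire collection. You have conflated two distinct notions.

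Second, and more seriously, your mutation paragraph is inconsistent with your own conclusion. If one could genuinely mutate the full exceptional collection of Theorem~\ref{thm:gr-lda} into that of Theorem~\ref{thm:gr-ldb} via staircase complexes, the result would automatically be full, i.e.\ $\CR^{\rm mu}=0$, and Conjecture~\ref{conj:gr-ldb} would be settled. But that conjecture is open in general (the survey records it only for $\gcd(k,n)=1$, for $k\le 2$, and for $k=3$). The obstruction is concrete: the staircase complex~\eqref{eq:staricase} resolves $\Sigma^{\bar\lambda}\CU^*(-1)$ by \emph{all} of $\Sigma^{\lambda^{(1)}}\CU^*,\ldots,\Sigma^{\lambda^{(w)}}\CU^*$, not just by twists of a single orbit representative, so there is no sequence of mutations that simply trades a non-lex-minimal tower for further twists of the lex-minimal one while staying inside the collection. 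The theorem asserts only semiorthogonality; keep the BBW+LR argument and drop the mutation step, which promises more than the statement claims and more than is currently known.
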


\begin{conjecture}[{\cite[Conjecture~4.4]{Fonarev2013}}]\label{conj:gr-ldb}
   The orthogonal $\CR^{\rm mu}$ in Theorem~\ref{thm:gr-ldb} vanishes.
   In other words, $\langle \Sigma^\lambda \CU^* \mid \lambda \in \YDmu_{k, n-k}\rangle$
   is a Lefschetz basis with the support function $o$.
\end{conjecture}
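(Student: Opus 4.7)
The plan is to apply Proposition~\ref{prop:full} to the Lefschetz collection $\{\Sigma^\mu\CU^*(i)\mid \mu\in\YDmu_{k,n-k},\ 0\leq i<o(\mu)\}$. Semiorthogonality with respect to the support function $o$ is already established by Theorem~\ref{thm:gr-ldb}, so the only remaining task is to prove fullness, i.e.\ that the subcategory $\CT$ generated by this collection coincides with $D(\Gr(k,V))$. Since $\CO=\Sigma^\emptyset\CU^*$ lies in $\CT$ and $\CO(1)$ is ample, it suffices by Proposition~\ref{prop:full} to show that $\Sigma^\mu\CU^*(o(\mu))\in\CT$ for every $\mu\in\YDmu_{k,n-k}$.

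The strategy is to traverse the cyclic orbit of $\mu$ step by step. At each orbit step $g^j\cdot\mu\to g^{j+1}\cdot\mu$ one of two things happens. If $\ydw(g^j\cdot\mu)<n-k$, then $g^{j+1}\cdot\mu=(g^j\cdot\mu)(1)$ still lies in $\YD_{k,n-k}$ and $\Sigma^{g^{j+1}\cdot\mu}\CU^* \simeq \Sigma^{g^j\cdot\mu}\CU^*\otimes\CO(1)$, so we simply pass from one Kapranov object to the next via a twist by $\CO(1)$. If $\ydw(g^j\cdot\mu)=n-k$, then $(g^j\cdot\mu)(1)\notin\YD_{k,n-k}$ and instead $g^{j+1}\cdot\mu=\overline{g^j\cdot\mu}$; here we invoke the staircase complex~\eqref{eq:staricase} applied to $g^j\cdot\mu$, which, after an appropriate twist, expresses $\Sigma^{g^{j+1}\cdot\mu}\CU^*$ (shifted) as an iterated extension of objects $\Sigma^{\lambda^{(i)}}\CU^*$ (shifted), where each $\lambda^{(i)}$ is a proper subdiagram of $g^j\cdot\mu$. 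Composing all $o(\mu)$ steps, $\Sigma^\mu\CU^*(o(\mu))$ is obtained as an iterated extension of Kapranov objects attached to strictly smaller diagrams, twisted by powers of $\CO$.

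The argument should proceed by induction on $|\mu|$, with the base case $\mu=\emptyset$ handled directly: the orbit passes through the rectangles $(t,\ldots,t)$ for $t=0,\ldots,n-k$ (pure twists of $\CO$) and then shrinks by column-cuts, and every staircase complex used has terms already in $\CT$. The main obstacle is that the subdiagrams $\lambda^{(i)}$ produced by the staircase complex are neither upper-triangular nor lex-min representatives of their orbits in general; each must itself be transported to its $\YDmu$-representative via the same cyclic/staircase machinery, and one must show that this recursive descent terminates correctly and remains inside $\CT$. This bookkeeping is genuinely delicate precisely when $\gcd(k,n)>1$ --- exactly the case where $\YDmu_{k,n-k}\subsetneq\YDu_{k,n-k}$ and the argument for Theorem~\ref{thm:gr-lda} does not directly apply --- because the orbit-representative substitutions can then combine nontrivially, and it is conceivable that some object produced along the way fails to lie in $\CT$ without an additional input. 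A carefully chosen well-founded induction (for instance, on the pair of diagram size and lexicographic order, with subsidiary use of the already-established fullness for lower-dimensional Grassmannians via restriction to $\Gr(k,\Ker\phi)$) is the most promising way to overcome this obstacle.
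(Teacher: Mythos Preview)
The statement you are attempting to prove is not a theorem in the paper: it is explicitly labelled as a \emph{conjecture} (Conjecture~\ref{conj:gr-ldb}), and the paper states that apart from the cases $(k,n)=1$, $k=2$, $n-k=2$, and $k=3$ (the last due to Kuznetsov--Smirnov), ``it remains open in the other cases''. There is therefore no proof in the paper to compare your proposal against.

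Your proposal is not a proof either, and to your credit you say so yourself. The strategy you outline --- run along the cyclic orbit, use the staircase complex~\eqref{eq:staricase} at each wraparound step, and invoke Proposition~\ref{prop:full} --- is exactly the mechanism used in~\cite{Fonarev2013} to prove Theorem~\ref{thm:gr-lda} for the larger set $\YDu_{k,n-k}$. The reason that argument goes through for $\YDu$ but not obviously for $\YDmu$ is precisely the point you flag: the intermediate diagrams $\lambda^{(i)}$ appearing in the staircase complex need not be lex-min representatives of their orbits, and when $\gcd(k,n)>1$ transporting each of them back to its $\YDmu$-representative can require further staircase steps whose own intermediate terms must again be accounted for. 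You do not supply the well-founded induction that would close this loop, and none is known in general; the suggestion to combine size/lex induction with restriction to $\Gr(k,\Ker\phi)$ is reasonable but speculative. In short, you have correctly identified both the natural strategy and the genuine obstruction, but the obstruction is the content of the open problem, not a detail to be filled in.
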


When $(k, n)=1$, one has $\YDmu_{k, n-k} = \YDu_{k, n-k}$, so we deduce from
Theorem~\ref{thm:gr-lda} that Conjecture~\ref{conj:gr-ldb} holds: the two bases
coincide. The same holds true when $k=2$ or $n-k=2$. In the next simplest
case, when $k=3$ and $n=6$, Conjecture~\ref{conj:gr-ldb} was established
in \cite[Proposition~5.7]{Fonarev2013}. In particular, one gets
a~full Lefschetz exceptional collection
\begin{equation}\label{eq:gr35-ldb}
   \begin{pmatrix*}[r]
      \Sigma^{2, 1}\CU^* & \Sigma^{2, 1}\CU^*(1) & & & & \\
      \Lambda^2\CU^* & \Lambda^2\CU^*(1) & \Lambda^2\CU^*(2) & \Lambda^2\CU^*(3) & \Lambda^2\CU^*(4) & \Lambda^2\CU^*(5) \\
      \CU^* & \CU^*(1) & \CU^*(2) & \CU^*(3) & \CU^*(4) & \CU^*(5) \\
      \CO & \CO(1) & \CO(2) & \CO(3) & \CO(4) & \CO(5)
   \end{pmatrix*}
\end{equation}
(compare with~\eqref{eq:gr36-lda}).
In~\cite{KuznetsovSmirnovGr2020} A.~Kuznetsov and Maxim~Smirnov
used decomposition~\eqref{eq:gr-ldb} in order to study a certain generalization
of Dubrovin's conjecture and
established Conjecture~\ref{conj:gr-ldb} for $k=3$, see~\cite[Proposition~A.1]{KuznetsovSmirnovGr2020}. It remains open in the other cases.

Finally, we present our third Lefschetz decomposition. It has the same
shape as the conjecturally full one constructed in Theorem~\ref{thm:gr-ldb},
and one can prove that it is full, although the proof has not appeared in the literature
just yet. Once again, we want to choose a representative in each orbit
of the cyclic action on $\YD_{k, n-k}$ and use the length of the orbit
$o$ as the support function (thus the same shape of the basis).
Let us drop the upper-triangular condition and simply pick the lexicographically
minimal element in each orbit. Denote the set of such elements by $\YDm_{k, n-k}$.

\begin{theorem}[{\cite{FonarevUnpublished}}]\label{thm:gr-ldm}
   There is a Lefschetz decomposition of $D(\Gr(k, V))$ with a Lefschetz
   basis given by
   \begin{equation}\label{eq:gr-ldm}
      \left\langle \Sigma^\lambda\CU^* \mid \lambda\in \YDm_{k, n-k}\right\rangle
   \end{equation}
   and the support function $o$.
\end{theorem}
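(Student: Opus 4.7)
The plan is to follow the strategy used to establish Theorem~\ref{thm:gr-lda}, adjusted to the new choice of orbit representatives: first verify semiorthogonality, and then deduce fullness from Proposition~\ref{prop:starcase} together with Kapranov's Theorem~\ref{thm:kapranov}.

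For semiorthogonality, the task is to show that for any $\lambda,\mu\in\YDm_{k,n-k}$ and any integers $0\leq i<o(\lambda)$, $0\leq j<o(\mu)$ one has
\[
   \Ext^\bullet(\Sigma^\mu\CU^*(j),\,\Sigma^\lambda\CU^*(i))
   \;\simeq\; H^\bullet(\Gr(k,V),\,\Sigma^\mu\CU\otimes\Sigma^\lambda\CU^*\otimes\CO(i-j))\;=\;0
\]
whenever $(\mu,j)$ lies strictly to the right of $(\lambda,i)$ in the Lefschetz order. Decomposing $\Sigma^\mu\CU\otimes\Sigma^\lambda\CU^*$ via the Littlewood--Richardson rule reduces the vanishing to an inspection of weights via the Borel--Bott--Weil theorem. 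The required combinatorial identities are of the same flavour as those carried out in~\cite[\S4]{Fonarev2013}, but with the inequalities defining $\YDm_{k,n-k}$ in place of those defining $\YDu_{k,n-k}$.

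For fullness one proves by induction on $|\lambda|$ that every bundle $\Sigma^\lambda\CU^*$ with $\lambda\in\YD_{k,n-k}$ lies in the triangulated subcategory $\CT$ generated by the proposed Lefschetz collection. Given $\lambda$, let $\lambda_{\min}$ denote the lexicographically minimal element of its cyclic orbit, so that $\lambda_{\min}\in\YDm_{k,n-k}$ and all twists $\Sigma^{\lambda_{\min}}\CU^*(t)$ for $0\leq t<o(\lambda_{\min})$ are in $\CT$ by construction. It then suffices to prove that if $\Sigma^\nu\CU^*$ lies in $\CT$ (up to an appropriate twist), then so does $\Sigma^{g\cdot\nu}\CU^*$. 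If $\ydw(\nu)<n-k$ this is immediate since $\Sigma^{g\cdot\nu}\CU^*\simeq\Sigma^\nu\CU^*(1)$. If $\ydw(\nu)=n-k$ one applies the staircase complex~\eqref{eq:staricase} to $\nu$, which presents $\Sigma^{g\cdot\nu}\CU^*(-1)=\Sigma^{\bar\nu}\CU^*(-1)$ as an iterated extension of $\Sigma^\nu\CU^*$ and the strictly smaller bundles $\Sigma^{\nu^{(i)}}\CU^*$ with $|\nu^{(i)}|<|\nu|$; all of them are in $\CT$ by the induction hypothesis, and hence so is $\Sigma^{g\cdot\nu}\CU^*$.

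The main obstacle is not the existence of these generating relations but the numerical bookkeeping attached to them: one has to verify that at every stage of the induction the twists of $\Sigma^{\lambda_{\min}}\CU^*$ which arise stay within the prescribed window $0\leq t<o(\lambda_{\min})$ dictated by the support function $o$. Equivalently, travelling once around a cyclic orbit of length $o(\lambda_{\min})$ via the above procedure must yield precisely the Serre twist $\otimes\,\CO(n)$ and no relation of shorter period. Checking this refinement --- which is delicate precisely because the lex-minimal representative need not be upper-triangular when $\gcd(k,n)>1$, so the clean inductive bound used for Theorem~\ref{thm:gr-lda} is no longer available --- is the content of the forthcoming~\cite{FonarevUnpublished}.
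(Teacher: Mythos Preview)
The paper itself does not prove this theorem; it is attributed to the forthcoming~\cite{FonarevUnpublished}, so there is no argument in the text against which to compare yours. Your outline---Borel--Bott--Weil plus Littlewood--Richardson for semiorthogonality, staircase complexes plus induction for fullness---is indeed the natural strategy and matches what the survey says was done for the closely related Theorem~\ref{thm:gr-lda}.

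That said, the fullness induction you write down does not work as stated, and the defect is more than the twist bookkeeping you flag in your last paragraph. You induct on $|\lambda|$ and, for a fixed $\lambda$, walk along the orbit from $\lambda_{\min}$ to~$\lambda$. But the intermediate diagrams $\nu$ on that walk can have $|\nu|>|\lambda|$, and the staircase correction terms $\nu^{(i)}$ satisfy only $|\nu^{(i)}|<|\nu|$, not $|\nu^{(i)}|<|\lambda|$; your inductive hypothesis therefore says nothing about them. Concretely, for $k=n-k=3$ take $\lambda=(2,0,0)$ with $\lambda_{\min}=(1,1,0)$: the walk $\lambda_{\min}\to(2,2,1)\to(3,3,2)\to(3,2,0)\to\lambda$ passes through $\nu=(3,3,2)$, whose staircase terms $(2,2,2)$, $(2,1,1)$, $(2,1,0)$ all have size exceeding $|\lambda|=2$. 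A second issue compounds this: since the staircase complex yields $\Sigma^{\bar\nu}\CU^*(-1)$, undoing the twist requires $\Sigma^{\nu^{(i)}}\CU^*(1)\in\CT$, and further steps along the orbit accumulate still higher twists---none of which is delivered by the hypothesis ``$\Sigma^\mu\CU^*\in\CT$ for $|\mu|<|\lambda|$''. So the skeleton of the induction, not just its bookkeeping, has to be chosen differently: for instance a simultaneous statement about $\Sigma^\lambda\CU^*(t)$ for a range of $t$ depending on $\lambda$, or an induction on an invariant other than~$|\lambda|$. This is presumably exactly what \cite{FonarevUnpublished} will supply.
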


Remark that the sets $\YDm_{w,h}$ and $\YDmu_{w, h}$ are different even
when $h$ and $w$ are coprime.
Indeed, the diagram $(1, 1, 1, 0)\in \YDm_{4, 3}$ is minimal, but the only
upper-triangular element in its orbit is $(2, 0, 0, 0)\in \YDmu_{4, 3}$.

\section{Kuznetsov--Polishchuk Construction}
\subsection{Equivariant vector bundles}
Let $\bfG$ be a simple algebraic group, and let $\bfP\subset \bfG$
be a maximal parabolic subgroup. Such subgroups are in one to one correspondence
with simple roots of $\bfG$. We are interested in $D(X)$, where $X=\bfG/\bfP$.
Since we know that all exceptional objects in $D(X)$ admit an equivariant structure,
we may being with studying the equivariant derived category $D^\bfG(X)$.
It is well known that the category of $\bfG$-equivariant sheaves
$\Coh^\bfG(X)$ is equivalent to the category of representations
\[
   \Coh^\bfG(X) \cong \Rep(\bfP),
\]
see~\cite{Bondal1990b}. Under this equivalence irreducible equivariant
vector bundles correspond to irreducible representations of $\bfP$, which
is not a reductive group. Denote by $\bfU\subset \bfP$ the unipotent radical,
and by $\bfL=\bfP/\bfU$ the semisimple Levi quotient. The semisimple representations
of $\bfP$ are precisely those restricted from $\bfL$.

Denote by $P^+_\bfL$ the dominant weight lattice of $\bfL$, and by
$\CU^\lambda$ the $\bfG$-equivariant vector bundle on $X$ corresponding
to $\lambda\in P^+_\bfL$. It is rather easy to construct an infinite
exceptional collection in $D^\bfG(X)$.

\begin{theorem}[{\cite[Theorem~3.4]{KuznetsovPolishchuk2016}}]\label{thm:gpg}
   Let $\xi$ denote the fundamental weight corresponding to the simple root
   associated with the maximal parabolic $\bfP\subset\bfG$.
   Consider the partial order on $P^+_\bfL$ \emph{opposite} to the so called
   \emph{$\xi$-ordering} given by the formula
   \[
      \lambda \preceq \mu \quad \text{if } (\xi, \lambda) < (\xi, \mu) \text{ or } \lambda=\mu.
   \]
   The bundles $\{\CU^\lambda \mid \lambda\in \bfP\subset\bfG\}$ form
   an infinite full exceptional collection in $D^\bfG(X)$.
\end{theorem}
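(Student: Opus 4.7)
The plan is to reduce the computation to rational cohomology on the parabolic $\bfP$ via the equivalence $\Coh^\bfG(X) \cong \Rep(\bfP)$. Under this equivalence $\CU^\lambda$ corresponds to the irreducible $\bfL$-representation $V^\lambda$ inflated along the quotient $\bfP \twoheadrightarrow \bfL$; in particular the unipotent radical $\bfU$ acts trivially on $V^\lambda$. Hence
\[
   \Ext^i(\CU^\mu, \CU^\lambda) \;\cong\; H^i\bigl(\bfP,\, V^\lambda \otimes (V^\mu)^*\bigr),
\]
and the whole problem becomes a weight computation on $\bfP$.

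Next, I would apply the Hochschild--Serre spectral sequence to the extension $1 \to \bfU \to \bfP \to \bfL \to 1$. Since the base field has characteristic zero, $\bfL$ is reductive and $H^{>0}(\bfL, -)$ vanishes on all rational $\bfL$-modules, so the spectral sequence collapses to
\[
   H^\bullet(\bfP, W) \;=\; H^\bullet(\bfU, W)^{\bfL}.
\]
When $W$ is trivial as a $\bfU$-module (our situation), rational cohomology of the unipotent group coincides with Lie algebra cohomology and factors as $H^\bullet(\bfU, W) = H^\bullet(\mathfrak{u}, \kk) \otimes W$, where $H^\bullet(\mathfrak{u}, \kk)$ is an $\bfL$-equivariant subquotient of the Chevalley--Eilenberg complex $\Lambda^\bullet \mathfrak{u}^*$.

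The decisive step is then a comparison of $\xi$-weights. On one hand $\mathfrak{u} = \bigoplus_{\alpha} \mathfrak{g}_\alpha$, where $\alpha$ ranges over positive roots whose coefficient of the simple root dual to $\xi$ is positive; consequently every weight of $\Lambda^q \mathfrak{u}^*$, and hence of $H^q(\mathfrak{u}, \kk)$, for $q \geq 1$ has strictly negative $\xi$-pairing. On the other hand $\xi$ is orthogonal to every simple root of $\bfL$, so all weights of $V^\lambda$ take the value $(\xi, \lambda)$ under $\xi$ and all weights of $(V^\mu)^*$ take the value $-(\xi, \mu)$. The existence of a nonzero $\bfL$-invariant in $H^q(\mathfrak{u}, \kk) \otimes V^\lambda \otimes (V^\mu)^*$ forces its total $\xi$-weight to vanish, which for $q \geq 1$ requires $(\xi, \mu) < (\xi, \lambda)$, and for $q = 0$ requires $(\xi, \mu) = (\xi, \lambda)$ together with $\Hom_{\bfL}(V^\mu, V^\lambda) \neq 0$, i.e.\ $\mu = \lambda$ by Schur. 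Taking $\mu = \lambda$ rules out all $\Ext^{>0}$ and gives $\Ext^0 = \kk$, so the bundles are exceptional; the general statement is exactly semiorthogonality in the $\xi$-ordering.

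Fullness is the easy part: under $\Coh^\bfG(X) \cong \Rep(\bfP)$ the simple objects are precisely the $V^\lambda$ with $\lambda \in P^+_\bfL$, and since any coherent $\bfG$-equivariant sheaf has a finite composition series with such simple subquotients, iterated cones show that $\{\CU^\lambda\}$ classically generates $D^\bfG(X)$. The one place I expect to slow down is checking that Hochschild--Serre degenerates with rational coefficients for the algebraic extension $1 \to \bfU \to \bfP \to \bfL \to 1$; however, in characteristic zero this rests on semisimplicity of $\Rep(\bfL)$ and is standard, so no genuine obstacle remains.
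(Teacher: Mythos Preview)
The survey does not supply its own proof; the statement is quoted from \cite{KuznetsovPolishchuk2016}. Your argument---passing to $\Rep(\bfP)$, collapsing Hochschild--Serre for $1\to\bfU\to\bfP\to\bfL\to 1$ via reductivity of $\bfL$ in characteristic zero, and reading off the $\xi$-weights of $\Lambda^\bullet\mathfrak{u}^*$---is correct and is the standard proof, essentially the one in the cited source. One caveat worth tracking: as you have set things up, the weight count gives $\Ext^{>0}(\CU^\mu,\CU^\lambda)\neq 0 \Rightarrow (\xi,\mu)<(\xi,\lambda)$, which is semiorthogonality for the $\xi$-ordering itself, whereas the theorem as stated here uses its \emph{opposite}; the discrepancy is purely a convention issue (whether the root spaces in $\mathfrak{u}$ pair positively or negatively with $\xi$, or equivalently whether $\CU^\lambda$ is associated to $V^\lambda$ or to $(V^\lambda)^*$ under $\Coh^\bfG(X)\cong\Rep(\bfP)$), so align your signs with those of \cite{KuznetsovPolishchuk2016} before declaring the direction.
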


There is an obvious forgetful functor
\[
   {\sf Fg}: D^\bfG(X)\to D(X).
\]
The idea of Kuznetsov and Polishchuk was to find a condition
under which a subcollection of the collection from Theorem~\ref{thm:gpg}
is mapped to an exceptional collection in the non-equivariant derived category.
They found one and called it the \emph{exceptional block} condition.

\subsection{Exceptional blocks}
Since the group $\bfG$ is reductive, the functor of taking $\bfG$-invariants
is exact. Thus, for any equivariant objects
$F, G\in D^\bfG(X)$ one has $\Ext_\bfG^i(F, G) = (\Ext_{D(X)}^i(F, G))^\bfG$,
and there is little chance that even exceptional objects are sent to exceptional
objects via the forgetful functor.

For any $\lambda, \mu\in P^+_\bfL$ the forgetful functor induces a linear map
\[
   {\sf Fg}: \Ext_\bfG^i(\CU^\lambda, \CU^\mu) \to \Ext_{D(X)}^i(\CU^\lambda, \CU^\mu),
\]
and for any triple $\lambda, \mu, \nu \in P^+_\bfL$ one gets a bilinear composition map
\[
   \Hom(\CU^\nu, \CU^\mu)\otimes_\kk \Ext_\bfG^i(\CU^\lambda, \CU^\nu)\to \Ext_{D(X)}^i(\CU^\lambda, \CU^\mu).
\]

\begin{definition}[{\cite[Definition~3.1]{KuznetsovPolishchuk2016}}]\label{def:reb}
   A subset $B\subset P^+_\bfL$ of dominant weights is called a \emph{(right) exceptional block} if the natural morphism
   \[
      \bigoplus_{\nu \in B}\Hom(\CU^\nu, \CU^\mu)\otimes_\kk \Ext_\bfG^i(\CU^\lambda, \CU^\nu)\to \Ext_{D(X)}^i(\CU^\lambda, \CU^\mu)
   \]
   is an isomorphism for all $\lambda, \mu\in B$.
\end{definition}

The definition of a right exceptional block says that any non-equivariant extension
between two irreducible equivariant bundles associated with two elements
of the block can be decomposed in a unique way as a sum of equivariant extensions
from the target into equivariant bundles associated with (other) elements of the block composed
with non-equivariant homomorphisms into the target.

The adjective \emph{right} does not appear in~\cite{KuznetsovPolishchuk2016};
however, it turns out that it is often convenient to consider left exceptional blocks.

\begin{definition}\label{def:leb}
   A subset $B\subset P^+_\bfL$ of dominant weights is called a \emph{left exceptional block} if the natural morphism
   \[
      \bigoplus_{\nu \in B} \Ext_\bfG^i(\CU^\nu, \CU^\mu)\otimes_\kk \Hom(\CU^\lambda, \CU^\nu)\to \Ext_{D(X)}^i(\CU^\lambda, \CU^\mu)
   \]
   is an isomorphism for all $\lambda, \mu\in B$.
\end{definition}

Here, we ask for each non-equivariant extension to decompose into a sum of
non-equivariant homomorphisms
followed by equivariant extensions.

Let $B\subset P^+_\bfL$ be an right exceptional block. According to Theorem~\ref{thm:gpg},
the bundles $\{\CU^\lambda \mid \lambda \in B\}$ form a graded exceptional collection in $D^\bfG(X)$.
Let $\{\CE^\lambda_B \mid \lambda \in B\}$ denote the \emph{graded right dual}
exceptional collection.

\begin{proposition}[{\cite[Proposition 3.9]{KuznetsovPolishchuk2016}}]\label{prop:reb}
   Let $B\subset P^+_\bfL$ be a right exceptional block. The objects
   \[\{{\sf Fg}(\CE^\lambda_B) \mid \lambda \in B\}\] form a graded exceptional
   collection in $D(X)$ with respect to the $\xi$-ordering.
\end{proposition}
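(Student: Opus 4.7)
My approach is to propagate the right exceptional block condition of Definition~\ref{def:reb} from the generators $\CU^\lambda$ to the graded dual objects $\CE^\lambda_B$ via dévissage, and then extract the required semiorthogonality and exceptionality by collapsing the resulting direct sums using the defining property of the graded right dual.

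The first step is to extend the block isomorphism to arbitrary objects in the first variable. Denote by $\CC \subset D^\bfG(X)$ the $\bfG$-equivariant triangulated subcategory generated by $\{\CU^\nu \mid \nu \in B\}$; each $\CE^\lambda_B$ lies in $\CC$, being built from the $\CU^\nu$'s by the mutation process producing the graded right dual. For fixed $\mu \in B$, the natural map
\[
   \bigoplus_{\nu \in B} \Hom_{D(X)}(\CU^\nu, \CU^\mu) \otimes_\kk \RHom_\bfG(F, \CU^\nu) \longrightarrow \RHom_{D(X)}(F, \CU^\mu)
\]
is a morphism of contravariant cohomological functors in $F$; by Definition~\ref{def:reb} it is an isomorphism when $F = \CU^\lambda$ for any $\lambda\in B$, so a five-lemma induction over a filtration of $F$ by the generators of $\CC$ propagates the isomorphism to every $F \in \CC$.

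Specializing to $F = \CE^\lambda_B$ and using the defining vanishing $\RHom_\bfG(\CE^\lambda_B, \CU^\nu) = \delta_{\lambda,\nu} \kk[-d_\lambda]$, the sum collapses to a single term and yields
\[
   \RHom_{D(X)}({\sf Fg}(\CE^\lambda_B), {\sf Fg}(\CU^\mu)) \cong \Hom_{D(X)}(\CU^\lambda, \CU^\mu)[-d_\lambda].
\]
To compute $\RHom_{D(X)}({\sf Fg}(\CE^\lambda_B), {\sf Fg}(\CE^\mu_B))$, I run a second dévissage in the second argument: express $\CE^\mu_B$ as an iterated cone of shifted $\CU^\sigma$'s (the structure maps being the equivariant Ext classes from the defining construction of the graded right dual) and apply the displayed formula at each stage of the associated long exact sequences. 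Combining this with Schur's lemma for the Levi $\bfL$, which gives $\Hom_\bfG(\CU^\lambda, \CU^\nu) = \delta_{\lambda,\nu}\kk$, the successive convolutions collapse into a single copy of $\kk$ in degree $0$ when $\lambda = \mu$, and vanish whenever $\lambda \not\preceq_\xi \mu$.

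The main obstacle is the degree bookkeeping. The grading shift $d_\lambda$ from the defining vanishing of $\RHom_\bfG(\CE^\lambda_B, \CU^\lambda)$ must cancel exactly against the cohomological shifts introduced in the second dévissage, so that the non-equivariant Ext groups between ${\sf Fg}(\CE^\lambda_B)$ and ${\sf Fg}(\CE^\mu_B)$ land in degree $0$ and the collection is genuinely exceptional. One must also track the reversal of the $\xi$-ordering under dualization: Theorem~\ref{thm:gpg} provides the equivariant collection with respect to the \emph{opposite} of the $\xi$-ordering, while the claim here is with respect to the $\xi$-ordering itself, so the dévissage must be organized so that semiorthogonality is extracted in the correct direction.
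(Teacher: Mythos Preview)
The survey does not spell out a proof of this proposition; it merely cites \cite[Proposition~3.9]{KuznetsovPolishchuk2016} and remarks that the proof ``is quite easy.'' So there is no in-text argument to compare against, and your d\'evissage strategy is indeed the standard one used in the cited source: extend the block isomorphism in the first argument by the five lemma, then specialise to $\CE^\lambda_B$ using the defining property of the right dual. Your first two steps are correct and match that approach.

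There is, however, a genuine looseness in your third step. After specialising you obtain
\[
   \RHom_{D(X)}(\CE^\lambda_B,\CU^\mu)\ \cong\ \Hom_{D(X)}(\CU^\lambda,\CU^\mu)[-d_\lambda],
\]
and the right-hand side involves the \emph{non-equivariant} $\Hom_{D(X)}$, which is typically large. Invoking Schur's lemma for $\Hom_\bfG$ does not collapse anything here, because $\Hom_\bfG$ does not appear in this formula. When you then filter $\CE^\mu_B$ by the $\CU^\sigma$'s and apply the displayed identification termwise, the graded pieces of $\RHom_{D(X)}(\CE^\lambda_B,\CE^\mu_B)$ are copies of $\Hom_{D(X)}(\CU^\lambda,\CU^\sigma)$ in various degrees, and you have not explained why these cancel. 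The actual mechanism is that the displayed isomorphism is functorial with respect to \emph{equivariant} morphisms in the second variable (the structure maps of the Postnikov tower for $\CE^\mu_B$ are equivariant), so the second d\'evissage produces a complex whose differentials are precisely post-composition with those equivariant Ext classes; the totalisation then computes $\Hom_{D(X)}(\CU^\lambda,\CU^\mu)$ applied to the defining resolution of $\CE^\mu_B$, and it is the dual-collection relation $\RHom_\bfG(\CE^\lambda_B,\CU^\nu)=\delta_{\lambda\nu}\kk[-d_\lambda]$, not Schur's lemma, that performs the collapse. Your ``main obstacle'' paragraph correctly flags the bookkeeping, but as written the proposal leaves this cancellation unjustified.
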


There is a similar result for left exceptional blocks: for a left exceptional
block $B\subset P^+_\bfL$ let $\{\CF^\lambda_B \mid \lambda \in B\}$ denote the \emph{graded left dual}
exceptional collection to $\{\CU^\lambda \mid \lambda \in B\}$.

\begin{proposition}\label{prop:leb}
   Let $B\subset P^+_\bfL$ be a left exceptional block. The objects
   \[\{{\sf Fg}(\CF^\lambda_B) \mid \lambda \in B\}\] form a graded exceptional
   collection in $D(X)$ with respect to the $\xi$-ordering.
\end{proposition}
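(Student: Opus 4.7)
The proof is parallel to that of Proposition~\ref{prop:reb} with the roles of the two slots of $\Ext$ exchanged throughout, and I would proceed in two steps. First, I would extend the isomorphism of Definition~\ref{def:leb} to allow arbitrary objects in the second slot that lie in the equivariant triangulated subcategory $\langle\CU^\nu\mid\nu\in B\rangle\subset D^\bfG(X)$: for fixed $\lambda\in B$, both sides of the natural transformation
\begin{equation*}
  \bigoplus_{\nu\in B}\Ext^\bullet_\bfG(\CU^\nu,-)\otimes_\kk\Hom_{D(X)}(\CU^\lambda,\CU^\nu)\longrightarrow\Ext^\bullet_{D(X)}(\CU^\lambda,-)
\end{equation*}
are cohomological functors on $D^\bfG(X)$, and by Definition~\ref{def:leb} the map is an isomorphism on each generator $\CU^\mu$; hence it is an isomorphism on the whole subcategory. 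Evaluating at $\CF^\mu_B$ and using $\Ext^\bullet_\bfG(\CU^\nu,\CF^\mu_B)=\kk[-|\mu|]\delta_{\nu,\mu}$ from the defining property of the graded left dual yields
\begin{equation*}
  \Ext^i_{D(X)}(\CU^\lambda,\CF^\mu_B)\cong\Hom_{D(X)}(\CU^\lambda,\CU^\mu)\ \text{for }i=|\mu|,\quad 0\text{ otherwise}.
\end{equation*}

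Second, I would compute $\Ext^\bullet_{D(X)}(\CF^\lambda_B,\CF^\mu_B)$ by presenting $\CF^\lambda_B$ as an iterated equivariant cone of the $\CU^\nu$ in $\langle\CU^\nu\mid\nu\in B\rangle$ and applying the contravariant functor $\RHom_{D(X)}(-,\CF^\mu_B)$ to this presentation. The previous step computes the $E_1$-page of the resulting spectral sequence, and the parallel equivariant computation identifies this spectral sequence, up to the uniform shift by $[|\mu|]$, with the one converging to $\Ext^\bullet_\bfG(\CF^\lambda_B,\CF^\mu_B)$. Since $\{\CF^\lambda_B\}$ is already an exceptional collection in $D^\bfG(X)$ with respect to the opposite poset, which coincides with the $\xi$-ordering, the equivariant answer is $\kk$ concentrated in degree zero when $\lambda=\mu$ and vanishes whenever $\lambda\not\preceq\mu$ in the $\xi$-ordering; the spectral-sequence comparison transfers this conclusion to $D(X)$.

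The principal obstacle is hidden in step two: one must verify that the isomorphism of step one is compatible with the equivariant connecting maps and higher compositions that glue the Postnikov tower of $\CF^\lambda_B$ together, not merely with the underlying cohomology groups. Concretely, one should upgrade the left block condition to an $A_\infty$-quasi-isomorphism between the non-equivariant Ext algebra of $\bigoplus_{\nu\in B}\CU^\nu$ and the appropriate base change of its equivariant Ext algebra by the non-equivariant Hom spaces. Once this compatibility is in place, the differentials on the two $E_1$-pages are identified and the exceptionality of $\{{\sf Fg}(\CF^\lambda_B)\}$ with respect to the $\xi$-ordering follows at once.
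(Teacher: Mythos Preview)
The paper itself gives no proof of Proposition~\ref{prop:leb}: it is stated as the left-handed analogue of Proposition~\ref{prop:reb}, which in turn is quoted from~\cite{KuznetsovPolishchuk2016}. So there is nothing in the paper to compare your argument against beyond the remark that the proof ``is quite easy'' and mirrors the right-block case. Your overall strategy---extend the block isomorphism to the second variable, plug in $\CF^\mu_B$, then resolve $\CF^\lambda_B$ by its equivariant Postnikov tower---is exactly the expected one.

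Your step one is correct and is the standard opening move. Step two, however, does not work as written. You claim that the spectral sequence converging to $\Ext^\bullet_{D(X)}(\CF^\lambda_B,\CF^\mu_B)$ is identified, up to a shift, with the one converging to $\Ext^\bullet_\bfG(\CF^\lambda_B,\CF^\mu_B)$. But their $E_1$-pages are genuinely different: by your step one the non-equivariant page has entries $\Hom_{D(X)}(\CU^\nu,\CU^\mu)$, whereas the equivariant page has entries $\Ext^\bullet_\bfG(\CU^\nu,\CF^\mu_B)=\kk\,\delta_{\nu\mu}$. No shift turns one into the other, so the ``identification'' fails already at $E_1$. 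This is not merely the hidden obstacle you flag afterwards; it is a false assertion.

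You are right that what is really needed is compatibility of the step-one isomorphism with the gluing maps of the Postnikov tower, but the full $A_\infty$ upgrade you propose is heavier than necessary. The isomorphism of Definition~\ref{def:leb} is given by composition, hence it is automatically a map of left modules over the equivariant $\Ext$-algebra $R^\bfG=\Ext^\bullet_\bfG(\bigoplus_\nu\CU^\nu,\bigoplus_\nu\CU^\nu)$, and the block condition says the non-equivariant $\Ext$-algebra is free as a left $R^\bfG$-module on the degree-zero $\Hom$-algebra $A$. Since the connecting maps of the Postnikov tower for $\CF^\lambda_B$ lie in $R^\bfG$, this left-module compatibility is exactly what controls the differentials on your $E_1$-page and lets you read off the answer; no $A_\infty$ statement is required. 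This is the content of the (elementary) argument in~\cite[\S3]{KuznetsovPolishchuk2016} that the paper is pointing to.
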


The proof of Proposition~\ref{prop:reb} is quite easy. The idea of the exceptional
block condition is beautiful and very powerful.
Most of~\cite{KuznetsovPolishchuk2016} is dedicated to constructing enough
orthogonal exceptional blocks for isotropic Grassmannians (quotients by maximal
parabolics of the simple groups of types $B$, $C$, and $D$) to produce
exceptional collections of maximal possible length (equal to the rank of $K_0$).
These results will be covered below.
The authors also formulated a certain curious conjecture
for classical Grassmannians, which we discuss in the following two sections.

\subsection{Generalized staircase complexes}
We have seen that the bundles $\Sigma^\lambda\CU^*$ (dually, $\Sigma^\lambda\CU$)
are exceptional on $\Gr(k, V)$ for $\lambda\in\YD_{k, n-k}$, and so are
$\Sigma^\mu\CU^\perp$ (dually $\Sigma^\mu(V/\CU)$) for $\mu\in\YD_{n-k, k}$.
It turns out, there is a way to ``interpolate'' between the collections
$\langle \Sigma^\lambda\CU^* \mid \lambda\in \YD_{k, n-k} \rangle$ and
$\langle \Sigma^\lambda(V/\CU)\mid \mu\in \YD_{n-k, k}\rangle$.
In the process, certain interesting exceptional bundles naturally appear.
In the present section we give a geometric description of these bundles
and present a generalization of staircase complexes.

To a pair of diagrams $\lambda\in \YD_{k, n-k}$ and $\mu\in \YD_{n-k, k}$ such that
\begin{equation}\label{eq:block-ineq}
   \ydw(\lambda) + \ydh(\mu) \leq n-k \qquad \text{and} \qquad
   \ydh(\lambda) + \ydw(\mu) \leq k
\end{equation}
we associate an equivariant exceptional vector bundle $\CF^{\lambda, \mu}$.
Its dual will be denoted by $\CE^{\lambda, \mu} = \left(\CF^{\lambda, \mu}\right)^*$.

Let $h= \ydh(\lambda)$ and assume that $0<h<k$.
Consider the diagram
\begin{equation}\label{eq:gr-elm1}
   \begin{tikzcd}
      & \Fl(k-h, k; V) \arrow[dr, "q"] \arrow[dl, "p"'] & \\
      \Gr(k, V) & & \Gr(k-h, V).
   \end{tikzcd}
\end{equation}
We denote by $\CU$ and $\CW$ the tautological bundles on
$\Gr(k, V)$ and $\Gr(k-h, V)$, respectively, as well as their
pullbacks under $p$ and $q$. Thus, the universal flag on
$\Fl(k-h, k; V)$ is $\CW\subset\CU\subset V$.

\begin{proposition}[\cite{Fonarev2014}]\label{prop:gr-flm}
   If $\lambda$ and $\mu$ satisfy~\eqref{eq:block-ineq} and $h=\ydh(\lambda)$
   satisfies $0 < h < k$, then
   \begin{equation*}
      R^ip_*\!\left(\Sigma^\lambda(\CU/\CW)\otimes q^*\Sigma^\mu\CW^\perp\right) = 0
      \quad \text{for all}\quad i > 0
   \end{equation*}
   and
   \begin{equation}\label{eq:flm}
      \CF^{\lambda, \mu} = p_*\!\left(\Sigma^\lambda(\CU/\CW)\otimes q^*\Sigma^\mu\CW^\perp\right)
   \end{equation}
   is an exceptional vector bundle on $\Gr(k, V)$.
\end{proposition}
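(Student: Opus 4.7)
The strategy is to compute $Rp_*$ fiberwise via Borel--Bott--Weil, treating $p\colon \Fl(k-h, k; V) \to \Gr(k, V)$ as the relative Grassmannian $\Gr_{\Gr(k,V)}(k-h, \CU)$ whose fiber over $[U]$ is $\Gr(k-h, k)$. The first move is to unpack $q^*\Sigma^\mu\CW^\perp$: dualizing the tautological sequence $0\to\CU/\CW\to V/\CW\to V/\CU\to 0$ on $\Fl(k-h, k; V)$ gives
\[
0\to \CU^\perp\to \CW^\perp\to (\CU/\CW)^*\to 0,
\]
and applying $\Sigma^\mu$ produces a canonical filtration of $\Sigma^\mu\CW^\perp$ whose associated graded is a direct sum, with Littlewood--Richardson multiplicities, of bundles of the form $\Sigma^\alpha\CU^\perp\otimes \Sigma^\beta(\CU/\CW)^*$ with $|\alpha|+|\beta|=|\mu|$.

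Tensoring with $\Sigma^\lambda(\CU/\CW)$, applying $Rp_*$, and invoking the projection formula (since $\CU^\perp=p^*\CU^\perp$) reduces each graded piece to
\[
\Sigma^\alpha\CU^\perp\otimes Rp_*\!\bigl(\Sigma^\lambda(\CU/\CW)\otimes\Sigma^\beta(\CU/\CW)^*\bigr).
\]
I would compute the inner push-forward on each fiber $\Gr(k-h,\CU)$ by decomposing $\Sigma^\lambda(\CU/\CW)\otimes\Sigma^\beta(\CU/\CW)^*$ into irreducible $\GL_h$-summands and applying Borel--Bott--Weil weight by weight. The key claim to establish is that the inequalities~\eqref{eq:block-ineq} translate, after the $\rho$-shift on $\Gr(k-h,k)$, into the statement that every arising weight is either singular (contributing zero) or already regular dominant with trivial Weyl action (contributing only in degree zero). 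This would force $R^ip_*=0$ for $i>0$ and would identify $p_*$ with a single irreducible $\bfG$-equivariant vector bundle $\CF^{\lambda,\mu}$ on $\Gr(k,V)$ of the expected rank.

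Once $\CF^{\lambda,\mu}$ is produced as an irreducible $\bfG$-equivariant bundle, exceptionality follows from Schur's lemma, which gives $\Hom(\CF^{\lambda,\mu},\CF^{\lambda,\mu})=\kk$, combined with a second application of Borel--Bott--Weil: one decomposes $(\CF^{\lambda,\mu})^*\otimes\CF^{\lambda,\mu}$ into irreducible equivariant summands and argues that every nontrivial summand has singular $\rho$-shifted highest weight on $\Gr(k,V)$, so that $\Ext^i(\CF^{\lambda,\mu},\CF^{\lambda,\mu})=H^i(\Gr(k,V),(\CF^{\lambda,\mu})^*\otimes\CF^{\lambda,\mu})=0$ for $i>0$.

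The main obstacle is the combinatorial bookkeeping in the fiberwise step: one must verify that the constraints~\eqref{eq:block-ineq} translate, through the $\GL_h$-decomposition of $\Sigma^\lambda(\CU/\CW)\otimes\Sigma^\beta(\CU/\CW)^*$ and Borel--Bott--Weil on $\Gr(k-h,k)$, into a clean regularity statement for each weight that appears in the filtration. These inequalities should cut out exactly the range in which the push-forward from $\Fl(k-h,k;V)$ collapses to a single irreducible equivariant bundle — that is the geometric content of the proposition and the reason for the numerical constraints on the pair $(\lambda,\mu)$.
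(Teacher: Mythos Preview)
The paper is a survey and does not prove this proposition; it is cited from \cite{Fonarev2014} (with full details in \cite{FonarevKP2015}). So there is no proof in the paper to compare against, and I can only evaluate your proposal on its own merits.

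Your plan for the vanishing of $R^ip_*$ is reasonable in outline: filter $\Sigma^\mu\CW^\perp$ via the short exact sequence $0\to\CU^\perp\to\CW^\perp\to(\CU/\CW)^*\to 0$, pull out $\Sigma^\alpha\CU^\perp$ by the projection formula, and reduce to a relative Borel--Bott--Weil computation on the fibres $\Gr(k-h,U)$. The combinatorics you flag as ``the main obstacle'' is genuine but tractable.

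The real gap is in the second half. You assert that the pushforward is a \emph{single irreducible} $\bfG$-equivariant bundle and then invoke Schur's lemma to get $\Hom(\CF^{\lambda,\mu},\CF^{\lambda,\mu})=\kk$. This is false. The very filtration you set up shows that $\CF^{\lambda,\mu}$ has, in general, \emph{many} irreducible equivariant subquotients: one for each pair $(\alpha,\beta)$ that survives. The paper makes this explicit just after the proposition: there is an epimorphism $\CE^{\lambda,\mu}\to\Sigma^\lambda\CU^*\otimes\Sigma^\mu(V/\CU)$ with nonzero kernel, and the first nontrivial example is the \emph{universal extension}
\[
0\to \CU\otimes\CU^\perp\to \CF^{\Box,\Box}\to \CO\to 0.
\]
So $\CF^{\lambda,\mu}$ is typically an indecomposable but \emph{non}-semisimple $\bfP$-representation, and Schur's lemma gives you nothing about $\Hom$ in the non-equivariant category. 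Likewise, the proposed $\Ext$-vanishing argument --- decompose $(\CF^{\lambda,\mu})^*\otimes\CF^{\lambda,\mu}$ into irreducibles and apply Borel--Bott--Weil --- cannot work as stated: that tensor product has many summands with nontrivial cohomology in various degrees, and it is precisely the extension data in $\CF^{\lambda,\mu}$ that makes these contributions cancel. A correct argument for exceptionality must use this structure, for instance by first showing that the object $\Sigma^\lambda(\CU/\CW)\otimes q^*\Sigma^\mu\CW^\perp$ is exceptional on $\Fl(k-h,k;V)$ (it lies in the relative Kapranov collection over $\Gr(k-h,V)$, cf.\ Theorem~\ref{thm:sam}) and then proving that $Rp_*$ is fully faithful on the relevant subcategory.
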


Since we introduced the extra condition on $h$, we extend the definition
of $\CF^{\lambda, \mu}$ by putting
\[
   \CF^{\lambda, 0} = \Sigma^\lambda\CU \qquad \text{and} \qquad
   \CF^{0, \mu} = \Sigma^\mu\CU^\perp.
\]
Thus, the bundles $\CF^{\lambda,\mu}$ become a generalization
of the bundles $\Sigma^\lambda\CU$ and $\Sigma^\mu\CU^\perp$.

There is an alternative geometric construction of $\CF^{\lambda, \mu}$,
which can be obtained via the usual isomorphism $\Gr(k, V)\simeq \Gr(n-k, V)$.
Assume that $w=\ydw(\lambda)$ is such that $0 < w < n-k$. Consider
the diagram
\begin{equation}\label{eq:gr-elm2}
   \begin{tikzcd}
      & \Fl(k, n-w; V) \arrow[dr, "g"] \arrow[dl, "f"'] & \\
      \Gr(k, V) & & \Gr(n-w, V).
   \end{tikzcd}
\end{equation}
Denote by $\CU$ and $\CK$ the tautological bundles on
$\Gr(k, V)$ and $\Gr(n-w, V)$, respectively, as well as their
pullbacks under $f$ and $g$. Thus, the universal flag on
$\Fl(k, n-w; V)$ is $\CU\subset\CK\subset V$.

\begin{proposition}[\cite{Fonarev2014}]\label{prop:gr-flm2}
   If $\lambda$ and $\mu$ satisfy~\eqref{eq:block-ineq} and $w=\ydh(\lambda)$
   satisfies $0 < w < n-k$, then
   \begin{equation*}
      R^if_*\!\left(\Sigma^\mu(\CK/\CU)^*\otimes g^*\Sigma^\lambda\CK\right) = 0
      \quad \text{for all}\quad i > 0
   \end{equation*}
   and
   \begin{equation}\label{eq:flm2}
      f_*\!\left(\Sigma^\mu(\CK/\CU)^*\otimes g^*\Sigma^\lambda\CK\right) \simeq \CF^{\lambda, \mu}.
   \end{equation}
\end{proposition}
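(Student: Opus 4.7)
The plan is to identify both $\CF^{\lambda,\mu}$, as defined in Proposition~\ref{prop:gr-flm} through the flag variety $\Fl(k-h, k; V)$, and the object $f_*\!\left(\Sigma^\mu(\CK/\CU)^* \otimes g^*\Sigma^\lambda\CK\right)$ appearing in~\eqref{eq:flm2} with the push-forward to $\Gr(k, V)$ of one and the same equivariant bundle on the three-step flag variety
\[
   Z = \Fl(k-h,\, k,\, n-w;\, V),
\]
where $h = \ydh(\lambda)$ and $w = \ydw(\lambda)$. Denote by $\CW \subset \CU \subset \CK \subset V$ the tautological flag on $Z$. The natural projections
\[
   \pi_1:Z \to \Fl(k-h, k; V), \qquad \pi_2: Z \to \Fl(k, n-w; V)
\]
forget $\CK$ and $\CW$ respectively, and the compositions $p \circ \pi_1$ and $f \circ \pi_2$ agree with the natural projection $\pi: Z \to \Gr(k, V)$.

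The first main step is to exhibit a $\GL(V)$-equivariant object $T$ on $Z$, built from Schur functors of the successive quotients $\CU/\CW$, $\CK/\CU$ and $V/\CK$ of the tautological flag together with appropriate determinant twists, such that
\[
   R(\pi_1)_* T \simeq \Sigma^\lambda(\CU/\CW) \otimes q^*\Sigma^\mu \CW^\perp
\]
on $\Fl(k-h, k; V)$ and
\[
   R(\pi_2)_* T \simeq \Sigma^\mu(\CK/\CU)^* \otimes g^*\Sigma^\lambda\CK
\]
on $\Fl(k, n-w; V)$. By the projection formula applied to the pulled-back factors, each computation reduces to a push-forward of Schur functors of tautological bundles along the relative Grassmannians $\Gr(n-w-k, V/\CU)$ and $\Gr(k-h, \CU)$, which can be handled by the Borel--Bott--Weil theorem. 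The numerical constraints~\eqref{eq:block-ineq}, together with $0 < w < n-k$, place the relevant weights (after the standard $\rho$-shift) strictly inside the dominant chamber, forcing the vanishing of all higher direct images in both directions; in particular, the required vanishing $R^if_*\!\left(\Sigma^\mu(\CK/\CU)^* \otimes g^*\Sigma^\lambda\CK\right) = 0$ for $i > 0$ is obtained along the way.

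Once both push-forward formulas are in hand, applying $p_*$ and $f_*$ and using $p \circ \pi_1 = f \circ \pi_2 = \pi$ yields
\[
   \CF^{\lambda,\mu} \simeq p_*\!\left(\Sigma^\lambda(\CU/\CW) \otimes q^*\Sigma^\mu \CW^\perp\right) \simeq \pi_* T \simeq f_*\!\left(\Sigma^\mu(\CK/\CU)^* \otimes g^*\Sigma^\lambda\CK\right),
\]
which is the required identification. The main obstacle is the explicit determination of $T$ and the bookkeeping of the determinant twists accumulated from the two different Borel--Bott--Weil computations; these are controlled by the tautological identities $\det\CU = \det\CW \otimes \det(\CU/\CW)$ and $\det V = \det\CK \otimes \det(V/\CK)$, and reconciling the twists along the two routes through $Z$ is the most delicate combinatorial step of the argument.
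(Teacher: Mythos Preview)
Your roof construction via $Z=\Fl(k-h,k,n-w;V)$ is a natural idea, but the specific form of $T$ you propose cannot work. A tensor product of Schur functors of the tautological subquotients $\CW,\ \CU/\CW,\ \CK/\CU,\ V/\CK$ (with determinant twists) is an \emph{irreducible} equivariant bundle for the Levi $\GL_{k-h}\times\GL_h\times\GL_{n-k-w}\times\GL_w$ of $Z$. Relative Borel--Bott--Weil then forces each of $R(\pi_1)_*T$ and $R(\pi_2)_*T$ to be either zero or an irreducible equivariant bundle on the target flag variety. However, neither of your two targets is irreducible: on $\Fl(k-h,k;V)$ the factor $q^*\Sigma^\mu\CW^\perp=\Sigma^\mu(V/\CW)^*$ is only filtered with respect to the Levi $\GL_{k-h}\times\GL_h\times\GL_{n-k}$ (via the subquotients $(\CU/\CW)^*$ and $(V/\CU)^*$), and likewise $g^*\Sigma^\lambda\CK$ is only filtered on $\Fl(k,n-w;V)$. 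Hence no irreducible $T$ can have both prescribed push-forwards, and the single Borel--Bott--Weil step you invoke does not suffice. A salvage would require $T$ to be a genuinely non-split filtered (or pulled-back) object, at which point the clean dominant-chamber argument you outline no longer applies and the bookkeeping becomes substantially harder than you indicate.

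The paper's route is entirely different and much shorter. As the sentence right before the proposition says, one simply transports Proposition~\ref{prop:gr-flm} along the canonical isomorphism $\Gr(k,V)\simeq\Gr(n-k,V^*)$, $U\mapsto U^\perp$. Under this isomorphism $\CU$ and $(V/\CU)^*$ trade places, so the roles of $(\lambda,\mu)$ and of the parameters $h=\ydh(\lambda)$ and $w=\ydw(\lambda)$ are interchanged; using the flexibility in the choice of the auxiliary integer noted just after the two propositions, the diagram~\eqref{eq:gr-elm1} on the dual side becomes exactly~\eqref{eq:gr-elm2}, and the vanishing of higher direct images plus the identification~\eqref{eq:flm2} are immediate consequences of Proposition~\ref{prop:gr-flm}. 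No new cohomology computation and no auxiliary three-step flag variety are needed.
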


\begin{example}
   The first nontrivial example of a diagram of the form $\CF^{\lambda, \mu}$
   is given by the universal extension
   \begin{equation}
      0 \to \CU\otimes\CU^\perp \to \CF^{\Box, \Box} \to \CO \to 0,
   \end{equation}
   the dual of which is 
   \begin{equation}
      0 \to \CO \to \CE^{\Box, \Box}\to \CU^*\otimes(V/\CU) \to 0.
   \end{equation}
\end{example}

In the following it will be convenient for us to work with the dual
bundles $\CE^{\lambda, \mu}$. Let us list some of their properties.
First, as we have already seen, if $\mu$ is empty,
then $\CE^{\lambda, \mu} = \CE^{\lambda, 0}\simeq \Sigma^\lambda\CU^*$,
and if $\lambda$ is empty, then $\CE^{0, \mu}=\Sigma^\mu(V/\CU)$.
Second,
\begin{equation*}
   \CE^{\lambda, \mu} \in \langle \Sigma^\alpha\CU^*\otimes \Sigma^\beta (V/\CU)
   \mid \alpha \subset \lambda,\ \beta\subset \mu \rangle,
\end{equation*}
and there is actually an epimorphism $\CE^{\lambda, \mu} \to \Sigma^\lambda\CU^*\otimes \Sigma^\mu (V/\CU)$
whose kernel belongs to the subcategory $\langle \Sigma^\alpha\CU^*\otimes \Sigma^\beta (V/\CU)
\mid \alpha \subsetneq \lambda,\ \beta\subsetneq \mu \rangle$.
Third, in Proposition~\ref{prop:gr-flm} one could choose any integer
$h$ as long as $\ydh(\lambda)\leq h$ and $\ydw(\mu) \leq k-h$.
Similarly, in Proposition~\ref{prop:gr-flm2} one could choose
any integer $w$ as long as $\ydw(\lambda)\leq w$ and $\ydh(\mu)\leq n-k-w$.

Let $\lambda, \mu$ be as above, and let $w=\ydw(\lambda)>0$.
Recall that $\bar\lambda$ denotes the diagram obtained from $\lambda$ by removing the first
row. Denote by $\mu(1)$ the diagram obtained from $\mu$ by adding
a full column of height $n-k-w$. In the following $\lambda^{(i)}$
are precisely the diagrams introduced in~\eqref{eq:circ}.

\begin{proposition}\label{prop:gr-genstc}
   Let $w=\ydw(\lambda)$.
   There is an exact sequence of vector bundles on $\Gr(k, V)$ of the form
   \begin{equation}\label{eq:gr-genstc}
      0\to \CE^{\bar\lambda, \mu(1)}(-1)\to \Lambda^{b_\lambda^{(w)}}V^*\otimes \CE^{\lambda^{(w)}, \mu} \to
      \cdots
      \to \Lambda^{b_\lambda^{(1)}}V^*\otimes \CE^{\lambda^{(1)}, \mu} \to
      \CE^{\lambda, \mu} \to 0,
   \end{equation}
   where $b_\lambda^{(i)} = |\lambda/\lambda^{(i)}| = |\lambda|-|\lambda^{i}|$. 
\end{proposition}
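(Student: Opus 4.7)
The plan is to mimic the proof of Proposition~\ref{prop:starcase} on a larger intermediate flag variety that carries both the $\lambda$- and $\mu$-data simultaneously. Set $Y = \Fl(1, k, n-w; V)$, with tautological flag $\CL \subset \CU \subset \CK \subset V$ and projections $p : Y \to \Gr(k, V)$ and $r : Y \to \PP(V)$; when $w = n - k$ the third step is trivial and we recover the setup from the proof of Proposition~\ref{prop:starcase}. The fiber of $p$ over a point $[U \subset V]$ is the product $\PP(U) \times \Gr(n - w - k, V/U)$, so Borel--Bott--Weil computations on the two factors decouple via the K\"unneth formula.

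Next, pull back the exact complex $\CF^\bullet$ of~\eqref{eq:stair-pn} along $r$ and set
$$\CX^\bullet = r^*\CF^\bullet \otimes \Sigma^{\bar\lambda}(\CU/\CL) \otimes \det(\CU/\CL)^{-1} \otimes \Sigma^{\mu}(\CK/\CU)^*.$$
Since $\CF^\bullet$ is exact and the twisting bundle is locally free, $\CX^\bullet$ is exact as well, so $Rp_*\CX^\bullet = 0$ in $D(\Gr(k,V))$. Consider the standard spectral sequence $E_1^{ab} = R^b p_*\CX^a \Rightarrow 0$ coming from the naive filtration. For each $-n < a \le 0$, the Bott computation on $\PP(\CU)$ reproduces the argument of Proposition~\ref{prop:starcase} and yields a single non-vanishing term $\Sigma^{\lambda^{(i)}}\CU^*$ for the index $i$ corresponding to $a$, while the Bott computation on $\Gr(n - w - k, V/\CU)$ leaves the $\mu$-factor untouched. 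Combining this with the geometric descriptions of Propositions~\ref{prop:gr-flm} and~\ref{prop:gr-flm2} identifies the resulting term with $\Lambda^{b_\lambda^{(i)}} V^* \otimes \CE^{\lambda^{(i)}, \mu}$ (and with $\CE^{\lambda,\mu}$ at $a = 0$). The crucial point is the left endpoint $a = -n$: the large twist by $\CL^n$ shifts the weight on the $V/\CU$-factor past a Weyl wall, and the Bott computation on $\Gr(n - w - k, V/\CU)$ grafts an extra column of height $n - k - w$ onto $\mu$, producing $\Sigma^{\mu(1)}(V/\CU)$, while the residual $\det(\CU)^{-1}$ accounts for the global twist by $\CO(-1)$. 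Using Proposition~\ref{prop:gr-flm} one last time, this leftmost term is identified with $\CE^{\bar\lambda, \mu(1)}(-1)$.

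The main obstacle is precisely this Bott calculation at the leftmost endpoint, together with verifying that only a single row of the spectral sequence is non-zero at each degree so that it collapses onto one horizontal line--one would then read off the long exact sequence~\eqref{eq:gr-genstc} as the only surviving edge. Once the vanishings are in place, everything else reduces to straightforward bookkeeping with the Littlewood--Richardson rule and the geometric descriptions of the bundles $\CE^{\lambda,\mu}$ established earlier in the section.
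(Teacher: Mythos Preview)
The survey does not actually prove Proposition~\ref{prop:gr-genstc}: it only cites \cite{FonarevKP2015} for the argument, so there is no in-paper proof to compare against. That said, your proposed strategy has a genuine gap.

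On $Y=\Fl(1,k,n-w;V)$ the fibre of $p$ over $[U]\in\Gr(k,V)$ is indeed the product $\PP(U)\times\Gr(n-w-k,\,V/U)$, but the twisting bundle you chose factors completely across this product: the piece $\CL^{-(a+1)}\otimes\Sigma^{\bar\lambda}(\CU/\CL)\otimes\det(\CU/\CL)^{-1}$ lives entirely on the $\PP(U)$-factor, and $\Sigma^\mu(\CK/\CU)^*$ lives entirely on the $\Gr(n-w-k,V/U)$-factor. By K\"unneth, the higher direct images along $p$ are therefore tensor products, and since $\Sigma^\mu(\CK/\CU)^*$ is acyclic on the Grassmannian factor it contributes a fixed factor $\Sigma^\mu(V/\CU)^*$ for \emph{every} $a$. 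What you obtain after dualising is an exact complex whose middle terms are $\Lambda^{b^{(i)}_\lambda}V^*\otimes\Sigma^{\lambda^{(i)}}\CU^*\otimes\Sigma^\mu(V/\CU)$, not $\Lambda^{b^{(i)}_\lambda}V^*\otimes\CE^{\lambda^{(i)},\mu}$. These differ precisely because $\CE^{\lambda^{(i)},\mu}$ is a nontrivial extension of $\Sigma^{\lambda^{(i)}}\CU^*\otimes\Sigma^\mu(V/\CU)$ by lower pieces. The same decoupling kills your argument at $a=-n$: the twist by $\CL^{n-1}$ lies wholly inside $\CU$ and has no effect on the $\CK/\CU$-weight, so no Weyl wall is crossed on the Grassmannian factor and $\mu$ is never promoted to $\mu(1)$.

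The fix is to let the ambient Grassmannian, not the projective line, carry the staircase. Work on $\Fl(k,n-w;V)$ from diagram~\eqref{eq:gr-elm2}: pull back along $g$ the ordinary staircase complex~\eqref{eq:staricase} for $\lambda$ on $\Gr(n-w,V)$ (this is where $\ydw(\lambda)=w$ is used, and $\ydh(\lambda)\le k\le n-w$ guarantees it makes sense there), tensor with $\Sigma^\mu(\CK/\CU)^*$, and push forward along $f$. Now the middle terms are exactly $f_*\bigl(\Sigma^\mu(\CK/\CU)^*\otimes g^*\Sigma^{\lambda^{(i)}}\CK\bigr)$, which is the formula of Proposition~\ref{prop:gr-flm2} and yields $\CF^{\lambda^{(i)},\mu}$ on the nose; the leftmost term becomes a genuine Borel--Bott--Weil computation on the $\Gr(n-w-k,V/U)$-fibre because the twist $\CO_{\Gr(n-w,V)}(-1)=\det\CK$ now interacts nontrivially with $\CK/\CU$.
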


Using the isomorphism between $\Gr(k, V)$ and $\Gr(n-k, V^*)$,
one gets the following dual version of Proposition~\ref{prop:gr-genstc}.

\begin{proposition}\label{prop:gr-genstc2}
   Let $w'=\ydw(\mu)$.
   There is an exact sequence of vector bundles on $\Gr(k, V)$ of the form
   \begin{equation}\label{eq:gr-genstc2}
      0\to \CE^{\lambda(1), \bar\mu}(-1)\to \Lambda^{b_\lambda^{(w')}}V^*\otimes \CE^{\lambda, \mu^{(w)}} \to
      \cdots
      \to \Lambda^{b_\lambda^{(1)}}V^*\otimes \CE^{\lambda, \mu^{(1)}} \to
      \CE^{\lambda, \mu} \to 0,
   \end{equation}
   where $b_\mu^{(i)} = |\lambda/\lambda^{(i)}| = |\lambda|-|\lambda^{i}|$,
   and $\lambda(1)$ is obtained from $\lambda$ by adding a full column
   of height $k-w'$.
\end{proposition}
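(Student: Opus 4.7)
The plan is to deduce Proposition~\ref{prop:gr-genstc2} from Proposition~\ref{prop:gr-genstc} by transporting the latter through the classical isomorphism
\[
   \sigma: \Gr(k, V) \xrightarrow{\;\sim\;} \Gr(n-k, V^*), \qquad W\mapsto W^\perp.
\]
Under $\sigma$, the tautological subbundle $\CU'\subset V^*$ on $\Gr(n-k, V^*)$ corresponds to $(V/\CU)^*$, so $\CU'{}^*$ pulls back to $V/\CU$; symmetrically, the quotient $V^*/\CU'$ pulls back to $\CU^*$. In particular, $\sigma^*\CO_{\Gr(n-k, V^*)}(1) \simeq \det(V/\CU) \simeq \CO_{\Gr(k, V)}(1)$ (after trivializing $\det V$).

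The main step is to establish the compatibility
\[
   \sigma^*\,\CE^{\alpha, \beta}_{\Gr(n-k, V^*)} \;\simeq\; \CE^{\beta, \alpha}_{\Gr(k, V)}
\]
whenever $(\alpha, \beta)$ satisfies the analogue of~\eqref{eq:block-ineq}. This is exactly the parallel of the two constructions already recorded in Propositions~\ref{prop:gr-flm} and~\ref{prop:gr-flm2}: one uses $\Fl(k-h, k; V)$ to compute $\CF^{\lambda, \mu}$ in terms of $\CU/\CW$ and $\CW^\perp$, the other uses $\Fl(k, n-w; V)$ and the bundles $(\CK/\CU)^*$ and $\CK$. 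Swapping the roles $k \leftrightarrow n-k$, $\lambda \leftrightarrow \mu$ and applying $\sigma$ identifies the second construction on $\Gr(k,V)$ with the first construction on $\Gr(n-k,V^*)$, giving the desired identification; the special cases $\lambda=0$ or $\mu=0$ match the base definitions $\CE^{0, \mu} = \Sigma^\mu(V/\CU)$ and $\CE^{\lambda, 0} = \Sigma^\lambda\CU^*$ directly. I expect this compatibility step to be the main obstacle, as one has to trace the Schur functors and the flag variety push-forwards carefully through $\sigma$.

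With this in hand, the rest is formal. Apply Proposition~\ref{prop:gr-genstc} on $\Gr(n-k, V^*)$ to the pair $(\mu, \lambda)$, where now $\mu \in \YD_{n-k, k}$ sits in the first slot with $\ydw(\mu) = w'$ and $\lambda \in \YD_{k, n-k}$ in the second. The proposition produces an exact sequence
\[
   0\to \CE^{\bar\mu, \lambda(1)}(-1) \to \Lambda^{b_\mu^{(w')}}V\otimes \CE^{\mu^{(w')}, \lambda} \to \cdots \to \Lambda^{b_\mu^{(1)}}V\otimes \CE^{\mu^{(1)}, \lambda} \to \CE^{\mu, \lambda} \to 0
\]
on $\Gr(n-k, V^*)$, in which $\lambda(1)$ is obtained from $\lambda$ by attaching a full column of height $(n-k) - (n-k-w') \cdot \text{(correction)}$ — more precisely, of height $k - w'$ after accounting for the ambient dimensions $n = (n-k) + k$ on the dual Grassmannian. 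Since pullback by $\sigma$ is exact, applying $\sigma^*$ term by term converts each $\CE^{\mu^{(i)}, \lambda}$ into $\CE^{\lambda, \mu^{(i)}}$, converts $\CE^{\bar\mu, \lambda(1)}$ into $\CE^{\lambda(1), \bar\mu}$, and converts the twist by $\CO(-1)$ into a twist by $\CO(-1)$ on $\Gr(k, V)$. The coefficients $\Lambda^{b_\mu^{(i)}}V$ become $\Lambda^{b_\mu^{(i)}}V^*$ via the isomorphism $V\simeq (V^*)^*$ inherent in the setup, matching the statement of~\eqref{eq:gr-genstc2}. This yields precisely the exact sequence claimed.
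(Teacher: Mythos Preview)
Your approach is correct and is exactly the one the paper takes: the text simply states that Proposition~\ref{prop:gr-genstc2} follows from Proposition~\ref{prop:gr-genstc} via the isomorphism $\Gr(k,V)\simeq\Gr(n-k,V^*)$, and your write-up fills in precisely the details (the swap $\CE^{\alpha,\beta}\leftrightarrow\CE^{\beta,\alpha}$ under $\sigma$, justified by comparing the two constructions in Propositions~\ref{prop:gr-flm} and~\ref{prop:gr-flm2}) that the paper leaves implicit. One small caution: pulling back along $\sigma$ does nothing to a trivial bundle, so the coefficient $\Lambda^{b_\mu^{(i)}}V$ stays $\Lambda^{b_\mu^{(i)}}V$ rather than becoming $\Lambda^{b_\mu^{(i)}}V^*$; the discrepancy with~\eqref{eq:gr-genstc2} is a typographical inconsistency in the displayed statement (note the clash between the superscripts $b_\lambda^{(i)}$ in the sequence and the line defining $b_\mu^{(i)}$), not an error in your argument.
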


\begin{definition}
   Long exact sequences~\eqref{eq:gr-genstc} and~\eqref{eq:gr-genstc2}
   are called \emph{generalized staircase complexes}.
\end{definition}

\begin{remark}
   The proofs of Propositions~\ref{prop:gr-genstc} and~\ref{prop:gr-genstc2}
   appear in~\cite{FonarevKP2015} (see Theorems~4.3 and~4.4, respectively);
   however, what we denote by $\CF^{\lambda, \mu}$
   in the present survey is denoted by $\CE^{\lambda, \mu}$ there.
   We present our apology to the reader for the change of notation,
   which we did to make it in line with the definitions of right
   and left exceptional blocks.
\end{remark}

Long exact sequences~\eqref{eq:gr-genstc} and~\eqref{eq:gr-genstc2}
are obvious generalizations of~\eqref{eq:staricase}, and are interesting
even when $\lambda$ or $\mu$ is the zero diagram.
For instance, when $\lambda = (w)$ for some $0 < w < n-k$ and $\mu$ is empty,
one has $\CE^{\lambda, \mu}\simeq S^w\CU^*$.
In this case $\bar\lambda$ is empty and $\mu(1) = (n-k-w)^T$,
so $\CE^{\bar\lambda, \mu(1)}(1)\simeq \Lambda^{n-k-w}(V/\CU)(1)\simeq \Lambda^w\CU^\perp$,
and the sequence~\eqref{eq:gr-genstc} is the well-known
long exact sequence
\[
0\to \Lambda^w\CU^\perp \to \Lambda^wV^*\otimes\CO \to \Lambda^{w-1}V^*\otimes\CU^*
\to \cdots \to
V^*\otimes S^{w-1}\CU^*\to S^w\CU^*\to 0
\]
obtained from the dual tautological short exact sequence $0\to \CU^\perp\to V^*\to \CU^*\to 0$.

\subsection{Kuznetsov--Polishchuk collections for classical Grassmannians}\label{ssec:kp-gr}
The first application of generalized staircase complexes that we encounter
is related with the following story which was conjectured in~\cite[Section~9.6]{KuznetsovPolishchuk2016}.
The proof of the statements was originally announced in~\cite{Fonarev2014} and presented
in full in~\cite{FonarevKP2015}.

Let $p=(w,h)\in\RR^2$ be a point in our favorite rectangle of width
$n-k$ and height $k$; that is, we assume that $0\leq w\leq n-k$ and $0\leq h\leq k$.
To such a point we want to associate a collection of weights
of the maximal parabolic $\rmP\subset \GL(V)$. Since we do not want to introduce
the necessary notation, we simply describe this collection via
the corresponding irreducible equivariant vector bundles:
these are of the form $\Sigma^\lambda\CU^*\otimes\Sigma^\mu(V/\CU)$,
with $\lambda\in\YD_{n-k-w, h}$ and $\mu\in\YD_{k-h, w}$
(where for arbitrary $a, b\in \RR$ we put $\YD_{a, b}=\YD_{\lfloor a \rfloor, \lfloor b \rfloor}$).
Remark that such a pair $(\lambda,\mu)$ satisfies conditions~\eqref{eq:block-ineq}.
We denote this set of weights by $B_p$, and by $\CB_p$ the subcategory
generated by the corresponding bundles,
\[
   \CB_p = \left\langle \Sigma^\lambda\CU^*\otimes\Sigma^\mu(V/\CU)
               \mid (\lambda,\mu)\in \YD_{n-k-w, h}\times\YD_{k-h, w} \right\rangle.
\]

\begin{proposition}[{\cite[Proposition~5.1]{FonarevKP2015}}]\label{prop:gr-kp}
   The subcategory $\CB_p$ is generated by a full exceptional collection
   \[
      \CB_p = \left\langle \CE^{\lambda,\mu}
      \mid (\lambda,\mu)\in \YD_{n-k-w, h}\times\YD_{k-h, w} \right\rangle.
   \]
\end{proposition}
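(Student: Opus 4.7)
The plan is to reduce the statement to an application of the graded duality for exceptional blocks (Proposition~\ref{prop:reb} or its left analogue Proposition~\ref{prop:leb}), which is precisely the strategy championed in \cite{KuznetsovPolishchuk2016}. The key recognition is that the set of weights corresponding to the bundles $\{\Sigma^\lambda\CU^*\otimes\Sigma^\mu(V/\CU)\}_{(\lambda,\mu)\in \YD_{n-k-w,h}\times\YD_{k-h,w}}$ is expected to form an exceptional block inside $P^+_\bfL$ for $\bfG = \GL(V)$ and $\bfP$ the stabilizer of a $k$-subspace, and that the bundles $\CE^{\lambda,\mu}$ are precisely the graded duals.

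First, I would verify the exceptional block condition of Definition~\ref{def:reb} (or \ref{def:leb}) for $B_p$. Both sides of the natural map
\[
   \bigoplus_{\nu \in B_p}\Hom(\CU^\nu, \CU^\mu)\otimes_\kk \Ext_\bfG^i(\CU^\lambda, \CU^\nu)\to \Ext_{D(X)}^i(\CU^\lambda, \CU^\mu)
\]
are computable: the equivariant groups via Borel--Bott--Weil on $X = \Gr(k,V)$ and the non-equivariant groups via the same tool combined with the Littlewood--Richardson rule to decompose $\Sigma^\lambda\CU^*\otimes\Sigma^\mu(V/\CU)\otimes (\Sigma^{\lambda'}\CU^*\otimes\Sigma^{\mu'}(V/\CU))^*$ into irreducibles. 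The check then becomes a careful combinatorial matching. This step is the main obstacle, because the Littlewood--Richardson expansion produces many terms and one has to exhibit an explicit bijection (or dimension count) between the contributing tensor factors $\CU^\nu$ on the left and the cohomology classes on the right.

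Second, I would identify the $\CE^{\lambda,\mu}$ with the graded duals of $\{\CU^\nu\}_{\nu\in B_p}$. The pushforward descriptions~\eqref{eq:flm} and~\eqref{eq:flm2} are tailored for this: using the relative Borel--Bott--Weil theorem along the smooth fibres of $p\colon \Fl(k-h,k;V)\to \Gr(k,V)$ and $f\colon\Fl(k,n-w;V)\to \Gr(k,V)$ one computes $\Ext^\bullet_\bfG(\CU^{\lambda,\mu}, \CE^{\lambda',\mu'})$ and verifies that it is $\kk$ concentrated in cohomological degree $|\lambda|+|\mu|$ when $(\lambda',\mu')=(\lambda,\mu)$ and vanishes otherwise. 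With the block property in hand, Propositions~\ref{prop:reb}/\ref{prop:leb} then guarantee that the bundles $\CE^{\lambda,\mu}$ form a graded exceptional collection in $D(\Gr(k,V))$ indexed by the product poset $\YD_{n-k-w,h}\times\YD_{k-h,w}$ with its containment order.

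Finally, I would match the two subcategories. One containment is immediate from the filtration property recalled just after Example~3.21: each $\CE^{\lambda,\mu}$ lies in $\langle \Sigma^\alpha\CU^*\otimes\Sigma^\beta(V/\CU)\mid \alpha\subseteq\lambda,\ \beta\subseteq\mu\rangle \subseteq \CB_p$. The converse containment follows by induction on $|\lambda|+|\mu|$ using the epimorphism $\CE^{\lambda,\mu}\twoheadrightarrow \Sigma^\lambda\CU^*\otimes\Sigma^\mu(V/\CU)$ whose kernel is built out of $\Sigma^\alpha\CU^*\otimes\Sigma^\beta(V/\CU)$ with $\alpha\subsetneq\lambda$ and $\beta\subsetneq\mu$; the inductive hypothesis places the kernel in the subcategory generated by the smaller $\CE^{\alpha,\beta}$, hence the generator $\Sigma^\lambda\CU^*\otimes\Sigma^\mu(V/\CU)$ lies in the subcategory generated by all the $\CE^{\alpha,\beta}$ with $(\alpha,\beta)\preceq(\lambda,\mu)$. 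The two subcategories thus coincide, and the collection is full in $\CB_p$.
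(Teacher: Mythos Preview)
Your plan is coherent, but it diverges from the route actually taken, and the survey is explicit about this. The Remark immediately following Proposition~\ref{prop:gr-kp} states that although the blocks $B_p$ were conjectured in \cite{KuznetsovPolishchuk2016} to be right exceptional blocks and this was announced in \cite{Fonarev2014}, \emph{the proof of Proposition~\ref{prop:gr-kp} does not rely on this fact}. In other words, the very step you flag as ``the main obstacle''---the combinatorial verification of the exceptional block condition via Littlewood--Richardson and Borel--Bott--Weil---is precisely what the cited proof circumvents rather than confronts.

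The approach in \cite{FonarevKP2015} is instead direct and geometric: exceptionality of each $\CE^{\lambda,\mu}$ comes from the pushforward constructions of Propositions~\ref{prop:gr-flm} and~\ref{prop:gr-flm2}, and the semiorthogonality relations are computed using those flag-variety presentations together with relative Borel--Bott--Weil, without invoking Propositions~\ref{prop:reb} or~\ref{prop:leb}. What is obtained \emph{a posteriori} (per the same Remark) is that the $\CE^{\lambda,\mu}$ form the right dual collection to the $\Sigma^\lambda\CU^*\otimes\Sigma^\mu(V/\CU)$ in the equivariant category---a conclusion rather than an input. Your third step, the inductive generation argument via the filtration with top quotient $\Sigma^\lambda\CU^*\otimes\Sigma^\mu(V/\CU)$, is sound and is essentially how one matches the two descriptions of $\CB_p$ in either approach.

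So your route would work provided you carry out the exceptional-block verification you left open, and it would deliver the stronger statement (confirming the Kuznetsov--Polishchuk conjecture for $B_p$). The published proof trades that extra conclusion for a self-contained geometric argument that sidesteps the hard combinatorics entirely.
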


\begin{remark}
   It was originally conjectured in~\cite{KuznetsovPolishchuk2016}
   that the blocks $B_p$ are (right) exceptional blocks. Though it was
   stated in~\cite{Fonarev2014} that this is indeed the case, the proof
   of Proposition~\ref{prop:gr-kp} does not rely on this fact.
   What we do get though is that the objects $\CE^{\lambda,\mu}$ form
   the right dual exceptional collection to $\Sigma^\lambda\CU^*\otimes\Sigma^\mu(V/\CU)$
   in the equivariant derived category.
\end{remark}

Consider a monotonous path $\gamma$ in $\RR^2$ going from $(0, 0)$ to $(n-k, k)$.
Denote by $p_0, p_1, p_2, \ldots, p_l$ the~points
of intersection of $\gamma$ with the grid $\ZZ\times\RR \cup \RR\times\ZZ$,
listed from left to right. In particular, one has $p_0=(0, 0)$ and
$p_l=(n-k, k)$. The following theorem was stated originally
as a conjecture in~\cite{KuznetsovPolishchuk2016} (see Conjecture~9.8).

\begin{theorem}[\cite{FonarevKP2015}]\label{thm:gr-kp}
   For any $\gamma$ as above, there is a semiorthogonal decomposition
   \begin{equation}\label{eq:gr-kp}
      D^b(\Gr(k, V)) = \langle \CB_{p_0}, \CB_{p_1}(1), \ldots, \CB_{p_l}(l) \rangle.
   \end{equation}
\end{theorem}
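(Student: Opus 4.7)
The plan is to prove the theorem by induction on the path $\gamma$. Any two monotonous paths from $(0,0)$ to $(n-k, k)$ are connected by a sequence of \emph{elementary moves}, each exchanging a right-then-up corner inside a $1\times 1$ grid square with the opposite up-then-right corner. Thus it suffices to establish a base case and then verify that each elementary move preserves~\eqref{eq:gr-kp}.

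For the base case I would take $\gamma_0 = (0,0)\to(0,k)\to(n-k,k)$. At every grid-intersection point $p$ of $\gamma_0$ the second diagram $\mu$ is forced to be empty in the definition of $\CB_p$, so the generators collapse to $\CE^{\lambda, 0} = \Sigma^\lambda \CU^*$ with $\lambda$ running over an explicit sub-rectangle of $\YD_{k, n-k}$. After the prescribed twists the blocks reassemble Kapranov's full exceptional collection~\eqref{eq:gr-ec}, and semiorthogonality is Theorem~\ref{thm:kapranov}. For the inductive step, assume~\eqref{eq:gr-kp} holds for $\gamma$ and let $\gamma'$ differ from $\gamma$ by an elementary move swapping $(w+1,h)\in\gamma$ for $(w,h+1)\in\gamma'$. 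Only the $i$-th block of the decomposition changes, so the task reduces to proving
\[
\langle \CB_{p_{i-1}}(i-1),\ \CB_{(w+1,h)}(i),\ \CB_{p_{i+1}}(i+1)\rangle
=\langle \CB_{p_{i-1}}(i-1),\ \CB_{(w,h+1)}(i),\ \CB_{p_{i+1}}(i+1)\rangle.
\]

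To realise this equality I would take each generator $\CE^{\lambda,\mu}(i)$ of $\CB_{(w+1,h)}(i)$ and apply the appropriate generalized staircase complex, \eqref{eq:gr-genstc} or \eqref{eq:gr-genstc2}, twisted by $\CO(i)$. The resulting resolution has $\CE^{\bar\lambda,\mu(1)}(i-1)$ (or its $\mu$-side analogue) at one end and intermediate terms of the form $\Lambda^j V^* \otimes \CE^{\lambda^{(j)},\mu}(i)$. Filtering each Koszul wedge $\Lambda^j V^*$ via the tautological sequence~\eqref{eq:gr-ses} into pieces $\Lambda^a \CU^* \otimes \Lambda^b (V/\CU)$, and rewriting the resulting tensor products in the $\CE^{-,-}$-basis by Proposition~\ref{prop:gr-kp}, every term lands in one of the three blocks $\CB_{p_{i-1}}(i-1)$, $\CB_{(w,h+1)}(i)$, $\CB_{p_{i+1}}(i+1)$. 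A symmetric argument starting from generators of $\CB_{(w,h+1)}(i)$ gives the reverse inclusion, closing the induction. Semiorthogonality between consecutive blocks along $\gamma'$ is, throughout, a Borel--Bott--Weil computation of the same flavour as in Proposition~\ref{prop:gr-kp}.

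The main obstacle is the combinatorial bookkeeping: one must verify that the diagrams $\bar\lambda$, $\mu(1)$, and $\lambda^{(j)}$, together with the summand decomposition of $\Lambda^j V^*$, really produce objects indexed by the prescribed rectangles $\YD_{n-k-w', h'} \times \YD_{k-h', w'}$ for the neighbouring grid points of $\gamma'$. Different relative positions of the elementary move (interior vs.\ boundary of the big rectangle, action on the $\lambda$-side vs.\ the $\mu$-side) generate slightly different combinatorial patterns, each a routine Young-diagram check once the framework is set up.
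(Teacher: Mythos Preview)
Your overall plan matches the argument sketched in the survey and carried out in~\cite{FonarevKP2015}: reduce an arbitrary monotonous path to the left-then-top base path by elementary moves, identify the base case with Kapranov's collection, and use the generalized staircase complexes of Propositions~\ref{prop:gr-genstc} and~\ref{prop:gr-genstc2} to pass between adjacent paths. The paper phrases this as ``reduce to a specific $\gamma$'' rather than as an induction over moves, but the content is the same.

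One step in your inductive argument is misconceived, however. In the complex~\eqref{eq:gr-genstc} the factor $\Lambda^{b_\lambda^{(j)}}V^*$ is a \emph{trivial} bundle---a $\GL(V)$-representation acting purely as a multiplicity space---so the middle term $\Lambda^{b_\lambda^{(j)}}V^*\otimes\CE^{\lambda^{(j)},\mu}$ is just a direct sum of copies of $\CE^{\lambda^{(j)},\mu}$ and already sits in whichever block $\CB_p$ contains that object. There is no need, and no sense, in filtering $\Lambda^jV^*$ through~\eqref{eq:gr-ses} and then trying to rewrite $\Lambda^a\CU^*\otimes\Lambda^b(V/\CU)\otimes\CE^{\lambda^{(j)},\mu}$ ``in the $\CE^{-,-}$-basis'': the bundles $\CE^{\lambda,\mu}$ are iterated extensions, not tensor products of Schur functors, and they do not absorb such factors. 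The real work in the inductive step is purely combinatorial: one checks that each $\lambda^{(j)}$ (and, at the far end, the pair $(\bar\lambda,\mu(1))$) satisfies the rectangle constraints of the middle block $\CB_{(w+1,h)}$ itself or of the neighbouring block at twist $i-1$, and then runs the symmetric argument with~\eqref{eq:gr-genstc2} for the reverse inclusion. Once you drop the spurious filtration, your sketch is the paper's proof.
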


Generalized staircase complexes were specifically constructed to prove
fullness of the decompositions~\eqref{eq:gr-kp}. Namely, via a combinatorial
argument one can reduce to the case when $\gamma$ is of very specific
form (goes very closely along the left border of the rectangle, then
follows the top border), and it is not hard to see that the corresponding
decompositions comes from Kapranov's exceptional collection.

\begin{remark}
   When $n=2k$, one can choose $\gamma$ to be diagonal in the rectangle $[0,k]\times[0,k]$.
   The corresponding decomposition has some very nice symmetry.
\end{remark}

\section{Symplectic Grassmannians}
\subsection{Symplectic Grassmannians}
We are ready to discuss isotropic Grassmannians associated with the simple
group $\bfG=\SP(V)$ of type $C_n$, where $V$ is a $2n$-dimensional
vector spaces equipped with a non-degenerate skew-symmetric bilinear form
$\omega\in\Lambda^2V^*$. There are precisely $n$ maximal parabolic
subgroups $\bfP_i\subset\bfG$.
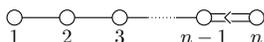
\begin{figure}[H]
   \begin{tikzpicture}
      \dynkin[labels={1,2,3, n-1, n}, scale=2, Bourbaki arrow] C{ooo...oo}
   \end{tikzpicture}
   \caption{Dynkin diagram $C_n$.}
   \label{fig:cn}
\end{figure}
We follow the Bourbaki numbering
of simple roots, which is displayed on Figure~\ref{fig:cn}. The quotient
$\bfG/\bfP_k \simeq \IGr(k, V)$ can be identified with the Grassmannian
of subspaces $U\subset V$ isotropic with respect to~$\omega$.
In particular, $\IGr(k, V)$ embeds into $\Gr(k, V)$, and can actually
be described as the zero locus of the regular section of $\Lambda^2\CU^*$
on $\Gr(k, V)$ which corresponds to $\omega$
(recall that $\Gamma(\Gr(k, V), \Lambda^2\CU^*)\simeq \Lambda^2V^*$).
The~isotropic Grassmannian $\IGr(k, V)$ has Fano index $2n-k+1$,
and $\rk K_0(\IGr(k, V)) = \binom{n}{k}2^n$.
We denote by $\CU$ the universal bundle (restricted from $\Gr(k, V)$)
on $\IGr(k, V)$.
The form $\omega$ induces an isomorphism $V\to V^*$, and one
gets a commutative diagram
\[
   \begin{tikzcd}
      0 \arrow[r] & \CU \arrow[r] \arrow[d, hook] & V \arrow[r] \arrow[d, "\omega"'] & V/\CU \arrow[r] \arrow[d, two heads] & 0 \\
      0 \arrow[r] & \CU^\perp \arrow[r] & V^* \arrow[r] & \CU^* \arrow[r] & 0
   \end{tikzcd}.
\]
The quotient $\CU^\perp\!/\CU$ is naturally a symplectic vector bundle.
By looking at the Levi subgroup of $\bfP_i$, we see that
the irreducible equivariant vector bundles on $\IGr(k, V)$ are all
of the form
\[
   \Sigma^\lambda\CU^*\otimes (\CU^\perp\!/\CU)^{\langle \mu \rangle},
\]
where $\lambda$ is a dominant weight of $\GL_k$, $\mu$ is a dominant weight
of $\SP_{n-k}$, and $(-)^{\langle\mu\rangle}$ denotes the symplectic
Schur functor.

\subsection{Symplectic Grassmannians of planes}
The first isotropic symplectic Grassmannian $\SP(V)/\bfP_1$ is not very
interesting. Indeed, $\IGr(1, V)\simeq \PP(V)$, and this case has been
covered by Beilinson. Thus, we turn to the case $k=2$.
Recall that $\IGr(k, V)$ is the zero locus of a regular section of
$\Lambda^2\CU^*$ on $\Gr(k, V)$. Thus, $\IGr(2, V)$ is just a smooth
hyperplane section of $\Gr(2, V)$ with respect to $\CO(1)\simeq \Lambda^2\CU^*$.
A full exceptional collection in $D(\IGr(2, V))$ was constructed
by A.~Kuznetsov in~\cite{Kuznetsov2008} using the following simple observation
which lies in the core of his theory of Homological Projective Duality.

\begin{proposition}
   Let $X$ be a smooth projective variety with a very ample line bundle $\CO(1)$,
   and let $Y$ be a smooth hyperplane section of $X$ with respect to $\CO(1)$.
   Denote by $\iota:Y \to X$ the embedding morphism.
   Assume that $D(X)$ is equipped with a Lefschetz decomposition
   \[
      D(X) = \langle \CA_0, \CA_1(1), \ldots, \CA_{m-1}(m-1) \rangle.
   \]
   Then for all $1\leq i < m$ the pullback functor induces a fully faithful
   embedding
   \[
      \iota^* : \CA_i(i)\hookrightarrow D(Y),
   \]
   and there is a semiorthogonal decomposition of the form
   \begin{equation}\label{eq:hpd}
      D(Y) = \langle \CA_Y, \iota^*(\CA_1(1)), \ldots, \iota^*(\CA_{m-1}(m-1)) \rangle.      
   \end{equation}
\end{proposition}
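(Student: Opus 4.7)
The plan is to exploit the Koszul resolution of the hyperplane section together with the semiorthogonality built into the Lefschetz decomposition of $D(X)$. Since $Y\subset X$ is cut out by a regular section of $\CO(1)$, the structure sheaf of $Y$ is resolved by $\CO_X(-1)\to\CO_X$, and tensoring with any $F\in D(X)$ produces the fundamental exact triangle
\begin{equation*}
   F(-1)\to F \to \iota_*\iota^*F.
\end{equation*}
Combined with the adjunction $\RHom_Y(\iota^*F,\iota^*G)\simeq \RHom_X(F,\iota_*\iota^*G)$, this is the only geometric input I need.

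For fully faithfulness of $\iota^*|_{\CA_i(i)}$ with $1\le i\le m-1$, I take $F,G\in \CA_i(i)$ and apply $\RHom_X(F,-)$ to the triangle $G(-1)\to G\to \iota_*\iota^*G$. It suffices to show $\RHom_X(F,G(-1))=0$, because then $\RHom_X(F,G)\xrightarrow{\sim}\RHom_X(F,\iota_*\iota^*G)\simeq\RHom_Y(\iota^*F,\iota^*G)$. Write $F=F_0(i)$ and $G=G_0(i)$ with $F_0,G_0\in\CA_i$; the Lefschetz nesting $\CA_i\subseteq\CA_{i-1}$ gives $G(-1)=G_0(i-1)\in\CA_{i-1}(i-1)$. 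Semiorthogonality of $\langle\CA_0,\CA_1(1),\ldots,\CA_{m-1}(m-1)\rangle$ (with $F$ in the later block $\CA_i(i)$ and $G(-1)$ in the earlier block $\CA_{i-1}(i-1)$) forces the desired vanishing.

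For semiorthogonality of the images, take $1\le i<j\le m-1$, $F\in\CA_i(i)$, $G\in\CA_j(j)$. Using the same adjunction and the triangle $F(-1)\to F\to\iota_*\iota^*F$, I need $\RHom_X(G,F)=0$ and $\RHom_X(G,F(-1))=0$. Both hold by Lefschetz semiorthogonality: $F\in\CA_i(i)$ and $F(-1)\in\CA_{i-1}(i-1)$ both sit in blocks strictly earlier than $\CA_j(j)$.

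To produce the decomposition~\eqref{eq:hpd}, I invoke admissibility. Each $\CA_i(i)$ is admissible in $D(X)$ by the results of Bondal and Kapranov, and its image under the fully faithful functor $\iota^*$ remains admissible in $D(Y)$; taking the (mutually semiorthogonal) span $\langle \iota^*(\CA_1(1)),\ldots,\iota^*(\CA_{m-1}(m-1))\rangle$ yields an admissible subcategory of $D(Y)$, and I define $\CA_Y$ as its left orthogonal. No step is really delicate—the only subtlety is the bookkeeping with the Lefschetz nesting $\CA_i\subseteq\CA_{i-1}$, which is exactly what makes the twisted piece $F(-1)$ still land in a permissible block for the semiorthogonality arguments.
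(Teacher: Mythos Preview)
The paper does not prove this proposition; it is stated without proof as a well-known observation underlying Kuznetsov's theory of Homological Projective Duality. Your argument is the standard one and is correct: the essential inputs are the divisorial exact triangle $F(-1)\to F\to\iota_*\iota^*F$, the adjunction $\iota^*\dashv\iota_*$, and the Lefschetz nesting $\CA_i\subseteq\CA_{i-1}$, and you combine them exactly as one should. The only step you treat somewhat loosely is the admissibility of $\iota^*(\CA_i(i))$ in $D(Y)$; this is indeed justified by Bondal--Kapranov, since $\CA_i(i)$, being admissible in $D(X)$ with $X$ smooth projective, is saturated, and any saturated full triangulated subcategory of the saturated category $D(Y)$ is automatically admissible.
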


In other words, if one has a Lefschetz decomposition with a rather small initial
block, then one gets quite a lot of information about the derived categories
of its hyperplane sections: the subcategory generated by the remaining blocks
embeds fully and faithfully into them.
The best possible scenario happens when $\CA_Y$ in~\eqref{eq:hpd}
vanishes. It turns out, this is the case for $\IGr(2, V)$.

\begin{theorem}[{\cite[Theorem~5.1]{Kuznetsov2008}}]\label{thm:igr2}
   There is a Lefschetz decomposition of $D(\IGr(2, V))$ with the basis
   \[
   (\CO, \CU^*, \ldots, S^{n-2}\CU^*, S^{n-1}\CU^*), \qquad o = (2n-1, 2n-1, \ldots, 2n-1, n-1).
   \]
\end{theorem}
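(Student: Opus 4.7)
The natural approach is to deduce the theorem from Theorem~\ref{thm:ld-gr2} applied to $\Gr(2, V)$ via the hyperplane-section Proposition stated just above. Since the symplectic form $\omega$ is non-degenerate, the associated global section of $\CO(1) \simeq \Lambda^2\CU^*$ on $\Gr(2, V)$ cuts out $Y := \IGr(2, V)$ as a smooth hyperplane section $\iota : Y \hookrightarrow X := \Gr(2, V)$; by adjunction $\omega_Y \simeq \CO_Y(-(2n-1))$, so the Fano index of $Y$ equals $2n-1$. Theorem~\ref{thm:ld-gr2} (the even case with $t = n$ and $\dim V = 2n$) supplies a Lefschetz decomposition of $D(X)$ with $m = 2n$ blocks $\CA_0 \supset \CA_1 \supset \cdots \supset \CA_{2n-1}$, where $\CA_i = \langle \CO, \CU^*, \ldots, S^{n-1}\CU^*\rangle$ for $0 \le i \le n-1$ and $\CA_i = \langle \CO, \CU^*, \ldots, S^{n-2}\CU^*\rangle$ for $n \le i \le 2n-1$.

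Plugging this into the hyperplane-section Proposition gives
\[
D(Y) = \langle \CA_Y,\ \iota^*\CA_1(1),\ \iota^*\CA_2(2),\ \ldots,\ \iota^*\CA_{2n-1}(2n-1)\rangle,
\]
with each $\iota^*\CA_i(i)$ fully faithfully embedded — this already takes care of exceptionality and semiorthogonality of all the twists of $\CO, \CU^*, \ldots, S^{n-1}\CU^*$ entering the claimed basis. Tensoring with $\CO_Y(-1)$ (an autoequivalence) and absorbing $\CA_Y(-1)$ together with $\iota^*\CA_1$ into a single block $\CA'_0 := \langle \CA_Y(-1), \iota^*\CA_1\rangle$, while setting $\CA'_i := \iota^*\CA_{i+1}$ for $1 \le i \le 2n-2$, produces a decomposition $D(Y) = \langle \CA'_0, \CA'_1(1), \ldots, \CA'_{2n-2}(2n-2)\rangle$ whose nesting $\CA'_0 \supset \cdots \supset \CA'_{2n-2}$ is inherited from the $\CA_i$ and whose shape matches the one claimed in the theorem provided $\CA_Y = 0$. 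Since this gives $2n-1$ blocks, equal to the Fano index, Serre duality guarantees that no larger Lefschetz decomposition is possible.

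The main obstacle is therefore to show $\CA_Y = 0$. Numerically this is easy: a Weyl group computation gives $\rk K_0(Y) = |W_{C_n}|/|W_{\bfP_2}| = 2^n n!/(2^{n-1}(n-2)!) = 2n(n-1)$, which equals the total $(n-1)(2n-1) + (n-1) = 2n(n-1)$ of objects in the claimed basis counted with multiplicities prescribed by $o$, so $\rk K_0(\CA_Y) = 0$. Promoting this numerical vanishing to vanishing of the admissible subcategory $\CA_Y$ — that is, ruling out a phantom — is the essential nontrivial content. The cleanest route invokes the full HPD picture for $\Gr(2, V)$, whose homological projective dual is a noncommutative Pfaffian variety: a non-degenerate form $\omega$ corresponds to a hyperplane in $\PP(\Lambda^2 V^*)$ avoiding the Pfaffian locus, forcing the HPD residual $\CA_Y$ to be trivial. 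A more elementary alternative is to cover $Y$ by the incidence $Z = \{(l, U) \mid l \in \PP(U)\}$, which is a $\PP^1$-bundle over $Y$ via the second projection and a $\PP^{2n-3}$-bundle over $\PP(V)$ via the first (with fibre $\PP(l^\perp/l)$); the restrictions $\CO_Y(j)|_{\PP(l^\perp/l)} \simeq \CO_{\PP^{2n-3}}(j)$ for $j = 0, \ldots, 2n-2$ exhibit Beilinson's collection on every such fibre, so any object of $\CA_Y$ would have to restrict to zero on every $\PP^{2n-3}$ and hence vanish after pullback to $Z$, forcing $\CA_Y = 0$ by fully faithfulness of $\pi_2^*$.
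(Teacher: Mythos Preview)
Your overall strategy is exactly the one the paper indicates: realise $\IGr(2,V)$ as a smooth hyperplane section of $\Gr(2,V)$, feed the even case of Theorem~\ref{thm:ld-gr2} into the hyperplane-section Proposition, twist by $\CO(-1)$ to renumber, and reduce everything to the vanishing of $\CA_Y$. The bookkeeping with the blocks and the support function is correct, and your first route to $\CA_Y=0$ --- via HPD for $\Gr(2,2n)$ and the observation that a non-degenerate $\omega$ corresponds to a point off the Pfaffian locus --- is the argument Kuznetsov actually uses.

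Your second, ``more elementary'' route through the incidence variety has a genuine gap. You assert that an object $F\in\CA_Y$ must restrict to zero on every $Y_l\simeq\PP(l^\perp/l)$ because the line bundles $\CO_Y(j)$ restrict to Beilinson's collection there. But membership in $\CA_Y$ only gives \emph{global} orthogonality $\Hom^\bullet_Y(\CO_Y(j),F)=0$ for $j\ge 1$; it does not give $\Hom^\bullet_{Y_l}(\CO_{Y_l}(j),F|_{Y_l})=0$, and there is no direct passage from the former to the latter. To make this line of attack work one has to use the orthogonality of $F$ to \emph{all} the bundles $S^i\CU^*(j)$, pull back to $Z$, and compare the two Orlov decompositions of $D(Z)$ (as a $\PP^1$-bundle over $Y$ and as a $\PP^{2n-3}$-bundle over $\PP(V)$), showing that $R\pi_{1*}\bigl(\pi_2^*F\otimes\CO_{\pi_1}(-j)\bigr)=0$ in $D(\PP(V))$ for $j=0,\dots,2n-3$. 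This is essentially the covering technique described after Theorem~\ref{thm:ld-gr2}, and carrying it out is not a one-liner. Since your HPD argument already suffices, I would either drop the second paragraph or flag it explicitly as a heuristic that needs the additional work just described.
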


\begin{remark}
   The exceptional collection produced by Theorem~\ref{thm:igr2} is, up to a twist by $\CO(-1)$,
   is precisely the collection given in Remark~\ref{rmk:gr2} with the first column dropped.
   Namely, we get an exceptional collection of the form
   \begin{equation*}
      \begin{pmatrix}
         S^{n-1}\CU^* & S^{n-1}\CU^*(1) & \cdots & S^{n-1}(n-2) &  \\
         S^{n-2}\CU^* & S^{n-2}\CU^*(1) & \cdots & S^{n-2}(n-2) & \cdots & S^{n-2}\CU^*(2n-2) \\
         \vdots & \vdots &  & \vdots & & \vdots \\
         \CU^* & \CU^*(1) & \cdots & \CU(n-1) & \cdots & \CU^*(2n-2) \\
         \CO^* & \CO^*(1) & \cdots & \CO(n-1) & \cdots & \CO^*(2n-2)
      \end{pmatrix}.
   \end{equation*}
\end{remark}

\subsection{Kuznetsov--Polishchuk collections}
Kuznetsov and Polishchuk constructed exceptional blocks that
produce exceptional collections of maximal length in $\IGr(k, V)$
for all $k$. The following results are formulated in~\cite[Theorem~9.2]{KuznetsovPolishchuk2016}.
We will also say that a collection of irreducible equivariant vector bundles forms
an exceptional block if the corresponding dominant weights do.

\begin{proposition}\label{prop:igr-b1}
   Let $0\leq t \leq k-1$. Then the bundles
   \[
      \{ \Sigma^\lambda\CU^*\otimes (\CU^\perp\!/\CU)^{\langle \mu \rangle}
      \mid \lambda\in\YD_{t, 2n-k-t},\ \mu\in\YD_{n-k,\lfloor(k-t/2)\rfloor} \}
   \]
   form an exceptional block. We denote by $\CA_t$ the corresponding
   admissible subcategory in $D(\IGr(k, V))$.
\end{proposition}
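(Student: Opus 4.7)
The plan is to verify the exceptional block condition of Definition~\ref{def:reb} by an explicit Borel--Bott--Weil computation, following the general scheme of~\cite{KuznetsovPolishchuk2016}. First I pass to representation theory via the equivalence $\Coh^\bfG(X)\simeq\Rep(\bfP)$. For any pair of Levi-dominant weights $\alpha,\beta$ one has $\Ext^i_{D(X)}(\CU^\alpha,\CU^\beta)=H^i(X,(\CU^\alpha)^*\otimes\CU^\beta)$, while $\Ext^i_\bfG$ picks out the $\bfG$-invariant part. Decomposing $(\CU^\alpha)^*\otimes\CU^\beta$ into irreducible Levi summands $\CU^\eta$---using the $\GL_k$ Littlewood--Richardson rule on the $\Sigma^{\bullet}\CU^*$ factor and the symplectic analogue on the $(\CU^\perp\!/\CU)^{\langle\bullet\rangle}$ factor---and then applying Bott's algorithm to each $\CU^\eta$, one obtains presentations of both $\Ext^i_{D(X)}$ and $\Ext^i_\bfG$ as explicit sums of irreducible $\bfG$-modules.

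The same decomposition applied to $\Hom(\CU^\nu,\CU^\mu)=H^0(X,(\CU^\nu)^*\otimes\CU^\mu)$ turns the natural morphism in Definition~\ref{def:reb} into a concrete pairing of Bott-theoretic summands arising from composition. The exceptional block property then reduces to the combinatorial assertion that every $\bfG$-irreducible summand of $\Ext^i_{D(X)}(\CU^\alpha,\CU^\beta)$ arises, in a unique way, from exactly one intermediate weight $\nu\in B$, via the tensor of an equivariant extension into $\CU^\nu$ and a non-equivariant hom out of it.

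The main obstacle is the alcove analysis underlying this matching. The constraints $\lambda\in\YD_{t,2n-k-t}$ and $\mu\in\YD_{n-k,\lfloor(k-t)/2\rfloor}$ are engineered so that, for all $\alpha,\beta\in B$, every $\CU^\eta$ appearing in the tensor-product expansion is controlled by Bott's dotted Weyl action in precisely the way needed for a unique and exhaustive factorization through elements of $B$; the appearance of $\lfloor(k-t)/2\rfloor$ reflects the column-pairing induced by the symplectic form on $\CU^\perp\!/\CU$, which effectively halves the allowed height on the $\mu$-side. Once the combinatorial matching is verified for every pair $(\alpha,\beta)\in B\times B$, a final dimension count upgrades the resulting natural surjection to an isomorphism, and exceptionality of the individual bundles follows as a special case ($\alpha=\beta$).
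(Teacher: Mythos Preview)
The paper does not give its own proof of this proposition: it is a survey, and the statement is simply quoted from~\cite[Theorem~9.2]{KuznetsovPolishchuk2016}. So there is no ``paper's proof'' to compare against beyond the citation itself.

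That said, your proposal is not a proof but an outline of where a proof would live. You correctly identify the framework---decompose $(\CU^\alpha)^*\otimes\CU^\beta$ into Levi-irreducibles, apply Borel--Bott--Weil, and match summands---but the entire content of the result is the ``combinatorial matching'' and ``alcove analysis'' that you declare will work once verified. Nothing is verified: you assert that the bounds on $\lambda$ and $\mu$ are ``engineered'' so that the factorization through intermediate $\nu\in B$ is unique and exhaustive, but you give no argument for why. In~\cite{KuznetsovPolishchuk2016} this step is not a direct case-by-case Bott computation; the authors develop structural criteria (closedness of the block under certain weight operations, compatibility with a carefully chosen cone of weights) that reduce the verification to tractable inequalities, and checking those inequalities is still a substantial piece of work. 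Your sketch skips all of this.

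There is also a misconception in your final sentence. Setting $\alpha=\beta$ in the exceptional block condition does not yield exceptionality of $\CU^\alpha$ in $D(X)$; indeed, the bundles $\Sigma^\lambda\CU^*\otimes(\CU^\perp/\CU)^{\langle\mu\rangle}$ are typically \emph{not} exceptional in the non-equivariant derived category. The point of Definition~\ref{def:reb} is precisely that it guarantees, via Proposition~\ref{prop:reb}, that the \emph{graded dual} objects $\CE^\lambda_B$ become exceptional after forgetting the equivariant structure---not the $\CU^\lambda$ themselves.
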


\begin{proposition}\label{prop:igr-b2}
   Let $k \leq t \leq 2n-k$. Then the bundles
   \[
      \{ \Sigma^\lambda\CU^*
      \mid \lambda\in\YD_{k-1, 2n-k-t} \}
   \]
   form an exceptional block. We denote by $\CA_t$ the corresponding
   admissible subcategory in $D(\IGr(k, V))$.
\end{proposition}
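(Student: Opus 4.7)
The condition to verify, following Definition~\ref{def:reb}, is that for every pair $\lambda, \mu \in B := \YD_{k-1, 2n-k-t}$ the natural composition map
\[
   \bigoplus_{\nu \in B}\Hom(\Sigma^\nu\CU^*, \Sigma^\mu\CU^*)\otimes_\kk \Ext^i_\bfG(\Sigma^\lambda\CU^*, \Sigma^\nu\CU^*)\to \Ext^i(\Sigma^\lambda\CU^*, \Sigma^\mu\CU^*)
\]
is an isomorphism for all $i$, where $\bfG = \SP(V)$. My plan is to compute both sides explicitly via Borel--Bott--Weil and verify that the composition is a termwise bijection.

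For the target I would exploit the realisation of $\IGr(k,V)$ as the zero locus inside $\Gr(k,V)$ of the tautological section of $\Lambda^2\CU^*$ determined by $\omega$. The Koszul resolution
\[
   \iota_*\CO_{\IGr(k,V)} \simeq \bigl[\Lambda^{\binom{k}{2}}(\Lambda^2\CU)\to\cdots\to\Lambda^2(\Lambda^2\CU)\to\Lambda^2\CU\to\CO\bigr]
\]
identifies
\[
   \Ext^i_{\IGr(k,V)}(\Sigma^\lambda\CU^*, \Sigma^\mu\CU^*) \simeq \mathbb{H}^i\bigl(\Gr(k,V),\,\Sigma^\lambda\CU\otimes\Sigma^\mu\CU^*\otimes \iota_*\CO_{\IGr(k,V)}\bigr),
\]
and the associated spectral sequence is accessible by combining the classical plethysm $\Lambda^j(\Lambda^2\CU) \cong \bigoplus_\alpha \Sigma^\alpha\CU$ (summed over partitions of $2j$ with all columns of even length) with the Littlewood--Richardson decomposition of $\Sigma^\lambda\CU\otimes\Sigma^\mu\CU^*$ and the Borel--Bott--Weil theorem on $\Gr(k,V)$. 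The constraints cutting out $B$ play a crucial role here: the height bound $\ydh(\lambda),\ydh(\mu)\le k-1$ forces a systematic vanishing in the last row of every Schur summand, and the width bound $\le 2n-k-t$ pins down the maximal twist before higher cohomology becomes forced to vanish, jointly restricting the nonzero dominant weights that survive to those attributable to indices $\nu \in B$.

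The domain side is handled by the same machinery. The factor $\Hom(\Sigma^\nu\CU^*,\Sigma^\mu\CU^*)$ decomposes as an $\SP(V)$-representation into isotypic pieces, while $\Ext^i_\bfG(\Sigma^\lambda\CU^*,\Sigma^\nu\CU^*)$ is extracted as its $\bfG$-invariant part after passing to the opposite pair in the spectral sequence. Once both sides are written as sums indexed by pairs of Young diagrams with a common combinatorial description, naturality of composition ensures that each contributing summand on the right comes from precisely one summand on the left.

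The hardest part, in my view, is upgrading the resulting dimension match to the statement that the composition map is itself an isomorphism. I would attack this conceptually by working in the equivariant category $\Coh^\bfG(\IGr(k,V)) \simeq \Rep(\bfP)$ and interpreting the composition through the Hochschild--Serre spectral sequence for the unipotent radical $\bfU\subset\bfP$, whose $E_2$-page has the form $H^\bullet(\mathfrak{u}, \Hom_\kk(V_\lambda, V_\mu))^\bfL$. The block condition should follow from the degeneration of this spectral sequence on pairs of weights in $B$, which in turn is governed by Kostant's theorem for $H^\bullet(\mathfrak{u},-)$ together with the Weyl group combinatorics forced by the rectangular shape of $B$. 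With that collapse established, naturality promotes the dimension match to the required isomorphism.
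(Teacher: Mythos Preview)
Your strategy follows the general machinery Kuznetsov and Polishchuk develop for arbitrary blocks, and in principle it can be pushed through, but you have missed the structural feature that makes \emph{this} block almost trivial and thereby left the genuinely hard step unresolved.

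The key observation, recorded in the paper immediately after Theorem~\ref{thm:igr-kp}, is that the bundles $\Sigma^\lambda\CU^*$ for $\lambda\in\YD_{k-1,2n-k-t}$ are \emph{fully orthogonal in the equivariant derived category}: one has $\Ext^i_\bfG(\Sigma^\lambda\CU^*,\Sigma^\nu\CU^*)=0$ unless $\lambda=\nu$ and $i=0$, in which case it is $\kk$. Once you know this, the left-hand side of the block condition collapses to the single summand $\nu=\lambda$, and the map becomes
\[
   \Hom(\Sigma^\lambda\CU^*,\Sigma^\mu\CU^*)\otimes_\kk\kk \longrightarrow \Ext^i(\Sigma^\lambda\CU^*,\Sigma^\mu\CU^*),
\]
which is composition with the identity in degree $0$ and the zero map in positive degrees. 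So the block condition is equivalent to the single cohomological vanishing $\Ext^{>0}(\Sigma^\lambda\CU^*,\Sigma^\mu\CU^*)=0$ for all $\lambda,\mu\in B$; this is a direct Borel--Bott--Weil check and is exactly what the remark means by ``one can check directly''. In particular the graded right dual collection coincides with the original one, which is why the objects $\Sigma^\lambda\CU^*$ themselves form the exceptional collection in $D(\IGr(k,V))$.

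Because you did not isolate the equivariant full orthogonality, you were forced to confront the composition map in its general form and ended up with the vague ``naturality promotes the dimension match'' step, which you yourself flag as the hardest part. That difficulty is an artefact of the approach: for this block it simply does not arise. Your Hochschild--Serre argument would eventually uncover the same collapse, but only after a great deal of unnecessary work; and as written, the claims about ``systematic vanishing in the last row'' and the width bound ``pinning down the maximal twist'' are too imprecise to count as a proof.
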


\begin{theorem}\label{thm:igr-kp}
   The subcategories $\CA_0,\CA_1(1),\ldots,\CA_{2n-k}(2n-k)$
   are semiorthogonal in $D(\IGr(k, V))$. Since each of them is generated
   by an exceptional collection, there is an exceptional collection
   in $D(\IGr(k, V))$ of length $2^k\binom{n}{k}$, which equals
   $\rk K_0(\IGr(k, V))$.
\end{theorem}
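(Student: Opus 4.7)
The plan is to assemble the decomposition block-by-block by combining Propositions~\ref{prop:igr-b1} and~\ref{prop:igr-b2} with the exceptional-block formalism. Granting those propositions, each $B_t$ is an exceptional block of weights of $\bfP_k$, and Proposition~\ref{prop:reb} immediately furnishes a graded exceptional collection $\{{\sf Fg}(\CE^\lambda_{B_t}) \mid \lambda\in B_t\}$ in $D(\IGr(k, V))$ with respect to the $\xi$-ordering on $B_t$. This collection generates $\CA_t$ and makes it admissible. So the first step is just to invoke this formalism, which simultaneously verifies that each $\CA_t$ is generated by an exceptional collection of length $|B_t|$.

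The main step is cross-block semiorthogonality: for $0\le s < t \le 2n-k$, any $X\in\CA_t$ and any $Y\in\CA_s$, one must show
\[
\Ext^\bullet_{\IGr(k, V)}\bigl(X(t),\ Y(s)\bigr) = 0.
\]
Since both subcategories are generated by the equivariant bundles of Propositions~\ref{prop:igr-b1}--\ref{prop:igr-b2}, this reduces to the vanishing of $H^\bullet(\IGr(k,V),\ \CF^*\otimes\CE\otimes\CO(s-t))$ for each such pair $\CE\in\CA_t$, $\CF\in\CA_s$. I would decompose each tensor product into $\bfG$-irreducible summands via the ordinary and symplectic Littlewood--Richardson rules, and then compute each summand's cohomology via Borel--Weil--Bott for the parabolic $\bfP_k\subset\SP(V)$. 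The main obstacle sits here: one has to show that the ranges $\YD_{t,2n-k-t}$, $\YD_{n-k,\lfloor(k-t)/2\rfloor}$, and $\YD_{k-1,2n-k-t}$ in Propositions~\ref{prop:igr-b1}--\ref{prop:igr-b2} are calibrated exactly so that every resulting highest weight becomes either non-dominant or singular after the Weyl shift by $\rho$, forcing the cohomology to vanish. This is a careful but finite case analysis controlled by where the twist $\CO(s-t)$ pushes the summands in the weight lattice.

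Finally, the length claim follows by summing the sizes of the blocks,
\[
\sum_{t=0}^{k-1}\binom{2n-k}{t}\cdot\bigl|\YD_{n-k,\lfloor(k-t)/2\rfloor}\bigr|\ +\ \sum_{t=k}^{2n-k}\binom{2n-1-t}{k-1},
\]
and verifying that it equals $\rk K_0(\IGr(k, V)) = |W(C_n)/W_{\bfP_k}| = 2^k\binom{n}{k}$. This is a finite combinatorial identity that can be checked by induction on $k$, or by matching with the Schubert cell count arising from the Bruhat decomposition of type $C_n$. Once verified, concatenating the exceptional collections from the first step in their respective twists yields the exceptional collection of length $2^k\binom{n}{k}$ in $D(\IGr(k, V))$ asserted by the theorem.
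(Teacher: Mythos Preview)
The survey does not prove this theorem; it is quoted from \cite[Theorem~9.2]{KuznetsovPolishchuk2016}, and your plan is essentially the strategy carried out there. The within-block step is exactly as you say: Propositions~\ref{prop:igr-b1}--\ref{prop:igr-b2} feed into Proposition~\ref{prop:reb}, and the resulting dual objects generate the same admissible subcategory as the irreducible bundles $\CU^\lambda$, so cross-block semiorthogonality may indeed be checked on the latter. Two small corrections. First, the tensor products $\CF^*\otimes\CE$ should be decomposed into $\bfL$-irreducible summands (irreducible equivariant bundles), not $\bfG$-irreducibles; it is to these that Borel--Weil--Bott applies. Second, the BWB vanishing criterion is that $\lambda+\rho$ be singular (lie on a wall of the Weyl chamber), not ``non-dominant or singular after the Weyl shift''. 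In practice \cite{KuznetsovPolishchuk2016} does not perform the cross-block vanishing by brute-force case analysis as you outline; they develop general convex-geometric criteria on the weight sets (closedness under certain reflections, compatibility of ``cores'' of consecutive blocks) that package the BWB combinatorics uniformly, and then verify these criteria for the specific blocks $B_t$. Your direct approach would work in principle but would be considerably more laborious. The counting identity you wrote down is also verified in \cite{KuznetsovPolishchuk2016}.
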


\begin{remark} Let us say a few words about Theorem~\ref{thm:igr-kp}.
   \begin{enumerate}
      \item The bundles in Proposition~\ref{prop:igr-b2} are actually fully orthogonal
      in the equivariant derived category. Thus, the graded right dual collection
      coincides with the original one. One can check directly that for
      $k\leq t\leq 2n-k$ the bundles
      \[
         \langle \Sigma^\lambda\CU^* \mid \lambda\in\YD_{k-1, 2n-k-t} \rangle
      \]
      form an exceptional collection in $D(\IGr(k, V))$.

      \item The blocks 
      \[
         \langle \CA_k(k), \CA_{k+1}(k+1), \ldots, \CA_{2n-k}(2n-k)\rangle
      \]
      can be neatly packed into a single subcategory generated by an exceptional
      collection
      \[
         \langle \Sigma^{\lambda}\CU^*(k) \mid \lambda \in \YD_{k, 2n-2k} \rangle.
      \]
   \end{enumerate}
\end{remark}

\begin{conjecture}[{\cite[Conjecture~1.6]{KuznetsovPolishchuk2016}}]\label{conj:kp-igr}
   The exceptional collections in Theorem~\ref{thm:igr-kp} are full.
\end{conjecture}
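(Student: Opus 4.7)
The plan is to apply Proposition~\ref{prop:full} with $\CL=\CO(1)$. Since $\CO = \Sigma^0\CU^*\otimes(\CU^\perp/\CU)^{\langle 0\rangle}\in \CA_0$ is contained in the KP subcategory $\CT=\langle \CA_0, \CA_1(1),\ldots,\CA_{2n-k}(2n-k)\rangle$, fullness reduces to checking that $E\otimes \CO(1)\in \CT$ for every generator $E$ of each block $\CA_t(t)$. For $t<2n-k$ the twist $E(1)$ lives in $\CA_t(t+1)$, and since $\CA_{t+1}\subset \CA_t$ the task becomes to express $E(1)$ as an iterated extension of generators of the later blocks $\CA_{t+1}(t+1),\ldots,\CA_{2n-k}(2n-k)$. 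The remaining case $E\in \CA_{2n-k}(2n-k)=\langle \CO(2n-k)\rangle$ asks to express $\CO(2n-k+1)$ in terms of the preceding blocks and handles the Fano-index boundary, to be produced from a Koszul complex associated with a regular section of a suitable equivariant bundle on $\IGr(k,V)$.

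The key mechanism for moving between Lefschetz blocks is an isotropic analog of the generalized staircase complexes of Proposition~\ref{prop:gr-genstc}. I would take those complexes on $\Gr(k,V)$ and restrict them to $\IGr(k,V)$ via the inclusion $\iota\colon\IGr(k,V)\hookrightarrow\Gr(k,V)$, which is cut out regularly by a section of $\Lambda^2\CU^*$. The restricted bundles $\iota^*\Sigma^\mu(V/\CU)$ decompose through the branching $\GL_{2n-2k}\to\SP_{2n-2k}$ applied to the graded pieces of the filtration $0\subset \CU\subset \CU^\perp\subset V$, producing extensions of tensor products $\Sigma^\alpha\CU^*\otimes (\CU^\perp\!/\CU)^{\langle\beta\rangle}$. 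Combining the two ingredients and inducting on the lexicographic order on the pair $(\ydw(\lambda),|\lambda|)$ and on the symplectic partition $\mu$, the resolutions strictly shrink the indexing diagrams at each step and terminate with line bundles $\CO(t)$ for $0\leq t\leq 2n-k$, all of which lie in $\CT$. A parallel reduction to the ambient Grassmannian is also available: by Kapranov's Theorem~\ref{thm:kapranov} and the Koszul resolution of $\iota_*\CO$, it suffices to place each $\iota^*\Sigma^\lambda\CU^*$ for $\lambda\in\YD_{k,2n-k}$ into $\CT$, which is handled by the same isotropic staircases.

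The main obstacle lies in producing isotropic staircase complexes whose terms never exit the allowed rectangles $\YD_{t,2n-k-t}\times \YD_{n-k,\lfloor(k-t)/2\rfloor}$ from Proposition~\ref{prop:igr-b1}, whose $\mu$-width $\lfloor(k-t)/2\rfloor$ depends asymmetrically on the twist index~$t$. This asymmetry is precisely what forces the appearance of symplectic Schur bundles in the KP collection, and it is also what breaks the naive transport of the type~$A$ argument: a term of a restricted staircase complex that fits inside a $\Gr(k,V)$ block may land outside its symplectic counterpart, forcing further rewriting. A plausible workaround is to enlarge the target subcategory temporarily in the spirit of Homological Projective Duality windows, verify fullness there using the type~$A$ staircases, and then trim the extra generators using the semiorthogonalities established by Kuznetsov and Polishchuk together with the numerical equality between the length of the collection and $\rk K_0(\IGr(k,V))$.
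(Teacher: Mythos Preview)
The statement you are trying to prove is an \emph{open conjecture}: the paper explicitly says that Conjecture~\ref{conj:kp-igr} has been established only for Lagrangian Grassmannians (the case $k=n$, Theorem~\ref{thm:lgr}), via the purpose-built Lagrangian staircase complexes of Proposition~\ref{prop:lstair}. Your proposal does not close the gap, and two of its load-bearing steps are incorrect.

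First, the nesting claim $\CA_{t+1}\subset\CA_t$ is false in the range $0\le t\le k-1$. Compare the index sets in Proposition~\ref{prop:igr-b1}: in $\CA_t$ one has $\lambda\in\YD_{t,\,2n-k-t}$ and $\mu\in\YD_{n-k,\,\lfloor(k-t)/2\rfloor}$, while in $\CA_{t+1}$ one has $\lambda\in\YD_{t+1,\,2n-k-t-1}$ and the $\mu$-rectangle is no wider. Neither $\lambda$-rectangle contains the other, so the Kuznetsov--Polishchuk collection is \emph{not} Lefschetz and the reduction ``express $E(1)$ in terms of later blocks'' does not have the shape you assume. This is exactly why the paper, even in the Lagrangian case, does not argue by restricting the type~$A$ staircases of Proposition~\ref{prop:gr-genstc} but instead constructs genuinely new complexes~\eqref{eq:lstair} whose terms are the bundles $\CE^\lambda$ intrinsic to $\LGr(V)$ and whose combinatorics is governed by a $\ZZ/(2n+2)\ZZ$-action rather than the $\ZZ/n\ZZ$-action of Section~\ref{ssec:stair}.

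Second, your proposed workaround---enlarge the target, prove fullness there, then ``trim the extra generators using \ldots\ the numerical equality between the length of the collection and $\rk K_0$''---is invalid. An exceptional collection whose length equals $\rk K_0$ need not be full; the orthogonal could be a phantom category. Numerical coincidence is precisely what Theorem~\ref{thm:igr-kp} already gives, and the entire content of the conjecture is that nothing further is hiding in the orthogonal. No semiorthogonality statement from \cite{KuznetsovPolishchuk2016} bridges that gap.
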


So far, Conjecture~\ref{conj:kp-igr} has been proved only for Lagrangian
Grassmannians in~\cite{Fonarev2022}. We discuss this result below.

\subsection{Lagrangian Grassmannians and Lagrangian staircase complexes}
Consider the Lagrangian Grassmannian $\LGr(V) = \IGr(n, V)$.
Theorem~\ref{thm:igr-kp}
produces a collection of subcategories
\[
   \CA_t = \langle \CE^\lambda_t \mid \lambda \subset \YD_{t, k-t} \rangle,
\]
where the objects $\CE^\lambda_t$ form the graded right dual exceptional
collection to $\langle \Sigma^\lambda\CU^* \mid \lambda \subset \YD_{t, k-t} \rangle$
in the equivariant derived category.

We first observe that the subscript $t$ can be dropped from $\CE^\lambda_t$
since the corresponding object does not depend on the block.
Indeed, for any $\lambda$ such that $\ydh(\lambda)+\ydw(\lambda)\leq n+1$ we are going
to construct an exceptional equivariant vector bundle on $\LGr(V)$,
which we denote by $\CE^\lambda$. This bundle will coincide
with $\CE^\lambda_t$ if $\lambda \in \YD_{t, k-t}$.
\begin{remark}
   We actually construct more exceptional bundles than what Theorem~\ref{thm:igr-kp}
   gives us: our condition is that $h+w=n+1$, while Theorem~\ref{thm:igr-kp}
   is concerned with diagrams $\lambda$ such that $\ydh(\lambda)+\ydw(\lambda)\leq n$.
\end{remark}

There are two constructions of $\CE^\lambda$ presented in~\cite{Fonarev2022}.
In both cases we are concerned with the dual objects, which we denote by
$\CF^\lambda$ (a similar thing happened in section~\ref{ssec:kp-gr}).
Let $h, w\geq 1$ be such that $h+w=n+1$.
Consider the diagram
\begin{equation}\label{eq:lgr-el1}
   \begin{tikzcd}
      & \IFl(w, n; V) \arrow[dr, "g"] \arrow[dl, "f"'] & \\
      \LGr(V) & & \IGr(w, V),
   \end{tikzcd}
\end{equation}
and denote by $\CW\subset\CU$ the universal flag on $\IFl(w, n; V)$.

\begin{proposition}[{\cite[Proposition~3.1]{Fonarev2022}}]\label{prop:lgr-fl1}
   Let $\lambda\in\YD_{h-1, w}$. The object
   \[
      \CF^\lambda = f_*g^* (\CW^\perp\!/\CW)^{\langle \lambda \rangle}
   \]
   is an exceptional vector bundle on $\LGr(V)$ which depends only on $\lambda$
   and not on $w$. 
\end{proposition}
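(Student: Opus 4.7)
The plan is to analyze $\CF^\lambda$ via the geometry of $f:\IFl(w,n;V)\to \LGr(V)$, which is a Grassmannian bundle whose fiber over a Lagrangian $U\subset V$ is the ordinary Grassmannian $\Gr(w,U)$ (every subspace of a Lagrangian is automatically isotropic). On this fiber the inclusions $\CW\subset U\subset \CW^\perp$ exhibit $U/\CW$ as a Lagrangian subbundle of $\CW^\perp/\CW$, and the symplectic form identifies $\CW^\perp/U$ with $(U/\CW)^*$. Hence $\CW^\perp/\CW \cong (U/\CW)\oplus (U/\CW)^*$ as a $\GL(U)$-equivariant bundle on the fiber.

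First I would verify that $\CF^\lambda$ is a vector bundle and identify its fiber as a $\GL(U)$-module. Restricting $g^*(\CW^\perp/\CW)^{\langle\lambda\rangle}$ to the fiber and applying the Littlewood branching rule for $\SP_{2(h-1)}\supset \GL_{h-1}$ decomposes it (non-canonically, with an appropriate filtration) into bundles of the form $\Sigma^\alpha(U/\CW)\otimes \Sigma^\beta(U/\CW)^*$ with multiplicities given by a known combinatorial expression in Littlewood--Richardson coefficients. A direct application of Bott's theorem on $\Gr(w,U)\cong \GL_n/P_w$ to each summand shows that higher cohomologies vanish and that the global sections assemble into a specific $\GL(U)$-representation
\[
    \bigoplus_\mu N^\lambda_\mu\, \Sigma^\mu U,
\]
where the coefficients $N^\lambda_\mu$ depend only on $\lambda$ and $\mu$, and not on the ambient integers $w, h, n$, as long as $\lambda$ and $\mu$ fit in the relevant boxes. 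By cohomology and base change, $R^if_*g^*(\CW^\perp/\CW)^{\langle\lambda\rangle}=0$ for $i>0$, so $\CF^\lambda$ is a $\bfG$-equivariant vector bundle whose fiber at every $U\in\LGr(V)$ is the representation displayed above. Independence of $w$ is then immediate: since this fiber description involves only $\lambda$, any two admissible values of $w$ yield $\bfG$-equivariant vector bundles with identical fibers, hence isomorphic.

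For exceptionality, I would identify $\CF^\lambda$ with the forgetful image of the graded right dual of $\Sigma^\lambda\CU^*$ inside an equivariant exceptional block of Proposition~\ref{prop:igr-b1} (concretely, for $\LGr(V)$, the block $\CA_t$ with $t=n-w$ for any admissible $w$). The defining properties of the right dual,
\[
    \Ext^\bullet_\bfG(\Sigma^\mu\CU^*,\CF^\lambda) = \begin{cases} \kk[-|\lambda|] & \text{if } \mu = \lambda, \\ 0 & \text{otherwise,} \end{cases}
\]
can be checked directly by rewriting the left-hand side via $f^*\dashv f_*$ adjunction as an $\Ext$-computation on $\IFl(w,n;V)$ between $\SP(V)$-equivariant bundles, which is evaluated using Borel--Bott--Weil and the Littlewood--Richardson rule. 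Exceptionality of $\CF^\lambda$ in the non-equivariant category $D(\LGr(V))$ then follows from Proposition~\ref{prop:reb}.

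The main obstacle is the symplectic branching step. One has to confirm that the fiberwise splitting $\CW^\perp/\CW\cong (U/\CW)\oplus (U/\CW)^*$ is actually $\GL(U)$-equivariant and not merely an isomorphism of vector spaces; apply Littlewood's branching rule with precise control of the multiplicities; and check that the resulting Bott weights are dominant (or compute the Weyl shifts correctly) so that only $H^0$ contributes uniformly across all pieces. Once this bookkeeping is set up, both the rank-independence of the answer and the identification of $\CF^\lambda$ with the Kuznetsov--Polishchuk graded dual become clean consequences of the fiber computation.
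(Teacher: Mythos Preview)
The survey does not prove this proposition; it is quoted from \cite{Fonarev2022}. Evaluating your proposal on its own terms, there is one genuine gap and one fixable mix-up.

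The gap is in the independence-of-$w$ step. Your fiber computation is carried out using the Levi $\GL(U)$: the splitting $\CW^\perp/\CW\cong (U/\CW)\oplus(U/\CW)^*$ requires a choice of Lagrangian complement to $U$ and is only $\bfL$-equivariant, not $\bfP$-equivariant. Hence what you extract is the $\bfL$-module structure of the fiber, i.e.\ the associated graded of the corresponding $\bfP$-representation. Two $\bfG$-equivariant bundles on $\LGr(V)=\bfG/\bfP$ with the same associated graded need not be isomorphic, because the unipotent radical can act differently; and the $\CF^\lambda$ are genuinely non-semisimple (already $(\CF^{(2)})^*=\CE^{(2)}$ is a nontrivial extension of $S^2\CU^*$ by $\CO$). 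So ``identical fibers, hence isomorphic'' does not follow. To close this you need either a direct comparison of the constructions for adjacent values of $w$ through an intermediate flag $\IFl(w,w+1,n;V)$, or an intrinsic characterization (such as the dual-collection property you write down) that is manifestly $w$-independent and pins the object down uniquely as a $\bfP$-representation, not merely as an $\bfL$-representation.

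The mix-up is in the exceptionality step: the condition $\Ext^\bullet_\bfG(\Sigma^\mu\CU^*,\CF^\lambda)=\kk[-|\lambda|]\,\delta_{\lambda\mu}$ is, in this paper's convention, the defining property of the graded \emph{left} dual, so the relevant input is Proposition~\ref{prop:leb} (left exceptional blocks), not Proposition~\ref{prop:reb}. Equivalently, dualize and show that $\CE^\lambda=(\CF^\lambda)^*$ satisfies the right-dual property inside the Kuznetsov--Polishchuk block of Proposition~\ref{prop:igr-b1}, and then invoke Proposition~\ref{prop:reb}. This is bookkeeping rather than a conceptual obstacle, but as written the two halves of your argument do not fit together.
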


If $w>1$, we also consider the diagram
\begin{equation}\label{eq:lgr-el2}
   \begin{tikzcd}
      & \IFl(w-1, n; V) \arrow[dr, "\tilde{g}"] \arrow[dl, "\tilde{f}"'] & \\
      \LGr(V) & & \IGr(w-q, V),
   \end{tikzcd}
\end{equation}
and denote by $\CH\subset\CU$ the universal flag on $\IFl(w-1, n; V)$.

\begin{proposition}[{\cite[Proposition~3.2]{Fonarev2022}}]\label{prop:lgr-fl2}
   Assume $w\geq 2$. Let $\lambda\in\YD_{h, w}$ be such that $\lambda_h>0$. The object
   \[
      \CF^\lambda = \tilde{f}_*\left( \det (\CU/\CH)\otimes 
       \tilde{g}^* (\CH^\perp\!/\CH)^{\langle \lambda(-1) \rangle}
       \right),
   \]
   where $\lambda(-1)=(\lambda_1-1, \ldots, \lambda_h-1)$,
   is an exceptional vector bundle on $\LGr(V)$.
   If $\lambda \in \YD_{h'-1, w'}$ for some $h'+w'=n+1$, then
   $\CF^\lambda$ coincides with the one defined in Proposition~\ref{prop:lgr-fl1}.
\end{proposition}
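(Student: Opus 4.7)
The plan is to exploit the intermediate isotropic flag variety $\IFl(w-1, w, n; V)$, whose natural projections $\pi\colon \IFl(w-1, w, n; V)\to \IFl(w-1, n; V)$ (forgetting $\CW$; a $\PP^{h-1}$-bundle with fiber $\PP(\CU/\CH)$) and $\rho\colon \IFl(w-1, w, n; V)\to \IFl(w, n; V)$ (forgetting $\CH$; a $\PP^{w-1}$-bundle with fiber $\PP(\CW^*)$) satisfy $\tilde{f}\circ\pi = f\circ\rho$, giving a bridge between the two constructions. On $\IFl(w-1, w, n; V)$ the bundle $\CH^\perp/\CH$ carries a three-step filtration $\CW/\CH\subset \CW^\perp/\CH\subset \CH^\perp/\CH$ with successive quotients $\ell,\ \CW^\perp/\CW,\ \ell^{-1}$, where $\ell = \CW/\CH$; this matches the parabolic reduction of $\SP(2h)$ with Levi $\GL_1\times\SP(2h-2)$.

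To show that $\CF^\lambda$ is a vector bundle, I would compute $R^\bullet\tilde{f}_*$ fiberwise. Over $\CU\in\LGr(V)$ the fiber of $\tilde{f}$ is $\Gr(w-1,\CU)\cong \Gr(w-1, n)$; a choice of Lagrangian complement to $\CU$ in $V$ locally splits $\CH^\perp/\CH$ on the fiber as $\CU/\CH\oplus (\CU/\CH)^*$, and the classical branching $\SP(2h)\supset \GL(h)$ decomposes $(\CH^\perp/\CH)^{\langle\lambda(-1)\rangle}$ into summands of the form $\Sigma^\alpha(\CU/\CH)\otimes\Sigma^\beta(\CU/\CH)^*$ with Littlewood-Richardson-type multiplicities. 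Twisting by $\det(\CU/\CH)$ and applying Borel-Bott-Weil on $\Gr(w-1, n)$ to each summand should yield vanishing of all higher direct images and a zeroth direct image of constant rank, giving a vector bundle on $\LGr(V)$.

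For the compatibility claim, I would apply Proposition~\ref{prop:lgr-fl1} with parameters $(h', w') = (h+1, w-1)$. These satisfy $h'+w' = n+1$, and the hypothesis $\lambda\in\YD_{h'-1, w'} = \YD_{h, w-1}$ is exactly the stated compatibility condition. Under this choice, Proposition~\ref{prop:lgr-fl1}'s flag variety coincides with $\IFl(w-1, n; V)$, with its ``$\CW$'' playing the role of our $\CH$, so that $\CF^\lambda_{3.1} = \tilde{f}_*\tilde{g}^*(\CH^\perp/\CH)^{\langle\lambda\rangle}$. Matching this against $\CF^\lambda_{3.2}$ reduces to the identity
\[
\tilde{f}_*\tilde{g}^*(\CH^\perp/\CH)^{\langle\lambda\rangle} \cong \tilde{f}_*\!\left(\det(\CU/\CH)\otimes\tilde{g}^*(\CH^\perp/\CH)^{\langle\lambda(-1)\rangle}\right),
\]
which is again verified fiberwise by comparing the two Borel-Bott-Weil calculations in the Lagrangian-split decomposition. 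Exceptionality of $\CF^\lambda$ in the overlap range then follows from exceptionality of the Proposition~\ref{prop:lgr-fl1} object; for $\lambda$ on the boundary $\ydh(\lambda)+\ydw(\lambda) = n+1$, where Proposition~\ref{prop:lgr-fl1} does not apply, I would establish it by a parallel direct computation of $\Ext^\bullet(\CF^\lambda, \CF^\lambda)$ using the same splitting and Borel-Bott-Weil.

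The main obstacle is the combinatorial comparison in the fiberwise Borel-Bott-Weil computations. The two sides of the required identity come from different $\GL(h)$-branchings of $\SP(2h)$-representations, with an additional $\det$-twist on one side, and each produces many potentially nonvanishing cohomology terms on $\Gr(w-1, n)$. Showing that these aggregate to the same $\GL(n)$-equivariant vector bundle on $\LGr(V)$ is where the essential representation-theoretic input lies: it amounts to a Jacobi-Trudi-type identity for symplectic Schur functions, and it is precisely the delicate step that forces the two constructions to yield the same object.
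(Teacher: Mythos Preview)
The survey does not prove this proposition at all: it is simply quoted from \cite[Proposition~3.2]{Fonarev2022} and immediately followed by a transition sentence. There is therefore no ``paper's own proof'' to compare against; everything substantive happens in the cited source, which is not reproduced here.

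As to your sketch itself: the architecture is reasonable --- using the triple flag $\IFl(w-1,w,n;V)$ to bridge the two constructions is the natural move, and the three-step filtration of $\CH^\perp/\CH$ you wrote down is correct. The fiberwise Lagrangian splitting $\CH^\perp/\CH\cong (\CU/\CH)\oplus(\CU/\CH)^*$ is also the right way to reduce the direct-image computation to Borel--Bott--Weil on a classical Grassmannian. However, what you have written is a plan, not a proof: you correctly flag the branching/Jacobi--Trudi comparison as ``the delicate step'' and then do not carry it out. That comparison is the entire content of the proposition, so at the moment the proposal contains the scaffolding but none of the load-bearing computation. If you want to turn this into an actual argument, you need to (i) write down explicitly which $(\alpha,\beta)$ appear in the $\GL_h$-branching of $(\CH^\perp/\CH)^{\langle\lambda(-1)\rangle}$ after the $\det$-twist, (ii) identify which of those survive Borel--Bott--Weil on $\Gr(w-1,n)$ and in which degree, and (iii) match the outcome term-by-term with the analogous list for $(\CH^\perp/\CH)^{\langle\lambda\rangle}$. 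Until that is done there is no proof, only a strategy.
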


As one can see, Propositions~\ref{prop:lgr-fl1} and~\ref{prop:lgr-fl2}
cover all $\lambda$ with $\ydh(\lambda)+\ydw(\lambda)\leq n+1$, but
not in a single take. The second construction does not have this disadvantage.
We consider diagram~\eqref{eq:lgr-el2} once again. First, observe the following.

\begin{lemma}[{\cite[Lemma~3.4]{Fonarev2022}}]\label{lm:lgr-ul}
   There is an exceptional collection in $D(\IGr(w, V))$ of the form
   \begin{equation}\label{eq:lgr-ul}
      \langle \Sigma^\mu \CU^* \mid \mu\in \YD_{w, h}\rangle.
   \end{equation}
\end{lemma}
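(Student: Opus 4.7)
The plan is to verify the exceptional-collection property by a direct cohomology computation, exploiting the fact that all bundles $\Sigma^\mu \CU^*$ are $\bfG = \Sp(V)$-equivariant. By equivariance,
\[
\Ext^\bullet_{\IGr(w,V)}(\Sigma^\mu \CU^*, \Sigma^\lambda \CU^*) \;=\; H^\bullet\!\bigl(\IGr(w, V),\, \Sigma^\mu \CU \otimes \Sigma^\lambda \CU^*\bigr),
\]
so the task reduces to a cohomology computation via Borel--Bott--Weil for $\Sp_{2n}$.

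First, I decompose the tensor product into irreducible $\bfG$-equivariant summands. Using the standard identity $\Sigma^\mu \CU \cong \Sigma^{\tilde\mu} \CU^* \otimes \CO(-\mu_1)$ with $\tilde\mu_i = \mu_1 - \mu_{w+1-i}$, and applying the Littlewood--Richardson rule to $\Sigma^{\tilde\mu} \CU^* \otimes \Sigma^\lambda \CU^*$, one obtains a direct sum of bundles of the form $\Sigma^\nu \CU^* \otimes \CO(-\mu_1)$ with multiplicity $c^\nu_{\tilde\mu,\lambda}$. Each such summand is a $\bfG$-equivariant bundle on $\IGr(w, V)$ whose $\Sp_{2n}$-highest weight is $(\nu_1 - \mu_1, \ldots, \nu_w - \mu_1, 0, \ldots, 0) \in \ZZ^n$, with the last $n - w$ zeros coming from the trivial component of the Levi factor $\Sp_{2n-2w}$.

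Next, I apply Borel--Bott--Weil for $\Sp_{2n}$ to each summand: its cohomology is nonzero iff the shifted weight $\nu - (\mu_1)^w + \rho_{\Sp}$, with $\rho_{\Sp} = (n, n-1, \ldots, 1)$, is regular (no zero entries and no two entries equal in absolute value), in which case it is concentrated in a single degree equal to the length of the Weyl-group element bringing the shifted weight to the dominant chamber. Within this framework, the ``diagonal'' summand $\nu = (\mu_1)^w$ yields the trivial bundle $\CO$ and contributes $\kk$ in $H^0$; its Littlewood--Richardson coefficient $c^{(\mu_1)^w}_{\tilde\mu, \lambda}$ is positive precisely when $\mu = \lambda$, as can be verified by a standard combinatorial analysis of tableaux on the rectangular skew shape $(\mu_1)^w/\tilde\mu$ (a $180^\circ$ rotation of $\mu$). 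For $\mu \not\subseteq \lambda$, the diagonal summand is absent, and the lemma is proved provided every other summand has vanishing cohomology.

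The main obstacle is the combinatorial verification that, for every pair $(\mu, \lambda)$ and every non-diagonal Littlewood--Richardson summand $\nu$, the shifted $\Sp_{2n}$-weight is irregular. The bound $\mu_1, \lambda_1 \leq n + 1 - w$ coming from $\mu, \lambda \in \YD_{w, h}$ is precisely the sharpest threshold forcing the shifted first-block coordinates $\nu_i - \mu_1 + n - i + 1$ to collide (in value or in absolute value) with the second-block coordinates $(n - w, \ldots, 1)$, producing the required irregularity. Because the symplectic Weyl group acts by signed permutations, this irregularity can manifest as a zero coordinate, two equal coordinates, or a pair of coordinates equal in magnitude but opposite in sign; the argument therefore requires a careful case-by-case check across all Littlewood--Richardson $\nu$, and it is exactly the numerical hypothesis $h = n + 1 - w$ that makes the check uniformly succeed.
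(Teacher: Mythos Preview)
The survey does not itself prove this lemma; it records the statement and cites \cite[Lemma~3.4]{Fonarev2022}. There is thus no in-text argument to compare against, and the following evaluates your proposal on its own merits. Your overall strategy---decompose $\Sigma^\mu\CU\otimes\Sigma^\lambda\CU^*$ by Littlewood--Richardson and test each summand with Borel--Bott--Weil for $\Sp_{2n}$---is the natural one and matches what one expects the cited argument to do.

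Your setup is correct: the reduction to $H^\bullet(\IGr(w,V),\Sigma^\nu\CU^*(-\mu_1))$ over LR constituents $\nu$ of $\Sigma^{\tilde\mu}\CU^*\otimes\Sigma^\lambda\CU^*$, the identification of the diagonal summand $\nu=(\mu_1)^w$ as the source of $\kk$ in degree~$0$, and the rectangle-complement fact $c^{(\mu_1)^w}_{\tilde\mu,\lambda}\ne 0\iff\lambda=\mu$ are all fine.

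There are two genuine problems. First, the target in your last paragraph is misstated: it is \emph{false} that for every pair $(\mu,\lambda)$ every non-diagonal summand has singular $\rho$-shifted weight. Already for $\mu=0$ and any nonzero $\lambda\in\YD_{w,h}$ the unique summand $\nu=\lambda$ is non-diagonal and has $H^0\ne 0$. The lemma only requires vanishing when $\mu=\lambda$ (exceptionality) or $\mu\not\subseteq\lambda$ (semiorthogonality for the containment order), and you never isolate these two regimes; in particular you never argue that the non-diagonal summands vanish when $\mu=\lambda$, only that the diagonal contributes~$\kk$. Second, and more seriously, even with the quantifiers corrected the proposal does not carry out the verification. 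Saying that the bound $\mu_1,\lambda_1\le h$ is ``precisely the sharpest threshold'' forcing collisions and that a ``careful case-by-case check'' succeeds is an assertion, not an argument: one must control which $\nu$ actually appear and, for each, exhibit a repeated absolute value or a zero among the entries of $\nu-(\mu_1)^w+\rho$. That computation is the entire content of the lemma, and it is missing. As written you have a sound plan, a misformulated intermediate claim, and no execution of the decisive step.
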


Denote the left graded dual exceptional collection to~\eqref{eq:lgr-ul} by
\begin{equation*}
   \langle \CG^\lambda \mid \lambda \in \YD_{h, w} \rangle.
\end{equation*}
The defining property of this collection is the following:
\begin{equation*}
   \CG^\lambda \in \langle \Sigma^\mu \CU^* \mid \mu\in \YD_{w, h}\rangle,
   \quad
   \Ext^\bullet(\Sigma^\mu\CU^*, \CG^\lambda) = \begin{cases}
      \kk[-|\lambda|] & \text{if } \lambda = \mu^T, \\
      0 & \text{otherwise}.
   \end{cases}
\end{equation*}

\begin{proposition}[{\cite[Proposition~3.6]{Fonarev2022}}]\label{prop:lgr-gl}
   There is an isomorphism
   \[
      \CF^\lambda \simeq f_*g^* \CG^\lambda.
   \]
\end{proposition}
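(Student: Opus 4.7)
My plan is to identify $f_*g^*\CG^\lambda$ with the bundles $\CF^\lambda$ from Propositions~\ref{prop:lgr-fl1} and~\ref{prop:lgr-fl2} by analyzing how the pushforward--pullback functor $f_*g^*$ acts on the Kapranov-style collection underlying $\CG^\lambda$ and then matching the result with the known constructions on $\LGr(V)$.

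\textbf{Step 1: computing $f_*g^*$ on generators.} First I would compute $f_*g^*\Sigma^\mu\CU^*$ for every $\mu \in \YD_{w, h}$. Since any $w$-dimensional subspace of a Lagrangian is automatically isotropic, the map $f$ is a $\Gr(w, n)$-bundle; writing $\CW$ for the rank-$w$ universal subbundle on the flag variety, we have $g^*\Sigma^\mu\CU^* = \Sigma^\mu\CW^*$. Relative Borel--Bott--Weil on the fibres gives vanishing of higher derived pushforwards and identifies $f_*g^*\Sigma^\mu\CU^*$ with the Schur bundle $\Sigma^{(\mu,\,0^{n-w})}$ of the Lagrangian tautological subbundle on $\LGr(V)$.

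\textbf{Step 2: resolving and comparing.} Since $\CG^\lambda$ lies in the subcategory $\langle \Sigma^\mu\CU^* \mid \mu \in \YD_{w, h}\rangle$ of $D^\bfG(\IGr(w, V))$ by construction, it admits a bounded representative built from the generators $\Sigma^\mu\CU^*$; the graded-dual defining property constrains its form and permits its explicit extraction via standard equivariant cohomological computations in the spirit of the classical Grassmannian case. Applying $f_*g^*$ term by term using Step~1 then produces an explicit representative of $f_*g^*\CG^\lambda$ in $D(\LGr(V))$. On the other hand, for $\lambda \in \YD_{h-1, w}$ the symplectic Schur bundle $(\CW^\perp/\CW)^{\langle\lambda\rangle}$ admits a resolution in terms of ordinary Schur bundles $\Sigma^\nu\CW^*$ coming from the short exact sequence $0 \to \CW \to \CW^\perp \to \CW^\perp/\CW \to 0$ together with Littlewood's branching formula; pushing this forward and matching term-by-term with the pushforward of $\CG^\lambda$ yields the desired equality. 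The case $\lambda_h > 0$ of Proposition~\ref{prop:lgr-fl2} is handled by the analogous computation on $\IFl(w-1, n; V)$, where the determinantal twist $\det(\CU/\CH)$ provides the correct normalization.

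\textbf{Main obstacle.} The hardest part is the combinatorial matching in Step~2: many terms in each resolution cancel under $f_*g^*$ owing to Borel--Bott--Weil vanishings, and tracking which terms survive with what multiplicity and cohomological degree is delicate. A cleaner alternative would be to verify the characterizing $\Ext$-vanishing conditions for $\CF^\lambda$ inside $D(\LGr(V))$ directly against the pushed-forward collection from Step~1 and check they match those defining the graded left dual on the $\IGr(w, V)$-side; this would trade the explicit resolution for a more abstract adjunction argument using base change formulas for $f$ and $g$, but still ultimately requires Step~1 as input.
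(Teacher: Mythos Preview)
The survey does not supply a proof of this proposition; it simply records the statement and cites \cite[Proposition~3.6]{Fonarev2022}. So there is nothing in the present paper to compare against, and I can only evaluate your outline on its own terms.

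Your Step~1 is correct and useful: the fibre of $f$ is $\Gr(w,\CU)$, and relative Borel--Bott--Weil gives $Rf_*g^*\Sigma^\mu\CU^*_{\IGr}\simeq \Sigma^\mu\CU^*_{\LGr}$ in degree~$0$ for every $\mu\in\YD_{w,h}$. This is the right starting point.

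Step~2, however, is not a proof but a description of where the proof would have to happen. You propose to (a) extract an explicit equivariant resolution of $\CG^\lambda$ in terms of the $\Sigma^\mu\CU^*$, (b) push it forward via Step~1, (c) independently resolve $(\CW^\perp/\CW)^{\langle\lambda\rangle}$ in terms of $\Sigma^\nu\CW^*$ using Littlewood's branching, push that forward, and (d) match. None of (a)--(d) is actually carried out, and the matching in (d) is precisely the nontrivial content: the two resolutions have no a~priori reason to agree term by term after pushforward, and the claimed cancellations are not exhibited. The phrase ``permits its explicit extraction via standard equivariant cohomological computations'' is a placeholder, not an argument.

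Your ``cleaner alternative'' is in fact the more promising route and is closer in spirit to how such identifications are typically established: rather than matching resolutions, one checks that $f_*g^*\CG^\lambda$ satisfies the same characterising $\Ext$-conditions (against an appropriate collection on $\LGr(V)$) that pin down $\CF^\lambda$, using adjunction for $f_*$ together with Step~1. If you pursue that line, the genuine input you still need is a clean description of $\Ext^\bullet_{\IFl}(f^*(-),g^*\CG^\lambda)$, which is where the equivariant structure and the dual-collection defining property of $\CG^\lambda$ should be brought to bear. As it stands, the proposal identifies the correct ingredients but does not assemble them into a proof.
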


\begin{example}
   Let us give some examples of $\CE^\lambda\simeq (\CF^\lambda)^*$.
   \begin{enumerate}
      \item If $\lambda = (t)$ for some $2\leq t \leq n$, then $\CE^{(t)}$
      is an extension of the form
      \[
         0\to S^{t-2}\CU^* \to \CE^{(t)} \to S^t\CU^* \to 0.
      \]
      \item If $\lambda = (t)^T$ for some $1\leq t \leq n$, then
      \[
         \CE^{(t)^T} \simeq \Lambda^t\CU^*.
      \]
      \item For $\lambda = (3, 1)$ there is an extension
      \[
         0\to S^2\CU^* \oplus \Lambda^2\CU^* \to \CE^{(3, 1)} \to \Sigma^{(3, 1)}\CU^* \to 0.
      \]
   
   \end{enumerate}
\end{example}

Descriptions of $\CF^\lambda$ (thus, of $\CE^\lambda$ which are simply the duals)
given by Propositions~\ref{prop:lgr-fl1}, \ref{prop:lgr-fl2}, and~\ref{prop:lgr-gl}
allow to construct what we call \emph{Lagrangian staircase complexes}.

Recall that the set $\YD_{h, w}$ for $w+h=n+1$ can be identified with the
set of binary sequences of length $n+1$ with exactly $w$ terms equal to $0$.
Thus, the disjoint union $\sqcup_{w+w=n+1}\YD_{h, w}$ is in bijection
with the set of all binary sequences of length $n+1$, which is $\{0,1\}^{n+1}$.
Fix a generator $g\in \ZZ/(2n+2)\ZZ$ and consider the following action
of the group $\ZZ/(2n+2)\ZZ$ on $\{0,1\}^{n+1}$:
\begin{equation*}
   g: a_0a_1\ldots a_{n} \mapsto (a_n-1)a_0a_1\ldots a_{n-1}.
\end{equation*}
The action on the diagrams can be described as follows.
Let $\lambda=(\lambda_1,\ldots,\lambda_h)\in\YD_{w, h}$. Then
\[
   g: \lambda \mapsto \lambda' = \begin{cases}
      (\lambda_1,\ldots,\lambda_h, 0) & \text{if } \lambda_1 < w, \\
      (\lambda_2+1,\ldots,\lambda_h+1) & \text{if } \lambda_1 = w.
   \end{cases}
\]
In particular, $\lambda'\in \YD_{w-1, h+1}$ if $\lambda_1 < w$
and $\lambda'\in \YD_{w+1, h-1}$ if $\lambda_1 = w$.
We keep the same notation $\lambda'$ as in section~\ref{ssec:stair}
in order to show direct analogy between the classical and Lagrangian
cases. We strongly suggest the reader compares the following proposition
with Proposition~\ref{prop:starcase}.

\begin{proposition}{{\cite[Proposition~4.3]{Fonarev2022}}}\label{prop:lstair}
   Let $w, h>0$ be such that $w+h=n+1$, and let $\lambda\in\YD_{h, w}$
   be such that $\lambda_1 = w$. There is an exact sequence of vector bundles
   on $\LGr(V)$ of the form
   \begin{equation}\label{eq:lstair}
      0\to \CE^{\lambda'}(-1) \to V^{[b_\lambda^{(w)}]}\otimes \CE^{\lambda^{(w)}}
      \to \ldots \to V^{[b_\lambda^{(1)}]}\otimes \CE^{\lambda^{(1)}} \to
      \CE^\lambda \to 0,
   \end{equation}
   where $\lambda^{(i)}$ and $b_\lambda^{(i)}$ are the same as in
   Proposition~\ref{prop:starcase}, and $V^{[j]}$ denotes the $j$-th fundamental
   representation of $\SP(V)$.
   Complexes of the form~\eqref{eq:lstair} are called
   \emph{Lagrangian staircase complexes}.
\end{proposition}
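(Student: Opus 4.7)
The plan is to mimic the fullness-independent proof of Proposition~\ref{prop:starcase}, adapting it to the symplectic setting and using the flag variety $\IFl(1, n; V)$ in place of $\Fl(1, k; V)$. The projection $p : \IFl(1, n; V) \to \LGr(V)$ realizes $\IFl(1, n; V)$ as the projectivization $\PP_{\LGr(V)}(\CU)$, and there is also a projection $q : \IFl(1, n; V) \to \PP(V)$. On $\PP(V)$ (of dimension $2n-1$) take the dualized and shifted Koszul resolution of the nowhere-vanishing section of $V(1)$:
\begin{equation*}
\CF^\bullet : \quad 0 \to \CO(-2n+1) \to V(-2n+2) \to \cdots \to \Lambda^{2n-1}V \to \CO(1) \to 0.
\end{equation*}
This complex is exact, so any pushforward of it tensored with an arbitrary bundle will vanish in $D(\LGr(V))$, which will give us our exact sequence in the form of a degenerating spectral sequence.

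The second step is to find the right ``twisting bundle'' $\CT_\lambda$ on $\IFl(1, n; V)$. Writing $\CL \subset \CU$ for the universal flag, one expects $\CT_\lambda$ to be built from $\CL$ and a Schur power of $\CU/\CL$ depending on $\lambda$, chosen so that after tensoring with $q^\ast \CF^\bullet$ and applying $p_\ast$, the Borel--Bott--Weil theorem on the fibres of $p$ (which are ordinary projective spaces $\PP(\CU)$) produces exactly the terms of the desired staircase complex. The key input is the $\SP(V)$-decomposition $\Lambda^j V \simeq \bigoplus_{i \geq 0} V^{[j-2i]}$ of exterior powers, which is what forces the coefficients of the resulting complex to assemble into fundamental $\SP(V)$-representations $V^{[b_\lambda^{(i)}]}$ rather than plain $\Lambda^\bullet V$. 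The combinatorial recipe dictating the exponents $b_\lambda^{(i)}$ and subdiagrams $\lambda^{(i)}$ has the same source as in the classical case, namely the shape of $\lambda$ read along its boundary.

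Concretely, I would set up the spectral sequence
\begin{equation*}
E_1^{a,b} = R^b p_\ast\!\left( q^\ast \CF^a \otimes \CT_\lambda \right) \Rightarrow 0,
\end{equation*}
and compute $R^b p_\ast$ term by term: $q^\ast \CF^a$ differs from $\CO(1)$ only by a twist on $\CL$ and a tensor factor $\Lambda^{2n-1+a} V$, so one is really computing $R^b p_\ast \bigl( \CL^{-a} \otimes \CT_\lambda \bigr)$ and expanding the resulting $\GL_n$-representation under branching to $\SP_{2n}$. For a suitable $\CT_\lambda$ almost all $E_1^{a, b}$ vanish; the surviving entries form a single row, which has to be identified with the dual of~\eqref{eq:lstair}. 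The surviving terms at the two ``endpoints'' are to be identified with $\CE^{\lambda^{(i)}}$ using Proposition~\ref{prop:lgr-fl1} (for the bulk terms) and with $\CE^{\lambda'}(-1)$ using Proposition~\ref{prop:lgr-fl2} (for the leftmost term, which is where the twist by $\det(\CU/\CL)$ and the shift $\lambda \mapsto \lambda'=(\lambda_2+1,\ldots,\lambda_h+1)$ enter, cf.~the case $\lambda_1=w$ in the cyclic action).

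The main obstacle is Step~3: correctly choosing $\CT_\lambda$ and carrying out the Borel--Bott--Weil--plus--symplectic-branching bookkeeping so that the $E_1$-page degenerates onto a single row and the surviving terms are exactly $V^{[b_\lambda^{(i)}]} \otimes \CF^{\lambda^{(i)}}$ together with $\CF^{\lambda'}(1)$ (i.e.\ the dual of~\eqref{eq:lstair}). Ensuring the signs and multiplicities match requires checking that the $\GL_n$-weights $\lambda^{(i)}$ read off from the boundary strips of $\lambda$ are precisely those that remain regular after the BBW chamber shift induced by $\CL^{-a}$, and that the differentials in the spectral sequence reproduce the Koszul differentials of~\eqref{eq:lstair}. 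Once this is done, dualizing the resulting exact complex yields the Lagrangian staircase complex.
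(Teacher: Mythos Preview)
Your overall scheme---pull back the Koszul complex from $\PP(V)$ to a flag variety, twist, push forward, and read off a degenerating spectral sequence---is the right template, but the choice of $\IFl(1,n;V)=\PP_{\LGr(V)}(\CU)$ as the intermediate variety is where the plan breaks. With a twisting bundle $\CT_\lambda$ built from $\CL$ and a Schur power of $\CU/\CL$, relative Borel--Bott--Weil along $p:\PP_{\LGr(V)}(\CU)\to\LGr(V)$ can only produce \emph{irreducible} equivariant bundles of the form $\Sigma^\nu\CU^*$ on $\LGr(V)$ (tensored with $\Lambda^\bullet V$, which you then branch to $\SP(V)$). But for generic $\lambda^{(i)}$ the bundle $\CE^{\lambda^{(i)}}$ is a \emph{nontrivial extension} of such irreducibles: already $\CE^{(t)}$ for $t\geq 2$ sits in $0\to S^{t-2}\CU^*\to\CE^{(t)}\to S^t\CU^*\to 0$, and these appear among the middle terms of~\eqref{eq:lstair} whenever $\lambda$ has a long first row. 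So your $E_1$ page cannot literally consist of the objects $V^{[b_\lambda^{(i)}]}\otimes\CE^{\lambda^{(i)}}$. Your appeal to Proposition~\ref{prop:lgr-fl1} does not close this gap: that proposition realizes $\CF^\lambda$ as a pushforward from $\IFl(w,n;V)$ with $w\geq\ydw(\lambda)$, and specializing to $w=1$ only covers single-column diagrams.

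The survey does not spell out the proof, but it signals the correct route in the sentence preceding the proposition: one uses the descriptions of $\CF^\lambda$ from Propositions~\ref{prop:lgr-fl1}, \ref{prop:lgr-fl2}, and especially~\ref{prop:lgr-gl}, all of which live on $\IFl(w,n;V)$ or $\IFl(w-1,n;V)$ for the \emph{given} $w=\ydw(\lambda)$. In other words, the relevant ``Koszul-type'' input sits over $\IGr(w,V)$ (where the dual objects $\CG^\lambda$ of Lemma~\ref{lm:lgr-ul} live), not over $\PP(V)$; pushing forward from there is what manufactures the extensions defining $\CE^{\lambda^{(i)}}$ all at once and packages the $\Lambda^\bullet V$ coefficients into the fundamental representations $V^{[b_\lambda^{(i)}]}$. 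If you want to rescue the $\IFl(1,n;V)$ approach, you would at minimum have to explain why the resulting complex of irreducibles $\Sigma^\nu\CU^*$ admits a refinement whose subquotients assemble into the $\CE^{\lambda^{(i)}}$, and that is essentially a second proof on top of the first.
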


Lagrangian staircase complexes were introduced precisely to prove
Conjecture~\ref{conj:kp-igr} for Lagrangian Grassmannians.

\begin{theorem}[{\cite[Theorem~4.6]{Fonarev2022}}]\label{thm:lgr}
   There is a semiorthogonal decomposition
   \begin{equation*}
      D(\LGr(V)) = \langle \CA_0,\, \CA_1(1),\, \CA_2(2),\, \ldots,\, \CA_{n}(n) \rangle,
   \end{equation*}
   where for each $0\leq h\leq n$ the subcategory $\CA_h$ is generated by an exceptional collection
   \begin{equation}\label{eq:lgr-block}
      \CA_h = \langle \CE^\lambda \mid \lambda \in \YD_{h, n-h} \rangle.
   \end{equation}
\end{theorem}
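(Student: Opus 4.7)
The plan is to deduce semiorthogonality from Theorem~\ref{thm:igr-kp} (Kuznetsov--Polishchuk) and to prove fullness via Proposition~\ref{prop:full}, following the template used for classical Grassmannians in which the roles of Kapranov's collection and the classical staircase complexes are now played by the bundles $\CE^\lambda$ and the Lagrangian staircase complexes of Proposition~\ref{prop:lstair}.

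For semiorthogonality, I would specialize Propositions~\ref{prop:igr-b1}--\ref{prop:igr-b2} to $k=n$, which collapses the equivariant exceptional blocks to $\{\Sigma^\lambda\CU^*\mid \lambda\in\YD_{h,n-h}\}$ for $0\le h\le n$ since the symplectic Schur factor disappears. By Propositions~\ref{prop:lgr-fl1}, \ref{prop:lgr-fl2} and~\ref{prop:lgr-gl}, the bundles $\CE^\lambda$ are precisely the forgetful images of the graded right duals to these blocks, so Theorem~\ref{thm:igr-kp} together with Proposition~\ref{prop:reb} gives at once the semiorthogonality of $\CA_0,\CA_1(1),\ldots,\CA_n(n)$.

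For fullness, I would set $\CT=\langle \CA_0,\CA_1(1),\ldots,\CA_n(n)\rangle$ and apply Proposition~\ref{prop:full} with the anti-ample line bundle $\CO(-1)$. Since $\CO=\CE^\emptyset\in\CA_0\subset\CT$, the task reduces to showing $\CE^\lambda(h-1)\in\CT$ for every $\lambda\in\YD_{h,n-h}$ and every $0\le h\le n$. The easy case is $\ydh(\lambda)\le h-1$, where $\lambda\in\YD_{h-1,n-h+1}$ and $\CE^\lambda(h-1)\in\CA_{h-1}(h-1)\subset\CT$; the extremal case $\ydh(\lambda)=h$ (which for $h=0$ also covers the base $\lambda=\emptyset$) is where Lagrangian staircase complexes enter. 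For such $\lambda$ I would set $\mu=(n-h,\lambda_1,\ldots,\lambda_h)\in\YD_{h+1,n-h}$; then $\ydh(\mu)+\ydw(\mu)=n+1$ and $\mu_1=\ydw(\mu)$, so Proposition~\ref{prop:lstair} produces a resolution
\[
0\to \CE^{\mu'}(-1)\to V^{[b_\mu^{(n-h)}]}\otimes\CE^{\mu^{(n-h)}}\to\cdots\to V^{[b_\mu^{(1)}]}\otimes\CE^{\mu^{(1)}}\to\CE^\mu\to 0
\]
with $\mu'=\lambda(+1)$. Using the identity $\CE^{\lambda(+1)}\simeq \CE^\lambda(1)$ (which follows from the constructions of Propositions~\ref{prop:lgr-fl1}--\ref{prop:lgr-gl} in the same way as $\Sigma^{\lambda(+1)}\CU^*\simeq \Sigma^\lambda\CU^*(1)$) one recognises $\CE^{\mu'}(-1)=\CE^\lambda$, and tensoring the complex by $\CO(h-1)$ expresses $\CE^\lambda(h-1)$ as the cohomology of a bounded complex built from the middle and right-hand terms.

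The main obstacle, and the heart of the proof, will be verifying that each of the remaining terms $V^{[b]}\otimes \CE^{\mu^{(i)}}(h-1)$ and $\CE^\mu(h-1)$ lies in $\CT$. Every $\mu^{(i)}\subsetneq \mu$ has to be tracked carefully: it should either fall into the already-handled easy range $\ydh<h$ or into a strictly smaller extremal instance, permitting an induction on $|\lambda|$ to close. The rightmost term $\CE^\mu(h-1)$ is more subtle because $\mu$ lies on the boundary $\ydh+\ydw=n+1$ and belongs to no block $\CA_{h''}$ of the decomposition; one would handle it by a further application of Proposition~\ref{prop:lstair} to $\mu$ itself (or to an auxiliary diagram built from it), eventually resolving $\CE^\mu(h-1)$ by objects already in $\CT$. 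Once this combinatorial bookkeeping is organised, Proposition~\ref{prop:full} delivers $\CT=D(\LGr(V))$ and finishes the proof.
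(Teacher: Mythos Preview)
Your semiorthogonality argument is fine: specialising Propositions~\ref{prop:igr-b1}--\ref{prop:igr-b2} to $k=n$ collapses the blocks to $\{\Sigma^\lambda\CU^*\mid\lambda\in\YD_{h,n-h}\}$, and Theorem~\ref{thm:igr-kp} together with the identification of the $\CE^\lambda$ with the Kuznetsov--Polishchuk duals gives the semiorthogonal sequence $\CA_0,\CA_1(1),\ldots,\CA_n(n)$.

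The fullness argument, however, contains a genuine error. The identity $\CE^{\lambda(+1)}\simeq\CE^\lambda(1)$ that you invoke is \emph{false}. Your analogy with $\Sigma^{\lambda(+1)}\CU^*\simeq\Sigma^\lambda\CU^*(1)$ is already wrong in this context: that isomorphism holds only when $\lambda$ is an $n$-tuple, since $\CO(1)=\det\CU^*$ and $\rk\CU=n$ on $\LGr(V)$. But your $\lambda$ is an $h$-tuple with $h<n$, so adding $1$ to each of its $h$ parts is not the operation of tensoring with the determinant. A concrete counterexample: for any $n\ge 2$ take $h=1$ and $\lambda=(1)$. Then $\CE^{(1)}=\CU^*$ while $\CE^{(2)}$ sits in the nonsplit extension $0\to\CO\to\CE^{(2)}\to S^2\CU^*\to 0$ (see the examples preceding Proposition~\ref{prop:lstair}); these bundles do not even have the same rank. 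Consequently the leftmost term $\CE^{\mu'}(-1)$ of your staircase is \emph{not} $\CE^\lambda$, and the complex you wrote down does not resolve the object you need.

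There is a second structural problem. Even setting aside the false isomorphism, your induction on $|\lambda|$ does not close: the intermediate diagrams $\mu^{(i)}$ can have height $h+1$ (indeed $\mu^{(1)}$ always does, since all $h+1$ parts of $\mu=(n-h,\lambda_1,\ldots,\lambda_h)$ are positive and $\mu^{(1)}$ only removes the last column), so $\CE^{\mu^{(i)}}(h-1)$ lands in neither the ``easy range'' $\ydh<h$ nor the ``extremal range'' $\ydh=h$ of your dichotomy. The actual argument in~\cite{Fonarev2022} organises the Lagrangian staircase complexes differently and makes essential use of the whole family of bundles $\CE^\lambda$ with $\ydh(\lambda)+\ydw(\lambda)\le n+1$, including the boundary ones that do not appear in any block $\CA_h$; this is precisely the ``strange doubling'' noted in the Remark after Theorem~\ref{thm:lgr}, and your scheme never really engages with it beyond deferring $\CE^\mu(h-1)$ to an unspecified further staircase.
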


\begin{remark}
   For some strange reason we had to consider more exceptional objects
   than used in Theorem~\ref{thm:lgr} (the condition $h+w=n$ was extended
   to $h+w=n+1$). This strange doubling phenomenon is quite common
   when dealing with Lagrangian Grassmannians.
\end{remark}

Using Theorem~\ref{thm:sam} and the existence of full exceptional collections
on flag varieties of type $A$, one immediately deduces the following.

\begin{corollary}
   Let $1\leq i_1 < i_2 < \cdots < i_t < n$. Then the bounded derived category
   of $\IFl(i_1, \ldots, i_t, n;\, V)$ admits a full exceptional collection
   consisting of vector bundles.
\end{corollary}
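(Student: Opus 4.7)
The plan is to view the isotropic flag variety $\IFl(i_1,\ldots,i_t,n;V)$ as a fibration over the Lagrangian Grassmannian $\LGr(V)=\IGr(n,V)$ and combine Theorem~\ref{thm:lgr} with the fibrewise fullness theorem of Samokhin (Theorem~\ref{thm:sam}). Concretely, forgetting the intermediate isotropic subspaces yields a smooth proper morphism
\[
\pi\colon \IFl(i_1,\ldots,i_t,n;V) \to \LGr(V),
\]
whose fibre over a point $[U]\in \LGr(V)$ is the type $A$ partial flag variety $\Fl(i_1,\ldots,i_t;U)$, because any isotropic flag ending in the Lagrangian $U$ is simply a flag inside the $n$-dimensional vector space $U$ (the isotropy condition being automatic).

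First I would invoke Corollary~\ref{cor:type-a} to equip $D(\Fl(i_1,\ldots,i_t;U))$ with a full exceptional collection of equivariant vector bundles. These bundles are built out of the tautological subbundles $\CU_{i_1}\subset\cdots\subset \CU_{i_t}$ on the fibre via Schur functors, and therefore lift to the corresponding global tautological subbundles on $\IFl(i_1,\ldots,i_t,n;V)$, whose fibrewise restrictions recover the Kapranov-type bundles pointwise. Applying Theorem~\ref{thm:sam} to these globally defined vector bundles $E_1,\ldots,E_r$ yields a semiorthogonal decomposition
\[
D(\IFl(i_1,\ldots,i_t,n;V)) = \langle \pi^*D(\LGr(V))\otimes E_1,\, \ldots,\, \pi^*D(\LGr(V))\otimes E_r \rangle.
\]

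Next I would plug in Theorem~\ref{thm:lgr}, which provides a full exceptional collection of equivariant vector bundles $\{\CE^\lambda\}$ on $\LGr(V)$. Pulling this collection back via $\pi^*$, twisting each block by the relative bundles $E_j$, and combining the two orderings lexicographically (as in the standard consequence of Theorem~\ref{thm:sam} recorded immediately after it) produces the desired full exceptional collection of vector bundles on $\IFl(i_1,\ldots,i_t,n;V)$.

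There is no real obstacle here: the only verification needed is that the chosen fibrewise exceptional objects are restrictions of globally defined vector bundles, which is transparent since the type $A$ collection on the fibre is expressed entirely in terms of tautological bundles that are restrictions of the universal flag bundles on the total space. Everything else is a formal application of the two theorems already established in the survey.
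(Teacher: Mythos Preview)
Your proposal is correct and follows essentially the same approach as the paper: view $\IFl(i_1,\ldots,i_t,n;V)$ as the relative flag bundle $\Fl_{\LGr(V)}(i_1,\ldots,i_t;\CU)$ over $\LGr(V)$, then combine Theorem~\ref{thm:sam}, the type~$A$ collections of Corollary~\ref{cor:type-a} on the fibres, and Theorem~\ref{thm:lgr} on the base. The paper's proof is a one-line sketch of exactly this argument; your version simply spells out the details, including the observation that the fibrewise exceptional bundles globalize because they are built from tautological bundles.
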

\begin{proof}
   Apply Theorem~\ref{thm:sam}, the existence of full exceptional collections
   on flag varieties of type $A$, and the fact that the projection
   $\IFl(i_1, \ldots, i_t, n;\, V)\to \LGr(V)$ is isomorphic to the
   relative flag bundle $\Fl_{\LGr(V}(i_1, \ldots, i_t;\, \CU)$.
\end{proof}

There is an interesting duality result for collections~\eqref{eq:lgr-block}.

\begin{theorem}[{\cite[Theorem~3.1]{Fonarev2023}}]\label{thm:lgr-dual}
   Let $0\leq h \leq n$. The exceptional collection
   $\langle \CF^\mu \mid \mu\in\YD_{n-h, h} \rangle$ is the graded
   left dual exceptional collection to $\langle \CE^\lambda \mid \lambda \in \YD_{h, n-h} \rangle$.
   That is, for $\mu\in\in\YD_{n-h, h}$ one has
   \[
      \CF^\mu \in \langle \CE^\lambda \mid \lambda\in \YD_{h, n-h}\rangle,
      \quad
      \Ext^\bullet(\CF^\mu, \CE^\lambda) = \begin{cases}
         \kk[-|\lambda|] & \text{if } \lambda = \mu^T, \\
         0 & \text{otherwise}.
      \end{cases}
   \]
\end{theorem}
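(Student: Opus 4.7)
The plan is to verify that $\CF^\mu$ satisfies the defining property of the graded left dual of the exceptional collection $\{\CE^\lambda \mid \lambda \in \YD_{h, n-h}\}$, and then appeal to the uniqueness of graded duals. Concretely, two points must be checked: first, that $\CF^\mu$ belongs to $\CA_h = \langle \CE^\lambda \mid \lambda \in \YD_{h, n-h}\rangle$; second, that
\[
   \Ext^\bullet_{\LGr(V)}(\CF^\mu, \CE^\lambda) = \kk[-|\lambda|]\cdot\delta_{\lambda,\mu^T}\qquad\text{for all } \lambda \in \YD_{h, n-h}.
\]

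For the Ext computation I would start from Proposition~\ref{prop:lgr-gl} and write $\CF^\mu = f_* g^*\CG^\mu$ with $f\colon \IFl(h, n; V) \to \LGr(V)$ and $g\colon \IFl(h, n; V) \to \IGr(h, V)$ the natural projections from the isotropic flag. The map $f$ realises $\IFl(h, n; V)$ as the relative Grassmannian $\Gr_{\LGr(V)}(h, \CU)$, so it is smooth and proper of relative dimension $h(n-h)$. Grothendieck duality for $f$ combined with the adjunction $g^* \dashv g_*$ converts the problem into a computation on $\IGr(h, V)$:
\[
   \Ext^\bullet_{\LGr(V)}(\CF^\mu, \CE^\lambda) \simeq \Ext^\bullet_{\IGr(h, V)}\bigl(\CG^\mu,\, g_* f^!\CE^\lambda\bigr).
\]
The defining property of $\CG^\mu$ as graded left dual of $\{\Sigma^\alpha\CU^* \mid \alpha \in \YD_{h, n+1-h}\}$ on $\IGr(h, V)$, given by Lemma~\ref{lm:lgr-ul} together with Proposition~\ref{prop:lgr-gl}, then pins down the answer, provided one can identify $g_* f^!\CE^\lambda$ with the expected shift of $\Sigma^{\lambda^T}\CU^*_{\IGr(h, V)}$.

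The main obstacle is precisely this identification. Computing $\omega_f$ in terms of $\det\CH$ and pullbacks from $\LGr(V)$ is routine, so the heart of the matter is understanding $g_* f^* \CE^\lambda$ on $\IGr(h, V)$. The fibers of $g$ are Lagrangian Grassmannians $\LGr(W^\perp\!/W)$ of rank $n-h$, each sitting inside $\LGr(V)$ as the closed subvariety of Lagrangians containing the given isotropic subspace $W$, so the push-forward is governed by the cohomology of the restrictions of $\CE^\lambda$ to these sub-Lagrangian Grassmannians. The cleanest way to treat this is induction on $n$, combining the duality already known for $\LGr$'s of smaller rank with a Borel--Bott--Weil computation along the Levi of the parabolic stabilising $W$.

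Once the Ext computation is in hand, membership $\CF^\mu \in \CA_h$ can be deduced in two ways. The first is to combine it with the analogous vanishings $\Ext^\bullet(\CF^\mu, \CE^{\lambda'}(j)) = 0$ for $j \neq h$ and $\lambda' \in \YD_{j, n-j}$, which follow from the same flag-variety adjunction argument, and then read off membership from the semiorthogonal decomposition of Theorem~\ref{thm:lgr}. A more explicit alternative is to resolve $\CF^\mu$ inductively in terms of the $\CE^\lambda$'s using the Lagrangian staircase complexes of Proposition~\ref{prop:lstair}, in analogy with the fullness argument for Kapranov's collection at the end of section~\ref{ssec:stair}. With both items established, uniqueness of graded left duals concludes the proof.
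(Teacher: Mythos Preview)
The survey does not include a proof of this theorem; it simply cites \cite[Theorem~3.1]{Fonarev2023}. So there is no in-text argument to compare against, and I can only evaluate your proposal on its own merits.

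Your overall strategy---verify the defining $\Ext$-conditions and membership in $\CA_h$, then invoke uniqueness of graded duals---is correct, and passing to $\IGr(h,V)$ via the adjunctions $Rf_*\dashv f^!$ and $g^*\dashv Rg_*$ is a natural move. There is, however, a genuine direction mismatch in the final step. After adjunction you arrive at
\[
   \Ext^\bullet_{\IGr(h,V)}\bigl(\CG^\mu,\, Rg_* f^!\CE^\lambda\bigr),
\]
with $\CG^\mu$ in the \emph{source}. But the defining property of $\CG^\mu$ as graded left dual (see the displayed formula just before Proposition~\ref{prop:lgr-gl}) controls $\Ext^\bullet(\Sigma^\alpha\CU^*,\CG^\mu)$, i.e.\ with $\CG^\mu$ in the \emph{target}. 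Even granting your proposed identification $Rg_* f^!\CE^\lambda \simeq \Sigma^{\lambda^T}\CU^*[\mathrm{shift}]$, knowing that $\CG^\mu$ is left dual to $\{\Sigma^\alpha\CU^*\}$ does not compute $\Ext^\bullet(\CG^\mu,\Sigma^{\lambda^T}\CU^*)$: for a general exceptional collection these reverse $\Ext$'s are far from diagonal (already on $\PP^1$ one finds $\Ext^\bullet(\CO(-1),\CO(1))\simeq\kk^3$). So as written the argument does not close.

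To repair this you have two options. Either identify $Rg_* f^!\CE^\lambda$ with an object for which $\Ext^\bullet(\CG^\mu,-)$ is directly accessible---for instance, show it lands in the subcategory generated by the $\CG^\nu$ and is (a shift of) a single $\CG^{\lambda}$, so that exceptionality of the $\CG$-collection applies---or run the construction from the other side: express $\CE^\lambda$ (rather than $\CF^\mu$) via a flag variety over $\IGr(n-h,V)$, so that after adjunction the left-dual property is invoked in the correct direction. Your inductive plan for understanding restrictions of $\CE^\lambda$ to the fibres $\LGr(W^\perp/W)$ is the right engine for either route; the issue is only that the specific target $\Sigma^{\lambda^T}\CU^*$ you name, together with the left-dual property as stated, does not yield the conclusion.
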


Finally, we note that
the full exceptional collections given by Theorem~\ref{thm:lgr} are in no reasonable
way Lefschetz. We end this section with minimal Lefschetz exceptional collections
for small $n$.

\begin{proposition}[\cite{Samokhin,SamokhinP,Fonarev2022}]\label{prop:lgr-lef}
   For $n=2, \ldots, 5$ there are minimal Lefschetz exceptional collections
   on $\LGr(n)$ of the following form.
   \begin{enumerate}
      \item For $n=2$ one has
      \[
         D(\LGr(2, 4)) = \begin{pmatrix*}
            \CU^* & & \\
            \CO & \CO(1) & \CO(2)
         \end{pmatrix*}.
      \]
      \item For $n=3$ one has
      \[
         D(\LGr(3, 6)) = \begin{pmatrix*}[r]
            \CU^* & \CU^*(1) & \CU^*(2) & \CU^*(3) \\
            \CO & \CO(1) & \CO(2) & \CO(3)
         \end{pmatrix*}.
      \]
      \item For $n=4$ one has
      \[
         D(\LGr(4, 8)) = \begin{pmatrix*}[r]
            \CE^{(2, 1)} & & & & \\
            \Lambda^2\CU^* & \Lambda^2\CU^*(1) & \Lambda^2\CU^*(2) & \Lambda^2\CU^*(3) & \Lambda^2\CU^*(4) \\
            \CU^* & \CU^*(1) & \CU^*(2) & \CU^*(3) & \CU^*(4) \\
            \CO & \CO(1) & \CO(2) & \CO(3) & \CO(4)
         \end{pmatrix*}.
      \]
      \item For $n=5$ one has
      \[
         D(\LGr(5, 10)) = \begin{pmatrix*}[r]
            \CE^{(2, 2)} & \CE^{(2, 2)}(1) & & & &  \\
            \CE^{(2, 1)} & \CE^{(2, 1)}(1) & \CE^{(2, 1)}(2) & \CE^{(2, 1)}(3) & \CE^{(2, 1)}(4) & \CE^{(2, 1)}(5) \\
            \CE^{(2)} & \CE^{(2)}(1) & \CE^{(2)}(2) & \CE^{(2)}(3) & \CE^{(2)}(4) & \CE^{(2)}(5) \\
            \Lambda^2\CU^* & \Lambda^2\CU^*(1) & \Lambda^2\CU^*(2) & \Lambda^2\CU^*(3) & \Lambda^2\CU^*(4) & \Lambda^2\CU^*(5) \\
            \CU^* & \CU^*(1) & \CU^*(2) & \CU^*(3) & \CU^*(4) & \CU^*(5) \\
            \CO & \CO(1) & \CO(2) & \CO(3) & \CO(4) & \CO(5)
         \end{pmatrix*}.
      \]
   \end{enumerate}
\end{proposition}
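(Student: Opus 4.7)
The plan is to treat each of the four cases $n=2,3,4,5$ separately, since the proposed collections differ considerably in complexity. In every case one must verify (a) exceptionality, (b) the semiorthogonality relations encoded by the Lefschetz shape of the matrix, and (c) fullness. Once (a) and (b) are established, the Lefschetz structure follows by reading off the support function $o$ from the row lengths of the matrix.

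For (a) and (b), the verifications reduce to cohomological computations on $\LGr(V)$. Whenever the basis consists of irreducible equivariant bundles (this covers $n=2$, $n=3$, and the lower rows of the $n=4,5$ matrices) the relevant $\Ext$-groups are computed via the Borel--Bott--Weil theorem, with the Littlewood--Richardson rule used to decompose the tensor products occurring over the Levi $\GL_n$ of $\bfP_n\subset \SP(V)$. For the composite objects $\CE^{(2)},\,\CE^{(2,1)},\,\CE^{(2,2)}$ appearing when $n=4,5$, one uses the two geometric descriptions in Propositions~\ref{prop:lgr-fl1}, \ref{prop:lgr-fl2}, and~\ref{prop:lgr-gl} together with the explicit equivariant filtrations illustrated in the example following Proposition~\ref{prop:lgr-gl}; the resulting spectral sequences reduce each $\Ext$-computation to a finite number of Borel--Bott--Weil calculations.

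The fullness step (c) splits into two regimes. For $n=2$, use the isomorphism $\LGr(2,4)\simeq Q^3$ with a three-dimensional smooth quadric, under which $\CU^*$ becomes a spinor bundle up to twist; fullness is then Kapranov's. For $n=3$, follow Samokhin's argument, which uses that $\LGr(3,6)$ arises as a natural fiber of a flag-variety projection so that fullness reduces via Theorem~\ref{thm:orlov-proj} and a Koszul-style resolution. For $n=4$ and $n=5$, start from the full exceptional collection produced by Theorem~\ref{thm:lgr},
\begin{equation*}
   D(\LGr(V))=\langle \CA_0,\CA_1(1),\CA_2(2),\ldots,\CA_n(n)\rangle,
   \qquad
   \CA_h=\langle \CE^\lambda\mid \lambda\in\YD_{h,n-h}\rangle,
\end{equation*}
and rewrite each $\CE^\lambda$ in terms of the proposed Lefschetz basis and its twists. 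The main tool is the Lagrangian staircase complex of Proposition~\ref{prop:lstair}: for $\lambda$ with $\lambda_1=n-h$ it gives an exact sequence expressing $\CE^{\lambda'}(-1)$ as an iterated extension of terms $V^{[b]}\otimes \CE^{\lambda^{(i)}}$, where each $\lambda^{(i)}$ has strictly smaller height or width. Iterating reduces $\CE^\lambda$ for complicated $\lambda$ to the small basis $\{\CO,\CU^*,\Lambda^2\CU^*\}$ (for $n=4$, together with $\CE^{(2,1)}$; for $n=5$, together with $\CE^{(2)},\CE^{(2,1)},\CE^{(2,2)}$) and their twists, at which point Proposition~\ref{prop:full} (or, equivalently, an explicit inductive mutation) completes the proof.

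The main obstacle is the bookkeeping in the $n=5$ case, where the basis object $\CE^{(2,2)}$ has the unusually small support $o=2$ while the other five basis objects have full support $o=6$. Consequently, when a Lagrangian staircase complex is applied to an object $\CE^\lambda(i)$ with $i\geq 2$, one must carefully check that any contribution of the form $V^{[b]}\otimes \CE^{(2,2)}(i)$ can be rewritten, by further staircase complexes, in terms of the remaining five rows of the matrix; this amounts to a combinatorial verification that the cyclic orbits of the shift map $\lambda\mapsto\lambda'$ on $\YD_{h,n-h}$ (for $n=5$) are compatible with the declared support function $(6,6,6,6,6,2)$. All intermediate steps are cohomologically routine, but the combinatorial pattern matching is the nontrivial content of the proof.
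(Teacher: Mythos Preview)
The paper is a survey and does not prove Proposition~\ref{prop:lgr-lef}; it merely records the result and cites \cite{Samokhin,SamokhinP,Fonarev2022}, so there is no in-paper argument to compare your proposal against. That said, your outline is broadly aligned with what those references do: the identification $\LGr(2,4)\simeq Q^3$ and Kapranov's collection handle $n=2$; the case $n=3$ is Samokhin's; and for $n=4,5$ the natural route is indeed to start from the full Kuznetsov--Polishchuk collection of Theorem~\ref{thm:lgr} and use Lagrangian staircase complexes (Proposition~\ref{prop:lstair}) to show that each $\CE^\lambda$ in the blocks $\CA_h$ lies in the subcategory generated by the proposed Lefschetz basis.

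Two comments on the write-up. First, the appeal to Proposition~\ref{prop:full} at the end is misplaced: once you have shown that every object of a collection already known to be full (namely the one from Theorem~\ref{thm:lgr}) lies in the subcategory generated by the Lefschetz collection, fullness is immediate and no further closure-under-twist argument is needed. Second, what you call ``bookkeeping'' is the entire content of the $n=4,5$ cases: the claim that iterated staircase complexes never force a twist $\CE^{(2,1)}(i)$ with $i\ge 1$ (for $n=4$) or $\CE^{(2,2)}(i)$ with $i\ge 2$ (for $n=5$) to appear is not obvious and has to be checked explicitly orbit by orbit. Your proposal correctly identifies this as the crux but does not carry it out, so as written it is a plan rather than a proof.
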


\subsection{Sporadic cases}
There are two more symplectic Grassmannians for which full exceptional
collections have been constructed: on $\IGr(3, 8)$ and $\IGr(3, 10)$
by L.~Guseva and A.~Novikov, respectively.

\begin{theorem}[\cite{Guseva}]\label{thm:igr38}
   There is a Lefschetz decomposition of $\IGr(3, 8)$ with the Lefschetz
   basis given by
   \[
      \langle F, \CO, \CU^*, S^2\CU^*, \Lambda^2\CU^*, \Sigma^{2,1}\CU^* \rangle,
      \qquad
      o = (2, 6, 6, 6, 6, 6),
   \]
   where $F$ is a certain explicit exceptional vector bundle.
   In particular, $D(\IGr(3, 8))$ admits a full exceptional collection.
\end{theorem}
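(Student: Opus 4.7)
The plan is to execute a three-step program: construct the exceptional bundle $F$, verify semiorthogonality of the proposed Lefschetz basis and its twists, and then establish fullness. For the first step, since any exceptional object on $\IGr(3, 8)$ admits a $\bfG$-equivariant structure (by Polishchuk's lemma, mentioned after Conjecture~\ref{conj:main}), $F$ should be defined as an equivariant bundle. A natural candidate is obtained as a nontrivial equivariant extension between two irreducible equivariant bundles whose existence and uniqueness are dictated by Borel--Bott--Weil on $\SP_8/\bfP_3$; $F$ is then exceptional essentially by Schur's lemma together with Ext-vanishing between the irreducibles away from degree $1$. Its role is to fill the residual category of the would-be rectangular Lefschetz decomposition generated by the five Schur bundles $\CO,\CU^*, S^2\CU^*,\Lambda^2\CU^*, \Sigma^{2,1}\CU^*$.

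The semiorthogonality of the full Lefschetz basis reduces to a finite collection of cohomology computations. All pairs of objects are $\SP_8$-equivariant, so each relevant $\Ext^t(E_i(s), E_j(r))$ is handled by first expanding tensor products via the Littlewood--Richardson and symplectic branching rules, and then applying Borel--Bott--Weil on $\SP_8/\bfP_3$. Since the Fano index equals $6$, only finitely many twists need to be considered. The Ext groups involving $F$ are the only genuinely new content; they are computed from the defining extension sequence via the associated long exact sequences, and they reduce to vanishing of a handful of specific equivariant cohomology groups.

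The main obstacle is fullness, which I would address via Proposition~\ref{prop:full}. Let $\CT\subseteq D(\IGr(3, 8))$ denote the subcategory generated by the $32$ candidate objects; it suffices to show that for each basis object $E$ the twist $E\otimes \CO(1)$ lies in $\CT$. By the shape of the Lefschetz basis this comes down to the six ``missing'' twists $F(2)$ and $\Sigma^\lambda\CU^*(6)$ for $\lambda \in \{\emptyset, (1), (2), (1,1), (2,1)\}$. To produce these inside $\CT$ one can combine (a) the Koszul resolution of $\CO_{\IGr(3, 8)}$ inside $\Gr(3, 8)$ coming from the regular section of $\Lambda^2\CU^*$, (b) pulled-back staircase complexes of Proposition~\ref{prop:starcase} from $\Gr(3, 8)$, and (c) the Kuznetsov--Polishchuk block structure of Theorem~\ref{thm:igr-kp}, which already produces $32 = \rk K_0(\IGr(3, 8))$ equivariant exceptionals. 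The bundle $F$ is engineered precisely so that the terms in these resolutions which are not generated by the rectangular part are absorbed by $F$ and its twist $F(1)$; pinning down the structure of $F$ so that this bookkeeping closes is the technically hardest step of the argument.
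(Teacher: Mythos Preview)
The paper itself contains no proof of this theorem: it is a survey, and Theorem~\ref{thm:igr38} is stated with a bare citation to Guseva's work. So there is no ``paper's own proof'' to compare against; all one can do is assess whether your outline is a plausible route to the result and how it relates to the original source.

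Your three-step plan is the standard shape such arguments take, and the first two steps are unproblematic in principle: the candidate $F$ is indeed built in~\cite{Guseva} as a specific equivariant extension, and semiorthogonality is a finite Borel--Bott--Weil check. What you have written for fullness, however, is not a proof but a wish list. You correctly isolate the six ``missing'' objects $F(2)$ and $\Sigma^\lambda\CU^*(6)$ needed to invoke Proposition~\ref{prop:full}, but the mechanisms you propose to produce them --- Koszul resolutions from the embedding into $\Gr(3,8)$, pulled-back staircase complexes, and the Kuznetsov--Polishchuk blocks of Theorem~\ref{thm:igr-kp} --- do not obviously combine to give exactly these objects inside $\CT$. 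The Koszul complex for $\IGr(3,8)\subset\Gr(3,8)$ has terms involving $\Lambda^i(\Lambda^2\CU)$, which decompose into Schur functors lying well outside your basis, and the staircase complexes on $\Gr(3,8)$ involve $\Sigma^\lambda\CU^*$ with $\lambda\in\YD_{3,5}$, again not directly in your list. You yourself flag that ``pinning down the structure of $F$ so that this bookkeeping closes is the technically hardest step''; that is exactly the content of the theorem, and your proposal does not supply it.

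For the record, Guseva's actual argument does not proceed via Proposition~\ref{prop:full}. She instead proves fullness by the restriction technique alluded to in the survey just after Theorem~\ref{thm:ld-gr2}: one covers $\IGr(3,8)$ by a family of smaller isotropic Grassmannians (zero loci of sections of $\CU^*$), shows that an object orthogonal to the whole collection has vanishing restriction to each member of the family, and concludes that it is zero. This bypasses the delicate bookkeeping you anticipate, at the cost of a different kind of inductive input (fullness on the smaller varieties).
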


\begin{theorem}[\cite{Novikov}]\label{thm:igr310}
   There is an explicit exceptional vector bundle $H$ on $\IGr(3, 10)$
   and a rectangular Lefschetz semiorthogonal decomposition
   \[
      D(\IGr(3, 10)) = \langle \CB, \CB(1), \ldots, \CB(7) \rangle,
   \]
   where $\CB$ is generated by an exceptional collection
   \[
      \CB = \langle H,\, S^2\CU,\, \Sigma^{3,1}\CU(1),\, \CU,\, \Sigma^{2,1}\CU(1),\,
      \Sigma^{3,1}\CU^*(-1),\, \CO,\, \CU^*,\, S^2\CU^*,\, S^3\CU^* \rangle.
   \]
   In particular, $D(\IGr(3, 10))$ admits a full exceptional collection.
\end{theorem}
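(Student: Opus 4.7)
The plan is to establish this decomposition in three stages: identify the bundle $H$, verify that the collection is exceptional with the rectangular Lefschetz structure, and then prove fullness. The starting point is the Kuznetsov--Polishchuk exceptional collection from Theorem~\ref{thm:igr-kp}, which already provides a full (but non-Lefschetz) exceptional collection on $\IGr(3, 10)$ of total length $80 = 10 \cdot 8$, matching $\rk K_0(\IGr(3, 10))$. The strategy is to reorganize it, via a sequence of mutations, into the desired rectangular form with base $\CB$; the bundle $H$ should emerge naturally from this process. Alternatively, $H$ can be characterized abstractly as the unique (up to scalar) nontrivial extension of two specific irreducible equivariant bundles, determined by requiring that $H$ be an exceptional object left-orthogonal to the other nine objects of~$\CB$.

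Second, I would verify exceptionality and semiorthogonality of $\langle \CB, \CB(1), \ldots, \CB(7)\rangle$. For the nine irreducible equivariant bundles in $\CB$, this reduces to Borel--Bott--Weil calculations on $\SP(V)/\bfP_3$ together with Littlewood--Richardson decompositions of tensor products of Schur functors of $\CU$, $\CU^*$ and $\CU^\perp\!/\CU$ twisted by $\CO(i-j)$ for $0 \le i, j \le 7$. The rectangular structure packages many of these into a uniform family: one essentially needs $\Ext^\bullet(F, F'(-t)) = 0$ for all $F, F' \in \CB$ and $0 < t \le 7$. For pairs involving $H$, the defining short exact sequence splits each such $\Ext^\bullet$ into two Borel--Bott--Weil computations connected by a long exact sequence.

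Third, and this is the main obstacle, one must prove fullness. Since the length already equals $\rk K_0$, it is enough to show that every object of the Kuznetsov--Polishchuk exceptional collection lies in the triangulated subcategory generated by $\langle \CB, \CB(1), \ldots, \CB(7)\rangle$. I would carry this out by an explicit mutation argument: starting from the Kuznetsov--Polishchuk decomposition, perform a carefully chosen sequence of mutations through objects of the various $\CB(i)$, each step producing a new bundle whose $\Ext$-groups with the remaining objects can be tracked by Borel--Bott--Weil. In spirit this resembles the staircase-complex method used in Section~\ref{ssec:stair} for classical Grassmannians and the Lagrangian staircase complexes that underpin Theorem~\ref{thm:lgr}, although no ready-made symplectic analogue seems available for general $\IGr(k, V)$.

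The principal technical difficulty is combinatorial: one has to manage the many Ext computations simultaneously, control the propagation of bundles across eight twists, and produce the correct object $H$ at the top of the base. This is exactly the phenomenon that makes the argument specific to $\IGr(3, 10)$, whereas proving Conjecture~\ref{conj:kp-igr} in full would require uncovering a systematic mechanism that replaces the ad hoc mutations used here.
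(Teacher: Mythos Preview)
Your proposal has a genuine gap in the fullness step. You assert that Theorem~\ref{thm:igr-kp} ``already provides a full (but non-Lefschetz) exceptional collection on $\IGr(3,10)$'' and then reduce fullness of the rectangular collection to showing that each Kuznetsov--Polishchuk object lies in $\langle \CB,\CB(1),\ldots,\CB(7)\rangle$. But Theorem~\ref{thm:igr-kp} only says that the Kuznetsov--Polishchuk collection has length $\rk K_0(\IGr(k,V))$; fullness is precisely the content of Conjecture~\ref{conj:kp-igr}, which, as the paragraph following that conjecture states, is proved only for Lagrangian Grassmannians. Since $\IGr(3,10)$ is not Lagrangian ($k=3<5=n$), you are assuming what you need to prove. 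Your mutation argument would therefore establish only that the two collections generate the \emph{same} admissible subcategory of $D(\IGr(3,10))$, not that this subcategory is all of $D(\IGr(3,10))$.

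The survey gives no proof of this theorem; it simply records Novikov's result. An actual proof must supply fullness by an independent route, for instance the restriction-to-subvarieties technique sketched after Remark~\ref{rmk:gr2} (restricting to sub-Grassmannians $\IGr(3,\Ker\phi)$ and arguing by induction), or the criterion of Proposition~\ref{prop:full} (showing directly that $\CB\otimes\CO(8)$ is generated by $\langle\CB,\ldots,\CB(7)\rangle$). Your mutation programme may well be the right way to build $H$ and to organise the semiorthogonality checks, but it cannot substitute for a genuine generation argument.
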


\subsection{Odd symplectic Grassmannians}
It is very plausible that another class of varieties which are very close
to being homogeneous admits full exceptional collections. Consider
a vector space $V$ of dimension $2n+1$ equipped with a skew-symmetric form
$\omega$ of maximal rank (which equals $2n$). As in the even case,
one can define isotropic Grassmannians $\IGr(k, V)$ for all $k=1, \ldots, n+1$
as zero loci of the corresponding regular sections of $\Lambda^2\CU^*$
on $\Gr(k, V)$. A good reference for the geometry of these varieties
is~\cite{Mihai2007}. Similarly, one can define odd symplectic flag varieties.

\begin{conjecture}\label{conj:igr-odd}
   Derived categories of odd symplectic flag varieties admit full
   exceptional collections.
\end{conjecture}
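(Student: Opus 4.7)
The plan is to reduce Conjecture~\ref{conj:igr-odd} to the case of odd symplectic Grassmannians, and then attack the Grassmannian case by exploiting the birational relationship between odd and even symplectic geometry. Let $V$ be a vector space of dimension $2n+1$ with a skew-symmetric form $\omega$ of maximal rank $2n$, and let $K = \ker\omega$ be the one-dimensional radical.

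First I would perform the reduction to Grassmannians. Given an odd symplectic flag variety $\IFl(i_1,\ldots,i_t;V)$, consider the forgetful morphism
\[
   \IFl(i_1,\ldots,i_t;V) \to \IGr(i_t;V),
\]
whose fiber over an isotropic $U_{i_t}$ is the (ordinary) Type $A$ flag variety $\Fl(i_1,\ldots,i_{t-1};U_{i_t})$. This is a (relatively Type $A$) flag bundle, so by Theorem~\ref{thm:sam} together with Corollary~\ref{cor:type-a}, the existence of a full exceptional collection on $\IGr(i_t;V)$ propagates to the whole flag variety. Thus the conjecture reduces to showing that every $\IGr(k,V)$ with $1 \leq k \leq n+1$ admits a full exceptional collection consisting of vector bundles.

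For $k=n+1$ the answer is immediate: since $\dim K = 1$ and every maximal isotropic of $V$ contains $K$, one has the canonical identification $\IGr(n+1,V) \cong \LGr(V/K) = \LGr(2n)$, and a full exceptional collection is given by Theorem~\ref{thm:lgr}. Similarly, $\IGr(1,V) = \PP(V)$ is covered by Beilinson's theorem. For intermediate $k$, my strategy would be to exploit the natural diagram
\[
   \begin{tikzcd}
      & \IFl(k,n+1;V) \arrow[dr, "\pi_2"] \arrow[dl, "\pi_1"'] & \\
      \IGr(k,V) & & \IGr(n+1,V) \cong \LGr(2n),
   \end{tikzcd}
\]
in which $\pi_2$ is a Lagrangian Grassmannian bundle over $\LGr(2n)$ (with fiber $\LGr(U_k^\perp/U_k)$) and $\pi_1$ is a Type $A$ Grassmannian bundle, since any $k$-subspace of a maximal isotropic $U_{n+1}$ is automatically isotropic. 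The right-hand leg, combined with Theorem~\ref{thm:sam} applied iteratively (first to $\LGr(2n)$ with its collection from Theorem~\ref{thm:lgr}, then to the relative Lagrangian factor for which one needs the relative analogue of Theorem~\ref{thm:lgr}), produces a full exceptional collection on the correspondence $\IFl(k,n+1;V)$. The task then becomes to descend this along $\pi_1$ to $\IGr(k,V)$, or alternatively to realize $\IGr(k,V)$ as a blowup / contraction of a simpler even-symplectic variety (extending $V$ to $\widehat{V}=V\oplus\kk$ with a non-degenerate form, one obtains $\IGr(k,V)\hookrightarrow \IGr(k,\widehat{V})$, and the geometry should allow a birational comparison in the style of Mihai's work) and then apply Orlov's blowup formula together with Theorem~\ref{thm:igr-kp}.

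The main obstacle, and the reason the conjecture remains open, is twofold. On the one hand, relating $\IGr(k,V)$ to $\IGr(k,\widehat V)$ requires a full exceptional collection on the even symplectic Grassmannian, which is precisely Conjecture~\ref{conj:kp-igr} and is itself open for $k\geq 3$ beyond the sporadic cases covered by Theorems~\ref{thm:igr38}--\ref{thm:igr310}. On the other hand, the Lagrangian-bundle projection $\pi_1$ is not of a form for which there is a clean semiorthogonal decomposition result analogous to Theorem~\ref{thm:sam} (which requires fibrewise fullness of bundles globally defined on the total space, something not obviously available here). A possible workaround would be to adapt the Kuznetsov--Polishchuk exceptional block machinery to the non-reductive odd symplectic group, or to use the residual action of $\SP(2n)$ on $\IGr(k,V)$, whose two orbits (the open one where $K\not\subset U$ and the closed one isomorphic to $\IGr(k-1,2n)$) might allow a block decomposition to be built by induction on $n$, combining generalized staircase complexes with the analogue for Lagrangian staircase complexes~\eqref{eq:lstair}.
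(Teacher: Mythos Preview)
The statement is a \emph{conjecture} in the paper, not a theorem; there is no proof to compare against. After stating Conjecture~\ref{conj:igr-odd}, the paper only records the known cases: $\IGr(1,V)\simeq\PP(V)$, $\IGr(n+1,V)\simeq\LGr(V/\Ker\omega)$, the series $\IGr(2,2n+1)$ (Kuznetsov), and the two sporadic cases $\IGr(3,7)$ and $\IGr(3,9)$. Everything else is open, and you correctly acknowledge this in your final paragraph.

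Your reduction from flag varieties to Grassmannians via the Type~A fibration and Theorem~\ref{thm:sam} is valid and is exactly the mechanism the paper uses in the even case (see the corollary after Theorem~\ref{thm:lgr}). Your treatment of the extreme cases $k=1$ and $k=n+1$ also matches the paper verbatim.

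There is, however, a genuine slip in your correspondence diagram. You have the fiber descriptions of $\pi_1$ and $\pi_2$ interchanged. The map $\pi_2:\IFl(k,n+1;V)\to\IGr(n+1,V)$ has fiber $\Gr(k,U_{n+1})$, a Type~A Grassmannian (this is where your justification ``any $k$-subspace of a maximal isotropic $U_{n+1}$ is automatically isotropic'' belongs). It is $\pi_1$ whose fiber over $U_k$ parametrizes maximal isotropics containing $U_k$, and here a further problem arises: in the odd symplectic setting these fibers are \emph{not} constant, since the isomorphism type of $U_k^\perp/U_k$ depends on whether the radical $K$ lies in $U_k$ or not. So $\pi_1$ is not a locally trivial bundle, and Theorem~\ref{thm:sam} does not apply to it directly. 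This is in fact one concrete manifestation of the obstacle you identify more abstractly at the end; the two-orbit structure you mention in your last sentence is precisely what causes the fiber jump.
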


When $k=1$, one has $\IGr(k, V)\simeq \PP(V)$. When $k=n+1$, it is
not hard to check that $\IGr(n+1, V)\simeq \LGr(\bar{V})$, where
$\bar{V}=V/\Ker\omega$. Thus, the first nontrivial case
is given by $\IGr(2, V)$.

\begin{theorem}[{\cite[Remark~5.6]{Kuznetsov2008}}]
   There is a rectangular Lefschetz exceptional collection in
   the derived category $\IGr(2, 2n+1)$
   which comes from the collection~\eqref{eq:gr2odd}:
   \[
      D(\IGr(2, 2n+1)) = \begin{pmatrix}
         S^{n-1}\CU^* & S^{n-1}\CU^*(1) & \cdots & S^{n-1}\CU^*(2n-1) \\
         S^{n-2}\CU^* & S^{n-2}\CU^*(1) & \cdots & S^{n-2}\CU^*(2n-1) \\
         \vdots & \vdots &  & \vdots \\
         \CU^* & \CU^*(1) & \cdots & \CU^*(2n-1) \\
         \CO^* & \CO^*(1) & \cdots & \CO^*(2n-1)
      \end{pmatrix}.
   \]
\end{theorem}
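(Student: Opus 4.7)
First I would realize $\IGr(2,V)$ as a smooth hyperplane section of $\Gr(2,V)$. Since $\Lambda^2\CU^*$ has rank $\binom{2}{2}=1$, it is a line bundle on $\Gr(2,V)$ and, via the Plücker embedding, coincides with $\CO(1)$; its global sections equal $\Lambda^2V^*$. The form $\omega$, having maximal rank $2n$, cuts out $\IGr(2,V)$ as a smooth divisor in $\Gr(2,V)$. By Theorem~\ref{thm:ld-gr2} applied to the odd-dimensional space $V$ (with $t=n$), $D(\Gr(2,V))$ carries a rectangular Lefschetz decomposition with $m=2n+1$ blocks, each equal to
\[
\CA=\langle \CO,\,\CU^*,\,S^2\CU^*,\,\ldots,\,S^{n-1}\CU^*\rangle.
\]

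Next I would invoke the hyperplane section proposition stated earlier in this section. Setting $\iota\colon\IGr(2,V)\hookrightarrow\Gr(2,V)$, it produces a semiorthogonal decomposition
\[
D(\IGr(2,V))=\langle \CA_Y,\;\iota^*\CA(1),\;\iota^*\CA(2),\;\ldots,\;\iota^*\CA(2n)\rangle,
\]
with each $\iota^*\CA(i)$ fully faithfully embedded and generated by the restrictions of $\CO(i),\CU^*(i),\ldots,S^{n-1}\CU^*(i)$. By adjunction $\omega_Y\simeq \omega_X|_Y\otimes\CO(1)\simeq\CO(-2n)$, so the Fano index of $\IGr(2,V)$ is exactly $2n$, which equals the number of twisted blocks appearing above; by Serre duality no Lefschetz decomposition of $D(Y)$ can accommodate more than $2n$ proper blocks. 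Tensoring the above decomposition by $\CO(-1)$ yields
\[
D(\IGr(2,V))=\langle \CA_Y(-1),\;\iota^*\CA,\;\iota^*\CA(1),\;\ldots,\;\iota^*\CA(2n-1)\rangle,
\]
which coincides with the displayed collection up to the residual piece $\CA_Y$.

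The remaining task is to prove $\CA_Y=0$. My preferred route is a rank count: the explicit collection contains $n\cdot 2n=2n^2$ objects, so it suffices to verify $\rk K_0(\IGr(2,2n+1))=2n^2$. This follows from the Bruhat-like cellular decomposition of odd symplectic Grassmannians (cf.\ \cite{Mihai2007}), which exhibits $\IGr(2,2n+1)$ as a disjoint union of $2n^2$ affine cells; cellularity forces $K_0$ to be free of that rank, so $[\CA_Y]=0$ in $K_0$, whence $\CA_Y=0$. The main obstacle is precisely isolating this cell count: without a numerical input of that type, one cannot exclude an accidental nonzero residual piece in the decomposition produced by the hyperplane section proposition. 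An alternative avoiding the direct cell count would be to apply Proposition~\ref{prop:full}---since $\CO$ already belongs to $\iota^*\CA$, it is enough to show that $S^j\CU^*\otimes\CO(1)$ lies in the subcategory spanned by the $2n$ twisted blocks for each $0\le j\le n-1$, which can be arranged by pulling back the appropriate Koszul/staircase relations from $\Gr(2,V)$ and then appealing to Orlov's classical generator theorem.
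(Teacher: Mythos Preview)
The survey does not give its own proof here; it simply cites \cite[Remark~5.6]{Kuznetsov2008}. Your framework---realize $\IGr(2,2n+1)$ as a smooth hyperplane section of $\Gr(2,2n+1)$ and feed the rectangular Lefschetz decomposition of Theorem~\ref{thm:ld-gr2} into the hyperplane-section proposition---is exactly the mechanism the paper spells out a~few paragraphs earlier for the even case leading to Theorem~\ref{thm:igr2}, and it correctly yields the $2n$ twisted copies of $\CA$ together with a residual $\CA_Y$.

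The genuine gap is the clause ``$[\CA_Y]=0$ in $K_0$, whence $\CA_Y=0$''. Vanishing of $K_0$ of an admissible subcategory does \emph{not} force the subcategory to vanish: this is the phantom phenomenon, and phantoms are by now known even inside derived categories of smooth projective rational varieties that admit full exceptional collections. Your cell count only shows that the $2n^2$ exceptional classes span $K_0$, hence that $K_0(\CA_Y)=0$; it does not kill $\CA_Y$. In \cite{Kuznetsov2008} the vanishing is obtained through Homological Projective Duality for $\Gr(2,2n+1)$: the projective dual of $\Gr(2,2n+1)$ is the Pfaffian locus $\{\rk\omega\le 2n-2\}$, the HP dual is supported there, and a maximal-rank form $\omega$ lies outside it, so the residual category over such a point is zero. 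Your alternative via Proposition~\ref{prop:full} would indeed bypass both the $K_0$ trap and the HPD machinery, but as written it is only a plan: you would still have to exhibit, for each $0\le j\le n-1$, a resolution of $S^j\CU^*(2n)$ by objects already in the collection on $\IGr(2,2n+1)$, and the restricted staircase relations from $\Gr(2,2n+1)$ do not hand you this directly.
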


The only other two known cases are covered by the following results.

\begin{theorem}[\cite{Fonarev2022a}]
   There is a rectangular Lefschetz exceptional collection in
   the derived category of $\IGr(3, 7)$ given by
   \[
      D(\IGr(3, 7)) = \begin{pmatrix*}[r]
         \Lambda^2\CQ & \Lambda^2\CQ(1) & \Lambda^2\CQ(2) & \Lambda^2\CQ(3) & \Lambda^2\CQ(4) \\
         \CU^* & \CU^*(1) & \CU^*(2) & \CU^*(3) & \CU^*(4) \\
         \CO & \CO(1) & \CO(2) & \CO(3) & \CO(4) \\
         \CU & \CU(1) & \CU(2) & \CU(3) & \CU(4)
      \end{pmatrix*},
   \]
   where $\CQ=V/\CU$ is the universal quotient bundle.
\end{theorem}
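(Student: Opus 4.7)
The plan is to prove the theorem in two main stages, exploiting the description of $\IGr(3,7)$ as the zero locus of a regular section of $\CU^*$ inside $\IGr(3,8)$. Concretely, if $V_8$ is a symplectic $8$-space and $v\in V_8$ a nonzero vector, then $V\simeq v^\perp$, and the condition $\omega(v,U)=0$ cuts out $\IGr(3,7)\hookrightarrow\IGr(3,8)$ as the vanishing locus of the section $\omega(v,-)|_{\CU}\in\Gamma(\IGr(3,8),\CU^*)$. Since $\CU^*$ has rank $3$ and the section is regular of codimension $3$, adjunction gives $\omega_{\IGr(3,7)}\simeq\CO(-5)$, and in particular the Fano index is exactly $5$, matching the number of columns in the proposed decomposition.

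To verify exceptionality and semiorthogonality of the twenty objects, I would transfer the computation of $\Ext^\bullet$ to $\IGr(3,8)$ via the Koszul resolution
\begin{equation*}
   0\to\CO_{\IGr(3,8)}(-1)\to\Lambda^2\CU\to\CU\to\CO_{\IGr(3,8)}\to\iota_*\CO_{\IGr(3,7)}\to 0,
\end{equation*}
where $\iota$ denotes the embedding. Each equivariant $\Ext^\bullet$ on $\IGr(3,7)$ unfolds into a spectral sequence whose $E_1$-page consists of equivariant Ext groups between irreducible vector bundles on $\IGr(3,8)$, and these are tractable via Borel--Bott--Weil together with the Littlewood--Richardson rule. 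All four generators $\CU,\CO,\CU^*,\Lambda^2\CQ$ admit natural equivariant lifts to $\IGr(3,8)$, so this reduction is effective. The required vanishings then amount to verifying that certain tensor products of Schur functors contain only weights sent to zero by BBW.

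The genuinely hard step is fullness. My plan is to apply Proposition~\ref{prop:full} with $\CL=\CO(1)$: since $\CO$ belongs to the subcategory $\CT$ generated by the twenty proposed objects, it suffices to show that each of $\CU(5),\CO(5),\CU^*(5),\Lambda^2\CQ(5)$ lies in $\CT$. By Serre duality with $\omega_{\IGr(3,7)}\simeq\CO(-5)$, these twists correspond to shifted duals of the first column, so one wants explicit long exact sequences that realise them inside $\CT$. There are two natural sources: pulling back and restricting Koszul-type resolutions from $\IGr(3,8)$, guided by the full exceptional collection of Theorem~\ref{thm:igr38}, or constructing them directly from the tautological sequence $0\to\CU\to V\otimes\CO\to\CQ\to 0$ together with its exterior powers. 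The combinatorial bookkeeping required to keep every intermediate term inside $\CT$ is the real obstacle, and this is where the specific shape of the basis $(\CU,\CO,\CU^*,\Lambda^2\CQ)$ and the known Lefschetz structure on $\IGr(3,8)$ must be made to cooperate, possibly after several equivariant mutations.
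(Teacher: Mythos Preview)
The survey does not prove this theorem at all; it merely records the statement and cites \cite{Fonarev2022a}. So there is no in-paper argument to compare against, and your proposal must stand on its own.

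As a proof it does not. What you have written is an outline whose two load-bearing steps are both left open. For exceptionality you say you would run Borel--Bott--Weil on $\IGr(3,8)$ through the Koszul resolution, but you do not carry out a single one of those computations; and one of them is not as clean as you suggest. The bundle $\Lambda^2\CQ$ on $\IGr(3,7)$ has $\CQ=V/\CU$ of rank~$4$, whereas on $\IGr(3,8)$ the tautological quotient $V_8/\CU$ has rank~$5$. Your claim that ``all four generators admit natural equivariant lifts to $\IGr(3,8)$'' is therefore not literally true for $\Lambda^2\CQ$: its restriction sits in a short exact sequence with $\Lambda^2(V_8/\CU)|_{\IGr(3,7)}$ and a copy of $\CQ$, not an isomorphism, so the spectral-sequence bookkeeping is more involved than a single BBW call.

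For fullness you propose to invoke Proposition~\ref{prop:full}, which is a reasonable strategy, but you then explicitly concede that producing the resolutions placing $\CU(5),\CO(5),\CU^*(5),\Lambda^2\CQ(5)$ inside $\CT$ is ``the real obstacle'' and leave it there. The appeal to Serre duality (``these twists correspond to shifted duals of the first column'') does not help with generation: Serre duality controls $\Ext$ groups, not membership in a triangulated subcategory. The suggestion to import resolutions from Guseva's collection on $\IGr(3,8)$ is not obviously workable either, since her Lefschetz basis has support $(2,6,6,6,6,6)$ and a genuinely different shape; no mechanism is given for turning that into the rectangular $5\times4$ collection you want. In short, the geometric setup (zero locus of a section of $\CU^*$, Koszul resolution, correct Fano index) is fine, but the proposal stops exactly where the actual work begins.
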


\begin{theorem}[\cite{Cattani2023}]
   There is an explicit exceptional vector bundle $\CH$ on $\IGr(3, 9)$
   and a rectangular Lefschetz decomposition
   \[
      D(\IGr(3, 9)) = \langle \CB, \CB(1), \ldots, \CB(6) \rangle,
   \]
   where $\CB$ is generated by an exceptional collection
   \[
      \CB = \langle \CH,\, S^2\CU,\, \CU,\, \Sigma^{2,1}\CU(1),\,
      \Sigma^{3,1}\CU^*(-1),\, \CO,\, \CU^*,\, S^2\CU^* \rangle.
   \]
\end{theorem}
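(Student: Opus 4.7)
The plan is to proceed in three stages: construct $\CH$, verify the rectangular semiorthogonality, and prove fullness.

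First, I would construct $\CH$ explicitly. The other nine objects of $\CB$ are Schur bundles in $\CU$ and $\CU^*$ together with two Lefschetz-shifted variants, so $\CH$ must be chosen to correct an $\Ext^1$-obstruction that would otherwise prevent exceptionality. In the spirit of the bundles $\CF^\lambda$ on the Lagrangian Grassmannian and of the extensions behind $\CE^{\lambda,\mu}$ on classical Grassmannians, a natural construction is to define $\CH$ as the middle term of a universal extension, or equivalently as a pushforward from a partial isotropic flag variety $\IFl(i, 3; V) \to \IGr(3, V)$ of a suitable irreducible equivariant bundle.

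Second, I would check the semiorthogonality required by the rectangular Lefschetz layout, namely $\Ext^\bullet(F, G(-i)) = 0$ for $F, G \in \CB$ and $1 \leq i \leq 6$, together with the exceptionality and internal semiorthogonality of $\CB$. Since $\IGr(3, 9) \subset \Gr(3, 9)$ is cut out by the section of $\Lambda^2\CU^*$ determined by the degenerate form $\omega$, the associated Koszul-type resolution of $\iota_*\CO_{\IGr(3, 9)}$ reduces every such computation to cohomology of irreducible equivariant bundles on $\Gr(3, 9)$, which is accessible by Borel--Bott--Weil and the Littlewood--Richardson rule. Almost all vanishings are thus mechanical; only those involving $\CH$ additionally require its defining extension and the long exact sequence in $\Ext$. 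A useful consistency check is that the seven-block structure is compatible with Serre duality and the Fano index of $\IGr(3, 9)$.

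Finally, the main obstacle is fullness. Numerically one has $8 \cdot 7 = 56 = \rk K_0(\IGr(3, 9))$, which is necessary but not sufficient. My preferred route is Proposition~\ref{prop:full}: since $\CO \in \CB$, it suffices to show that for every basis object $F$ the twist $F \otimes \CO(1)$ lies in $\langle \CB, \CB(1), \ldots, \CB(6) \rangle$. For the pure Schur bundles this follows from suitable staircase-type long exact sequences, and for $\Sigma^{2,1}\CU(1)$ and $\Sigma^{3,1}\CU^*(-1)$ from the same sequences combined with the Koszul resolution of $\iota$. The genuinely delicate case is $\CH(1)$, for which one must produce an explicit resolution by objects of the collection; this will likely require an $\IGr(3, 9)$-analogue of the generalized staircase complexes of Section~4. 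An alternative strategy is to realise $\IGr(3, 9)$ as a degenerate hyperplane section of $\IGr(3, 10)$ and to restrict Novikov's collection from Theorem~\ref{thm:igr310} via a Kuznetsov-style argument, thereby reducing fullness to a matching (up to mutation) of the restricted collection with the prescribed $\CB$.
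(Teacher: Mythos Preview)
The paper you are working from is a survey: this theorem is simply \emph{quoted} from \cite{Cattani2023} and carries no proof in the text. There is therefore no ``paper's own proof'' to compare your proposal against; any assessment of your outline would have to be made against Cattani's original argument, not against anything in this survey.

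That said, a couple of internal slips in your proposal are worth fixing before you go further. First, $\CB$ has eight objects, so there are seven ``other'' objects besides $\CH$, not nine. Second, your application of Proposition~\ref{prop:full} is misstated: showing that $F(1)\in\langle \CB,\CB(1),\ldots,\CB(6)\rangle$ for $F\in\CB$ is vacuous, since $\CB(1)$ is already one of the blocks. The nontrivial content of Proposition~\ref{prop:full} here is that $F(7)$ (equivalently, the twist by $\CO(1)$ of the objects in the \emph{top} block $\CB(6)$) must lie in the subcategory, and it is for these twists that you need the staircase-type resolutions. Finally, your alternative route via $\IGr(3,10)$ is geometrically off: $\IGr(3,9)$ is an odd symplectic Grassmannian (the form on the $9$-dimensional space is degenerate), and it is not a hyperplane section of $\IGr(3,10)$ in any sense that would let you transport Novikov's collection by the Lefschetz/HPD mechanism you invoke.
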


\section{Orthogonal Grassmannians}
\subsection{Orthogonal Grassmannians}
We consider Grassmannians for groups of type $B_n$ and $D_n$.
First, let $\bfG$ be of type $B_n$. We number the simple
roots according to Figure~\ref{fig:bn}.
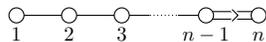
\begin{figure}[H]
   \begin{tikzpicture}
      \dynkin[labels={1,2,3, n-1, n}, scale=2, Bourbaki arrow] B{ooo...oo}
   \end{tikzpicture}
   \caption{Dynkin diagram $B_n$.}
   \label{fig:bn}
\end{figure}
Then $\bfG/\bfP_k$ is isomorphic to the orthogonal Grassmannian 
$\OGr(k, V)$, where $V$ is a $(2n+1)$-dimensional vector space
equipped with a non-degenerate symmetric form $q\in S^2V^*$.
Recall that there is an isomorphism $S^2V^*\simeq \Gamma(\Gr(2, V), S^2\CU^*)$.
Similar to the symplectic case, $\OGr(k, V)$ is the zero locus
of the regular section of $S^2\CU^*$ corresponding to $q$.

Let $\bfG$ be of type $D_n$. We number the simple
roots according to Figure~\ref{fig:dn}.
\begin{figure}[H]
   \begin{tikzpicture}
      \dynkin[labels={1,2,n-2,n-1,n}, label directions={,,right,,}, scale=2, Bourbaki arrow] D{oo...ooo}
   \end{tikzpicture}
   \caption{Dynkin diagram $D_n$.}
   \label{fig:dn}
\end{figure}
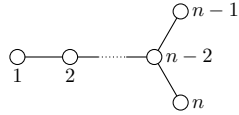

For $k\leq n-2$ the Grassmannian $\bfG/\bfP_k$ is isomorphic to
the orthogonal Grassmannian 
$\OGr(k, V)$, where $V$ is a $2n$-dimensional vector space
equipped with a non-degenerate symmetric form $q\in S^2V^*$.
Again, $\OGr(k, V)$ is the zero locus
of the regular section of $S^2\CU^*$ corresponding to $q$.
When $k=n-1$ or $k=n$, the Grassmannians $\bfG/\bfP_{n-1}$
and $\bfG/\bfP_{n}$ are isomorphic to the two (isomorphic)
connected components of $\OGr(n, V)$, which are traditionally
denoted by $\OGr_+(n, V)$ and $\OGr_-(n, V)$.

Finally, there is a classical isomorphism
of varieties $\OGr_+(n, 2n)\simeq \OGr(n-1, 2n-1)$.

\subsection{Quadrics and orthogonal Grassmannians of planes}
The simplest orthogonal Grassmannian are quadrics.
Consider an $m$-dimensional quadric $Q$. If $m$ is even,
then $Q$ carries two very important vector bundles, called
\emph{spinor bundles}, which we denote by $\CS_+$ and $\CS_-$.
If $m$ is odd, there is only one spinor bundle on $Q$, which
we denote by $\CS$. We refer the reader to~\cite{Ottaviani1988}
for the case of quadrics.
For~spinor bundles on general orthogonal Grassmannians,
we refer the reader to~\cite[Section~6]{Kuznetsov2008}.
It was shown by Kapranov in~\cite{Kapranov1988} that
its bounded derived category admits a full exceptional collection.

\begin{theorem}[\cite{Kapranov1988}]\label{thm:quad}
   Let $Q$ be an $m$-dimensional quadric. There are full exceptional
   collections
      \begin{align}
         \label{eq:qeven}
         D(Q) &=  \langle \CS_+, \CS_-, \CO, \CO(1), \ldots, \CO(m-1) \rangle & & \text{if }m\text{ is even}, \\
         \label{eq:qodd}
         D(Q) &= \langle \CS, \CO, \CO(1), \ldots, \CO(m-1) \rangle & & \text{if }m\text{ is odd}.
      \end{align}
\end{theorem}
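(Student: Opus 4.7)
The plan is to follow the two-step strategy used for $\PP(V)$ and for classical Grassmannians: first verify exceptionality and pairwise semiorthogonality by direct cohomology computations, then establish fullness by a resolution of the diagonal argument, with the spinor bundles appearing as the pieces that are needed beyond the block of line bundles.

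For semiorthogonality, the classical computation of the cohomology of line bundles on a quadric gives $H^t(Q, \CO(j)) = 0$ for $t>0$ and $-(m-1) \leq j \leq 0$, exactly as on projective space, so $\Ext^\bullet(\CO(j), \CO(i)) = 0$ for $0 \leq i < j \leq m-1$. Each spinor bundle is an irreducible $\Spin$-equivariant bundle on the homogeneous variety $Q$, so its endomorphism algebra is scalar by Schur, and exceptionality reduces to vanishing of the higher Ext-groups of $\CS^{*} \otimes \CS$, which I would check via the Borel--Bott--Weil theorem. The remaining semiorthogonality conditions, namely $\Ext^\bullet(\CO(j), \CS_\pm) = 0$ and $\Ext^\bullet(\CO(j), \CS) = 0$ for $0 \leq j \leq m-1$, together with $\Ext^\bullet(\CS_-, \CS_+) = 0$ in the even case, are again mechanical Borel--Bott--Weil computations with half-spin weights.

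For fullness, I would construct on $Q \times Q$ an explicit resolution of $\Delta_*\CO_Q$ whose terms have the form $E_x^\circ \boxtimes E_x^{*}$, where the $E_x$ are the objects of the proposed collection and the $E_x^\circ$ their graded duals. Given such a resolution, the projection-formula argument of~\eqref{eq:pn-id} would show that every object of $D(Q)$ lies in the subcategory $\langle \CS_\pm, \CO, \ldots, \CO(m-1)\rangle$ (or its odd analogue). To produce the resolution, I would start from Beilinson's Koszul resolution of $\Delta_*\CO_{\PP(V)}$ on $\PP(V)\times\PP(V)$, restrict it to $Q \times Q$ (a complete intersection of bidegrees $(2,0)$ and $(0,2)$ in $\PP \times \PP$), and simplify the resulting complex using the Clifford multiplication maps $V \otimes \CS_\pm \to \CS_\mp(1)$ in the even case and $V \otimes \CS \to \CS(1)$ in the odd case. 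These are precisely the maps responsible for the spinor bundles at the extreme left of the diagonal resolution.

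The main obstacle is the final simplification: one has to identify the hypercohomology of the restricted Beilinson complex, together with the Koszul contributions coming from the two quadratic equations, with a complex whose nonzero terms are governed by the spinor representations of $\Spin$. This is the essentially representation-theoretic core of Kapranov's argument, and while every individual step is a homogeneous-bundle computation, it requires careful bookkeeping of the $\Spin$-module structures on $\CS$ and $\CS_\pm$. A less conceptual but faster route would be to combine the semiorthogonality verification with Proposition~\ref{prop:full} applied to $\CL = \CO(1)$: there one only has to check that $\CO(m)$ and $\CS_\pm(1)$ (respectively $\CS(1)$) belong to the subcategory, using Ottaviani's short exact sequences that express each $\CS_\pm(1)$ as an extension of $\CS_\mp$ by a trivial bundle, combined with a Koszul-type resolution on $Q$ for the top twist.
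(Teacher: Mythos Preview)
The paper does not prove this theorem; it is quoted from~\cite{Kapranov1988} without argument, so there is no in-paper proof to compare against.  That said, your sketch deserves two corrections.

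First, the fullness step via ``restrict Beilinson's resolution of $\Delta_{\PP}$ to $Q\times Q$ and simplify'' does not work as stated.  The intersection $\Delta_{\PP}\cap(Q\times Q)$ is not transversal: a short Koszul computation shows
\[
L\iota^*\CO_{\Delta_{\PP}}\ \simeq\ \bigl[\,\CO_{\Delta_Q}(-2)[1]\to \CO_{\Delta_Q}\,\bigr],
\]
so the pulled-back complex resolves a two-term object, not $\CO_{\Delta_Q}$.  Kapranov does \emph{not} proceed by restriction; he constructs the resolution on $Q\times Q$ directly, introducing bundles $\Psi_i$ on $Q$ defined inductively through Clifford multiplication (with $\Psi_0=\CO$ and $\Psi_m$ equal to $\CS$, or $\CS_+\oplus\CS_-$ in the even case), and the diagonal resolution has terms $\Psi_i\boxtimes\CO(-i)$.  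The Clifford structure is what makes the complex terminate at step $m$ rather than $m+1$.

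Second, your alternative via Proposition~\ref{prop:full} is sound for the spinor twists---Ottaviani's sequences do give $\CS(1)\in\langle\CS,\CO\rangle$ and $\CS_\pm(1)\in\langle\CS_\mp,\CO\rangle$---but the phrase ``a Koszul-type resolution on $Q$ for the top twist'' hides the real work.  The naive Koszul complex pulled back from $\PP^{m+1}$ has length $m+2$, one step too many to place $\CO(m)$ inside $\langle\CO,\ldots,\CO(m-1)\rangle$.  One can still reach $\CO(m)$ by iterating the Ottaviani sequences and their twists, but this is a multi-step induction rather than a single resolution, and by the time you organise it you have essentially rebuilt Kapranov's $\Psi_i$.
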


\begin{remark}
   Decomposition~\eqref{eq:qodd} is obviously minimal Lefschetz,
   while~\eqref{eq:qeven} can be rewritten as
   \[
      D(Q) =  \langle \CS_+, \CO, \CS_+(1), \CO(1), \ldots, \CO(m-1) \rangle,
   \]
   which is minimal Lefschetz.
\end{remark}

The next two series of cases are $\OGr(2, V)$.
The reader will immediately recognize that the following
collections are closely related to minimal Lefschetz exceptional
collections on classical Grassmannians of planes
(see Theorem~\ref{thm:ld-gr2}).
We first
consider $\IGr(2, 2n+1)$. Full exceptional collections of
vector bundles on these varieties were constructed
by A.~Kuznetsov in~\cite{Kuznetsov2008}.

\begin{theorem}
   Let $n\geq 2$. There is a full Lefschetz exceptional collection on $\IGr(2, 2n+1)$
   of the form
   \begin{equation}\label{eq:ogr2odd}
      D(\OGr(2, 2n+1)) = \begin{pmatrix}
         \CS & \CS(1) & \cdots & \CS(2n-3) \\
         S^{n-2}\CU^* & S^{n-2}\CU^*(1) & \cdots & S^{n-2}\CU^*(2n-3) \\
         \vdots & \vdots &  & \vdots \\
         \CU^* & \CU^*(1) & \cdots & \CU^*(2n-3) \\
         \CO^* & \CO^*(1) & \cdots & \CO^*(2n-3)
      \end{pmatrix},
   \end{equation}
   where $\CS$ denotes the spinor bundle.
\end{theorem}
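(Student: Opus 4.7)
The plan is to split the proof into verifying semiorthogonality and exceptionality of the displayed collection, and then proving fullness. Throughout, I would view $X = \OGr(2, V)$ with $\dim V = 2n+1$ as the zero locus of a regular section of $S^2\CU^*$ on $Y = \Gr(2, V)$, giving the Koszul resolution
\[
0 \to \Lambda^3(S^2\CU) \to \Lambda^2(S^2\CU) \to S^2\CU \to \CO_Y \to \iota_*\CO_X \to 0.
\]
The ``classical'' part of the basis, namely $(\CO, \CU^*, S^2\CU^*, \ldots, S^{n-2}\CU^*)$, is a restriction of the first $n-1$ objects of the Lefschetz basis for $D(Y)$ from Theorem~\ref{thm:ld-gr2}.

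The first step is semiorthogonality and exceptionality. All required $\Ext$-groups between bundles of the form $S^i\CU^*(j)$ with $0\le i\le n-2$ and $0\le j\le 2n-3$ can be reduced, via adjunction and the Koszul resolution above, to cohomology computations on $Y=\Gr(2, V)$; these are handled by Borel--Bott--Weil together with Littlewood--Richardson, and the required vanishings follow essentially because the associated collection on $Y$ is already Lefschetz. The spinor bundle $\CS$ on $X$ is constructed either from the spin representation of the Levi of the parabolic defining $X$, or equivalently via the quadric fibration structure of the isotropic flag $\IFl(1, 2; V) \to Q_{2n-1}$ whose fibres are $(2n-3)$-dimensional quadrics. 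The cross $\Ext$-computations involving $\CS$ are then handled by pulling up along the $\PP^1$-bundle $\IFl(1, 2; V) \to X$, writing $\CS$ in terms of relative spinor bundles on the quadric fibration, and pushing forward.

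For fullness I would proceed by induction on $n$. The base case $n=2$ gives $\OGr(2, 5) \simeq Q_3$, and the stated collection matches Kapranov's full collection on $Q_3$ from Theorem~\ref{thm:quad} (up to identifying $\CS$ with the spinor bundle and normalising twists, taking into account that $\CO_Y(1)|_{Q_3}$ is a power of $\CO_{Q_3}(1)$). For the inductive step I would use the restriction-to-subvariety technique: every non-isotropic vector $v\in V$ determines an embedding $\OGr(2, v^\perp) \hookrightarrow X$ of the expected codimension, these subvarieties cover $X$ as $v$ varies, and by induction our collection restricts to (or dominates) a full collection on each of them. Thus an object right-orthogonal to the displayed collection restricts to zero on every such subvariety and must vanish. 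A parallel route would combine the $\PP^1$-bundle $\IFl(1, 2; V)\to X$ with the quadric fibration $\IFl(1, 2; V)\to Q_{2n-1}$ and Kuznetsov's semiorthogonal decomposition for quadric bundles, comparing the two resulting decompositions of $D(\IFl(1, 2; V))$ to transport fullness from $D(Q_{2n-1})$ (known by Theorem~\ref{thm:quad}) to $D(X)$.

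The main obstacle is the fullness step, and specifically keeping track of $\CS$ so that the induction closes: one must check both that restrictions of $\CS$ generate (together with the classical bundles) the spinor row on the smaller $\OGr(2, v^\perp)$, and that the twists by $\CO(j)$ line up correctly. The deeper conceptual point is that the spinor row is exactly the residual block measuring the failure of the restriction of Kapranov's collection from $\Gr(2, V)$ to generate $D(X)$; making this precise and controlling it uniformly in $n$ is where the bulk of the work goes.
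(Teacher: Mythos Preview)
Your ``parallel route'' via the isotropic flag variety is in fact the approach Kuznetsov uses in~\cite{Kuznetsov2008}, and you should promote it to the main argument; the induction you propose as the primary route does not close.

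There are two concrete problems with the restriction-to-$\OGr(2,v^\perp)$ induction. First, the base case is misidentified: for $n=2$ one has the exceptional isomorphism $\Spin(5)\simeq\SP(4)$, under which $\OGr(2,5)\simeq\PP^3$ (this is exactly the sporadic case listed later in the survey), not $Q_3$. This is repairable, but the second issue is not. For a \emph{non-isotropic} vector $v\in V$ with $\dim V=2n+1$, the orthogonal $v^\perp$ is a $2n$-dimensional space with a nondegenerate form, so $\OGr(2,v^\perp)=\OGr(2,2n)$ is of type $D_n$, not $B_{n-1}$. Your induction hypothesis says nothing about the even case, and you cannot simply assume it: as the survey recalls immediately after this theorem, in~\cite{Kuznetsov2008} the collection on $\OGr(2,2n)$ came out one object short, and fullness there was only established much later by Kuznetsov--Smirnov via a genuinely different argument (the residual category is $D(\Rep D_n)$, not zero). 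So the even case is strictly harder, and an induction that feeds the odd case through the even one cannot work without first proving the even case by other means.

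The flag-variety argument avoids this entirely. The orthogonal flag $\OF(1,2;V)$ is a $\PP^1$-bundle over $X=\OGr(2,V)$ and simultaneously a $Q_{2n-3}$-bundle over $Q_{2n-1}$. Orlov's projective bundle formula (Theorem~\ref{thm:orlov-proj}) expresses $D(\OF(1,2;V))$ in terms of $D(X)$, while Kuznetsov's semiorthogonal decomposition for quadric fibrations expresses it in terms of $D(Q_{2n-1})$ together with the derived category of the associated even Clifford algebra; the spinor bundle $\CS$ arises exactly from this Clifford piece. Comparing the two decompositions transports fullness from $D(Q_{2n-1})$ (Theorem~\ref{thm:quad}) to $D(X)$ in one step, with no induction on $n$ needed. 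Your sketch of this route is essentially correct; what remains is the bookkeeping matching the Clifford component to the row of spinor twists, which is where the work in~\cite{Kuznetsov2008} actually lies.
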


For $\IGr(2, 2n)$ an almost full exceptional collection was constructed in~\cite{Kuznetsov2008}.
However, it was one object too short. The problem of constructing
a full exceptional collection was solved
by A.~Kuznetsov and M.~Smirnov in~\cite{Kuznetsov2021}.
In its most elegant form it can be stated as follows.

\begin{theorem}[{\cite{Kuznetsov2021}}]
   Consider the subcategory
   \[
      \CA = \langle \CO,\, \CU^*,\, S^2\CU^*,\, \ldots, S^{n-3}\CU^*,\, \CS_-,\,\CS_+\rangle
      \subseteq D(\OGr(2, 2n)),
   \]
   where $\CS_+$ and $\CS_-$ are the two spinor bundles.
   There is a semiorthogonal decomposition
   \[
      D(\OGr(2, 2n)) = \langle \CR,\, \CA,\, \CA(1),\, \ldots,\, \CA(2n-4) \rangle,
   \]
   where $\CR$ is equivalent to the derived category of representations
   of the Dynkin quiver $D_n$. In particular, $\CR$ and, as a consequence,
   $D(\OGr(2, 2n))$ admits a full exceptional collection.
\end{theorem}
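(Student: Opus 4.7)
The plan is to split the argument into three essentially independent steps: verifying that $\CA$ is an exceptional collection and that the blocks $\CA, \CA(1), \ldots, \CA(2n-4)$ are Lefschetz-semiorthogonal; performing a rank count to pin down the size of $\CR$; and finally identifying $\CR$ with $D(\Rep(D_n))$.

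For the first step, I would reduce all the required $\Ext$ vanishings to cohomology computations on the ambient classical Grassmannian $\Gr(2, 2n)$ via the Koszul resolution of $\CO_{\OGr(2, 2n)}$ associated with the regular section of $S^2\CU^*$ that cuts out $\OGr(2, V) \subset \Gr(2, V)$. The subcollection $\langle \CO, \CU^*, \ldots, S^{n-3}\CU^*\rangle$ and its twists is, up to truncation, the restriction of the Lefschetz collection from Theorem~\ref{thm:ld-gr2}, and Borel--Bott--Weil combined with the Littlewood--Richardson rule handles the corresponding vanishings. For the two spinor bundles $\CS_\pm$, I would use their description as direct images of line bundles from the isotropic flag variety $\OF(2, n; V) \to \OGr(2, V)$, so that, after the projection formula, the required Ext groups again reduce to Borel--Bott--Weil on an iterated flag bundle.

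A rank count then gives the expected size of $\CR$: since $\chi(\OGr(2, 2n)) = 2n(n-1)$, and $\CA$ has rank $n$ (the $n-2$ symmetric powers plus the two spinor bundles), the proposed rectangular Lefschetz part has rank $n(2n-3)$, leaving exactly $n$ classes in $K_0(\CR)$, which is precisely the number of simple objects in $\Rep(D_n)$. This makes the numerical prediction consistent and sets the correct target.

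The main obstacle is the third step: constructing the equivalence $\CR \simeq D(\Rep(D_n))$. My approach would be to produce $n$ explicit exceptional objects in $\CR$ by projecting well-chosen equivariant vector bundles onto $\CR$ via the left adjoint to the inclusion of the Lefschetz part; natural candidates are $\CU$, $S^{n-2}\CU^*$, and suitable twists of $\CS_\pm$, possibly after mutating by the generators of $\CA$. I would then verify directly that these $n$ projected objects form a full strong exceptional collection whose Ext quiver is precisely the $D_n$ Dynkin diagram with all arrows concentrated in degree zero. The delicate part is choosing the bundles so that the $\Hom$-spaces between their projections are one-dimensional and arranged exactly in the $D_n$ shape (in particular, the trivalent vertex must come from an object with two incoming and one outgoing arrow, and the two endpoints of the fork must be distinguishable and tied to the two spinor bundles). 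Once this is achieved, the equivalence $\CR \simeq D(\Rep(D_n))$ follows from the standard fact that a full strong exceptional collection whose endomorphism algebra is the path algebra of a Dynkin quiver realises the bounded derived category of that quiver's representations.
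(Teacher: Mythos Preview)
This theorem is only stated in the survey, not proved: it is attributed to \cite{Kuznetsov2021} and the text moves on immediately. There is therefore no in-paper argument to compare your proposal against.

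On the merits of the proposal itself, the serious gap is in your third step. The rank count shows $K_0(\CR)$ has rank $n$, but that is only a necessary condition; it does not show that any particular $n$-term exceptional collection in $\CR$ is full. Producing $n$ exceptional objects whose $\Ext$-quiver is the $D_n$ diagram yields a fully faithful functor $D(\Rep D_n)\hookrightarrow \CR$, and you still owe a generation argument to upgrade this to an equivalence. Your plan says you will ``verify directly'' fullness, but gives no mechanism (restriction to subvarieties, mutation back to a known full collection, analysis of the Serre functor on the residual category, etc.), and matching $K_0$-ranks alone does not rule out a phantom complement. Separately, your list of candidate generators ``$\CU$, $S^{n-2}\CU^*$, and suitable twists of $\CS_\pm$'' names at most four bundles, not $n$; for the argument to be a genuine proposal you would need to specify all $n$ objects and explain why their mutual $\Hom$ and $\Ext^1$ spaces assemble into exactly the $D_n$ pattern, with the two spinor bundles accounting for the fork.
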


\subsection{Kuznetsov--Polishchuk collections}
In~\cite{KuznetsovPolishchuk2016} the authors constructed enough exceptional blocks
for orthogonal Grassmannians to get exceptional collections of maximal
possible length. Accurate formulation of these results
requires some setup from representation theory, so
we refer the reader to~\cite[Theorems~9.1 and~9.3]{KuznetsovPolishchuk2016}
for details.

\begin{theorem}\label{thm:bn}
   Consider $\OGr(k, V)$, where $\dim V = 2n+1$ and $k\leq n-1$.
   For each $0\leq t \leq k-1$ the sets of weights
   \[
      B_t = \left\{\lambda\in \ZZ^n\; \middle|\;
      \begin{aligned}
      & 2n-1-k \geq \lambda_1 \geq \cdots \geq
      \lambda_t \geq t = \lambda_{t+1} = \cdots = \lambda_k, \\
      & (k-t)/2 \geq \lambda_{k+1} \geq \cdots \geq \lambda_n \geq 0
      \end{aligned}
      \right\}
   \] 
   and
   \[
      B_{t+1/2} = \left\{\lambda\in (1/2+\ZZ)^n\; \middle|\;
      \begin{aligned}
      & 2n-1/2-k \geq \lambda_1 \geq \cdots \geq
      \lambda_t \geq t+1/2 = \lambda_{t+1} = \cdots = \lambda_k, \\
      & (k-t)/2 \geq \lambda_{k+1} \geq \cdots \geq \lambda_n \geq 0
      \end{aligned}
      \right\}
   \]
   are (right) exceptional blocks. 
   
   For each $k\leq t \leq 2n-k-1$
   the set of weights
   \[
      B_t = \left\{\lambda\in \ZZ^n\; \middle|\;
      \begin{aligned}
      & 2n-1-k \geq \lambda_1 \geq \cdots \geq
      \lambda_{k-1} \geq \lambda_k=t, \\
      & \lambda_{k+1} = \cdots = \lambda_n = 0
      \end{aligned}
      \right\}
   \]
   is a (right) exceptional block.

   Denote by $\CA_t$ and $\CA_{t+1/2}$
   the categories generated by the exceptional collections
   $\langle \CE^\lambda \mid \lambda \in B_t \rangle$
   and $\langle \CE^\lambda \mid \lambda \in B_{t+1/2} \rangle$,
   respectively. The subcategories
   \[
      \CA_0,\, \CA_{1/2},\, \CA_1,\, \ldots,\, \CA_{k-1/2},\, \CA_k,\, \CA_{k+1},\, \ldots,\, \CA_{2n-1-k}
   \]
   are semiorthogonal.
\end{theorem}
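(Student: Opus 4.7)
The plan is to reduce Theorem~\ref{thm:bn} to two largely independent verifications: (a)~each of the sets $B_t$ and $B_{t+1/2}$ is a right exceptional block in the sense of Definition~\ref{def:reb}, and (b)~the admissible subcategories $\CA_i$ produced via Proposition~\ref{prop:reb} remain semiorthogonal in $D(\OGr(k,V))$ after the prescribed twists by $\CO(i)$. Both tasks are, in principle, purely representation-theoretic computations on the homogeneous variety $X = \bfG/\bfP_k$, and the main engine throughout is the Borel--Bott--Weil theorem applied to the appropriate partial flag varieties.

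For (a), I would first use Theorem~\ref{thm:gpg} to observe that $\{\CU^\lambda \mid \lambda \in B\}$ is a graded exceptional collection in $D^\bfG(X)$ with respect to the $\xi$-ordering; this follows from the explicit bounds on the first $k$ coordinates of $\lambda$. The block condition itself requires that the natural map
\begin{equation*}
   \bigoplus_{\nu \in B} \Hom(\CU^\nu, \CU^\mu) \otimes_\kk \Ext^i_\bfG(\CU^\lambda, \CU^\nu) \longrightarrow \Ext^i_{D(X)}(\CU^\lambda, \CU^\mu)
\end{equation*}
is an isomorphism. The right-hand side is the cohomology of a homogeneous bundle on $X$, computable via Borel--Bott--Weil, and its $\bfL$-isotypic decomposition is controlled by Littlewood--Richardson-type rules for the Levi $\bfL = \GL_k \times \SO_{2(n-k)+1}$. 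The cocycle on the left is then encoded by extensions with the bundles $V/\CU$ and $\CU^\perp/\CU$, and the bounds defining $B_t$ (respectively $B_{t+1/2}$) are engineered precisely so that every weight appearing in the decomposition of the right-hand side is again a dominant weight lying in $B$. This matching of combinatorial constraints is the heart of the argument.

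For (b), having verified that each $B_i$ is a right exceptional block, Proposition~\ref{prop:reb} gives dual exceptional objects $\CE^\lambda = {\sf Fg}(\CE^\lambda_{B_i})$ that form a graded exceptional collection in $D(X)$ with respect to the $\xi$-ordering. Semiorthogonality between distinct blocks reduces to checking that
\begin{equation*}
   \Ext^\bullet_{D(X)}\bigl(\CE^\mu(j), \CE^\lambda(i)\bigr) = 0 \quad \text{whenever } i < j,
\end{equation*}
for $\lambda \in B_i$ and $\mu \in B_j$. Via Serre duality on $X$ (whose canonical bundle twist is $\CO(-(2n-k))$) and unwinding the right-dual construction, this amounts once again to Borel--Bott--Weil vanishing: the relevant homogeneous bundles have weights shifted by $\rho$ that land on singular hyperplanes, so their cohomology vanishes in all degrees. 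The $\xi$-gap between the blocks, together with the ranges $0 \le t \le 2n-1-k$, is exactly what keeps all the weights outside the dominant chamber modulo the $W$-action of the Weyl group.

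The main obstacle will be the combinatorial bookkeeping in the half-integer blocks $B_{t+1/2}$. These involve spinor-type weights (weights of the spin double cover of $\SO_{2n+1}$), whose translates under the Weyl group interact nontrivially with the integer blocks, so one must simultaneously control both kinds of weights when checking orthogonality of, say, $\CA_t$ against $\CA_{s+1/2}$. A subsidiary difficulty is the qualitative change of shape between the blocks $B_t$ with $0 \le t \le k-1$ (which have a nontrivial tail in coordinates $\lambda_{k+1}, \ldots, \lambda_n$ coming from the orthogonal factor of the Levi) and the blocks $B_t$ with $k \le t \le 2n-1-k$ (where this tail collapses); the passage $t = k-1/2 \rightsquigarrow t = k$ has to be handled carefully so that the last mixed block interlocks correctly with the first pure block.
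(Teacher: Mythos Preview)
The paper does not prove this theorem. It is a survey, and Theorem~\ref{thm:bn} is stated with an explicit reference to \cite[Theorems~9.1 and~9.3]{KuznetsovPolishchuk2016}; the surrounding text says that ``accurate formulation of these results requires some setup from representation theory, so we refer the reader to~\cite{KuznetsovPolishchuk2016} for details.'' There is therefore no in-paper argument to compare against.

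Your outline is broadly in the spirit of how Kuznetsov and Polishchuk actually proceed: exceptional block conditions and inter-block semiorthogonality are indeed reduced to Borel--Bott--Weil computations governed by the combinatorics of $\bfL$-dominant weights. Two remarks, though. First, in the statement of Theorem~\ref{thm:bn} the subcategories $\CA_t$ are \emph{not} presented as twists $\CA(t)$ of a fixed block; the shift is already absorbed into the weight constraints (note $\lambda_{t+1}=\cdots=\lambda_k=t$), so your framing in terms of ``twists by $\CO(i)$'' is slightly off relative to what is written. Second, your sketch understates how much structure Kuznetsov--Polishchuk build before touching a single block: they isolate a numerical criterion for a set of weights to be exceptional (what they call \emph{strong exceptional blocks}, together with closedness and convexity conditions on the weight sets) and a separate compatibility criterion for semiorthogonality between blocks, and only then verify these criteria case by case for the $B_t$ and $B_{t+1/2}$. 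The ``combinatorial bookkeeping'' you flag for the half-integer blocks is not an afterthought but the main labour, and it is handled through that general machinery rather than by ad hoc Borel--Bott--Weil checks on individual pairs $(\lambda,\mu)$. If you intend to reconstruct the proof rather than cite it, that abstraction layer is what you are missing.
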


\begin{theorem}\label{thm:dn}
   Consider $\OGr(k, V)$, where $\dim V = 2n$ and $k\leq n-2$.
   For each $0\leq t \leq k-1$ the sets of weights
   \[
      B_t = \left\{\lambda\in \ZZ^n\; \middle|\;
      \begin{aligned}
      & 2n-2-k \geq \lambda_1 \geq \cdots \geq
      \lambda_t \geq t = \lambda_{t+1} = \cdots = \lambda_k, \\
      & (k-t)/2 \geq \lambda_{k+1} \geq \cdots \geq \lambda_{n-1} \geq \abs{\lambda_n}
      \end{aligned}
      \right\}
   \] 
   and
   \[
      B_{t+1/2} = \left\{\lambda\in (1/2+\ZZ)^n\; \middle|\;
      \begin{aligned}
      & 2n-3/2-k \geq \lambda_1 \geq \cdots \geq
      \lambda_t \geq t+1/2 = \lambda_{t+1} = \cdots = \lambda_k, \\
      & (k-t)/2 \geq \lambda_{k+1} \geq \cdots \geq \lambda_{n-1} \geq \abs{\lambda_n}
      \end{aligned}
      \right\}
   \]
   are (right) exceptional blocks. 
   
   For each $k\leq t \leq 2n-k-2$
   the set of weights
   \[
      B_t = \left\{\lambda\in \ZZ^n\; \middle|\;
      \begin{aligned}
      & 2n-2-k \geq \lambda_1 \geq \cdots \geq
      \lambda_{k-1} \geq \lambda_k=t, \\
      & \lambda_{k+1} = \cdots = \lambda_n = 0
      \end{aligned}
      \right\}
   \]
   is a (right) exceptional block.

   Denote by $\CA_t$ and $\CA_{t+1/2}$
   the categories generated by the exceptional collections
   $\langle \CE^\lambda \mid \lambda \in B_t \rangle$
   and $\langle \CE^\lambda \mid \lambda \in B_{t+1/2} \rangle$,
   respectively. The subcategories
   \[
      \CA_0,\, \CA_{1/2},\, \CA_1,\, \ldots,\, \CA_{k-1/2},\, \CA_k,\, \CA_{k+1},\, \ldots,\, \CA_{2n-2-k}
   \]
   are semiorthogonal.
\end{theorem}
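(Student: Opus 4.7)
The plan is to mirror the strategy used for the type $B$ analogue (Theorem~\ref{thm:bn}) and the classical Kuznetsov--Polishchuk result for type $A$ (Proposition~\ref{prop:gr-kp}), adapting the Borel--Bott--Weil input to the group $\Spin(2n)$. Since $\OGr(k, V)\hookrightarrow \Gr(k, V)$ is the zero locus of a regular section of $S^2\CU^*$, one has a Koszul resolution
\[
0 \to \Lambda^N(S^2\CU) \to \cdots \to \Lambda^2(S^2\CU) \to S^2\CU \to \CO_{\Gr(k, V)} \to \iota_*\CO_{\OGr(k,V)} \to 0,
\]
with $N=\binom{k+1}{2}$. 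The first step is to rewrite each $\Ext^\bullet_{\OGr}(\CU^\lambda, \CU^\mu)$ between irreducible equivariant bundles as the abutment of a spectral sequence whose terms are the groups $H^\bullet(\Gr(k, V),\, \Sigma^\alpha\CU^*\otimes \Lambda^p(S^2\CU))$. Decomposing $\Lambda^p(S^2\CU)$ into irreducible $\GL_k$-representations via a plethysm and applying Borel--Bott--Weil on $\Gr(k, V)$ reduces every such computation to a purely combinatorial question about singular and regular weights of $\GL_n$.

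Next I would verify the exceptional block condition of Definition~\ref{def:reb} for each of the three families in the statement. For a pair $\lambda,\mu$ in a given block $B$ one writes $\Ext^\bullet_{D(\OGr)}(\CU^\lambda, \CU^\mu)$ as a $\Spin(V)$-representation via the above spectral sequence, and then checks that every irreducible summand $\CU^\nu$ appearing in the decomposition has $\nu\in B$, so that the sum in Definition~\ref{def:reb} is indeed surjective (injectivity is automatic since $\bfG$ is reductive and the map is tautologically induced by composition). The half-integer blocks $B_{t+1/2}$ parametrize weights of the spin cover, and the use of $\abs{\lambda_n}$ rather than $\lambda_n$ in the inequalities reflects that the two half-spin representations of $\Spin(2n)$ are exchanged by the outer automorphism, so they must be handled simultaneously. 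The precise cutoffs $2n-2-k$, $(k-t)/2$, and $\abs{\lambda_n}\leq\lambda_{n-1}$ are engineered so that all intermediate weights $\nu$ remain inside $B$.

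Third, I would verify semiorthogonality of the resulting subcategories $\CA_0,\CA_{1/2},\CA_1,\ldots,\CA_{2n-2-k}$. By Proposition~\ref{prop:reb} each $\CA_t$ is generated by the graded right dual to $\{\CU^\lambda\mid \lambda\in B_t\}$, so the desired semiorthogonality reduces to vanishings $\Ext^\bullet(\CU^\lambda,\CU^\mu)=0$ for $\lambda\in B_s$, $\mu\in B_t$, $s>t$, again computed via Koszul plus Borel--Bott--Weil. The organizing device is the $\xi$-ordering of Theorem~\ref{thm:gpg}: once one checks that the pairings $(\xi, \lambda)$ strictly increase from block to block (the half-integer blocks interleaving the integer ones), the required vanishings match those already established for the equivariant exceptional collection in $D^\bfG(\OGr(k,V))$.

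The main obstacle will be the combinatorial bookkeeping near the fork of the $D_n$ diagram. Unlike in type $B$, the spin representation of $\Spin(2n)$ splits into two irreducibles, and the Borel--Bott--Weil computation for twists by $\Sigma^\alpha\CU^*\otimes\Lambda^p(S^2\CU)$ produces $\Spin(V)$-irreducibles whose highest weights in $\ZZ^n\sqcup (1/2+\ZZ)^n$ must be shown to lie in the prescribed box defining $B$. The delicate step is proving that no \emph{stray} weight $\nu$ with $\abs{\nu_n}>(k-t)/2$ or $\nu_{n-1}<\abs{\nu_n}$ contributes, which pins down the inequality $\lambda_{n-1}\geq\abs{\lambda_n}$ in the statement; a careful case analysis at this boundary (and at the border $t=k-1$ where the integer and half-integer ranges meet) is where I expect the bulk of the technical work to lie.
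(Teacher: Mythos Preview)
The paper itself does not prove Theorem~\ref{thm:dn}: this is a survey, and the result is simply quoted from \cite[Theorem~9.3]{KuznetsovPolishchuk2016}. Your broad plan---verify the exceptional block condition of Definition~\ref{def:reb} and then the inter-block semiorthogonality by reducing all $\Ext$-computations to Borel--Bott--Weil combinatorics---is indeed the Kuznetsov--Polishchuk strategy, so in that sense your proposal is aligned with what the paper is citing. (A minor difference: in \cite{KuznetsovPolishchuk2016} the cohomology computations are carried out directly on $\OGr(k,V)$ via Borel--Bott--Weil for $\Spin(2n)$, rather than by pulling back along the Koszul resolution to $\Gr(k,V)$ as you suggest; both routes are viable, but the bookkeeping is organized differently.)

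There is, however, a genuine gap in your third step. You write that once the pairings $(\xi,\lambda)$ strictly increase from block to block, ``the required vanishings match those already established for the equivariant exceptional collection in $D^\bfG(\OGr(k,V))$.'' This conflates two very different statements. The $\xi$-ordering in Theorem~\ref{thm:gpg} yields only the \emph{equivariant} vanishing $\Ext^\bullet_\bfG(\CU^\lambda,\CU^\mu)=0$, i.e.\ vanishing of the $\bfG$-invariant part of the $\Ext$-space. The semiorthogonality asserted in Theorem~\ref{thm:dn} lives in the non-equivariant category $D(\OGr(k,V))$ and requires the full vanishing $\Ext^\bullet(\CU^\lambda,\CU^\mu)=0$ for $\lambda\in B_s$, $\mu\in B_t$ with $s>t$. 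The latter is strictly stronger and does not follow from the former; in \cite{KuznetsovPolishchuk2016} it is established by a separate and rather lengthy Borel--Bott--Weil argument. You correctly name the tool one sentence earlier, but the claim that the equivariant case already supplies these vanishings is incorrect and would leave the semiorthogonality unproved. A smaller point in the same spirit: injectivity in the exceptional block condition is not ``automatic'' from reductivity of $\bfG$ either---in the original argument both directions come out of the explicit cohomology computation and a dimension count.
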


\begin{theorem}\label{thm:bnmax}
   Consider $\OGr(n, V)$, where $\dim V = 2n+1$.
   For all $0\leq t \leq n-1$ the sets of weights
   \[
      B_{2t} = \{ \lambda\in\ZZ^n \mid n-1 \geq \lambda_1 \geq \cdots \geq
      \lambda_t \geq t = \lambda_{t+1} = \cdots = \lambda_n \}
   \]
   and
   \[
      B_{2t+1} = \{ \lambda\in(1/2+\ZZ)^n \mid n-1/2 \geq \lambda_1 \geq \cdots \geq
      \lambda_t \geq t+1/2 = \lambda_{t+1} = \cdots = \lambda_n \}
   \]
   are (right) exceptional blocks.

   Denote by $\CA_t$ the subcategory in $D(\OGr(n, V))$ generated by the exceptional collection
   $\langle \CE^\lambda \mid \lambda \in B_t \rangle$.
   The subcategories
   $
      \CA_0,\, \CA_1,\, \ldots,\, \CA_{2n-1}
   $
   are semiorthogonal.
\end{theorem}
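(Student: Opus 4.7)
The plan is to verify, for each $B_t$, the right exceptional block condition of Definition~\ref{def:reb} and then to assemble the resulting blocks into a semiorthogonal chain using the $\xi$-ordering of Theorem~\ref{thm:gpg}. The strategy parallels the one carried out for Theorems~\ref{thm:bn} and~\ref{thm:dn}.

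First, I would reparametrize each block. For $\lambda \in B_{2t}$, write $\lambda = \mu + t \cdot \mathbf{1}$ with $\mathbf{1}=(1,\dots,1)$, so that $\mu$ lies in $\YD_{t, n-1-t}$; similarly for $B_{2t+1}$ with a half-integer shift $\mu + (t+\tfrac{1}{2})\cdot \mathbf{1}$. Under this identification the bundles $\CU^\lambda$ in $B_{2t}$ become $\Sigma^\mu \CU \otimes \CO(t)$, and those in $B_{2t+1}$ become $\Sigma^\mu \CU$ twisted by a fixed spinor bundle, with $\mu$ running over a rectangular set of Young diagrams. This turns the block into a Kapranov-type family, which is the key computational input.

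Second, I would verify equivariant exceptionality of each block by computing $\xi$-values and confirming that within $B_t$ the weights are totally $\xi$-ordered, so Theorem~\ref{thm:gpg} applies. The non-trivial content is then the block condition itself: using the equivalence $\Coh^\bfG(\OGr(n,V)) \simeq \Rep(\bfP)$, I would express
\begin{equation*}
\Ext^\bullet_\bfG(\CU^\lambda, \CU^\mu) \simeq H^\bullet(\mathfrak{u}, \Hom(V_\lambda, V_\mu))^\bfL
\end{equation*}
via the Chevalley--Eilenberg resolution, and compare it with the non-equivariant $\Ext^\bullet(\CU^\lambda, \CU^\mu)$ computed by Borel--Weil--Bott. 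The decomposition of $\Lambda^\bullet \mathfrak{u}^*$ under the Levi $\bfL = \GL_n$ (for type $B_n$ at the last node, the relevant graded pieces are built from $\CU$ and $\Lambda^2 \CU$) identifies which intermediate dominant weights $\nu$ arise in the composition map of Definition~\ref{def:reb}. The defining constraints on $B_t$ are chosen precisely so that every such $\nu$ lies inside $B_t$, which yields the required isomorphism.

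Third, for semiorthogonality between distinct blocks I would check that whenever $s<t$, every $\lambda \in B_s$ satisfies $(\xi, \lambda) < (\xi, \mu)$ for every $\mu \in B_t$. Together with Step~2, this shows that the concatenated family $\bigsqcup_t \{\CU^\lambda : \lambda \in B_t\}$ is an equivariant exceptional collection with respect to the $\xi$-ordering; Proposition~\ref{prop:reb} applied block by block then yields the semiorthogonal decomposition of $\CA_0, \CA_1, \ldots, \CA_{2n-1}$ inside $D(\OGr(n, V))$. The main obstacle is Step~2: the decomposition of $\Lambda^\bullet \mathfrak{u}^*$ produces many candidate weights $\nu$, and one must argue combinatorially, case by case on the shape of $\lambda, \mu$, that none of them escape $B_t$. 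For $B_{2t+1}$ the appearance of spinor bundles makes the relevant weights half-integral and the combinatorics of Borel--Weil--Bott more intricate, though the overall shape of the argument mirrors the integer case.
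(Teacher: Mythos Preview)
The paper is a survey and does not prove this theorem; it simply states the result and refers to \cite[Theorems~9.1 and~9.3]{KuznetsovPolishchuk2016} for the proof. So there is no ``paper's own proof'' to compare against beyond the original Kuznetsov--Polishchuk argument. Your Step~2 outline (reparametrize by Young diagrams, use the Chevalley--Eilenberg description of equivariant $\Ext$, identify $\mathfrak{u}$ as $\CU\oplus\Lambda^2\CU$ under the Levi $\GL_n$, and check that all intermediate weights $\nu$ stay in the block) is indeed the shape of the KP verification of the exceptional block condition.

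There is, however, a genuine gap in Step~3. The claim that for $s<t$ every $\lambda\in B_s$ and $\mu\in B_t$ satisfy $(\xi,\lambda)<(\xi,\mu)$ is false. Here $\xi=\omega_n=\tfrac12(e_1+\cdots+e_n)$, so $(\xi,\lambda)=\tfrac12\sum\lambda_i$. For $n\ge 4$ take $\lambda=(n-1,1,\ldots,1)\in B_2$ and $\mu=(\tfrac32,\ldots,\tfrac32)\in B_3$: then $\sum\lambda_i=2n-2\ge \tfrac{3n}{2}=\sum\mu_i$. So the $\xi$-values of the blocks interleave. Even if they did not, $\xi$-ordering only controls the \emph{equivariant} $\Ext$ groups (Theorem~\ref{thm:gpg}), whereas the semiorthogonality of the $\CA_t$ is a statement in the non-equivariant category $D(X)$; Proposition~\ref{prop:reb} applied to each $B_t$ separately says nothing about $\Ext^\bullet_{D(X)}(\CE^\lambda_{B_t},\CE^\mu_{B_s})$ for $\lambda\in B_t$, $\mu\in B_s$, $s<t$. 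In \cite{KuznetsovPolishchuk2016} this between-block semiorthogonality is a separate condition (their notion of semiorthogonal blocks) whose verification again requires Borel--Weil--Bott computations relating the non-equivariant $\Ext$ groups across blocks, not merely a comparison of $\xi$-values. Your plan needs to incorporate that step.
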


\begin{conjecture}
   The collections constructed in Theorems~\ref{thm:bn},
   \ref{thm:dn}, and~\ref{thm:bnmax} are full.
\end{conjecture}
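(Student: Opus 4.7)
The plan is to adapt to the orthogonal setting the strategy that succeeded in Theorem~\ref{thm:lgr} for Lagrangian Grassmannians: construct analogs of Lagrangian staircase complexes and apply Proposition~\ref{prop:full}. The starting point is to replace the abstract description of $\CE^\lambda$ arising from the graded right dual in the equivariant derived category (Proposition~\ref{prop:reb}) with a geometric one. Paralleling Proposition~\ref{prop:lgr-fl1}, I would realize each $\CE^\lambda$ as a push-forward along an orthogonal partial flag variety $\OF(k-h, k; V)\to\OGr(k, V)$ of a bundle of the form $\Sigma^\lambda(\CW^\perp\!/\CW)$, tensored, in the half-integer block case, with the appropriate spinor bundle on the intermediate isotropic Grassmannian. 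A useful byproduct should be a description of the graded left dual to each block analogous to Theorem~\ref{thm:lgr-dual}, which makes vanishing computations much more tractable.

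Next, for each weight $\lambda$ lying on the boundary of its block $B_t$ or $B_{t+1/2}$, I would construct an orthogonal staircase complex expressing an appropriate twist of $\CE^\lambda$ as an iterated extension of objects $\CE^{\lambda^{(i)}}$ lying in earlier blocks. The expected shape parallels Propositions~\ref{prop:starcase} and~\ref{prop:lstair}: one pushes forward a Koszul resolution along a two-step isotropic flag variety and analyzes the resulting spectral sequence via Borel--Bott--Weil. The combinatorics should be governed by a cyclic action on the set of weights whose order equals the Fano index of $\OGr(k, V)$; this action shifts by one block and, crucially, transports between the integer and half-integer lattices via tensoring with the spinor bundle.

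With these staircase complexes in hand, fullness follows by induction on the lexicographic order on weights, exactly as in the Lagrangian case: one shows that every irreducible equivariant bundle lies in the subcategory $\CT$ generated by the Kuznetsov--Polishchuk blocks. Since $\CT$ contains $\CO$ and, by the staircase argument, is closed under $\otimes\CO(1)$, Proposition~\ref{prop:full} would complete the argument. For the non-maximal $\OGr(k, V)$ in Theorems~\ref{thm:bn} and~\ref{thm:dn}, a complementary route is to combine the maximal orthogonal case with the fibration $\OF(k, n; V)\to\OGr(k, V)$ and Theorem~\ref{thm:sam}, provided one can match the induced decomposition with the Kuznetsov--Polishchuk blocks.

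The main obstacle is the coupling between the integer and half-integer weight blocks. Unlike the Lagrangian situation, where all blocks sit on a single lattice and the cyclic action admits a clean combinatorial description via binary sequences, in the orthogonal case the staircase complexes must interleave ordinary bundles with spinor twists, and the connecting maps involve Clifford multiplication. In type $D_n$, the splitting of the spinor representation into two halves introduces further parity phenomena tied to the geometry of $\OGr_\pm(n, 2n)$, so one expects distinct families of staircase complexes depending on the parities of $n$ and $k$. Verifying exactness of these complexes---a computation that already required the full strength of Borel--Bott--Weil in the Lagrangian case---should be the most delicate technical step.
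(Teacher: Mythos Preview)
This statement is explicitly a \emph{conjecture} in the paper, not a theorem: the paper provides no proof whatsoever. There is therefore nothing to compare your proposal against. What you have written is a research plan for an open problem, not a proof, and you should be clear about that distinction.

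As a research outline, your plan is plausible and indeed aligned with what the paper itself hints at (the Lagrangian case in Theorem~\ref{thm:lgr} is the only instance of Conjecture~\ref{conj:kp-igr} that has been settled, and it was done via staircase complexes). But you should be aware that the obstacles you identify in your final paragraph are genuine and, to date, unresolved. The interleaving of integer and half-integer blocks via spinor bundles is not a mere technicality: the relevant Borel--Bott--Weil computations in types $B$ and $D$ are substantially more intricate than in type $C$, the cyclic action you posit has no known clean combinatorial model in this setting, and no analogue of Proposition~\ref{prop:lstair} has been written down for orthogonal Grassmannians. Your alternative route via $\OF(k,n;V)\to\OGr(k,V)$ and Theorem~\ref{thm:sam} presupposes the maximal case (Theorem~\ref{thm:bnmax}) is already settled, which it is not. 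In short, your proposal correctly identifies a natural line of attack and its principal difficulties, but none of the key steps is currently known to go through; presenting it as a proof would be premature.
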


\subsection{Sporadic cases}
We have the following not very interesting cases of orthogonal
Grassmannians when $n$ is small.
\begin{itemize}
   \item If $\bfG$ is of type $B_2$, then $\bfG/\bfP_2\simeq \PP^3$.
   This case is covered by Beilinson's work.
   \item If $\bfG$ is of type $D_3$, then $\bfG/\bfP_1\simeq \bfG/\bfP_3 \simeq \PP^3$.
   This case is covered by Beilinson's work.
   \item If $\bfG$ is of type $D_4$, then $\bfG/\bfP_3\simeq \bfG/\bfP_4 \simeq \bfG/\bfP_1\simeq Q^6$
   This case is covered by Kapranov's work.
   \item If $\bfG$ is of type $B_3$, then
   $\bfG/\bfP_3 = \OGr(3, 7) \simeq \OGr(4, 8) \simeq Q^6$.
   This case is covered by Kapranov's work.
\end{itemize}

The derived category of $\OGr_+(5, 10)$ was first studied by A.~Kuznetsov.
\begin{theorem}[\cite{Kuznetsov2006}]
   Let $X=\OGr_+(5, 10)$, also isomorphic to $\OGr(4, 9)$.
   The bounded derived category of $X$ admits a full exceptional collection.
\end{theorem}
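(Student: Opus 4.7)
The plan is to leverage the isomorphism $\OGr_+(5, 10) \simeq \OGr(4, 9)$ and the Kuznetsov--Polishchuk machinery of Theorem~\ref{thm:bnmax} applied with $n = 4$. Under this isomorphism, $X \simeq \bfG/\bfP_4$ for $\bfG$ of type $B_4$, and Theorem~\ref{thm:bnmax} provides eight exceptional blocks $B_0, B_1, \ldots, B_7$ of sizes $(1,1,3,3,3,3,1,1)$ (a direct count from the defining inequalities), generating semiorthogonal admissible subcategories $\CA_0, \CA_1, \ldots, \CA_7 \subset D(X)$. Their combined length is $1+1+3+3+3+3+1+1 = 16 = \rk K_0(X)$, so to prove the theorem it suffices to show
\[
D(X) = \langle \CA_0, \CA_1, \ldots, \CA_7 \rangle.
\]

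Once this candidate collection is in hand, exceptionality and semiorthogonality come for free from Theorem~\ref{thm:bnmax} (the blocks are exceptional and the subcategories are pairwise semiorthogonal by construction), so the entire content of the proof is fullness. A natural first attempt is to apply Proposition~\ref{prop:full} with $\CL = \CO(1)$: since the blocks contain the structure sheaf and appropriate twists of the tautological and spinor bundles, it would suffice to show that each line bundle $\CO(i)$ for $0 \le i \le \dim X = 10$ lies in $\CT = \langle \CA_0, \ldots, \CA_7\rangle$. Combined with Orlov's generator criterion (Theorem~\ref{lm:orlov}), this would complete the proof. Carrying this out requires building explicit resolutions of the `extra' line bundles in terms of the given equivariant bundles, using Borel--Bott--Weil and relative Koszul-type complexes for the fibrations $\OGr_+(5,10) \leftarrow \OFl \rightarrow \OGr(k, 10)$ for various intermediate $k$.

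The main obstacle is precisely this fullness step, which is what Conjecture~\ref{conj:kp-igr} (and its $B_n$ analog) asserts in much greater generality and where no uniform proof is known. Kuznetsov's strategy in~\cite{Kuznetsov2006} specific to $X = \OGr_+(5, 10)$ circumvents the lack of a direct Koszul resolution of the diagonal by exploiting the self-homological-projective-duality of $X$ under its spinor embedding $X \hookrightarrow \PP^{15}$. Concretely, one studies the universal family of smooth hyperplane sections of $X$, equips each such section with a compatible semiorthogonal decomposition whose nontrivial pieces match $\CA_1, \ldots, \CA_7$ pulled back, and then deduces by a specialization and orthogonality argument that the would-be residual category $\langle \CA_0, \ldots, \CA_7 \rangle^\perp$ on $X$ itself must vanish. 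The most delicate step is constructing and tracking the Lefschetz data on the family of hyperplane sections, where the exceptional symmetry coming from the $\Spin(10)$-spinor self-duality is essential and unlikely to generalize; it is this geometric input specific to $D_5$ (or equivalently $B_4$ under the isomorphism) that makes the case accessible.
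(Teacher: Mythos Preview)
The paper itself gives no proof of this statement; it is simply recorded with a citation to~\cite{Kuznetsov2006}, and the subsequent Remark notes that fullness of the Kuznetsov--Polishchuk collection on $\OGr_+(5,10)$ was established only later by Moschetti--Rampazzo. So there is no ``paper's own proof'' to compare against, but there are two substantive issues with your proposal worth flagging.

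First, you conflate two distinct exceptional collections. The blocks you write down come from Theorem~\ref{thm:bnmax}, i.e.\ the Kuznetsov--Polishchuk construction of 2016; the result actually being cited predates that paper by a decade and uses a different collection. Your closing paragraph then attributes to~\cite{Kuznetsov2006} a fullness argument for the KP blocks via homological projective self-duality, but as the paper's own Remark makes explicit, fullness of the KP collection on this variety was \emph{not} known from~\cite{Kuznetsov2006} and required separate work. So the narrative ``set up KP blocks, then invoke Kuznetsov's 2006 HPD argument to show they generate'' is historically and logically off: those two pieces do not fit together as written.

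Second, and more seriously, your proposal does not actually establish fullness by either route. The first approach (Proposition~\ref{prop:full} / Theorem~\ref{lm:orlov}) is set up correctly in principle---showing $\CO(i)\in\CT$ for $0\le i\le 10$ would indeed suffice---but you do not produce the resolutions needed to place $\CO(8),\CO(9),\CO(10)$ inside $\CT$, and there is no indication of which complexes would do this. The second approach is a one-paragraph impression of an HPD-flavoured argument (``study the universal family of hyperplane sections\ldots deduce the residual category vanishes'') with no verification of the key step, namely why the orthogonal $\langle\CA_0,\ldots,\CA_7\rangle^\perp$ is forced to be zero. As written, the proposal is a reasonable outline of \emph{where} the difficulty lies, but it stops exactly at the point where a proof would have to begin.
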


\begin{remark}
   In~\cite{Moschetti2024} R.~Moschetti and M.~Rampazzo show that the collection
   given by the Kuznetsov--Polishchuk construction for $\OGr_+(5, 10)$
   is full.
\end{remark}

The next theorem by V.~Benedetti, D.~Faenzi, and M.~Smirnov
covers $\OGr_+(6, 12)$, which is isomorphic to $\OGr(5, 11)$.
\begin{theorem}[\cite{Benedetti2023}]
   Let $X=\OGr_+(6, 12)$. There are unique $\Spin_{12}$-equivariant
   extensions
   \[
      0\to \CO \to \CE^{(1,1)} \to \Lambda^2\CU^* \to 0
      \quad \text{and} \quad
      0\to \CU^* \to \CE^{(2,1)} \to \Sigma^{2,1}\CU^* \to 0
   \]
   and a Lefschetz decomposition of $D(X)$ with the Lefschetz basis
   \[
      \langle \CO, \CU^*, \CE^{(1,1)}, \CE^{(2,1)} \rangle,
      \quad
      o = (10, 10, 10, 2).
   \]
   In particular, $D(\OGr_+(6, 12))\simeq D(\OGr(5, 11))$
   admits a full exceptional collection of vector bundles.
\end{theorem}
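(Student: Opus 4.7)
The first step is to produce the two extension bundles and establish their uniqueness. By the Borel--Bott--Weil theorem one computes the $\Spin_{12}$-invariant parts of $\Ext^1(\Lambda^2\CU^*, \CO)$ and $\Ext^1(\Sigma^{2,1}\CU^*, \CU^*)$; each is expected to be one-dimensional, yielding a unique equivariant nontrivial extension in each case. Applying $\RHom$ to the defining short exact sequences from both sides reduces exceptionality of $\CE^{(1,1)}$ and $\CE^{(2,1)}$ to vanishings of cohomology of irreducible equivariant bundles on $\OGr_+(6,12)$, which are settled again by BBW.

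The next step is to check that $\langle \CO,\CU^*,\CE^{(1,1)},\CE^{(2,1)}\rangle$ is exceptional in the prescribed order and that the Lefschetz semiorthogonality holds across twists. Since $\omega_X\simeq\CO(-10)$, the support function $o=(10,10,10,2)$ produces blocks $\CA_0=\CA_1=\langle \CO,\CU^*,\CE^{(1,1)},\CE^{(2,1)}\rangle$ and $\CA_2=\cdots=\CA_9=\langle \CO,\CU^*,\CE^{(1,1)}\rangle$, of total length $2\cdot 4+8\cdot 3=32=\rk K_0(X)$. Via the defining extensions, every required semiorthogonality reduces to vanishing of cohomology of tensor products $\Sigma^\alpha\CU^*\otimes\Sigma^\beta\CU(m)$, which is enumerated by the Littlewood--Richardson rule combined with the branching rules for $\Spin_{12}$. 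The role of $\CE^{(1,1)}$ and $\CE^{(2,1)}$ is precisely to cancel the obstructions that would appear if one used the reducible bundles $\CO\oplus\Lambda^2\CU^*$ and $\CU^*\oplus\Sigma^{2,1}\CU^*$ in their place.

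The main obstacle is fullness. Numerically the length $32$ matches $\rk K_0(X)$, so nothing is missing; the task is to exhibit a generation argument. My plan is to apply Proposition~\ref{prop:full} with $\CL=\CO(1)$: since the subcategory $\CT$ generated by the collection contains $\CO$ tautologically, it suffices to verify that for every $E$ in the collection one has $E\otimes\CO(1)\in \CT$. All twists internal to the Lefschetz decomposition are automatic, so the problem reduces to showing that
\[
\CO(10),\quad \CU^*(10),\quad \CE^{(1,1)}(10),\quad \CE^{(2,1)}(2)
\]
all lie in $\CT$. For the three objects twisted by $\omega_X^{-1}=\CO(10)$ I would attempt an analogue of the staircase and generalised staircase constructions of Section~\ref{ssec:stair}: work on a two-step isotropic flag variety fibred over $X$, choose a regular equivariant section of a suitable homogeneous bundle, and push forward the associated Koszul complex in order to cut out the desired resolution. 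The delicate case is $\CE^{(2,1)}(2)$: because the support $o(\CE^{(2,1)})=2$ is small, this object must be generated using only a few blocks of twists of $\CO,\CU^*,\CE^{(1,1)},\CE^{(2,1)}$, which leaves little room to manoeuvre and is where I expect the real technical heart of the argument to lie. An independent route is to compare the Lefschetz collection with the Kuznetsov--Polishchuk exceptional collection attached to $\OGr_+(6,12)\simeq\OGr(5,11)$ via equivariant mutations; however, fullness of the Kuznetsov--Polishchuk collection is itself conjectural here, so this variant only shifts, rather than eliminates, the difficulty.
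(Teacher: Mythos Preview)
This survey does not prove the theorem: it is stated with attribution to \cite{Benedetti2023} and no argument is given in the text. There is therefore no proof in the present paper against which to compare your proposal.

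That said, a brief assessment of the proposal on its own merits: the first two steps (existence and uniqueness of the equivariant extensions, exceptionality, and semiorthogonality across twists) are routine in outline and would indeed reduce to Borel--Bott--Weil computations, though you have not carried any of them out. The genuine gap is fullness. Your plan to invoke Proposition~\ref{prop:full} is sound in principle, but the substance of the argument --- producing the concrete resolutions that place $\CO(10)$, $\CU^*(10)$, $\CE^{(1,1)}(10)$, and especially $\CE^{(2,1)}(2)$ inside the subcategory --- is left entirely at the level of ``I would attempt an analogue of staircase complexes.'' No candidate complexes are exhibited, no flag variety or bundle is named, and you yourself identify $\CE^{(2,1)}(2)$ as the hard case without proposing anything specific for it. The alternative route via mutations to the Kuznetsov--Polishchuk collection is, as you correctly note, circular, since fullness there is itself conjectural. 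As written, the proposal is an outline of the \emph{shape} a proof might take, not a proof; to evaluate it further one would need to consult \cite{Benedetti2023} directly.
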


\section{Grassmannians for Exceptional Groups}
\subsection{Grassmannians for $E_6$, $E_7$, $E_8$}
The only case when something is known is when
$\bfG$ is of type $E_6$ and the parabolic
group is either $\bfP_1$ or $\bfP_6$,
where we use the numbering of roots shown on Figure~\ref{fig:e6}.
\begin{figure}[H]
   \begin{tikzpicture}
      \dynkin[labels={1,4,2,3,5,6}, scale=2] E6
   \end{tikzpicture}
   \caption{Dynkin diagram $E_6$.}
   \label{fig:e6}
\end{figure}
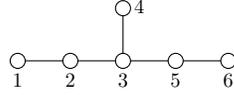

The diagram has a symmetry, so $\bfG/\bfP_1$ and $\bfG/\bfP_6$
are isomorphic. This is a $16$-dimensional variety of Fano index $12$,
which is also known as the Cayley plane $\mathbb{OP}^2$.
Its derived category was originally studied by L.~Manivel in~\cite{Manivel2011}.
The following result is due to D.~Faenzi and L.~Manivel.

\begin{theorem}[\cite{Faenzi2015}]\label{thm:op2}
   There is a Lefschetz decomposition of $D(\mathbb{CO}^2)$
   with the Lefschetz basis given by
   \[
      \langle \CU^{2\omega_6-\omega_1},\, \CU^{\omega_6-\omega_1}, \CO \rangle,
      \quad
      o = (3, 12, 12),
   \]
   where $\CU^{\omega}$ is the vector bundle associated with the $\bfL$-dominant
   weight $\omega$, and $\omega_i$ are the dominant weights of $\bfG$
   with respect to the numbering given on Figure~\ref{fig:e6}.
   In particular, $D(\mathbb{CO}^2)$ admits a full exceptional collection
   consisting on vector bundles.
\end{theorem}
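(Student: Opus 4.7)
The Cayley plane $X=\mathbb{OP}^2$ has dimension $16$, Fano index $12$, and $\rk K_0(X)=27=3+2\cdot 12$, which matches the length of the proposed collection and shows that the twelve blocks saturate the Serre-duality bound. Since the Levi subgroup of $\bfP_1\subset E_6$ is $\Spin_{10}\times\mathbb{G}_m$, the weights $\omega_6-\omega_1$ and $2\omega_6-\omega_1$ are $\bfL$-dominant and give rise to irreducible $E_6$-equivariant vector bundles on $X$. My plan is to verify semiorthogonality by the Borel--Bott--Weil theorem, and then to attack fullness by means of Proposition~\ref{prop:full}.

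The semiorthogonality and exceptionality checks reduce to a finite list of cohomology computations. Writing $E_1=\CU^{2\omega_6-\omega_1}$, $E_2=\CU^{\omega_6-\omega_1}$, $E_3=\CO$, I would decompose each tensor product $E_i^*\otimes E_j\otimes \CO(s)$ into $\bfL$-irreducibles and apply Borel--Bott--Weil on $X$ for $s\in\{-11,\ldots,11\}$, halving the workload by Serre duality (the Serre functor on $X$ is $(-)\otimes\CO(-12)[16]$). The vanishing pattern dictated by the support function $o=(3,12,12)$ is: inside each block the generators must form an exceptional collection; the blocks $\CA_i(i)$ and $\CA_j(j)$ must be right-semiorthogonal for $i<j$; and, crucially, $E_1$ must be absent from all blocks with index $\geq 3$, which requires the vanishing of $\Ext^\bullet(E_j(t),E_1(s))$ for $s-t\geq 3$ in the appropriate range. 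All of this is algorithmic, if tedious.

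Fullness is the genuine obstacle. By Proposition~\ref{prop:full} it is enough to check that every generator twisted by $\CO(1)$ still lies in the subcategory $\CT$ generated by the collection, and the genuinely new objects that need to be produced are $\CU^{2\omega_6-\omega_1}(3)$, $\CU^{\omega_6-\omega_1}(12)$, and $\CO(12)$. For the first of these I would search for an $E_6$-equivariant resolution patterned on the staircase complexes of Section~\ref{ssec:stair}, obtained as the pushforward of a Koszul complex from an auxiliary $E_6$-equivariant flag bundle over $X$ --- for instance the two-step flag $E_6/\bfP_{1,6}$, whose fibres over $X$ are eight-dimensional quadrics and which carries natural equivariant vector bundles whose regular sections cut out low-codimension Schubert-type loci. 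For the other two missing twists I would look for an analogous staircase pattern, or alternatively combine Theorem~\ref{lm:orlov} with equivariant Koszul resolutions placing $\CO(12),\ldots,\CO(16)$ directly into $\CT$. The hardest step is finding the correct ``ladder'' of intermediate $\bfL$-irreducibles appearing in each complex: outside of type $A$ such collapses rarely happen without careful guidance from the representation theory of $E_6$, and every term of the putative resolution must itself decompose into constituents that already belong to $\CT$, which typically forces the shape of the ladder upon us after some case analysis.
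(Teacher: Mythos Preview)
The paper does not contain a proof of this statement. This is a survey, and Theorem~\ref{thm:op2} is stated with attribution to~\cite{Faenzi2015} (Faenzi--Manivel) and nothing more; the next sentence already moves on to $E_7$. There is therefore no proof in the paper to compare your proposal against.

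For context only: your plan for the semiorthogonality checks (Borel--Bott--Weil plus Serre duality to halve the range of twists) is the standard approach and is essentially what is carried out in the cited reference. Your strategy for fullness, however, is different from and more speculative than the original. Faenzi and Manivel do not build an $E_6$-analogue of the staircase complexes of Section~\ref{ssec:stair}; instead they argue in the spirit of the covering-family technique described in the paper right after Theorem~\ref{thm:ld-gr2}, using the incidence variety $E_6/\bfP_{1,6}\to X$ (whose fibres are the eight-dimensional quadrics you mention) together with Kapranov's description of $D(Q^8)$ to control restrictions and deduce generation. Your idea of pushing a Koszul complex from $E_6/\bfP_{1,6}$ is not unreasonable in principle, but as you note the hard part is arranging for every intermediate term to land in $\CT$, and nothing of that sort is known for $E_6$; the restriction/covering argument sidesteps this difficulty entirely.
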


Apart from that when $\bfG$ is of type $E_7$, V.~Benedetti, D.~Faenzi, and M.~Smirnov
in~\cite{Benedetti2023}
construct an exceptional collection of maximal length
in $D(\bfG/\bfP_7)$.

\subsection{Grassmannians for $F_4$}
Let $\bfG$ be of type $F_4$. We use the numbering of weights given
on Figure~\ref{fig:f4}.
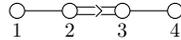
\begin{figure}[H]
   \begin{tikzpicture}
      \dynkin[label, scale=2, Bourbaki arrow] F4
   \end{tikzpicture}
   \caption{Dynkin diagram $F_4$.}
   \label{fig:f4}
\end{figure}

The two cases when something is known correspond to the first and fourth roots.
The variety $\bfG/\bfP_1$ is known as the adjoint Grassmannian in type $F_4$.
It is a variety of dimension $15$, and its Fano index equals $8$.
The following was shown by M.~Smirnov.

\begin{theorem}[\cite{Smirnov2021}]
   Let $X=\bfG/\bfP_1$ be the adjoint Grassmannian.
   There is a unique equivariant extension
   \[
      0 \to \CO \to \tilde{\CT} \to \CT_{X} \to 0
   \]
   of its tangent bundle $\CT_X$ and a rectangular Lefschetz exceptional
   collection
   \[
      D(X) = \begin{pmatrix*}[r]
         \tilde{\CT} & \tilde{\CT}(1) & \tilde{\CT}(2) & \tilde{\CT}(3) & \tilde{\CT}(4) & \tilde{\CT}(5) & \tilde{\CT}(6) & \tilde{\CT}(7) \\
         \CU^{\omega_4} & \CU^{\omega_4}(1) & \CU^{\omega_4}(2) & \CU^{\omega_4}(3) & \CU^{\omega_4}(4) & \CU^{\omega_4}(5) & \CU^{\omega_4}(6) & \CU^{\omega_4}(7) \\
         \CO & \CO (1) & \CO (2) & \CO (3) & \CO (4) & \CO (5) & \CO (6) & \CO (7)
      \end{pmatrix*},
   \]
   where $\CU^{\omega_4}$ is the vector bundle associated with the $\bfL$-dominant
   fundamental weight $\omega_4$ of $\bfG$.
\end{theorem}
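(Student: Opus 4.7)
The plan has three main steps. First I would establish existence and uniqueness of the extension defining $\tilde{\CT}$. Since $X=\bfG/\bfP_1$ is a homogeneous variety for a maximal parabolic, its Picard group has rank one, which yields $H^1(X,\Omega^1_X)=\kk$. Using $\CHom(\CT_X,\CO)\simeq\Omega^1_X$, one gets $\Ext^1(\CT_X,\CO)=\kk$, producing a one-dimensional space of extensions and a unique non-split extension class up to scalar. The resulting $\tilde{\CT}$ inherits a canonical $\bfG$-equivariant structure because the extension class is $\bfG$-invariant, and the uniqueness of the non-split extension forces uniqueness of $\tilde{\CT}$ up to isomorphism.

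Second, I would verify that $\CO$, $\CU^{\omega_4}$, and $\tilde{\CT}$ are exceptional, that the proposed Lefschetz basis is exceptional, and that the rectangular semiorthogonality across twists $(0),(1),\ldots,(7)$ holds. For pairs of irreducible equivariant bundles among $\CO$ and $\CU^{\omega_4}$, one decomposes $\CU^{\omega_4}\otimes(\CU^{\omega_4})^*$ into irreducible $\bfL$-summands and applies Borel--Bott--Weil in type $F_4$ to each summand twisted by $\CO(-t)$ for $0\le t\le 7$. For $\tilde{\CT}$, the defining short exact sequence gives distinguished triangles that reduce every $\Ext$-computation involving $\tilde{\CT}$ to computations involving $\CO$ and the irreducible equivariant tangent bundle $\CT_X$, again amenable to Borel--Bott--Weil. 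Serre duality, combined with $\omega_X\simeq\CO(-8)$, naturally accounts for the ceiling at eight twists. The length count is correct: since $\chi(X)=|W(F_4)|/|W(C_3)|=1152/48=24=3\cdot 8$, the proposed collection has the maximal possible length, so fullness will be equivalent to generation.

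The main obstacle is fullness. The cleanest strategy is to apply Proposition~\ref{prop:full}: as $\CO$ lies in the collection, it suffices to show that each of $\CO(8)$, $\CU^{\omega_4}(8)$, and $\tilde{\CT}(8)$ lies in the triangulated category generated by the twists of $\langle \CO,\CU^{\omega_4},\tilde{\CT}\rangle$ by $\CO(i)$ for $0\le i\le 7$. For this one aims to construct staircase-style Koszul resolutions in the spirit of section~\ref{ssec:stair}: resolutions by equivariant bundles of the form $W\otimes E(t)$ with $W$ a $\bfG$-representation, $E\in\{\CO,\CU^{\omega_4},\tilde{\CT}\}$, and $0\le t\le 7$. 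For $\tilde{\CT}(8)$ one reduces first to $\CT_X(8)$ using the defining extension and then exploits the relationship between $\CT_X$ and the adjoint representation of $\bfG$. Identifying the right $\bfG$-modules and proving acyclicity of the resulting complexes via Borel--Bott--Weil in type $F_4$ is the representation-theoretic heart of the argument, and the place where the sporadic combinatorics of $F_4$ present the real difficulty.
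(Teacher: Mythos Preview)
The survey does not prove this theorem; it is stated with a citation to \cite{Smirnov2021} and no argument is supplied. There is therefore no proof in the present paper to compare your proposal against.

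Your first two steps are standard and correct: the identification $\Ext^1(\CT_X,\CO)\simeq H^1(X,\Omega^1_X)\simeq\kk$ (from Picard rank one) produces the unique non-split equivariant extension, and exceptionality and semiorthogonality reduce to finitely many Borel--Bott--Weil computations in type $F_4$, exactly as you outline. The numerics $\chi(X)=|W(F_4)|/|W(C_3)|=24=3\cdot 8$ are also right.

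The gap is in your fullness argument. Invoking Proposition~\ref{prop:full} is legitimate, but the entire substance of that route is the construction of explicit equivariant resolutions expressing $\CO(8)$, $\CU^{\omega_4}(8)$, and $\tilde{\CT}(8)$ in terms of the twists $(0),\ldots,(7)$ of the basis. You have not produced, or even sketched, such complexes; ``staircase-style Koszul resolutions in the spirit of section~\ref{ssec:stair}'' is a wish, not a method. The mechanism that makes staircase complexes work in type $A$ (and, with considerable effort, in type $C$ for the Lagrangian case) is a specific combinatorial structure on the weight poset together with a natural fibration over a smaller Grassmannian, and no analogue of this is known for the adjoint $F_4$-Grassmannian. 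As written, the third step is the whole difficulty of the theorem, and your proposal leaves it entirely open.
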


The second case is the variety $\bfG/\bfP_4$, which is known
as the coadjoint Grassmannian in type $F_4$.
It is a variety of dimension $15$, and its Fano index equals $11$.
Moreover, it can be realized as a general hyperplane section
of the Cayley plane $\mathbb{CO}^2$. In~\cite{Belmans2023}
P.~Belmans, A.~Kuznetsov, and M.~Smirnov show that the collection constructed by L.~Manivel
and D.~Faenzi (see Theorem~\ref{thm:op2}) restricts to a full
rectangular exceptional collection in $D(\bfG/\bfP_4)$.

\begin{theorem}[{\cite{Belmans2023}}]
   The coadjoint Grassmannian $\bfG/\bfP_4$
   admits a full Lefschetz exceptional collection
   consisting of vector bundles with the support function
   \[
      o=(11, 11, 2).
   \]
\end{theorem}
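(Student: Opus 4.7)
The plan is to apply the hyperplane section principle (the HPD-style proposition stated just before Theorem~\ref{thm:igr2}) to the Faenzi--Manivel Lefschetz decomposition of $D(\mathbb{OP}^2)$ from Theorem~\ref{thm:op2}, and then to show that the residual component it produces vanishes.

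Concretely, Theorem~\ref{thm:op2} provides the blocks
\[
\CA_0 = \CA_1 = \CA_2 = \langle \CU^{2\omega_6-\omega_1},\, \CU^{\omega_6-\omega_1},\, \CO \rangle,
\qquad
\CA_3 = \cdots = \CA_{11} = \langle \CU^{\omega_6-\omega_1},\, \CO \rangle
\]
for the Cayley plane, whose Fano index is $12$. Since the coadjoint Grassmannian $Y = \bfG/\bfP_4$ sits in $\mathbb{OP}^2$ as a smooth hyperplane section with respect to the ample generator of $\Pic(\mathbb{OP}^2)$, the cited proposition yields
\[
D(Y) = \langle \CA_Y,\, \iota^*\CA_1(1),\, \iota^*\CA_2(2),\, \ldots,\, \iota^*\CA_{11}(11) \rangle,
\]
with each restriction $\iota^*$ on $\CA_i(i)$ fully faithful. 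Tensoring the entire decomposition by $\CO(-1)$ and relabeling, the eleven blocks $\iota^*\CA_i(i-1)$ form a Lefschetz-shaped collection on $Y$ with $E_1 = \iota^*\CO$, $E_2 = \iota^*\CU^{\omega_6-\omega_1}$, $E_3 = \iota^*\CU^{2\omega_6-\omega_1}$ and support function $o = (11, 11, 2)$, contributing $3 + 3 + 9 \cdot 2 = 24$ pairwise semiorthogonal exceptional bundles.

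It remains to show $\CA_Y = 0$. The numerical side is immediate: the number of Schubert cells of $\bfG/\bfP_4$ is $|W(F_4)|/|W(B_3)| = 1152/48 = 24$, which matches the count just produced, so additivity of Hochschild homology along semiorthogonal decompositions forces $HH_\bullet(\CA_Y) = 0$. The main obstacle is to upgrade this numerical vanishing to an honest vanishing $\CA_Y = 0$, that is, to rule out a phantom subcategory. Because $\CA_Y$ is admissible and $\bfG$-stable, one can try to argue that any nonzero object in $\CA_Y$ lifts to an equivariant object (compare the Polishchuk lemma cited in the text); then a Borel--Bott--Weil computation on $Y$, combined with the classification of $\bfL$-dominant weights for the Levi of $\bfP_4$ and the fact that $E_1, E_2, E_3$ together with their twists exhaust the relevant orbit of weights, should force any such hypothetical object to have nonzero $\Ext$ with at least one of the $24$ generators, a contradiction. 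This equivariant Borel--Bott--Weil bookkeeping on the homogeneous model of $Y$ is where the genuine technical work lies, and is the step I expect to be the principal obstacle.
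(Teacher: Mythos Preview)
Your overall strategy---restrict the Faenzi--Manivel Lefschetz collection on $\mathbb{OP}^2$ to the hyperplane section $Y=\bfG/\bfP_4$ via the HPD-type proposition, obtain $24$ exceptional bundles with support function $(11,11,2)$, and then show the residual $\CA_Y$ vanishes---is exactly the route the survey attributes to \cite{Belmans2023}, and your numerics are correct.

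The genuine gap is in your proposed mechanism for $\CA_Y=0$. Polishchuk's lemma produces an equivariant structure only on \emph{exceptional} objects, not on arbitrary ones, and a phantom subcategory by definition contains no exceptional objects whatsoever. So even granting that $\CA_Y$ is $\bfG$-stable, there is no object in it to which Borel--Bott--Weil could be applied; the argument you sketch cannot get started. The Hochschild-homology vanishing you note is necessary but, after the discovery of phantoms, is well known not to be sufficient. Fullness in this situation is instead established by \emph{generation}: one shows directly, via explicit resolutions and complexes relating the restricted bundles and their twists (in the spirit of Proposition~\ref{prop:full} and the staircase-type arguments earlier in the survey), that the subcategory they span already contains a classical generator of $D(Y)$. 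That is the nature of the work carried out in \cite{Belmans2023}, and it is of a quite different character from the equivariance/orthogonality bookkeeping you outline.
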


\subsection{Grassmannians for $G_2$}
Finally, we consider the case when $\bfG$ is of type $G_2$.
The root numbering is shown on Figure~\ref{fig:g2}.
\begin{figure}[H]
   \begin{tikzpicture}
      \dynkin[label, scale=2, Bourbaki arrow] G2
   \end{tikzpicture}
   \caption{Dynkin diagram $G_2$.}
   \label{fig:g2}
\end{figure}
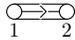

There are only two Grassmannians. The quotient
$\bfG/\bfP_2$ is isomorphic to $Q_5$, a $5$-dimensional quadric.
This case is covered by Kapranov's work. The remaining case
is $\bfG/\bfP_1$, which is a $5$-dimensional variety of Fano index $3$.
A.~Kuznetsov showed the following.

\begin{theorem}[\cite{Kuznetsov2006}]
   The bounded derived category of $\bfG/\bfP_1$ admits a full
   exceptional collection.
\end{theorem}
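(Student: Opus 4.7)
The first task is to compute the rank of the Grothendieck group. Since $\bfG/\bfP_1$ has Weyl group $W(G_2)$ of order $12$ and the Levi's Weyl group has order $2$, we have $\rk K_0(\bfG/\bfP_1) = 6$. As the Fano index is $3$, Serre duality caps any Lefschetz decomposition at three blocks, so a natural target is a rectangular Lefschetz collection with two objects per block. My strategy would follow the template used throughout this survey: realize $X = \bfG/\bfP_1$ inside an auxiliary variety whose derived category is well understood, restrict a suitable collection, then verify exceptionality and fullness separately.

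For the geometric realization, I would use the fact that $X$ can be described as the zero locus of a regular section of an equivariant vector bundle on a classical Grassmannian (or, via a dual picture, as a linear section of a Grassmannian). Concretely, let $V$ be the $7$-dimensional fundamental representation of $\bfG$, which carries a generic invariant $3$-form, and consider the subvariety of $\Gr(2,V)$ on which the associated skew morphism $\CU \to (V/\CU)^*$ degenerates in the appropriate way; dimension count matches and this cuts out $X$ as a zero locus of a rank-$5$ equivariant bundle in $\Gr(2,V)$. The Koszul resolution of the structure sheaf of $X$ in $\Gr(2,V)$ has terms built from exterior powers of this bundle, which are direct sums of Schur functors of the tautological bundles.

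Now, starting from Kapranov's collection on $\Gr(2,V)$ (Theorem~\ref{thm:kapranov}), I would write down candidates for the exceptional collection on $X$ by restricting certain equivariant bundles, the natural choices being $\CO$, $\CU^*$, together with one or two additional $\bfG$-equivariant vector bundles (for instance twists of $\CU^*$ or a spinor-like/extension bundle that arises from the $G_2$-structure), packaged into a rectangular Lefschetz form
\begin{equation*}
   \begin{pmatrix*}[r]
      \CE & \CE(1) & \CE(2) \\
      \CO & \CO(1) & \CO(2)
   \end{pmatrix*}
\end{equation*}
where $\CE$ is the second equivariant bundle in each block. Exceptionality and semiorthogonality inside each block, as well as between blocks, reduce to Borel--Bott--Weil computations for $\bfG$ applied to the restriction of equivariant bundles from $\Gr(2,V)$: the Koszul complex lets one pull cohomology back to $\Gr(2,V)$ and compute there, and the required vanishings are a finite, explicit check.

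The hardest step, as always, is fullness. I would use Proposition~\ref{prop:full}: having shown $\CO \in \langle \CE_1,\dots,\CE_6\rangle$, it suffices to prove that each $\CE_i \otimes \CO(1)$ lies in the subcategory generated by the $\CE_i$. For the line bundles this is automatic from the rectangular Lefschetz shape; the work is to show that the non-trivial bundle $\CE$, in its highest twist $\CE(3)$, lies in $\langle \CE_1,\dots,\CE_6\rangle$. This would be established by exhibiting one explicit equivariant complex on $X$ expressing $\CE(3)$ in terms of lower-twist bundles in the collection, obtained by restricting a Koszul-type complex from $\Gr(2,V)$. If this direct route stumbles (which I would expect to be the main obstacle, since $\bfG$ is exceptional and there are no general staircase complexes available), the fallback is Orlov's Theorem~\ref{lm:orlov}: show that the six objects classically generate $\CO,\CO(1),\dots,\CO(5)$, which by Orlov's lemma forces the subcategory to be all of $D(X)$. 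Either way, the fullness argument is the delicate part and the reason this case required a dedicated paper.
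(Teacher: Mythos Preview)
The survey does not prove this theorem; it only cites \cite{Kuznetsov2006}. There is therefore no in-paper argument to compare against, and I assess your proposal on its own terms and against the known result.

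Your numerics and target shape are correct: $\rk K_0(\bfG/\bfP_1)=6$, the Fano index is $3$, and Kuznetsov's collection is precisely the rectangular Lefschetz collection
\[
D(\bfG/\bfP_1)=\langle \CO,\ \CU^*,\ \CO(1),\ \CU^*(1),\ \CO(2),\ \CU^*(2)\rangle,
\]
so your unidentified bundle $\CE$ is simply $\CU^*$. The hedging about ``spinor-like or extension bundles'' is unnecessary, and the sentence about needing ``one or two additional'' bundles beyond $\CO,\CU^*$ is a slip: two objects per block is already the whole basis. The realization of $X$ as the zero locus in $\Gr(2,V)$, $\dim V=7$, of the section of $(V/\CU)^*(1)$ determined by the $G_2$-invariant $3$-form is the standard one and is essentially what you describe, though your phrasing via a ``degenerating skew morphism'' is imprecise.

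What you have submitted, however, is a strategy rather than a proof. You never commit to $\CE$, you carry out no Borel--Bott--Weil vanishing, and for fullness you name two routes (Proposition~\ref{prop:full} or Theorem~\ref{lm:orlov}) while executing neither and explicitly conceding the direct one may ``stumble''. That is candid, but it means the proposal halts exactly where the content lies. To complete it along your lines you must actually produce the resolution expressing $\CU^*(3)$ (equivalently $\CU$, by Serre duality) in terms of the six listed objects. An alternative, closer to how this variety is usually handled, is to exploit the relation between $\bfG/\bfP_1$ and $\bfG/\bfP_2\simeq Q^5$ through the full $G_2$-flag variety, which is a $\PP^1$-bundle over each and lets one transport generation from Kapranov's collection on the quadric; your Koszul-in-$\Gr(2,7)$ route does not make use of this structure.
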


\printbibliography

\end{document}